\numberwithin{equation}{section}
\newtheorem*{thma}{Theorem A}
\newtheorem{theorem}{Theorem}[section]
\newtheorem{corollary}[theorem]{Corollary}
\newtheorem{lemma}[theorem]{Lemma}
\newtheorem{proposition}[theorem]{Proposition}
\newtheorem{remark}[theorem]{Remark}
\newtheorem{definition}[theorem]{Definition}
\theoremstyle{remark}
\def\@rst #1 #2other{#1}
\newcommand\MR[1]{\relax\ifhmode\unskip\spacefactor3000 \space\fi
  \MRhref{\expandafter\@rst #1 other}{#1}}
\newcommand{\MRhref}[2]{\href{http://www.ams.org/mathscinet-getitem?mr=#1}{MR#2}}
\def\MR#1{\href{http://www.ams.org/mathscinet-getitem?mr=#1}{MR#1}}
\newcommand{\C}{\mathbbm{C}}
\newcommand{\D}{\mathbbm{D}}
\newcommand{\E}{\mathbbm{E}}
\newcommand{\N}{\mathbbm{N}}
\newcommand{\Q}{\mathbbm{Q}}
\newcommand{\R}{\mathbbm{R}}
\renewcommand{\P}{\mathbbm{P}}
\newcommand{\bbH}{\mathbbm{H}}
\newcommand{\IG}{\mathrm{IG}}
\newcommand{\eps}{\varepsilon}
\let\Re\undefined
\DeclareMathOperator{\Re}{Re}
\let\Im\undefined
\DeclareMathOperator{\Im}{Im}
\DeclareMathOperator{\GFF}{GFF}
\DeclareMathOperator{\Cov}{Cov}
\DeclareMathOperator{\Var}{Var}
\DeclareMathOperator{\SLE}{SLE}
\def\cW{\mathcal{W}}
\def\cS{\mathcal{S}}
\def\cQ{\mathcal{Q}}
\def\cL{\mathcal{L}}
\def\cK{\mathcal{K}}
\def\cH{\mathcal{H}}
\def\cD{\mathcal{D}}
\def\cC{\mathcal{C}}
\def\alb#1\ale{\begin{align*}#1\end{align*}}
\def\allb#1\alle{\begin{align}#1\end{align}}
\newcommand{\aryb}{\begin{eqnarray*}}
\newcommand{\arye}{\end{eqnarray*}}
\def\alb#1\ale{\begin{align*}#1\end{align*}}
\newcommand{\eqb}{\begin{equation}}
\newcommand{\eqe}{\end{equation}}
\newcommand{\eqbn}{\begin{equation*}}
\newcommand{\eqen}{\end{equation*}}
\newcommand{\BB}{\mathbbm}
\newcommand{\ol}{\overline}
\newcommand{\ul}{\underline}
\newcommand{\op}{\operatorname}
\newcommand{\im}{\operatorname{Im}}
\newcommand{\re}{\operatorname{Re}}
\newcommand{\frk}{\mathfrak}
\newcommand{\ep}{\varepsilon}
\newcommand{\rta}{\rightarrow}
\newcommand{\wt}{\widetilde}
\newcommand{\wh}{\widehat} 
\newcommand{\mcl}{\mathcal}
\newcommand{\bdy}{\partial}
\let\originalleft\left
\let\originalright\right
\renewcommand{\left}{\mathopen{}\mathclose\bgroup\originalleft}
\renewcommand{\right}{\aftergroup\egroup\originalright}
\DeclareMathAlphabet{\mathpzc}{OT1}{pzc}{m}{it}
\begin{document}

\title{Liouville quantum gravity surfaces with boundary as matings of trees}
\author{
\begin{tabular}{c}Morris Ang\\[-5pt]\small MIT\end{tabular}\; 
\begin{tabular}{c}Ewain Gwynne\\[-5pt]\small Cambridge\end{tabular}
} 
\date{  }

\maketitle

\begin{abstract}
For $\gamma \in (0,2)$, the quantum disk and $\gamma$-quantum wedge are two of the most natural types of Liouville quantum gravity (LQG) surfaces with boundary. These surfaces arise as scaling limits of finite and infinite random planar maps with boundary, respectively.
We show that the left/right quantum boundary length process of a space-filling SLE$_{16/\gamma^2}$ curve on a quantum disk or on a $\gamma$-quantum wedge is a certain explicit conditioned two-dimensional Brownian motion with correlation $-\cos(\pi\gamma^2/4)$. 
This extends the mating of trees theorem of Duplantier, Miller, and Sheffield (2014) to the case of quantum surfaces with boundary (the disk case for $\gamma \in (\sqrt 2 , 2)$ was previously treated by Duplantier, Miller, Sheffield using different methods).
As an application, we give an explicit formula for the conditional law of the LQG area of a quantum disk given its boundary length by computing the law of the corresponding functional of the correlated Brownian motion. 
\end{abstract}

\tableofcontents

\section{Introduction}
\label{sec-intro}

\subsection{Overview}
\label{sec-overview}

Let $h$ be an instance of the Gaussian Free Field (GFF) on a planar domain $D$, and fix $\gamma \in (0,2)$. Informally, the $\gamma$-Liouville quantum gravity (LQG) surface associated with $(D,h)$ is the random surface conformally parametrized by $D$, with metric tensor $e^{\gamma h} \, (dx^2+dy^2)$, where $dx^2+dy^2$ is the Euclidean metric tensor. LQG surfaces are expected (and in some cases proven) to be the scaling limits of random planar maps. The case $\gamma = \sqrt{8/3}$, sometimes called \emph{pure gravity}, corresponds to uniform random planar maps, and other values correspond to random planar maps weighted by the partition function of an appropriate statistical mechanics model (sometimes called ``gravity coupled to matter"). For example, $\gamma = \sqrt 2$ corresponds to random planar maps weighted by the number of spanning trees they admit and $\gamma=\sqrt{4/3}$ corresponds to random planar maps weighted by the number of bipolar orientations~\cite{kmsw-bipolar} they admit. 

The GFF $h$ does not have well-defined pointwise values, so the above definition of LQG does not make rigorous sense. However, one can define LQG rigorously using various regularization procedures. For example, it is possible to define the LQG area measure $\mu_h$ on $D$ as a limit of regularized versions of $e^{\gamma h(z)} \,dz$, where $dz$ denotes Lebesgue measure~\cite{kahane,shef-kpz,rhodes-vargas-review}. In a similar vein, one can define the LQG boundary length measure $\nu_h$ on $\bdy D$ (in the case when $D$ has a boundary) and on certain curves in $D$, including SLE$_\kappa$-type curves for $\kappa = \gamma^2$~\cite{shef-zipper}. 
The measures $\mu_h$ and $\nu_h$, respectively, are expected to be the scaling limits of the counting measure on vertices and the counting measure on boundary vertices for random planar maps. This convergence has been proven for a few types of planar maps conformally embedded in the plane~\cite{gms-tutte,hs-cardy-embedding} and for various types of uniform planar maps in the Gromov-Hausdorff-Prokhorov topology (see, e.g.,~\cite{legall-uniqueness,miermont-brownian-map,bet-mier-disk,gwynne-miller-uihpq,bmr-uihpq}).

The measures $\mu_h$ and $\nu_h$ satisfy a conformal covariance relation which leads to a natural rigorous definition of LQG surfaces. 
Suppose $D, \widetilde D$ are planar domains and $\varphi: D \to \widetilde D$ is a conformal map. If $\widetilde h$ is a GFF on $\widetilde D$ and 
\begin{equation}\label{eqn: quantum surface defn}
h = \widetilde h \circ \varphi + Q \log |\varphi'| \quad \text{ where } Q = \frac2\gamma + \frac\gamma2,
\end{equation}
then by~\cite[Proposition 2.1]{shef-kpz} the LQG area and boundary length measures satisfy $\mu_{\wt h} = \varphi_* \mu_h$ and $\nu_{\wt h} = \varphi_* \nu_h$, where $\varphi_*$ denotes the pushforward. This leads us to define an equivalence relation on pairs $(D,h)$ by saying that $(D,h) \sim (\widetilde D, \widetilde h)$ if there exists some $\varphi$ for which \eqref{eqn: quantum surface defn} holds. Following~\cite{shef-kpz,shef-zipper,wedges}, we define an equivalence class of such pairs $(D,h)$ to be a \emph{quantum surface}. We will often want to decorate a quantum surface by one or more marked points in $D\cup \bdy D$ or paths. In this situation, we define equivalence classes via \eqref{eqn: quantum surface defn}, and further require that the conformal map $\varphi$ maps decorations on the first surface to corresponding decorations on the second surface.

There are many deep results concerning $\gamma$-LQG surfaces decorated by Schramm-Loewner Evolution (SLE$_\kappa$)~\cite{schramm0,schramm-sle} curves for $\kappa \in \{\gamma^2,16/\gamma^2\}$. Such results are the continuum analogs of special symmetries which arise for random planar maps decorated by the ``right" type of statistical mechanics model, whose partition function matches up with the weighting of the random planar map. 

One of the most important connections between SLE and LQG is the \emph{mating of trees} or \emph{peanosphere} theorem of Duplantier, Miller, and Sheffield~\cite{wedges,sphere-constructions}. 
The whole-plane version of this theorem concerns a special type of $\gamma$-LQG surface parametrized by the whole plane, called a \emph{$\gamma$-quantum cone}, decorated by an independent space-filling SLE$_\kappa$ curve $\eta$ for $\kappa = 16/\gamma^2$ (see Section~\ref{section: preliminaries} for more background on these objects). 
The theorem states that if we parametrize $\eta$ so that it traverses one unit of LQG mass in one unit of time, then the net change in the LQG boundary lengths of the left and right outer boundaries of $\eta$ relative to time 0 evolve as a pair of correlated Brownian motions, with correlation $-\cos(\pi\gamma^2/4)$. 
Roughly speaking, the space-filling SLE-decorated $\gamma$-quantum cone can be obtained by gluing together, or ``mating" the continuum random trees (CRT's) associated with these two Brownian motions, and the curve $\eta$ corresponds to the peano curve which snakes between the two mated trees. See~\cite[Section 1.3]{wedges} and Figure~\ref{fig: boundary_lengths_disk_bottom} for more detail on this point.
See also~\cite{sphere-constructions} for an analog of this result on the sphere rather than the whole plane. 

The mating of trees theorem is a continuum analog of so-called \emph{mating of trees bijections} for random planar maps, such as the Mullin bijection and its generalization the Hamburger-Cheeseburger bijection~\cite{mullin-maps,bernardi-maps,shef-burger,gkmw-burger}.
Such bijections encode a random planar map decorated by a statistical mechanics model (a spanning tree in the case of the Mullin bijection, or an instance of the FK cluster model~\cite{fk-cluster} in the case of the Hamburger-Cheeseburger bijection) in terms of a pair of discrete random trees, or equivalently a random walk on $\BB Z^2$ with a certain increment distribution. 
In many cases it is possible to show that the encoding walk for the decorated random planar map converges in the scaling limit to the pair of correlated Brownian motions arising in the continuum mating of trees theorem (this type of convergence is called ``peanosphere convergence"). 
This constitutes the first rigorous connection between random planar maps and LQG.

The mating of trees theorem has proven to be an extremely fruitful tool in the study of random planar maps, LQG, and SLE. 
For a few examples, it is used in the proof of the equivalence between $\sqrt{8/3}$-LQG and the Brownian map~\cite{lqg-tbm1,lqg-tbm2,lqg-tbm3}, to study various fractal properties of SLE~\cite{ghm-kpz,gp-sle-bubbles}, to study random planar maps embedded in the plane~\cite{gms-tutte}, and to compute exponents for graph distances and for various processes on random planar maps (see, e.g.,~\cite{ghs-map-dist,gm-spec-dim}). 
See~\cite{ghs-mating-survey} for a survey of results proven using mating-of-trees theory. 

The goal of this paper is to prove extensions of the mating of trees theorem to the two most natural LQG surfaces with boundary: the quantum disk and the $\gamma$-quantum wedge. See Theorems~\ref{thm: peanosphere disk} and~\ref{thm: peanosphere gamma wedge} for precise statements. (For $\gamma \in (\sqrt 2 , 2)$, the quantum disk case was previously treated in~\cite{wedges,sphere-constructions} using different techniques.)
One reason why these surface are natural is that they are expected to arise as the scaling limits of planar maps with the topology of the disk and the half-plane, respectively (see, e.g.,~\cite[Section 5]{hrv-disk} for a precise conjecture in the disk case). 
As in the case of random planar maps without boundary, our results are continuum analogs of mating of trees bijections for random planar maps with boundary.
We will not go into detail about this here since our focus is on the continuum theory, but see, e.g.,~\cite{gp-dla,bhs-site-perc,kmsw-bipolar} for some discussion of such bijections. 

Our results are useful for identifying the scaling limits of random planar maps with boundary, both in the sense of ``peanosphere convergence" discussed above and in the setting of random planar maps embedded in the plane. For example, in~\cite{gms-tutte}, the scaling limit of the so-called \emph{mated-CRT map} with boundary, embedded into the plane via the Tutte embedding (a.k.a.\ the harmonic embedding) is not explicitly described in the case when $\gamma \in (0,\sqrt 2]$ (see~\cite[Footnote 3]{gms-tutte}). Our results immediately imply that this scaling limit is a quantum disk decorated by an independent SLE$_{16/\gamma^2}$ loop based at a boundary point, as one would expect. 

Our results also have applications to proving exact formulas for LQG, since the mating of trees theorem allows us to reduce LQG calculations to much easier calculations for a correlated two-dimensional Brownian motion. 
In particular, we will explicitly identify the law of the area of a quantum disk given its boundary length modulo a single unknown constant (the variance of the peanosphere Brownian motion); see Theorem~\ref{thm: intro area disk}. 
This gives a new approach to proving exact formulas for LQG which is completely different from (but probably less general than) the conformal field theory techniques used to prove other exact formulas for LQG in~\cite{krv-dozz,remy-fb-formula,rz-gmc-interval}.

\subsection{Main results}
\label{sec-results}

Here and throughout the rest of the paper, we fix $\gamma \in (0,2)$ and define
\begin{equation}
\kappa = \gamma^2, \quad \kappa' = \frac{16}{\gamma^2}, \quad Q = \frac\gamma2 + \frac2\gamma.
\end{equation}
 
There is an important one-parameter family of quantum surfaces with two marked boundary points called \emph{$\alpha$-quantum wedges} for $\alpha \in (-\infty,Q + \frac{\gamma}{2})$. For the parameter range $\alpha \in (-\infty ,Q]$, the surface is called a \emph{thick quantum wedge}. Thick quantum wedges are typically parametrized by $\BB H$ with marked points at 0 and $\infty$. Every bounded neighborhood of 0 has finite total LQG mass but the complement of every such neighborhood has infinite LQG mass. 
For $\alpha \in (Q, Q + \frac\gamma2)$, the surface is called a \emph{thin quantum wedge}. Informally, it is an infinite Poissonian ``chain'' (concatenation) of finite volume quantum surfaces, called \emph{beads}, each with two marked boundary points. See Section~\ref{subsection: quantum wedges and disks} for a more comprehensive review of these random surfaces.

For a simply connected domain $D$ with marked boundary points $a,b$, for $\kappa' > 4$ one can define a random space-filling curve called \emph{space-filling $\SLE_{\kappa'}$} from $a$ to $b$. For $\kappa' \geq 8$, this is just ordinary chordal SLE$_{\kappa'}$. For $\kappa' \in (4,8)$, space-filling $\SLE_{\kappa'}$ can be obtained from ordinary chordal SLE$_{\kappa'}$ by iteratively ``filling in the bubbles" which it disconnects from its target point by SLE$_{\kappa'}$-type curves. By taking the limit as $b \to a$ in the counterclockwise definition, we can define \emph{counterclockwise} space-filling $\SLE_{\kappa'}$ rooted at the point $a$. See Section~\ref{subsection: space-filling SLE} for a discussion on space-filling SLE. In this paper we will be concerned with random surfaces decorated by independent space-filling $\SLE_{\kappa'}$ curves. This is easy to define for surfaces parametrized by simply connected domains (such as quantum disks or thick quantum wedges): we just sample the space-filling SLE$_{\kappa'}$ independently from the GFF-type distribution which describes the quantum surface.
In the case of a thin wedge, a space-filling SLE$_{\kappa'}$ between the two marked points is defined to be a concatenation of independent space-filling SLE$_{\kappa'}$s in the beads of the thin wedge, each going between the two marked points of its corresponding bead; see Figure~\ref{fig: alpha_wedge}, right.

We first briefly explain the mating of trees theorem for the $\frac{3\gamma}{2}$-quantum wedge (which is an immediate consequence of the main result of \cite{wedges}), then state new mating-of-trees theorems for the unit boundary-length quantum disk and the $\gamma$-quantum wedge. 
We note that a $\frac{3\gamma}{2}$-quantum wedge is thick if and only if $\gamma \leq \sqrt 2$. 

\begin{thma}[\cite{wedges}] \label{thm: peanosphere alpha wedge}
Let $\gamma \in (0,2)$.
Consider a $\frac{3\gamma}{2}$-quantum wedge $(\bbH, h, 0, \infty)$ decorated by an independent space-filling $\SLE_{\kappa'}$ curve $\eta'$ from $0$ to $\infty$. Parametrize $\eta'$ by LQG area so that $\mu_h(\eta'([s,t])) = t-s$ for each $0\leq s  \leq t  <\infty$. Let $(L_t, R_t)_{t \geq 0}$ be the left/right boundary length process as defined in Figure \ref{fig: alpha_wedge}. Then $(L_t, R_t)_{t \geq 0}$ evolves as a Brownian motion with variances and covariances given by 
\begin{equation}\label{eqn: covariance}
\Var(L_t) = {\BB a}^2 t, \quad \Var(R_t) = {\BB a}^2 t, \quad \Cov(L_t, R_t) = - \cos (\pi \gamma^2 /4) {\BB a}^2 t \quad \text{ for } t \geq 0 ,
\end{equation}
where $\BB a > 0$ is a deterministic constant which depends only on $\gamma$ (and is not made explicit in~\cite{wedges}). 
Moreover, $(L_t, R_t)_{t \geq 0}$ a.s.\ determines the $\frac{3\gamma}{2}$-quantum wedge decorated by the space-filling SLE, viewed as a curve-decorated quantum surface (i.e., viewed modulo conformal coordinate changes as in~\eqref{eqn: quantum surface defn} which fix 0 and $\infty$). 
\end{thma}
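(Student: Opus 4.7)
The plan is to deduce Theorem A from the whole-plane mating of trees theorem of~\cite{wedges}. Consider a $\gamma$-quantum cone $(\C, h_0, 0, \infty)$ decorated by an independent whole-plane space-filling $\SLE_{\kappa'}$ curve $\wt\eta$ from $\infty$ to $\infty$ passing through $0$, parametrized by LQG area so that $\wt\eta(0) = 0$ and $\mu_{h_0}(\wt\eta([s,t])) = t-s$. The whole-plane mating of trees theorem asserts that the two-sided left/right boundary length process $(\wt L_t, \wt R_t)_{t \in \R}$, normalized so that $\wt L_0 = \wt R_0 = 0$, is a correlated two-dimensional Brownian motion with the covariance structure in~\eqref{eqn: covariance}, and that it a.s.\ determines the curve-decorated cone modulo conformal coordinate change fixing $0$ and $\infty$.

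The key structural input from~\cite{wedges} that I would invoke is the identification of the quantum surface parametrized by the forward trace $\wt\eta([0,\infty))$, with marked points $\wt\eta(0) = 0$ and $\infty$, as a $\frac{3\gamma}{2}$-quantum wedge. This is valid in both the thick regime $\gamma \leq \sqrt 2$ and the thin regime $\gamma > \sqrt 2$, with the Poissonian bead decomposition in the thin case emerging from the zero set of $(\wt L, \wt R)$: each excursion of $(\wt L, \wt R)$ away from the origin in the positive half-line corresponds to one bead traversed by $\wt\eta$, with the LQG area of the bead given by the length of the excursion. Conditional on this wedge, the restriction $\wt\eta|_{[0,\infty)}$ is by construction an independent counterclockwise space-filling $\SLE_{\kappa'}$ from $0$ to $\infty$ on the wedge (in the thin case interpreted as a concatenation of independent space-filling SLEs in the beads), so the pair (wedge, curve) has the law of the pair appearing in Theorem A.

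Under this identification the boundary length process $(L_t, R_t)_{t\ge 0}$ of $\eta'$ coincides with the forward restriction $(\wt L_t, \wt R_t)_{t\ge 0}$, and since the latter is a one-sided correlated Brownian motion starting from $0$, the first assertion of Theorem A is immediate with the covariances in~\eqref{eqn: covariance}. The measurability assertion, that $(L_t, R_t)_{t \geq 0}$ determines the curve-decorated $\frac{3\gamma}{2}$-quantum wedge, follows from the analogous measurability statement in the whole-plane theorem applied to the forward halves: the forward Brownian motion determines the forward curve-decorated surface, which in view of Step~2 is exactly the desired curve-decorated wedge.

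The substantive content hidden in this reduction, and the main obstacle if one attempted to prove Theorem A from scratch rather than as a corollary of the whole-plane result, is the structural identification of $\wt\eta([0,\infty))$ as a $\frac{3\gamma}{2}$-quantum wedge. This relies on the quantum zipper of~\cite{shef-zipper} and the imaginary geometry construction of space-filling SLE underlying~\cite{wedges}. Granted this identification, the remainder of the argument is a purely formal translation from the cone picture to the wedge picture via the Markov property of Brownian motion.
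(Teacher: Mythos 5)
Your proposal is correct and follows essentially the same route as the paper: the paper also deduces Theorem~A by taking the whole-plane mating of trees result ([wedges, Theorems 1.9 and 1.11], restated as Theorems~\ref{thm-MOT} and~\ref{thm-measurability}) and restricting the curve-decorated $\gamma$-quantum cone to $\wh\eta'([0,\infty))$, using the identification of that forward surface as a $\frac{3\gamma}{2}$-quantum wedge decorated by an independent chordal space-filling $\SLE_{\kappa'}$ (and citing \cite{kappa8-cov} for the explicit covariance when $\gamma<\sqrt 2$). The only quibbles are cosmetic: the restricted curve is ordinary chordal space-filling $\SLE_{\kappa'}$ from $0$ to $\infty$ rather than a ``counterclockwise'' one, and the claim that the forward Brownian motion alone determines the forward surface uses the independence of the past and future curve-decorated wedges, exactly as spelled out in the paper's Lemma~\ref{lem-measurability}.
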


This theorem was proved\footnote{See Section~\ref{sec-MOT} for details.} in \cite[Theorem 1.9, Theorem 1.11]{wedges}, except for the explicit form of the covariances \eqref{eqn: covariance} for $\gamma \in (0,\sqrt 2)$ which was later established in \cite{kappa8-cov}. We emphasize that although here the boundary length process $(L_t, R_t)_{t \geq 0}$ has specified initial value $(L_0,R_0)=(0,0)$, we only care about the changes in $(L_t, R_t)_{t \geq 0}$ over time rather than the exact values, so we will sometimes modify boundary length processes by an additive constant. 

\begin{figure}[ht!]
\begin{center}
  \includegraphics[scale=0.85]{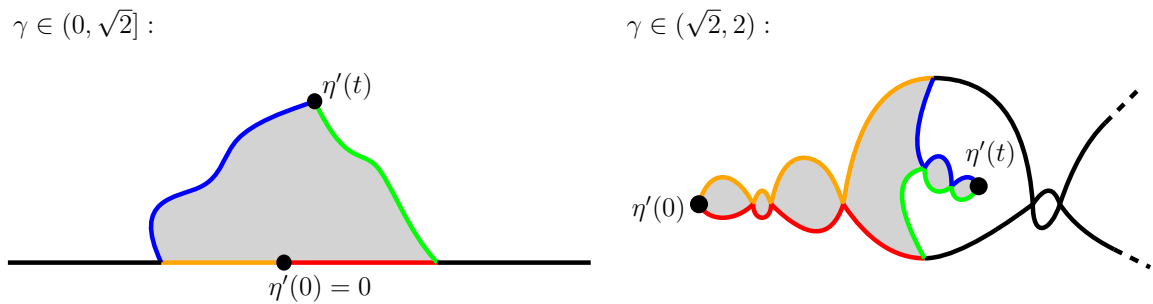}%
 \end{center}
\caption{\label{fig: alpha_wedge} Illustration of Theorem~\hyperref[thm: peanosphere alpha wedge]{A}. Consider a $\frac{3\gamma}{2}$-quantum wedge with field $h$, decorated by an independently drawn space-filling $\SLE_{\kappa'}$ curve $\eta'$ from $0$ to $\infty$ parametrized by quantum area. We define $L_t = \nu_h(\text{blue}) - \nu_h(\text{orange})$, and $R_t = \nu_h(\text{green}) - \nu_h(\text{red})$. Then $(L_t, R_t)$ evolves as Brownian motion with covariances given by \eqref{eqn: covariance}. \textbf{Left: }For $\gamma \in (0,\sqrt2]$, the $\frac{3\gamma}{2}$-quantum wedge is thick, so we can parametrize it by $\bbH$. \textbf{Right: }For $\gamma \in (\sqrt2, 2)$, the $\frac{3\gamma}{2}$-quantum wedge is thin, so it is a countable collection of doubly-marked disk-homeomorphic quantum surfaces together with a total ordering on the set of such surfaces.}
\end{figure}

The unit boundary length quantum disk is a kind of quantum surface with the topology of the disk which has (random) finite area and boundary length one, first introduced in \cite{wedges}. It typically comes with one or more marked boundary points, which are sampled independently from the quantum boundary length measure. The unit boundary length quantum disk is equivalent to the quantum disk considered in~\cite{hrv-disk}. This will be proved in the forthcoming paper~\cite{cercle-quantum-disk}; see~\cite{ahs-sphere} for the sphere case. See Section~\ref{subsection: disk} for more on the quantum disk.

It is known that in the regime $\gamma \in (\sqrt2, 2)$, if one decorates a quantum disk with an independent counterclockwise space-filling $\SLE_{\kappa'}$ from a marked boundary point to itself and defines the left/right boundary length process appropriately, then the boundary length process is a two-dimensional Brownian motion conditioned to remain in the first quadrant. This is proved in \cite{wedges} and elaborated upon in \cite[Theorem 2.1]{sphere-constructions}. The reason why the proof is easier for $\gamma \in (\sqrt 2,2)$ is as follows. When $\kappa' = 16/\gamma^2 \in (4,8)$, space-filling SLE$_{\kappa'}$ surrounds ``bubbles" (regions with the topology of the disk) and subsequently fills them in (this is related to the fact that the quantum wedge in Theorem~\ref{thm: peanosphere alpha wedge} is thin for $\gamma \in (\sqrt 2,2)$). The quantum surfaces obtained by restricting the field to these bubbles are quantum disks, so one can deduce the quantum disk case from Theorem \hyperref[thm: peanosphere alpha wedge]{A} by restricting to one of the bubbles.
In this paper, we extend the result to the full range $\gamma \in (0, 2)$ (see Corollary~\ref{cor: restate disk peanosphere} for an explanation of the equivalence of the results for $\gamma \in (\sqrt2,2)$).

\begin{theorem}\label{thm: peanosphere disk}
Suppose that $\gamma \in (0,2)$, and that $(\D, \psi, -1)$ is a unit boundary length quantum disk with random quantum area $\mu_\psi(\D)$ and one marked boundary point. Let $\eta':[0,\mu_\psi(\D)]\to \overline\D$ be a counterclockwise space-filling $\SLE_{\kappa'}$ process from $-1$ to $-1$ sampled independently from $\psi$ and then parametrized by $\mu_\psi$-mass. Let $L_t$ and $R_t$ denote the quantum lengths of the left and right sides of $\eta'([0,t])$, with additive constant normalized so that $L_0 = 0$ and $R_0 = 1$; see Figure \ref{fig: boundary_lengths_disk}. Then $(L_t,R_t)_{ 0 \leq t \leq \mu_\psi(\D)}$ is a finite-time Brownian motion started from $(0,1)$ and conditioned to stay in the first quadrant $\R^+ \times \R^+$ until it hits $(0,0)$, with variances and covariances as in~\eqref{eqn: covariance} (Figure~\ref{fig: boundary_lengths_disk_bottom}).
Moreover, the function $(L_t, R_t)_{0\leq t \leq \mu_\psi(\D)}$ a.s.\ determines $(\D , \psi,\eta' , -1)$ as a curve-decorated quantum surface (i.e., viewed modulo conformal coordinate changes as in~\eqref{eqn: quantum surface defn} which fix $-1$). 
\end{theorem}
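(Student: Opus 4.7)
The natural strategy is to reduce to a mating-of-trees statement for a quantum wedge and extract the disk from it as a bead or as a stopped piece, with the ambient wedge and the extraction method depending on whether the $\frac{3\gamma}{2}$-wedge is thin ($\gamma > \sqrt 2$) or thick ($\gamma \leq \sqrt 2$).

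For $\gamma \in (\sqrt 2, 2)$ the $\frac{3\gamma}{2}$-wedge is thin, so Theorem~\hyperref[thm: peanosphere alpha wedge]{A} gives the result almost immediately. In this regime the wedge is a Poissonian concatenation of two-marked-point quantum disks (its beads), and on each bead the space-filling $\SLE_{\kappa'}$ is a counterclockwise space-filling $\SLE_{\kappa'}$ between its two marked points. Theorem~\hyperref[thm: peanosphere alpha wedge]{A} combined with excursion theory for the correlated Brownian motion in the first quadrant shows that the boundary-length process restricted to a bead of right-arc length $\ell_0$ is a finite-time Brownian motion excursion from $(0,\ell_0)$ to $(0,0)$ with the covariance~\eqref{eqn: covariance}. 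The one-marked-point unit boundary length disk is then obtained from such a bead by conditioning on total boundary length $1$ and merging the two marked points (the limiting procedure defining the counterclockwise space-filling $\SLE_{\kappa'}$ on a one-marked-point disk), which yields the initial value $(L_0,R_0) = (0,1)$ as claimed.

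For $\gamma \in (0, \sqrt 2]$ the wedge is thick, there is no bead decomposition, and a genuinely different argument is needed. Here I would first establish Theorem~\ref{thm: peanosphere gamma wedge} (mating-of-trees for the $\gamma$-quantum wedge), for instance via a local absolute continuity / Girsanov-type comparison between the $\gamma$-wedge field and the $\frac{3\gamma}{2}$-wedge field near the marked point (both fields are thick in this range and differ by an explicit $\frac{\gamma}{2}\log|\cdot|$ singularity at $0$), combined with Theorem~\hyperref[thm: peanosphere alpha wedge]{A}. Given Theorem~\ref{thm: peanosphere gamma wedge}, I would extract the unit boundary length quantum disk as a finite-time piece of the $\gamma$-wedge cut off at a suitable stopping time of its left/right boundary length process $(L_t,R_t)$, chosen so that the stopped process is a Brownian motion started from $(0,1)$, conditioned to stay in $\R^+ \times \R^+$ until hitting $(0,0)$, with the covariance of~\eqref{eqn: covariance}. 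The uniqueness statement (the process $(L_t,R_t)$ determines the curve-decorated surface modulo conformal automorphisms fixing $-1$) then descends from the analogous uniqueness for the ambient $\gamma$-wedge via the extraction bijection.

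The main obstacle will be the $\gamma \leq \sqrt 2$ regime, where the thin-wedge shortcut is unavailable. Two subtle points demand care: first, making the absolute-continuity comparison between the $\gamma$- and $\frac{3\gamma}{2}$-wedge fields rigorous enough to transfer the Brownian motion structure of Theorem~\hyperref[thm: peanosphere alpha wedge]{A} while preserving the explicit constant $\BB a$; and second, verifying that the stopped piece of the $\gamma$-wedge is intrinsically distributed as a unit boundary length quantum disk $(\D,\psi,-1)$ together with an independent counterclockwise space-filling $\SLE_{\kappa'}$, rather than some implicitly conditioned variant. The thin/thick threshold at $\gamma = \sqrt 2$ reflects a genuine qualitative difference in how $\SLE_{\kappa'}$ interacts with the boundary (simple-curve-plus-bubbles vs.\ already space-filling), so the two regimes must be handled by structurally different arguments.
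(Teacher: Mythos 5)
There is a genuine gap, and it sits exactly where you flagged it but did not resolve it. For $\gamma \in (0,\sqrt 2]$ you propose to ``extract the unit boundary length quantum disk as a finite-time piece of the $\gamma$-wedge cut off at a suitable stopping time of its left/right boundary length process.'' No such stopping-time extraction exists: the boundary length process of the $\gamma$-wedge (Theorem~\ref{thm: peanosphere gamma wedge}) is an unconditioned Brownian motion in the future and a Brownian motion conditioned to keep $R_{-t}\geq 0$ in the past, and producing a surface whose boundary length process is an excursion from $(0,1)$ to $(0,0)$ in the quadrant requires conditioning on a probability-zero event (the pinched-off boundary lengths being exactly prescribed). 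Since for $\gamma\leq\sqrt2$ the space-filling SLE does not disconnect bubbles, there is no intrinsic piece of the wedge to point at; this is precisely why the paper spends Sections~\ref{section: cone excursions}--\ref{section: disk peanosphere} constructing \emph{two} bottleneck events on the wedge --- one defined from the field ($E_{r,K,q_1,q_2}$ in Proposition~\ref{prop: head of GFF}, with the two pinched-off boundary lengths confined to exponentially short intervals near $1/2$) and one from the SLE boundary length process ($F_{r,C}$) --- and then proving they are compatible (Propositions~\ref{prop:F given E} and~\ref{prop: E given F}), that conditioning on the field bottleneck makes the pinched-off surface close in total variation to a $(\frac12,\frac12)$-length quantum disk (Proposition~\ref{prop: conditioned on past, future is disk}), and that conditioning on the SLE bottleneck makes the boundary length process converge to the sheared cone excursion (Lemma~\ref{lem: boundary length process in D} via Proposition~\ref{prop: approx BM}). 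Your sketch supplies none of this machinery, and the measurability claim you want to ``descend'' from the wedge also rests on the unestablished extraction.

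A secondary problem is your proposed route to Theorem~\ref{thm: peanosphere gamma wedge} itself: a Girsanov/absolute-continuity comparison between the $\gamma$-wedge and the $\frac{3\gamma}{2}$-wedge fields \emph{near the marked point} cannot work, because wedges of different weights are mutually singular there --- their field average processes are Brownian motions with different drifts $(\alpha-Q)$, and the drift is a.s.\ recoverable from the germ of the field at the marked point. The paper instead zooms in at a quantum-typical boundary point at unit quantum length from the origin, where Lemma~\ref{lem-zipper} (\cite[Proposition 5.5]{shef-zipper}) shows the field is close in total variation to a $\gamma$-wedge and Proposition~\ref{prop: zoom in Dirichlet boundary} shows the curve is close to an independent counterclockwise space-filling $\SLE_{\kappa'}$; this total-variation localization, rather than Radon--Nikodym comparison at the singularity, is what transfers Theorem~\hyperref[thm: peanosphere alpha wedge]{A} with the constant $\BB a$ intact. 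Your $\gamma\in(\sqrt2,2)$ bead argument is essentially the previously known proof and is fine in outline, but the paper's bottleneck argument is what handles the full range $\gamma\in(0,2)$ uniformly.
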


The Brownian motion of Theorem \ref{thm: peanosphere disk} is conditioned on a probability zero event; we discuss the precise definition of this process in Section \ref{section: cone excursions}. 
The statement that $(L_t, R_t)_{0\leq t \leq \mu_\psi(\D)}$ a.s.\ determines $(\D, \psi,\eta',-1)$ can equivalently be phrased as follows.
If we fix some canonical choice of equivalence class representative of the curve-decorated quantum surface $(\D , \psi , \eta', -1)$ (e.g., we require that the $\gamma$-LQG lengths of the arcs separating $-1$, $-e^{2\pi i /3}$, and $e^{-4\pi i/3}$ are each equal to $1/3$) then $(L_t, R_t)_{0\leq t \leq \mu_\psi(\D)}$ a.s.\ determines $(\psi,\eta')$. 
As in~\cite{wedges,sphere-constructions}, our proof does not give an explicit description of the functional which takes in $(L_t, R_t)_{0\leq t \leq \mu_\psi(\D)}$ and outputs $(\psi, \eta')$. However, this functional can be made explicit using the results of~\cite{gms-tutte}; see, in particular,~\cite[Remark 1.4]{gms-tutte}. 

\begin{figure}[ht!]
\begin{center}
\includegraphics[scale=0.85]{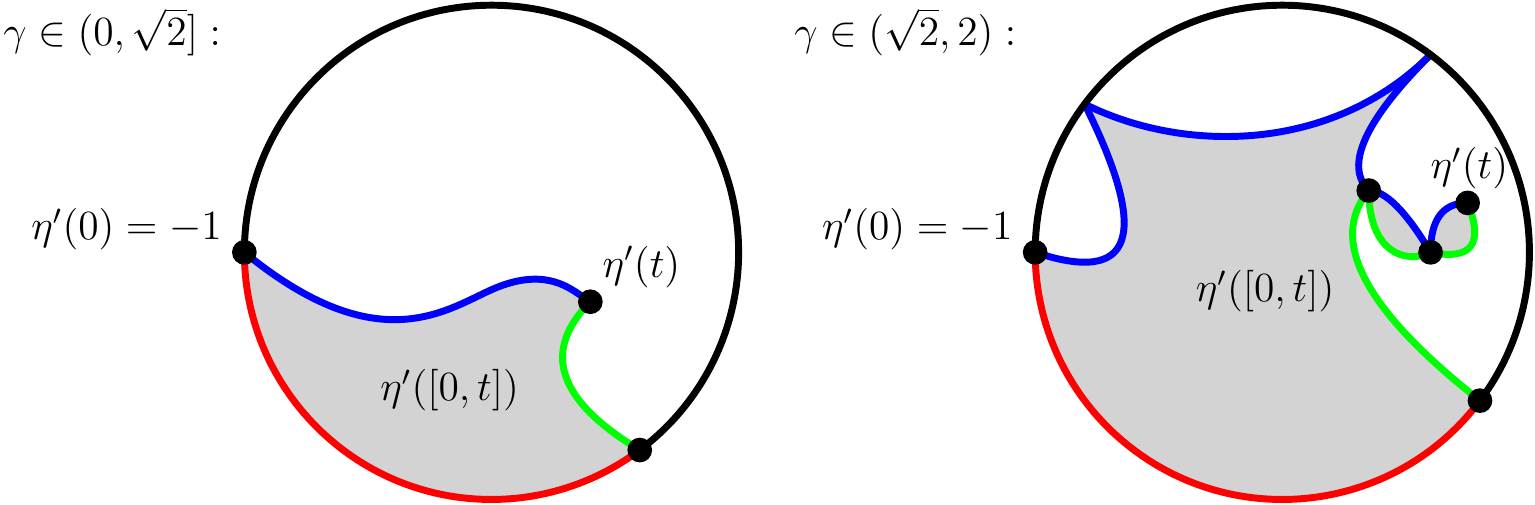}
\end{center}
\caption{\label{fig: boundary_lengths_disk} For $\gamma \in (0, 2)$, consider a unit boundary length quantum disk $(\D,\psi ,-1)$ with an independently drawn counterclockwise space-filling $\SLE_{\kappa'}$ curve $\eta'$ from $-1$ to $-1$ parametrized by quantum area. For $t > 0$, we define $L_t = \nu_\psi(\text{blue})$, and $R_t = 1 + \nu_\psi(\text{green}) - \nu_\psi(\text{red})$. Note that for the case $\gamma \in (\sqrt2,2)$, the curve stopped at time $t$ contains (at most countably many) connected components joined at pinch points. Each such component has a left and right boundary, and in this description of $(L_t, R_t)$, one should take the sum over the appropriate left/right boundaries of the components.}
\end{figure}

\begin{figure}[ht!]
\begin{center}
\includegraphics[scale=0.65]{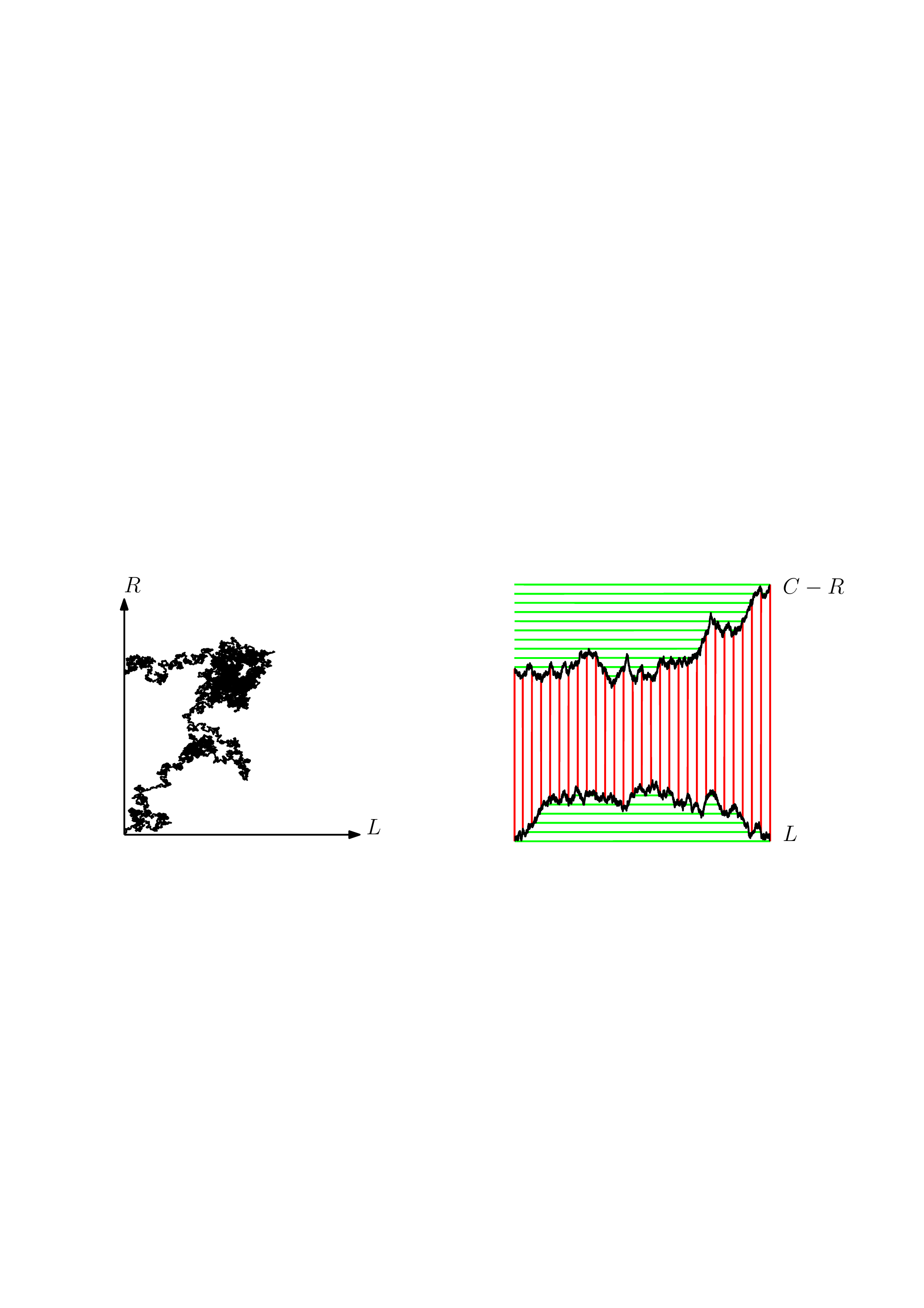}
\end{center}
\caption{\label{fig: boundary_lengths_disk_bottom} \textbf{Left: }Theorem~\ref{thm: peanosphere disk} tells us that $(L_t, R_t)$ evolves as a Brownian motion with covariances given by \eqref{eqn: covariance}, starting at $(0,1)$ and ending at $(0,0)$ and conditioned to stay in the positive quadrant. Pictured is a sample for $\gamma = \sqrt{2}$. \textbf{Right: } As in the whole-plane and sphere cases (see~\cite[Section 1.3]{wedges}), one can recover the curve-decorated topological space $(\ol{\BB D} ,\eta')$ from  $(L,R)$ explicitly as follows. 
We first plot the graphs of $L_t $ and $C - R_t$ against $t$ (with $C$ chosen sufficiently large so the graphs are disjoint), as in the figure. 
We then identify all points in the rectangle $[0,\mu_\psi(\BB D)] \times [0,C]$ which lie on the same vertical line segment between the graphs (several such segments are shown in red) or the same horizontal line segment above the graph of $C-R$ or below the graph of $L$ (green). 
The resulting topological quotient space, decorated by the curve obtained by tracing along the graph of $L$ (equivalently, $C-R$) from left to right is homeomorphic to $(\ol{\BB D} ,\eta')$ via a curve-preserving homeomorphism. This can be seen using Theorem~\ref{thm: peanosphere disk} and exactly the same arguments as in the whole-plane and sphere cases. The boundary of the disk is the image under the quotient map of the vertical segment between the points $(0, C-1)$ and $(0,C)$.} 
\end{figure}

Since $\eta'$ is parametrized by $\mu_\psi$-mass, the area $\mu_\psi(\BB D)$ of the quantum disk in Theorem~\ref{thm: peanosphere disk} is the random time that the Brownian motion of Theorem~\ref{thm: peanosphere disk} hits $(0,0)$. 
Theorem~\ref{thm: peanosphere disk} along with a Brownian motion calculation will therefore allow us to prove the following theorem.

\begin{theorem}\label{thm: intro area disk}
Recall the unknown constant $\BB a$ from~\eqref{eqn: covariance}. The area of the unit boundary length quantum disk is distributed according to the law
\begin{equation*}
  \P[\mu_\psi(\D) \in dt] = \frac{1}{c t^{1+4/\gamma^2}}  \exp\left(-\frac{1}{2 ( \BB a \sin(\pi\gamma^2/4) )^2  t } \right)  \, dt,
\end{equation*}
where
\[c =  2^{4/\gamma^2} \Gamma(4/\gamma^2) (\BB a \sin(\pi\gamma^2/4) )^{ 8/\gamma^2 } .\]
\end{theorem}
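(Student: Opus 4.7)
By Theorem~\ref{thm: peanosphere disk}, the area $\mu_\psi(\D)$ equals the hitting time of the origin by the correlated Brownian motion $(L_t,R_t)$ started at $(0,1)$ with covariance~\eqref{eqn: covariance}, conditioned to stay in the first quadrant until that time (with the precise meaning of this conditioning given in Section~\ref{section: cone excursions}). The plan is to diagonalize this covariance via a linear change of coordinates, identify the resulting conditioned process as a Doob $h$-transform of planar Brownian motion in a wedge, and then use that the radial part is a Bessel process whose first-passage distribution is classical.

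Write $\theta_0 = \pi\gamma^2/4$ and set
\[
B^1_t \,=\, \frac{L_t}{\BB a}, \qquad B^2_t \,=\, \frac{R_t + \cos\theta_0 \, L_t}{\BB a \sin\theta_0}.
\]
Then $(B^1,B^2)$ is a standard planar Brownian motion, the first quadrant $\{L \geq 0,\, R \geq 0\}$ corresponds to a wedge $W$ of opening angle $\theta_0$, and the initial point $(0,1)$ is sent to a point at Euclidean distance $r_0 = 1/(\BB a \sin(\pi\gamma^2/4))$ from the apex, lying on one of the two rays bounding $W$. Hence $\mu_\psi(\D)$ is distributed as the lifetime of standard planar Brownian motion in $W$ starting at distance $r_0$ from the apex and conditioned to exit at the apex.

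I would identify this conditioned process as the Doob $h$-transform of Brownian motion killed on exiting $W$ by the harmonic function $\tilde h(r,\phi) = r^{-\alpha}\sin(\alpha\phi)$ with $\alpha = \pi/\theta_0 = 4/\gamma^2$ (i.e.\ the Martin kernel of $W$ at the apex). A short polar-coordinate generator computation shows that the $\tilde h$-transform adds radial drift $-\alpha/r$ independent of the angular coordinate, so the radial part $|B_t|$ is a Bessel process of dimension $d = 2 - 2\alpha = 2 - 8/\gamma^2 < 2$ started from $r_0$, and its first hitting time of $0$ equals the conditioned lifetime. Substituting $d = 2 - 8/\gamma^2$ and $r_0 = 1/(\BB a \sin(\pi\gamma^2/4))$ into the classical BES$(d)$ first-passage formula
\[
\P[\tau_0 \in dt] \,=\, \frac{r_0^{\,2-d}}{2^{1-d/2}\,\Gamma(1-d/2)}\, t^{d/2-2}\, e^{-r_0^2/(2t)}\, dt
\]
then reproduces exactly the density asserted in the theorem.

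The main subtlety is that the starting point $(0,1)$ lies on the lateral boundary of $W$, so $\tilde h$ vanishes there and the $h$-transform is formally degenerate at time $0$. This should be handled by approximation from interior starting points using the precise construction of the conditioned Brownian motion carried out in Section~\ref{section: cone excursions}. Since $r_0 > 0$ is bounded away from the apex and only the radial coordinate enters the lifetime distribution, the approximation does not affect the first-passage computation and the Bessel formula still applies.
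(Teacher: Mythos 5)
Your proposal is correct, and it takes a genuinely different route from the paper's proof (Section~\ref{subsection: area of disk}). The paper views the area as the duration of the excursion \emph{from the apex}, which forces it to invoke Shimura's construction of Brownian motion started at the cone vertex (Proposition~\ref{prop-shimura}), decompose via Bayes' rule in~\eqref{eqn-bayes}, compute three ingredients by hand (the conditional law of $Z_\tau$ given $\{\tau>t\}$ via an explicit harmonic-measure integral, the law of $Z_\tau$, and $\P^\eps[\tau>t]$), and then send $\eps\to 0$, pinning the constant at the very end from $\mu^\#\{\tau>t\}\to1$. You instead anchor the excursion at the endpoint sitting at distance $r_0=1/(\BB a\sin\theta_0)$ from the apex, identify the conditioning as the Doob $h$-transform by the Martin kernel $r^{-\alpha}\sin(\alpha\phi)$, and read off the duration as a Bessel first-passage time; this bypasses Shimura's theorem and all the integral asymptotics, and yields the constant for free from the normalized $\mathrm{BES}(d)$ density. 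I checked the arithmetic: the image of $(0,1)$ does lie on the lateral boundary at radius $1/(\BB a\sin\theta_0)$, and substituting $d=2-8/\gamma^2$ into your first-passage formula reproduces the stated density and the constant $c$ exactly. Two points deserve an explicit sentence in a full write-up. First, the $h$-transform also introduces an angular drift $\alpha\cot(\alpha\phi)/r$; the radial part is nonetheless an autonomous $\mathrm{BES}(2-2\alpha)$ because this drift is orthogonal to the radial direction and does not enter $d|B_t|$, and since $\tilde h$ vanishes only on the lateral boundary the transformed process stays in the open wedge until its radial part hits $0$, so the lifetime is exactly the Bessel passage time. Second, the identification of $\mu^\#_{\cC_\theta}(z,0)$ with the $\tilde h$-transform for interior $z$ follows from the paper's definition of excursion measures by conformal pushforward with the Brownian time change (map the wedge to $\bbH$ by $z\mapsto z^{\alpha}$ and use the half-plane Poisson kernel), and the passage to the boundary starting point works as you say: the convergence in~\eqref{eqn: approximating brownian excursion} is in the Prohorov metric for $d_\cK$, which controls durations, while the $\mathrm{BES}(2-2\alpha)$ passage law is continuous in the starting radius. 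Note finally that $d=2-8/\gamma^2<0$ for all $\gamma\in(0,2)$, so one should quote the first-passage law for Bessel processes of negative dimension (equivalently index $\nu=d/2-1<-1$); it remains $\tau_0\overset{d}{=}r_0^2/(2G)$ with $G$ a Gamma$(1-d/2)$ variable, which is what your formula encodes.
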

The exact formula for the law of $\mu_\psi(\BB D)$ does not appear elsewhere in the existing literature. 
However, Guillame Remy and Xin Sun [Private communication] have informed us of a work in progress in which they prove the same formula as in Theorem~\ref{thm: intro area disk} without the unknown constant $\BB a$. This is done using techniques which are similar to those in~\cite{krv-dozz,remy-fb-formula,rz-gmc-interval} and completely different from those in the present paper.
Comparing the two formulas will lead to a computation of the unknown constant $\BB a$ of~\eqref{eqn: covariance}. 

The quantum wedge with $\alpha = \gamma$ is particularly special. Informally, when one zooms in on a typical boundary point of a $\gamma$-quantum surface from the perspective of the $\gamma$-LQG boundary length measure and simultaneously re-scales so that LQG areas remain of constant order, then the resulting surface is a $\gamma$-quantum wedge. See~\cite[Proposition 1.6]{shef-zipper} for a precise statement of this form. 
 Since $\gamma \in (0,Q)$ for $\gamma \in (0,2)$, the $\gamma$-quantum wedge is always thick, so we can parametrize it by $\bbH$. By zooming in near a boundary-typical point of a $\frac{3\gamma}{2}$-quantum wedge, we will prove the following mating of trees theorem for the $\gamma$-quantum wedge (which is new for all $\gamma \in (0,2)$). 

\begin{theorem}\label{thm: peanosphere gamma wedge}
Suppose that $\gamma \in (0,2)$, and that $(\bbH, h, 0, \infty)$ is a $\gamma$-quantum wedge. Let $\eta' : \BB R\rta \ol{\BB H}$ be a counterclockwise space-filling $\SLE_{\kappa'}$ process from $\infty$ to $\infty$ sampled independently from $h$ and then reparametrized by quantum area, and with time recentered so that $\eta'(0) = 0$. Let $L_t$ and $R_t$ be defined as in Figure \ref{fig: boundary_lengths_gamma_wedge}. Then the law of $(L_t,R_t)_{t \in \R}$ can be described as follows: 
\begin{itemize}
\item The process $(L_t, R_t)_{t \geq 0}$ is a two-dimensional Brownian motion with covariances given by \eqref{eqn: covariance};
\item The process  $(L_{-t}, R_{-t} )_{t \geq 0}$ is independent of $(L_t, R_t)_{t \geq 0}$ and is a Brownian motion with the same covariance structure~\eqref{eqn: covariance}, with the additional conditioning that $R_{-t} \geq 0$ for all $t \geq 0$.  
\end{itemize}
Moreover, $(L_t, R_t)_{t\in\BB R}$ a.s.\ determines $(\bbH , h,  \eta' , 0, \infty)$ viewed as a curve-decorated quantum surface.
\end{theorem}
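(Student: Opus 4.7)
The plan is to deduce Theorem~\ref{thm: peanosphere gamma wedge} from Theorem~\hyperref[thm: peanosphere alpha wedge]{A} by ``zooming in at a quantum-typical boundary point'' of a $\frac{3\gamma}{2}$-quantum wedge, as suggested in the paragraph preceding the theorem. Concretely, start with a $\frac{3\gamma}{2}$-quantum wedge $(\bbH, h_0, 0, \infty)$ (handling the thin case $\gamma \in (\sqrt 2, 2)$ bead-by-bead) decorated by an independent space-filling $\SLE_{\kappa'}$ curve $\eta_0'$ from $0$ to $\infty$ parametrized by quantum area, whose left/right boundary length process $(L^0, R^0)$ is a 2D Brownian motion of covariance~\eqref{eqn: covariance} by Theorem~\hyperref[thm: peanosphere alpha wedge]{A}. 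By~\cite[Proposition 1.6]{shef-zipper}, if one samples a point $x\in\R_+$ from the quantum boundary length measure $\nu_{h_0}$ (working in the unrooted / infinite-measure framework of~\cite{shef-zipper,wedges} to avoid conditioning on a null event) and applies the conformal map sending $x \mapsto 0$ with an appropriate rescaling, then in the limit the new surface is a $\gamma$-quantum wedge $(\bbH, h, 0, \infty)$.

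Under this zoom-in, both endpoints $0$ and $\infty$ of the original SLE escape to $\infty$, so in the limit $\eta_0'$ becomes a counterclockwise space-filling $\SLE_{\kappa'}$ from $\infty$ to $\infty$ on the $\gamma$-wedge; time-translating so that the image of $x$ is visited at time $0$, this is precisely the curve $\eta'$ of Theorem~\ref{thm: peanosphere gamma wedge}. The process $(L_t, R_t)_{t \in \R}$ should then be identified as the scaling limit of the centered process $(L^0_{T_x + t}-L^0_{T_x},\, R^0_{T_x + t}-R^0_{T_x})$, where $T_x$ is the $\eta_0'$-hit time of $x$. For $t \geq 0$, the strong Markov property of Brownian motion immediately gives that $(L_t, R_t)_{t \geq 0}$ is a Brownian motion of covariance~\eqref{eqn: covariance} independent of the past. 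For $t < 0$, the constraint that $\eta_0'$ has not yet reached $x$ at any time before $T_x$ means the quantum length of the right boundary of $\eta_0'([0, T_x + t])$ from $\eta_0'(T_x + t)$ to $x$ stays strictly positive; under the zoom-in and time-reversal this translates to exactly the conditioning $R_{-t} \geq 0$ for all $t \geq 0$.

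To carry this out rigorously I would use a disintegration of the infinite measure on ``$\frac{3\gamma}{2}$-wedges with an extra marked boundary point sampled from $\nu_{h_0}$'' as in~\cite{shef-zipper,wedges}, so that the identification with the $\gamma$-wedge measure is a genuine equality of (infinite) measures rather than a conditioning on a null event. The determinacy statement---that $(L,R)$ a.s.\ recovers the curve-decorated $\gamma$-wedge---can then be transported through the zoom-in from the corresponding statement in Theorem~\hyperref[thm: peanosphere alpha wedge]{A}, or alternatively read off from the explicit topological reconstruction indicated in~\cite[Remark 1.4]{gms-tutte}.

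The main obstacle will be making sense of the singular conditioning $\{R_{-t} \geq 0 \text{ for all } t \geq 0\}$, a probability-zero event under the unconstrained Brownian law, so that the backward process must be defined via a Doob $h$-transform or a suitable truncation and limit. The technical heart of the argument is to verify that this $h$-transform is exactly what the ``sample $x$ from $\nu_{h_0}$ and zoom in'' operation produces: i.e., the Radon--Nikodym factor between the two infinite measures is precisely the one converting an unconstrained backward Brownian motion to its cone-excursion version, and the forward and backward pieces glue consistently at $t=0$. A separate but considerably less delicate point is to confirm that space-filling $\SLE_{\kappa'}$ transforms compatibly under the zoom-in conformal map, which is essentially immediate from its imaginary geometry construction.
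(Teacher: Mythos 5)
Your overall strategy---zoom in at a quantum-typical boundary point of a $\frac{3\gamma}{2}$-quantum wedge and transfer Theorem~\hyperref[thm: peanosphere alpha wedge]{A}---is exactly the paper's. The difference is in how the backward (singularly conditioned) piece is produced, and this is where your plan has a genuine gap, at precisely the step you yourself flag as the ``technical heart'': you propose to sample the marked point $x$ from the infinite boundary measure, work with a disintegration of infinite measures, and then verify that the Radon--Nikodym factor produced by the zoom-in is exactly the Doob $h$-transform converting unconstrained backward Brownian motion into the process conditioned to keep $R\geq 0$; you give no indication of how to carry out that matching, and it is not a routine verification. The paper is structured so that this computation never arises: it takes the point $y$ at a \emph{fixed} quantum boundary length $C^3$ from the origin and a window of quantum length $C$ on each side of $y$ (Lemma~\ref{lem: zoom in on alpha wedge}, built on Lemma~\ref{lem-zipper}). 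With this choice the time $\tau$ at which the curve hits $y$ is the \emph{first} time the right boundary length process of Theorem~\hyperref[thm: peanosphere alpha wedge]{A} hits $-C^3$, so for every finite $C$ the backward piece is exactly the time reversal of a correlated Brownian motion run up to a first hitting time---a positive-probability description in which the constraint $R\geq 0$ holds automatically and is identified with the conditioned process by the standard Williams-type path decomposition. The singular conditioning in Theorem~\ref{thm: peanosphere gamma wedge} then appears only as the $C\to\infty$ limit, in total variation on compact time intervals, and since all intermediate conditionings are on positive-probability events, the measurability statement also transfers directly via Lemma~\ref{lem-measurability} rather than needing a separate argument.

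A second, smaller issue: the claim that the chordal space-filling $\SLE_{\kappa'}$ from $0$ to $\infty$ ``becomes'' a counterclockwise space-filling $\SLE_{\kappa'}$ from $\infty$ to $\infty$ near the zoom-in point is not immediate from the imaginary geometry construction or conformal invariance---the two curves are coupled with GFFs having different boundary data, so no coordinate change identifies them exactly. What is needed is a quantitative local comparison: Proposition~\ref{prop: zoom in Dirichlet boundary}(b) shows that the imaginary geometry field restricted to a small neighborhood of $y$ is close in total variation to the field generating counterclockwise space-filling $\SLE_{\kappa'}$, and the local-determination result \cite[Lemma 2.4]{gms-harmonic} transfers this closeness to the curve segment explored near $y$. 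Your plan should incorporate these ingredients (or equivalents) rather than treating the curve step as automatic.
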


As in the discussion just after Theorem~\ref{thm: peanosphere disk} we can say that $(L_t, R_t)_{t\in\BB R}$ a.s.\ determines $(h, \eta')$ if we fix a canonical choice of equivalence class representative, e.g., if we require that $\mu_h(\BB D\cap\BB H)  =1$. 
In the setting of Theorem~\ref{thm: peanosphere gamma wedge}, we can explicitly identify the curve-decorated surface parametrized by $\eta'((-\infty, 0])$ and the curve-decorated surface parametrized by $\eta'([0,\infty))$. These are independent quantum wedges decorated by space-filling $\SLE_{\kappa'}(\ul\rho)$ curves; see Theorem~\ref{thm: wedge laws}. 

\begin{figure}[ht!]
\begin{center}
\includegraphics[scale=0.65]{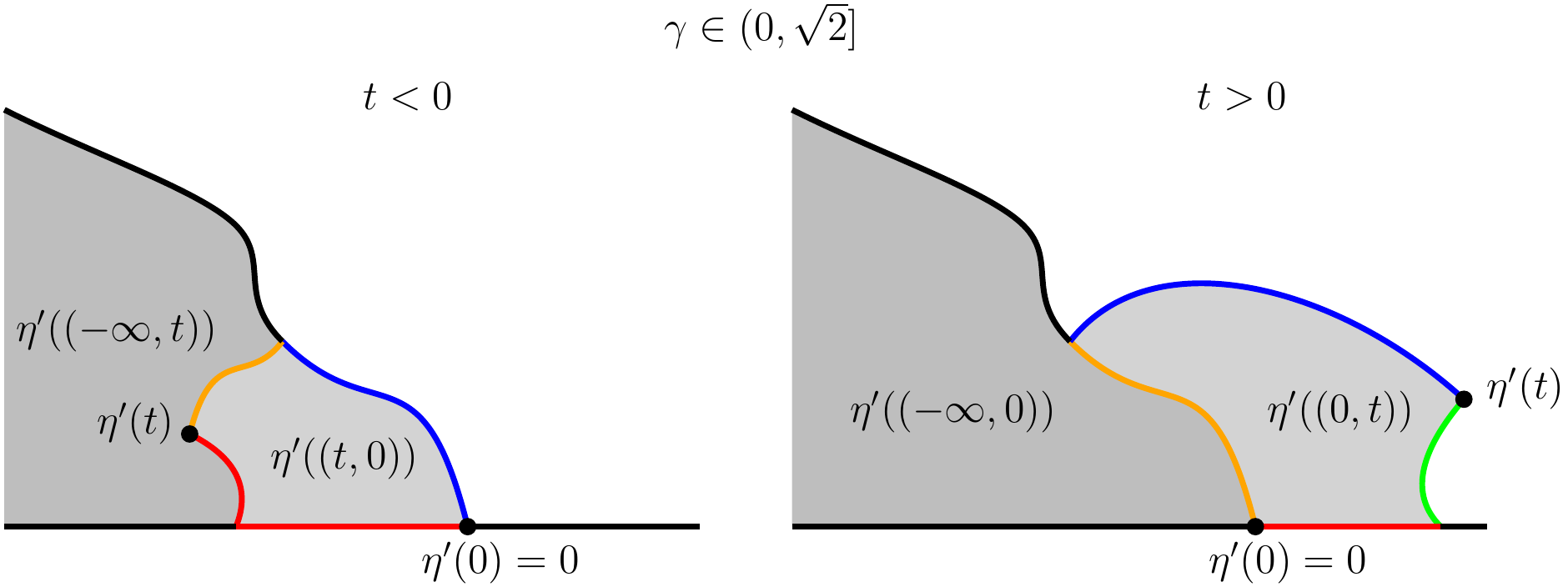}
\end{center}
\caption{\label{fig: boundary_lengths_gamma_wedge} Illustration for Theorem~\ref{thm: peanosphere gamma wedge}. For $\gamma \in (0,2)$, consider a $\gamma$-quantum wedge $(\bbH,h,0,\infty)$ with an independently drawn counterclockwise space-filling $\SLE_{\kappa'}$ curve $\eta'$ from $-\infty$ to $-\infty$ parametrized by quantum area. Note that for the case $\gamma \in (\sqrt2,2)$ (not illustrated here), the region $\eta'((-\infty, t])$ typically has multiple components joined at pinch points, and in the following description of $(L_t, R_t)$, one should take the corresponding sum over the left/right boundaries of the components. \textbf{Left:} For $t < 0$, we define $L_t = \nu_h(\text{orange}) - \nu_h(\text{blue})$, and $R_t = \nu_h(\text{red})$. \textbf{Right:} For $t > 0$, we define $L_t = \nu_h(\text{blue}) - \nu_h(\text{orange})$, and $R_t = \nu_h(\text{green}) - \nu_h(\text{red})$.}

\end{figure}

\subsection{Proof outlines and paper structure}
\label{sec-outline}

The first main result we prove in this paper is Theorem~\ref{thm: peanosphere gamma wedge}. We outline its proof below. For $\gamma \in (0,\sqrt2]$, consider a $\frac{3\gamma}{2}$-quantum wedge parametrized by $\bbH$, decorated by an independent space-filling curve $\eta'$ from $0$ to $\infty$.
\begin{itemize}
\item Theorem~\hyperref[thm: peanosphere alpha wedge]{A} gives us the boundary length process of $\eta'$ on the $\frac{3\gamma}{2}$-quantum wedge;
\item \cite[Proposition 5.5]{shef-zipper} tells us that when we zoom in on a quantum-typical boundary point of the $\frac{3\gamma}{2}$-quantum wedge, in a small neighborhood the quantum surface is close in total variation to a neighborhood of the origin in a $\gamma$-quantum wedge; 
\item Proposition~\ref{prop: zoom in Dirichlet boundary}(b) implies us that when we zoom in on a fixed (i.e. independent of $\eta'$) boundary point to the right of the origin, the curve $\eta'$ in a small neighborhood of the point is close in total variation to a counterclockwise space-filling $\SLE_{\kappa'}$. This is because a space-filling SLE$_{\kappa'}$ curve can be coupled with a GFF with certain Dirichlet boundary data in such a way that the curve is locally determined by the GFF, using the theory of imaginary geometry~\cite{ig4}.
\end{itemize}
Since the quantum wedge and $\eta'$ are independent, when we zoom in on a quantum-typical boundary point of the quantum wedge, the joint law of the quantum wedge and space-filling SLE in a small neighborhood of this point is close in total variation to the law of a $\gamma$-quantum wedge and an independent counterclockwise space-filling $\SLE_{\kappa'}$. Combining with the first ingredient above, we have the proof of Theorem~\ref{thm: peanosphere gamma wedge} in the case $\gamma \in (0,\sqrt2]$. For the regime $\gamma \in (\sqrt2,2)$, the $\frac{3\gamma}{2}$-quantum wedge is a thin quantum wedge, so it has countably many beads joined at pinch points. Nevertheless, we can carry out the same procedure by zooming in on a boundary point of one of these beads.

The proof of Theorem~\ref{thm: peanosphere disk} is similar but much more involved. Roughly speaking, we can obtain a quantum surface by conditioning a $\gamma$-quantum wedge $(\bbH, h, 0, \infty)$ to have a small ``bottleneck'' which ``pinches off'' a quantum surface with $0$ on its boundary, such that the quantum boundary lengths of this surface to the left and right of $0$ are each close to $1/2$. Of course, this procedure depends on how one defines the bottleneck. The field of the $\gamma$-quantum wedge gives us one natural way to define a bottleneck, and under this definition the quantum surface parametrized by the pinched off region is close to a unit boundary length quantum disk. Alternatively, the correlated 2D Brownian motion of Theorem~\ref{thm: peanosphere gamma wedge} gives a different way of defining a bottleneck on a curve-decorated $\gamma$-quantum wedge, and the resulting pinched-off curve-decorated surface has boundary length process close to a correlated Brownian cone excursion. One can show that these two definitions of bottlenecks are compatible in a certain sense, and by taking limits obtain Theorem~\ref{thm: peanosphere disk}.

Our proof of Theorem~\ref{thm: peanosphere disk} is in some ways similar to the proof of the quantum sphere version of the mating of trees theorem in~\cite[Theorem 1.1]{sphere-constructions} with $\gamma \in (\sqrt2,2)$. There, the authors take a space-filling $\SLE_{\kappa'}$-decorated $\gamma$-quantum cone (i.e., the quantum surface appearing in the whole-plane mating of trees theorem) conditioned to have a bottleneck pinching off a region of quantum area close to 1, and show that the quantum surface parametrized by this region is similar to a quantum sphere decorated by an independent space-filling $\SLE_{\kappa'}$. As in the present paper, the authors of~\cite{sphere-constructions} also define two bottlenecks (using the field of the quantum cone, and using the space-filling $\SLE_{\kappa'}$ exploration of the cone) and show that they are compatible. In their setting, the $\SLE_{\kappa'}$ is self-intersecting and pinches off ``bubbles''. This allows them to define the latter bottleneck by looking at the first bubble containing a target point whose boundary is ``short'', and then conditioning the area of the bubble to be close to 1. In particular, the bottleneck can be identified without reference to the \emph{exact} area of the bubble. This argument yields a mating of trees theorem for the unit area quantum sphere. 

Since we want to obtain a unit boundary length quantum disk, we instead want our $\SLE_{\kappa'}$ exploration bottleneck to pinch off a region with boundary length close to 1. However, the space-filling SLE does not pinch off bubbles so there does not seem to be a reasonable definition for the $\SLE_{\kappa'}$ exploration bottleneck that 1) does not specify the \emph{exact} quantum lengths of the left and right boundaries of the pinched-off region, 2) has a tractable left/right boundary length process in the pinched-off region and 3) is compatible with the quantum wedge bottleneck. To get around this, we forfeit 1) so when we condition on the existence of this bottleneck, we are also conditioning on the lengths of the left and right boundaries of the pinched-off region\footnote{We condition on \emph{two} boundary lengths because the most convenient description of the quantum disk involves two marked points on its boundary (corresponding to $0$ and the location of the ``pinch''), hence there are two natural marked boundary arcs (the two arcs separating the points).}. As a result we encounter significant challenges which are not present in the sphere case. 
\begin{itemize}
\item Because our definition of the $\SLE_{\kappa'}$ exploration bottleneck specifies the exact boundary lengths of the pinched-off region, we need our definition of the quantum wedge bottleneck to specify the two pinched-off boundary lengths being in exponentially short intervals in order to compare the curve-decorated quantum surfaces corresponding to the two types of bottlenecks. 
\item Given our pinched-off quantum surface, the remaining (infinite) quantum surface on the other side of the bottleneck has a law depending on two parameters (i.e., the boundary lengths of two marked arcs), unlike the quantum sphere case where the remaining surface depends only on one parameter.
\end{itemize}

In Section~\ref{section: preliminaries}, we review some preliminary facts about GFF, SLE, quantum wedges and disks, and conformal maps. In Section~\ref{section: wedge mating}, we prove Theorem~\ref{thm: peanosphere gamma wedge}. In Section~\ref{section: cone excursions}, we review the notion of a Brownian excursion in the cone, prove an approximation theorem for cone excursions, and carry out the Brownian motion calculation which leads to Theorem~\ref{thm: intro area disk}.
In Section~\ref{section: pinching off a disk}, we show that under suitable conditioning, we can ``pinch off'' a unit boundary length quantum disk from a $\gamma$-quantum wedge, and in Section~\ref{section: disk peanosphere} we compare bottlenecks to prove Theorem~\ref{thm: peanosphere disk}.
\medskip

\noindent\textbf{Acknowledgments.}  We thank Jason Miller, Minjae Park, Guillaume Remy, Scott Sheffield, and Xin Sun for helpful discussions. We thank an anonymous referee for helpful comments on an earlier version of this paper. We  also  thank the Isaac Newton Institute for Mathematical Sciences, Cambridge University, for its hospitality during the Random Geometry Workshop where part of this work was carried out.
M.A.\ was partially supported by the NSF grant DMS-1712862.
E.G.\ was partially supported by a Herchel Smith fellowship and a Trinity College junior research fellowship. 

\section{Preliminaries}\label{section: preliminaries}

In Section~\ref{subsection: GFF}, we recall properties of the GFF; in particular, the restrictions of a GFF to two open sets are almost independent when the open sets are far apart. In Section~\ref{subsection: space-filling SLE} we give a review of space-filling $\SLE_{\kappa'}$ (introduced in \cite{ig4}), and discuss properties of counterclockwise space-filling $\SLE_{\kappa'}$ starting and ending at the same point. In Section~\ref{section-lqg-intro}, we explain the definition and properties of the quantum area and boundary length measures. In Section~\ref{subsection: quantum wedges and disks}, we provide a brief explanation of quantum wedges and disks (introduced in \cite{wedges}). In particular we introduce the quantum disk with two marked points sampled from its boundary measure, conditioned on the lengths $(a,b)$ of the boundary arcs between these two marked points. In Section~\ref{sec-MOT}, we review the whole-plane version of the mating-of-trees theorem from~\cite{wedges}, discuss its connection to Theorem~\hyperref[thm: peanosphere alpha wedge]{A}, and prove a lemma to the effect that the Brownian motion in the theorem determines the curve-decorated quantum surface in a local manner. In Section~\ref{subsection: continuity of field} we provide a certain decomposition of the $(a,b)$-length quantum disk, and show this decomposition is continuous with respect to $(a,b)$. In Section~\ref{subsection: qualitative bounds on boundary lengths} we prove that if a quantum surface has small field averages, then its boundary lengths are small. Finally, in Section~\ref{subsection: conformal estimates} we prove an estimate on conformal maps which we will use when we perform cutting and gluing procedures on quantum surfaces.

\subsection{The Gaussian free field}\label{subsection: GFF}
Let $\cS = \R \times [0,\pi]$ be the strip, and $\cS_+ = \R_+ \times [0,\pi]$ the half-strip. 
It will often be convenient for us to work in $\cS$ since certain quantum surfaces (such as quantum disks and wedges) have nicer descriptions when parametrized by $\cS$. For notational convenience we will often identify $\cS, \cS_+ \subset \R^2$ with subsets of the complex plane, so for instance $\R_+ + i\pi$ refers to the half-line $\R_+ \times \{\pi\} \subset \cS$, and $\cS_+ - N$ refers to the translated half-strip $[-N, \infty) \times [0,\pi]$.

Let $D \subsetneq \C$ be a simply connected domain. For smooth compactly supported functions $f,g \in C^\infty_0(D)$ (or, more generally, smooth functions with $L^2$ gradients), we define the Dirichlet inner product
\[
(f,g)_\nabla = \frac{1}{2\pi} \int_D \nabla f (z) \cdot \nabla g(z) \ dz.
\]
Let $H^0(D)$ be the Hilbert space closure of $C^\infty_0(D)$ with respect to the Dirichlet inner product. The \emph{zero boundary GFF} $h$  is defined to be the ``standard Gaussian random variable in $H^0(D)$'', in the sense that for any choice of orthonormal basis $(f_n)$ of $H^0(D)$ we have $h \stackrel{d}{=} \sum \alpha_nf_n$ for i.i.d. $\alpha_n \sim N(0,1)$. This summation does not converge in $H^0(D)$ (so $h \not \in H^0(D)$), but a.s. converges in the space of distributions. If $\frk f$ is a harmonic function on $D$, the \emph{Dirichlet GFF with boundary data given by $\frk f$} is defined to be the sum of $\frk f$ and a zero-boundary GFF on $D$.

Next, we introduce the \emph{Neumann GFF} on $D$. A \emph{function modulo additive constant} is an equivalence class identifying functions which differ by an additive constant, i.e. $f \sim f + c$ for $c \in \R$. 
Let $\wt C^\infty(D)$ be the space of smooth functions modulo additive constant which have $L^2$ gradients, and let $H(D)$ be the Hilbert space closure of $\wt C^\infty(D)$ under the Dirichlet inner product.
The \emph{Neumann GFF modulo additive constant} is a ``standard Gaussian random variable in $H(D)$''; as with the Dirichlet case, it is a.s. not an element of $H(D)$, but makes sense as a distribution modulo additive constant. We can likewise define the \emph{whole-plane GFF} (modulo additive constant) by setting $D = \C$ in the above construction. 
The additive constant of a Neumann GFF can be fixed in various ways. For the Neumann GFF on $\cS$, we will typically fix it by requiring that its average\footnote{These averages can be defined by mapping $\cS$ to $\bbH$ by exponentiation, where Neumann GFF half-circle averages (on $\partial B_{e^t} (0) \cap \bbH$) are well defined \cite[Section 6.1]{shef-kpz}. We can also directly define the average of $h$ over $[t,t+i\pi]$ since the inverse Laplacian of the uniform measure on $[t,t+i\pi]$ has finite Dirichlet energy, cf.~\cite[Section 3.1]{shef-kpz}.} on $ [0, i \pi]$ (as defined just below) is zero.

Finally, we can also define the GFF with \emph{mixed} boundary conditions (namely, Neumann on part of $\partial D$, and Dirichlet on the rest). Let $D \subset \bbH$ be a domain, and let $I \subset \R \cap \partial D$ be a finite union of linear boundary intervals of $D$. Writing $D^\dagger = D \cup \ol D \cup I$, we have the orthogonal decomposition $H^0(D^\dagger)= H_e(D^\dagger) \oplus H_o(D^\dagger)$ into spaces of even and odd functions respectively. The mixed boundary GFF in $D$ with Neumann boundary conditions on $I$ and Dirichlet boundary conditions on $\partial D \backslash I$ is then the projection of the Dirichlet GFF on $D^\dagger$ (with reflected boundary conditions) to $H_e(D^\dagger)$.
See \cite[Section 6.2]{shef-kpz} for details. If we impose Dirichlet boundary conditions on a non-trivial segment of $\bdy D$ then the mixed boundary GFF is well-defined not just modulo additive constant.

We note that GFFs are conformally invariant; this follows from the conformal invariance of the Dirichlet inner product. 
Additionally, GFFs satisfy a Markov property, which we state below.
\begin{lemma}\label{lem-markov}
For $U \subset D$, we have a Markov decomposition for various types of GFFs $h$ on $D$
\eqb\label{eq-markov}
h = \mathfrak h + \mathring{h},
\eqe
where the fields $\mathfrak h$ and $\mathring h$ are independent, $\mathfrak h$ is a distribution on $D$ whose restriction to $U$ is a harmonic function, and $\mathring h$ is some kind of GFF on $U$ (and identically zero outside $U$). We state below several versions of this for different choices of $h,D,U$:
\begin{enumerate}[(a)]
\item Let $D\subset \C$ be a domain with harmonically nontrivial boundary, and let $U \subset D$ be an open subset of $D$ with $\mathrm{dist}(U, \partial D) > 0$. Let $h$ be a Dirichlet GFF on $D$. Then we have the decomposition~\eqref{eq-markov}, with $\mathring h$ a zero boundary GFF on $U$. 

\item Let $D \subset \C$ be a simply connected domain with harmonically nontrivial boundary, and let $I \subset \partial D$ be a smooth boundary interval. Let $U \subset D$ be an open subset such that $\partial U \cap \partial D \subset I$ is a union of finitely many boundary intervals. Let $h$ be a mixed boundary GFF on $D$, with Neumann boundary conditions on $I$ and Dirichlet boundary conditions on $\partial D \backslash I$. Then we have the decomposition~\eqref{eq-markov}, where $\mathring h$ is a mixed boundary GFF with Neumann boundary conditions on $\partial U \cap I$ and zero boundary conditions on $\partial U \cap I^c$. Moreover, the harmonic function $\mathfrak h|_{U}$ extends smoothly to $\partial U \cap I$, and has normal derivative zero there. (We allow $\partial U \cap \partial D = \emptyset$; in this case $\mathring h$ would just be a zero boundary GFF.)

\item Let $D = \C$ and let $U \subset D$ be an open set with harmonically nontrivial boundary. Let $h$ be a whole-plane GFF (modulo additive constant). Then we also have the decomposition~\eqref{eq-markov}, where $\mathring h$ is a zero boundary GFF, and we view $\mathfrak h$ as a distribution modulo additive constant.
\end{enumerate}
\end{lemma}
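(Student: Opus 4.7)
My plan is to prove all three parts using the standard orthogonal decomposition of the Cameron-Martin Hilbert space. For part (a), I would decompose $H^0(D) = H_1 \oplus H_2$, where $H_1$ is the closure of $C_0^\infty(U)$ (viewed as a subspace of $H^0(D)$ by extension by zero) and $H_2$ is its orthogonal complement with respect to the Dirichlet inner product. Using integration by parts, one checks that $H_2$ consists of functions in $H^0(D)$ whose restriction to $U$ is harmonic. Picking an orthonormal basis $(f_n)$ of $H_1$ and $(g_n)$ of $H_2$ and writing $h = \sum \alpha_n f_n + \sum \beta_n g_n$ with i.i.d.\ $N(0,1)$ coefficients gives $\mathfrak{h} = \sum \beta_n g_n$ and $\mathring{h} = \sum \alpha_n f_n$, which are independent by independence of the coefficients. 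The series $\sum \alpha_n f_n$ is exactly the expansion of a zero-boundary GFF on $U$ (extended by zero), giving the claim.

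For part (b), I would use the standard reflection trick from \cite{shef-kpz}. Double $D$ along $I$ to form $D^\dagger$ and let $\wt h$ be a Dirichlet GFF on $D^\dagger$; then the mixed boundary GFF $h$ on $D$ is the projection of $\wt h$ onto the even subspace $H_e(D^\dagger)$ under reflection. Let $U^\dagger$ denote the reflection double of $U$ across $I$ (the portions of $\partial U$ on $I$ disappear into the interior). Applying part (a) to $\wt h$ with $U^\dagger \subset D^\dagger$ yields $\wt h = \wt{\mathfrak{h}} + \wt{\mathring h}$ with $\wt{\mathring h}$ a zero-boundary GFF on $U^\dagger$; projecting each piece to $H_e(D^\dagger)$ preserves independence and gives the desired decomposition of $h$. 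The even projection of $\wt{\mathring h}$, restricted back to $U$, is exactly a mixed boundary GFF on $U$ with Neumann conditions on $\partial U \cap I$ and zero conditions on $\partial U \cap I^c$. The fact that $\mathfrak{h}|_U$ has vanishing normal derivative on $\partial U \cap I$ follows because $\wt{\mathfrak{h}}$ is harmonic on $U^\dagger$ (which crosses $I$), so its even part has zero normal derivative there by symmetry; smoothness up to $\partial U \cap I$ comes from harmonicity on a neighborhood.

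For part (c), I would use the construction of the whole-plane GFF modulo additive constant as a distributional limit of Dirichlet GFFs $h_R$ on $B_R(0)$ (with the average over $B_1(0)$, say, subtracted off), as $R \to \infty$. Applying part (a) on $B_R(0)$ gives $h_R = \mathfrak h_R + \mathring{h}_R$ for large $R$ such that $U \Subset B_R(0)$, with $\mathring{h}_R$ a zero-boundary GFF on $U$ independent of $\mathfrak{h}_R$. Since $\mathring h_R$ depends on $R$ only through the decomposition, it has the same law for all sufficiently large $R$, and $\mathfrak h_R$ (mod constant) converges in distribution. Taking the limit yields the stated decomposition modulo additive constant.

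The main technical obstacle is part (b): verifying carefully that the boundary behavior carries through the reflection and projection procedure, namely that the resulting $\mathring h$ really is the \emph{mixed} boundary GFF on $U$ with the stated Neumann/Dirichlet conditions (not merely a zero boundary GFF on the double of $U$). This requires matching Cameron-Martin spaces: the even subspace of $H^0(U^\dagger)$ is canonically isomorphic, via restriction to $U$, to the Dirichlet-closure of smooth functions on $U$ that vanish on $\partial U \cap I^c$ but need not vanish on $\partial U \cap I$, which is precisely the Cameron-Martin space of the mixed boundary GFF on $U$ with the stated boundary conditions. Once this identification is set up cleanly, the independence and the boundary conditions on $\mathfrak{h}|_U$ follow automatically from part (a) applied in the double.
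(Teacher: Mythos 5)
Your proof is correct and takes essentially the same approach as the paper: the orthogonal decomposition of the Cameron--Martin space $H^0(D) = \overline{C_0^\infty(U)} \oplus H^{\mathrm{harm}}(U)$ for (a), and the reflection/even-projection argument reducing (b) to (a) applied on the doubled domain, including the same symmetry argument for the vanishing normal derivative of $\mathfrak h|_U$ on $\partial U \cap I$. For (c) the paper simply cites \cite[Proposition 2.8]{ig4}, whose standard proof is precisely the large-ball limiting argument you sketch.
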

\begin{proof}
(a) is proved in \cite[Theorem 2.17]{shef-gff}, and (c) in \cite[Proposition 2.8]{ig4}. Roughly speaking, (a) holds because one can decompose the space $H(D)$ comprising smooth functions supported in $D$ with $L^2$ derivatives into $(\cdot, \cdot)_\nabla$-orthogonal spaces $H(D) = H(U) \oplus H^\mathrm{harm} (U)$, where the space $H^\mathrm{harm}(U)$ comprises functions which are harmonic in $U$. 

To obtain (b) from (a), we conformally change the domains so $D = \mathbb D \cap \bbH$ and $I = [-1,1]$. Write $J = \partial U \cap I$, and let $\wt U$ to be the union of $U$, $J$, and the reflection of $U$ across $\R$. Recalling the definition of $h$ as the even part of a Dirichlet GFF on $\D$, we apply the Markov property of the Dirichlet GFF on $\D$ to the sub-domain $\wt U$; taking the even part gives (b).
\end{proof}

\begin{remark}
In the above decomposition, the restriction of $\mathfrak h$ to $U$ can be interpreted as the ``harmonic extension'' of $h|_{\partial U}$ to $U$ \cite[above Proposition 3.1]{ig1}. Consequently, instead of saying ``conditioned on $\mathfrak h$, the law of $h|_U$ is given by $\mathfrak h + \mathring h$'', we sometimes instead say ``the law of $h|_U$ given $h|_{D \backslash U}$ is that of a (mixed or Dirichlet) GFF on $U$ with Dirichlet boundary values specified by $h|_{D \backslash U}$''. This reduces notational clutter. 
\end{remark}

Let $\cH_1(\cS)$ denote the subspace of $H(\cS)$ comprising functions which are constant on vertical lines viewed modulo additive constant, and let $\cH_2(\cS)$ denote the subspace of $H(\cS)$ given by functions which have mean zero on vertical lines (\emph{not} viewed modulo additive constant). By~\cite[Lemma 4.3]{wedges} we have the following decomposition of $H(\cS)$ into $(\cdot, \cdot)_\nabla$-orthogonal subspaces:
\begin{equation}\label{eqn: orthogonal decomposition}
H(\cS) = \cH_1(\cS) \oplus \cH_2(\cS).
\end{equation}
We also have the analogous decomposition $H(\cS_+) = \cH_1(\cS_+) \oplus \cH_2 (\cS_+)$. 
In this paper, we will view elements of $\cH_1(\cS)$ (resp. $\cH_1(\cS_+)$) as functions from $\R$ to $\R$ (resp. $\R_+$ to $\R$).
 
\begin{remark} \label{remark: GFF on strip}
As in \cite[Section 4.1.6]{wedges}, we point out that the above decomposition of $H(\cS)$ gives us the following explicit description of a Neumann GFF on $\cS$ normalized to have average 0 on $[0,i\pi]$, in terms of its (independent) projections onto $\cH_1(\cS)$ and $\cH_2(\cS)$:
\begin{itemize}
\item Its projection onto $\cH_1(\cS)$ is a standard two-sided linear Brownian motion with quadratic variation $2dt$, taking the value 0 at time 0;
\item Its projection onto $\cH_2(\cS)$ can be sampled as $\sum_n \alpha_n f_n$ where $(f_n)$ is an orthonormal basis of $\cH_2(\cS)$ and $(\alpha_n)$ is an i.i.d. sequence of standard Gaussians. 
\end{itemize}
We note also that the projection $h_2$ of a Neumann GFF on $\cS$ onto $\cH_2(\cS)$ has a law that is is translation invariant, i.e. $h_2(\cdot) \stackrel{d}{=} h_2(\cdot + C)$ for any $C \in \R$. This follows since an orthonormal basis of $\cH_2(\cS)$ is still an orthonormal basis after translation.
\end{remark} 

The following has essentially the same proof as \cite[Proposition 2.10]{ig4}.

\begin{proposition} \label{prop: adaptation of ig4 proposition 2.10}
Let $h$ be a Gaussian free field on $\cS_+$, having Neumann boundary conditions on $\R_+$ and $\R_+ + i\pi$, and arbitrary Dirichlet boundary conditions on $[0, i\pi]$. Then as $N \to \infty$, the total variation distance between the following two fields goes to zero:
\begin{itemize}
\item $h(\cdot + N)|_{\cS_+}$, viewed as a distribution modulo additive constant;

\item A GFF on $\cS$ with Neumann boundary conditions, restricted to $\cS_+$, and viewed modulo additive constant.
\end{itemize}
The rate of convergence depends on the choice of Dirichlet boundary conditions.
\end{proposition}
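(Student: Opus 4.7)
The plan is to mirror the proof of \cite[Proposition 2.10]{ig4}, adapted to the mixed-boundary setup on the half-strip $\cS_+$.

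First, I would apply Lemma~\ref{lem-markov}(b) to write $h = \frk f + h_0$, where $\frk f$ is the deterministic harmonic function on $\cS_+$ with the prescribed Dirichlet values on $[0, i\pi]$ and vanishing normal derivative on $\R_+ \cup (\R_+ + i\pi)$, and $h_0$ is an independent zero-Dirichlet mixed boundary GFF (zero on $[0, i\pi]$, Neumann on the two horizontal rays). By separation of variables on a half-infinite strip with Dirichlet data on the finite end and Neumann data on the two sides, $\frk f(z) - c_\infty$ decays exponentially as $\Re(z) \to \infty$, where $c_\infty$ denotes the average of the Dirichlet data over $[0, i\pi]$. Hence $\frk f(\cdot + N) \to c_\infty$ uniformly on compacts, which becomes the zero distribution once we pass to the additive-constant quotient.

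The main step is to show that $h_0(\cdot + N)|_{\cS_+}$, modulo additive constant, converges in total variation to the restriction to $\cS_+$ of a Neumann GFF on $\cS$, also modulo additive constant. I would use the orthogonal decomposition~\eqref{eqn: orthogonal decomposition} and treat the $\cH_1$ and $\cH_2$ components separately. The $\cH_1(\cS_+)$ projection of $h_0$ is a Brownian motion on $[0, \infty)$ with quadratic variation $2\, dt$ pinned at $0$ at time $0$, while the $\cH_1(\cS_+)$ projection of a Neumann GFF on $\cS$ restricted to $\cS_+$ is also a one-sided Brownian motion with quadratic variation $2\, dt$, with arbitrary starting value. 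Modulo additive constant, both laws reduce to the law of Brownian increments, so these components agree exactly, even before taking the limit $N \to \infty$. For the $\cH_2$ components, the Green's functions corresponding to the mixed-boundary and fully Neumann settings differ by a smooth function that decays exponentially in $\min(\Re z_1, \Re z_2)$, reflecting the spectral gap of $-\partial_y^2$ on $[0, \pi]$ with Neumann conditions, which separates the constant mode (absorbed into $\cH_1$) from higher Fourier modes.

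The principal obstacle is upgrading this pointwise Green's function comparison into a total variation bound. Following \cite[Proof of Proposition 2.10]{ig4}, I would express the Radon-Nikodym derivative between the two $\cH_2$-projected laws as a Wick exponential of a quadratic form in the field, and bound its Kullback-Leibler divergence from $1$ by the Hilbert-Schmidt norm of the difference of the two covariance operators, which tends to $0$ as $N \to \infty$ by the exponential Green's function decay above. A triangle inequality combining this with $\frk f(\cdot + N) \to 0$ then completes the proof, with the convergence rate depending on the Dirichlet boundary data through an appropriate norm of $\frk f - c_\infty$.
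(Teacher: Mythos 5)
Your opening move (splitting off the harmonic extension of the Dirichlet data) is consistent with the paper, but the way you propose to compare the two remaining Gaussian fields misses the key idea and, as stated, does not work. The paper's proof (and the proof of \cite[Proposition 2.10]{ig4} that it adapts) does \emph{not} compare two Gaussian laws with different covariance operators. Instead it uses the Markov property of the \emph{target} field to produce a coupling in which the two fields share literally the same zero-boundary part: after mapping $\cS_+\to\D\cap\bbH$, $\cS\to\bbH$ and unfolding by reflection, one writes $h = f + \frk g$ with $f$ the whole-plane (target) field and $\frk g$ an independent random harmonic function. The difference is then a single harmonic function, and the Radon--Nikodym derivative is the \emph{linear} Cameron--Martin exponential $e^{(f,\frk g_r)_\nabla - \frac12(\frk g_r,\frk g_r)_\nabla}$, where $\frk g_r$ is a bump-function cutoff of $\frk g$ whose Dirichlet energy is $o(1)$. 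Your plan replaces this with a Feldman--H\'ajek-type comparison of covariance operators, and the criterion you invoke --- that small Hilbert--Schmidt norm of $C_2 - C_1$ controls the KL divergence --- is not correct: the relevant operator is $C_1^{-1/2}(C_2-C_1)C_1^{-1/2}$, and since $C_1^{-1/2}$ is unbounded for a GFF, pointwise (even exponentially small) closeness of Green's functions does not by itself yield total variation closeness. This is precisely the difficulty the coupling argument is designed to avoid.

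Two further points need attention even within your own framework. First, the orthogonal decomposition $H=\cH_1\oplus\cH_2$ and the independence of the two projections are justified in the paper only for the Neumann GFF; for the field with Dirichlet conditions on $[0,i\pi]$ the Cameron--Martin space is different (a mean-zero-on-vertical-lines function need not vanish on $[0,i\pi]$), so your claim that the $\cH_1$ projections ``agree exactly'' and that the entire discrepancy lives in $\cH_2$ requires proof. Second, uniform-on-compacts convergence $\frk f(\cdot+N)\to c_\infty$ does not give total variation convergence of the shifted laws: adding a deterministic function changes the law by a Cameron--Martin factor controlled by the Dirichlet energy of (a cutoff of) the shift, not by its sup norm, so the same bump-function/energy estimate is needed for $\frk f$ as for the random harmonic part. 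All of these issues are resolved simultaneously by the paper's route: conformally map to the half-disk and half-plane, unfold by reflection to reduce to a Dirichlet GFF on $\D$ versus a whole-plane GFF near $0$, couple via the Markov property, and apply the cutoff Cameron--Martin computation.
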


\begin{proof}
We map $\cS_+ \to \D \cap \bbH$ and $\cS \to \bbH$ via the map $z \mapsto e^{\pi i - z}$. The mixed-boundary GFF on $\D \cap \bbH$ with Neumann boundary conditions on $[-1,1]$ and Dirichlet boundary conditions on $\partial \D \cap \bbH$ can be written as the even part of the Dirichlet GFF on $\D$ with reflected boundary conditions on $\partial \D$, and similarly, the Neumann GFF on $\bbH$ (modulo additive constant) can be written as the even part of a whole plane GFF modulo additive constant \cite[Section 3.2]{shef-zipper}. Thus, it suffices to prove that the total variation distance between the following two fields goes to zero as $r \to 0$:
\begin{itemize}
\item $h|_{B(0,r)}$ viewed as a distribution modulo additive constant, for $h$ a Dirichlet GFF on $\D$ with arbitrary boundary conditions;
\item $f|_{B(0,r)}$ viewed as a distribution modulo additive constant, for $f$ a whole-plane GFF modulo additive constant restricted to $\D$. 
\end{itemize}
We will prove this by using the Markov property of the GFF together with a Radon-Nikodym derivative bound. By (a), (c) of Lemma~\ref{lem-markov} we can write each of $f$ and $h$ as a sum of a zero boundary GFF on $\D$ and an independent distribution which is harmonic in $\D$. Thus we can couple $f,h$ so that $h = f + \mathfrak g$ where $ \mathfrak g$ is a random distribution which is harmonic in $\D$ and independent of $f$. 

Consider first a fixed choice of $\mathfrak g$; WLOG choose the additive constant of $\mathfrak g$ so that $\mathfrak g(0) = 0$, so $\lim_{r \to 0} \max_{B_r(0)} |\mathfrak g| = 0$. Let $\phi$ be a smooth function compactly supported in $2\D$ and equal to $1$ on $\D$, and set $\phi_r (z) = \phi(r^{-1}z)$. Then writing $\mathfrak g_r(z) = \phi_r(z) \mathfrak g(z)$, we see that $\mathfrak g$ and $\mathfrak g_r$ agree in $B_r(0)$. Moreover, since $\nabla \phi_r = O(r^{-1})$ and $\nabla \mathfrak g|_{B_1(0)} = O(1)$, we have  $\nabla \mathfrak  g_r = \mathfrak  g \nabla \phi_r + \phi_r \nabla  \mathfrak  g = O(r^{-1} \max_{B_r(0)} |\mathfrak g| + 1)$. Since $\frk g_r$ is supported on $B_r(0)$, we get 
\[|(\mathfrak g_r, \mathfrak  g_r)_\nabla| \lesssim r^2 \cdot (r^{-1} \max_{B_r(0)} |\mathfrak g| + 1)^2 = o_r(1). \]
The Radon-Nikodym derivative of the law of $f+\mathfrak g_r$ with respect to the law of $f$ is given by \cite[Proposition 2.9]{ig4}
\[  e^{(f,\mathfrak g_r)_\nabla - \frac12 (\frk g_r , \frk g_r)_\nabla}, \]
so for fixed $\mathfrak g$ the total variation distance between $h|_{B(0,r)} = (f+\mathfrak g_r)|_{B(0,r)}$ and $f|_{B(0,r)}$ goes to zero as $r \to 0$. Since $\mathfrak g$ is independent of $f$ we have shown Proposition~\ref{prop: adaptation of ig4 proposition 2.10}. 
\end{proof}

Now, we provide an analogous proposition for GFFs with piecewise-constant Dirichlet boundary conditions, and for GFFs zoomed in at a boundary point.
\begin{proposition}\label{prop: zoom in Dirichlet boundary}
\begin{enumerate}[(a)]
\item 
For $a,b \in \R$, let $h$ be a $\GFF$ on $\cS_+$ with constant boundary conditions $a$ on $\R_+$ and $b$ on $\R_+ + i\pi$, and arbitrary Dirichlet boundary conditions on $[0,i\pi]$. Let $h^\infty$ be the $\GFF$ on $\cS$ with constant boundary conditions $a$ on $\R$ and $b$ on $\R + i\pi$, restricted to $\cS_+$. Then the law of $h(\cdot + N)|_{\cS_+}$ converges to that of $h^\infty$ in total variation as $N \to \infty$.
\item Let $D \subset \bbH$ be a simply connected domain such that $\D \cap \bbH \subset D$.
For $a \in \R$, let $h$ be a $\GFF$ on $D$ with arbitrary bounded Dirichlet boundary conditions, and constant boundary value $a$ on $[-1,1]$. Let $h^\infty$ be a Dirichlet $\GFF$ on $\bbH$ with constant boundary value $a$. Then as $d \to 0$, the total variation distance between the laws of $h|_{d\D \cap D}$ and $h^\infty|_{d \D \cap \bbH}$ goes to zero.
\end{enumerate}
\end{proposition}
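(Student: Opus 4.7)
Both parts follow the strategy of Proposition~\ref{prop: adaptation of ig4 proposition 2.10}: decompose each field as a (possibly random) harmonic function plus an independent zero-boundary GFF on a common subdomain, couple the zero-boundary GFF parts equal, and then bound the resulting harmonic discrepancy.

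For part (a), set $L(x+iy) = a+(b-a)y/\pi$ and write $h^\infty = L + \alpha$ with $\alpha$ a zero-boundary GFF on $\cS$, and $h = L + G_\phi + \tilde\beta$ with $G_\phi$ the deterministic harmonic extension to $\cS_+$ of $\phi := (h-L)|_{[0,i\pi]}$ (with zero top/bottom) and $\tilde\beta$ an independent zero-boundary GFF on $\cS_+$. Applying Lemma~\ref{lem-markov}(a) (in a standard variant allowing the subdomain to touch $\partial D$, obtained by shrinking and taking a limit) to $\alpha$ on $\cS_+ \subset \cS$ and to $\tilde\beta$ on $\cS_+ \subset \cS_+ - N$ yields
\alb
h^\infty|_{\cS_+} &= L + \hat F^\infty + \mathring h,\\
h(\cdot + N)|_{\cS_+} &= L + G_\phi(\cdot + N)|_{\cS_+} + \hat F_N + \mathring h,
\ale
where $\hat F^\infty$, $\hat F_N$ are the random harmonic extensions on $\cS_+$ (with zero top/bottom) of $\alpha|_{[0,i\pi]}$ and $\tilde\beta|_{N+[0,i\pi]}$ respectively, and $\mathring h$ is an independent zero-boundary GFF on $\cS_+$ coupled identically between the two decompositions. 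Because harmonic extension is a deterministic continuous map, the TV distance between the two fields is bounded by the TV distance between the laws of $\alpha|_{[0,i\pi]}$ (Gaussian with mean $0$ and covariance $G_\cS(z_1,z_2)$) and of $G_\phi|_{N+[0,i\pi]} + \tilde\beta|_{N+[0,i\pi]}$ (Gaussian with mean $G_\phi|_{N+[0,i\pi]}$ and covariance $G_{\cS_+}(N+z_1, N+z_2)$) as random distributions on $[0,i\pi]$. Separation of variables on $[0,\pi]$ (first Dirichlet eigenvalue $1$) together with an image-source computation shows that both the mean shift and the covariance-kernel difference are exponentially small in $N$; the standard TV-comparison theory for equivalent Gaussian measures then gives TV convergence to $0$.

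Part (b) is analogous on $(D, \bbH)$ and fits directly into the Cameron-Martin scheme at the end of the proof of Proposition~\ref{prop: adaptation of ig4 proposition 2.10}. Subtracting $a$ reduces the constant boundary value on $[-1,1]$ to zero. Write $h - a = f^* + \mathring h$ on $D$ (where $f^*$ is the deterministic harmonic extension of the $h$-boundary data minus $a$), and decompose $h^\infty - a = F + \mathring h$ by applying Lemma~\ref{lem-markov}(a) to the zero-boundary GFF $h^\infty - a$ on $\bbH$ with sub-domain $D$ (where $F$ is the harmonic extension of $(h^\infty-a)|_{\partial D \cap \bbH}$ with zero data on $\partial D \cap \R \supset [-1,1]$), coupling $\mathring h$ to be the same zero-boundary GFF on $D$ in both. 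Then $(h - h^\infty)|_{d\D \cap \bbH} = (f^* - F)|_{d\D \cap \bbH}$, and both $f^*$ and $F$ are harmonic on $D$ and vanish on $[-1,1]$. By Schwarz reflection across $(-1,1)$ each extends smoothly (the extension of $F$ being random but a.s.\ smooth), so $|f^* - F| = O(d)$ and $|\nabla(f^* - F)| = O(1)$ on $d\D \cap \bbH$. Multiplying by the cutoff $\phi(\cdot/d)$ as in Proposition~\ref{prop: adaptation of ig4 proposition 2.10} produces a Cameron-Martin perturbation of $\mathring h$ with Dirichlet energy $O(d^2) \to 0$, and the Radon-Nikodym derivative argument of that proposition (now conditioned on $F$ and integrated out by dominated convergence) closes the proof.

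The main obstacle is the Gaussian-measure step in part (a): converting the sup-norm closeness of the mean and covariance kernels into total-variation closeness of the two Gaussian laws on the space of distributions on $[0,i\pi]$. This requires a Hilbert-Schmidt-type bound on the relative covariance operator, which follows from the explicit strip Green's function $G_\cS(z_1,z_2) = -\log|(e^{z_1}-e^{z_2})/(e^{z_1}-e^{\bar z_2})|$ and its analog on $\cS_+$ via conformal mapping, but the operator-theoretic verification is somewhat technical.
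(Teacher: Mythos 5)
Your part (b) is essentially the paper's argument. The paper realizes both fields as odd parts of GFFs on the reflected domains $D\cup\ol D\cup[-1,1]$ and $\C$ and then reruns the cutoff Cameron--Martin computation of Proposition~\ref{prop: adaptation of ig4 proposition 2.10}; you instead keep the fields on $D$ and $\bbH$ and apply Schwarz reflection only to the harmonic discrepancy $f^*-F$. Since both harmonic parts vanish on $[-1,1]$, the discrepancy is $O(d)$ with $O(1)$ gradient near $0$ (a.s.\ in $F$), and the rest is the same Radon--Nikodym estimate. This is correct and substantively identical to the paper.

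Part (a) is where you genuinely diverge, and as written it is not yet a proof. The paper's route (map $\cS_+\to\D\cap\bbH$ and $\cS\to\bbH$, take odd parts of GFFs on $\D$ and $\C$) reduces (a) to exactly the situation of Proposition~\ref{prop: adaptation of ig4 proposition 2.10}: after coupling the zero-boundary parts on a common \emph{two-dimensional} subdomain, the two fields differ only by a harmonic function independent of the field being perturbed, so the covariance operators agree and an elementary Cameron--Martin mean-shift bound finishes the job (oddness even makes the normalization $\frk g(0)=0$ automatic). Your reduction to the boundary traces on $[0,i\pi]$ instead produces two Gaussian fields with \emph{different} covariance kernels, $G_{\cS}$ versus $G_{\cS_+}(N+\cdot,N+\cdot)$, and the step ``the mean and the covariance kernels are uniformly exponentially close, hence the laws are TV-close'' is false in general: sup-norm closeness of covariance kernels of log-correlated fields does not imply mutual absolute continuity, let alone small total variation distance. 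What is actually required is (i) that the mean shift lie in the Cameron--Martin space of the trace field with small norm, and (ii) a Feldman--H\'ajek bound on the Hilbert--Schmidt norm of $I-C_1^{-1/2}C_2C_1^{-1/2}$ --- exactly the point you defer. The argument can be completed, because in the basis $\sin(ky)$ on $[0,\pi]$ both kernels diagonalize simultaneously: the eigenvalues of $G_\cS$ restricted to a cross-section are of order $k^{-1}$ and the image term carries a relative factor $e^{-2kN}$, so the Hilbert--Schmidt norm is of order $(\sum_k e^{-4kN})^{1/2}$ and the Cameron--Martin norm of the mean shift is $(\sum_k k|a_k|^2e^{-2kN})^{1/2}$, both tending to $0$. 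But this computation is the entire content of part (a) and must be carried out; it cannot be cited as standard. The lesson of the paper's reflection trick is that it removes the covariance comparison altogether.
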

\begin{proof}
\begin{enumerate}[(a)]
\item 
The proof is similar to that of Proposition~\ref{prop: adaptation of ig4 proposition 2.10}. First, by subtracting off the harmonic function $f(z) = a + \frac{\Im(z)}{\pi} (b-a)$ we reduce to the case where $a = b = 0$. Then, we map the half-strip and strip to the half-disk and upper half-plane respectively, and since we want zero boundary conditions on $[-1,1]$ and $\R$ respectively, we can sample these free fields as the odd parts of GFFs in the reflected domains $\D$ and $\C$. The rest of the proof is identical.

\item The proof is almost identical to that of the previous part. By subtracting $a$ from the boundary conditions, we can WLOG assume $a=0$. We can obtain the fields we want as the odd parts of GFFs on the domains $D \cup \overline D \cup [-1,1]$ and $\C$. The rest of the proof is identical.
\end{enumerate}
\end{proof}

\subsection{Space-filling $\SLE$}
\label{subsection: space-filling SLE}

For $\kappa' > 4$, space-filling $\SLE_{\kappa'}$ is a variant of $\SLE_{\kappa'}$ \cite{schramm0} first introduced in \cite[Section 1.2.3]{ig4}. In the regime $\kappa' \geq 8$, ordinary $\SLE_{\kappa'}$ is already space-filling, and coincides with space-filling $\SLE_{\kappa'}$ (this is immediate from the construction in~\cite{ig4}). For $\kappa' \in (4,8)$, however, ordinary $\SLE_{\kappa'}$ is \emph{not} space-filling. It bounces off of the boundary and itself, disconnecting ``bubbles'' from its target point, and subsequently never revisits these bubbles. Roughly speaking, space-filling $\SLE_{\kappa'}$ can be obtained by iteratively filling in the bubbles of ordinary $\SLE_{\kappa'}$ with space-filling $\SLE_{\kappa'}$ type curves. It is possible to give a definition of space-filling SLE$_{\kappa'}$ which is directly based on this rough description; see~\cite[Section 3.6.3]{ghs-mating-survey}. However, here we will instead give the original definition from~\cite{ig4} since it is somewhat simpler to describe and is more directly relevant to our arguments (it follows from results in~\cite{ig4} that the two definitions are equivalent).

Now, we discuss properties and the construction of space-filling $\SLE_{\kappa'}$ in the upper half-plane $\bbH$ from 0 to $\infty$. For $\kappa'\in (4,8)$, the easiest way to construct it rigorously is via imaginary geometry (for $\kappa'\geq 8$, the construction just gives ordinary SLE$_{\kappa'}$). For $\kappa' > 4$, let 
\eqb \label{eqn: ig parameter}
\kappa = \frac{16}{\kappa'} \in (0,4) ,\quad \lambda' = \frac{\pi}{\sqrt{\kappa'}} , \quad  \lambda = \frac{\pi}{\sqrt\kappa},\quad \text{and} \quad
\chi = \frac{2}{\sqrt\kappa} - \frac{\sqrt\kappa}{2} ,
\eqe
as in~\cite{ig1,ig2,ig3,ig4}. Let $h^\IG$ be a GFF in $\bbH$ with boundary conditions given by $-\lambda'$ on $\R_+$ and $\lambda'$ on $\R_-$ (here, IG stands for ``Imaginary Geometry'', and is used to distinguish the field $h^{\IG}$ from the field corresponding to an LQG surface). The space-filling $\SLE_{\kappa'}$ $\eta'$ can be coupled with $h^\IG$ so that $\eta'$ is a.s. determined by $h^\IG$ \cite[Theorem 4.12]{ig4}. 

For $z \in \bbH$ and $\theta \in \R$, one can define the \emph{flow line} $\eta$ of $h^{\IG}$ started from $z$ with angle $\theta$ as in \cite[Section 1.2.3]{ig4}, which has the informal interpretation of being the curve solving the ODE $\frac{d}{dt} \eta(t) = \exp(i h^\IG (\eta(t)) /\chi  + \theta)$ (this does not make literal sense because the distribution $h^\IG$ cannot be evaluated pointwise). This is an $\SLE_{\kappa}$-type curve which is a.s. determined by the field $h^\IG$. 

For any point $z \in \bbH$, let $\eta^L_z$ and $\eta^R_z$ be the flow lines started at $z$ with angles $\pi/2$ and $-\pi/2$ respectively. For $\kappa' \geq 8$, these two flow lines started at $z$ do not meet again, and for $\kappa' \in (4,8)$, they a.s. bounce off of each other without crossing \cite[Theorem 1.7]{ig4}. The space-filling $\SLE_{\kappa'}$  curve $\eta'$ is defined in such a way that for each $z \in \bbH$, the flow lines $\eta^L_z$ and $\eta^R_z$ are almost surely the left and right outer boundaries of the curve $\eta'$ stopped when it first hits $z$. More specifically, $\eta^L_z$ and $\eta^R_z$ divide $\bbH$ into two parts: 
\begin{enumerate}[(i)]
\item those points in complementary components whose boundary consists of a segment of either the left side of $\eta^L_z$ or the right side of $\eta^R_z$ (and possibly also an arc of $\partial \bbH$) and
\item those points in complementary components whose boundary consists of a segment of either the right side of $\eta^L_z$ or the left side of $\eta^R_z$ (and possibly also an arc of $\partial \bbH$).
\end{enumerate}
Then the closure of (i) comprises the points that $\eta'$ hits before hitting $z$, and the closure of (ii) the points that $\eta'$ hits after hitting $z$. In fact, by considering the countable collection of left and right flow lines started from $z \in \Q^2 \cap \bbH$, this property allows us to a.s. define $\eta'$ as a continuous space-filling curve which is a.s.\ determined by $h^\IG$ and which is continuous when parametrized so that it traverses one unit of Lebesgue measure in one unit of time~\cite[Theorem 1.16]{ig4}.

For $\kappa' \geq 8$, the region explored by space-filling $\SLE_{\kappa'}$ between the times when it hits two specified points is almost surely simply connected. For $\kappa' \in (4,8)$, however, the interior and the complement of this region each have countably many disk-homeomorphic components.

Space-filling SLE can be defined on other simply-connected domains by a similar procedure, and is conformally invariant (this follows from the conformal invariance of imaginary geometry constructions). We turn to the construction of whole-plane space-filling SLE, as described in \cite[Footnote 4]{wedges} (immediately before \cite[Theorem 1.9]{wedges}). We consider a whole-plane GFF viewed modulo an integer multiple of $2\pi \chi$, and draw its corresponding flow lines $\eta^L$ and $\eta^R$ from 0. If $\kappa' \geq 8$, these flow lines partition the plane into two regions each homeomorphic to $\bbH$, and we can construct a whole-plane space-filling $\SLE_{\kappa'}$ by concatenating two chordal space-filling $\SLE_{\kappa'}$'s, one in each region --- the first path is taken to run from $\infty$ to $0$ and the second from $0$ to $\infty$. If $\kappa' \in (4,8)$, the flow lines partition the plane into a countable collection of pockets, and whole-plane space-filling $\SLE_{\kappa'}$ is constructed by concatenating independently sampled space-filling $\SLE_{\kappa'}$'s in each of these pockets.

Next, we discuss \emph{counterclockwise space-filling $\SLE_{\kappa'}$} from $x$ to $x$, which appears in Theorems~\ref{thm: peanosphere disk} and~\ref{thm: peanosphere gamma wedge}. Suppose we start with a simply-connected domain $D$ with two marked boundary points $x, y \in \partial D$, so one can define space-filling $\SLE_{\kappa'}$ from $x$ to $y$. Sending $y \to x$ in the counterclockwise direction and taking a limit, we obtain counterclockwise space-filling $\SLE_{\kappa'}$ from $x$ to $x$ (see~\cite[Appendix A.3]{bg-lbm} for more details). Alternatively, if we consider the domain $D = \bbH$ and let $h^\IG$ be a Dirichlet GFF with constant boundary value $-\lambda'$, then the induced space-filling curve is counterclockwise $\SLE_{\kappa'}$ from $\infty$ to $\infty$. It can be seen that each fixed point of $\bdy D$ is a.s.\ hit exactly once by the space-filling SLE curve (although there are exceptional points which are hit twice). Say that a boundary point is ``typical" if it is hit exactly once. Almost surely, the counterclockwise space-filling $\SLE_{\kappa'}$ from $x$ to $x$ hits the typical points of $\bdy D$ in the counterclockwise order from $x$. The time-reversal of counterclockwise $\SLE_{\kappa'}$ hits boundary points in clockwise order, but this time reversal is \emph{not} clockwise $\SLE_{\kappa'}$.

In the next lemma we identify the interface between the past and future of a space-filling counterclockwise $\SLE_{\kappa'}$ when it hits a boundary point.
This will be used to identify the laws of the past and future quantum surfaces in the setting of Theorem~\ref{thm: peanosphere gamma wedge}; see Theorem~\ref{thm: wedge laws}. 
The interface is a process called $\SLE_\kappa(\ul \rho)$. This is a variant of $\SLE_\kappa$ where one keeps track of two extra marked boundary points $x_L, x_R$ to the left and right of 0 called \emph{force points}, which have weights $\ul \rho = (\rho_L, \rho_R)$ (this process is well defined for more than two force points, but we only need two here). We allow $x_L$ and $x_R$ to be $0^-$ and $0^+$; in this case we will neglect to specify $x_L, x_R$ and just write $\SLE_{\kappa'}(\rho_L;\rho_R)$. See \cite{ig1} for a construction of $\SLE_\kappa(\ul\rho)$ and its coupling with an imaginary geometry field $h^{\IG}$. For $\kappa' > 4$, one can also analogously define space-filling $\SLE_{\kappa'}(\ul\rho)$ curves; see \cite{ig4}. 

\begin{lemma}\label{lem: identify-wedges}
For $\kappa' > 4$, let $\kappa = 16/\kappa'$. Let $\eta'$ be a counterclockwise space-filling $\SLE_{\kappa'}$ on $\bbH$ from $\infty$ to $\infty$, with time parametrized so that it traverses one unit of Lebesgue measure in one unit of time and hits the origin at time 0. Then the interface $\eta'((-\infty, 0])\cap\eta'([0,\infty))$ is an $\SLE_{\kappa}(\frac{\kappa}{2}-2;-\frac{\kappa}{2})$ curve from $0$ to $\infty$. Moreover, if we condition on this interface then $\eta'|_{(-\infty,0]}$ and $\eta'|_{[0,\infty)}$ are conditionally independent and their laws are described as follows.
\begin{itemize}
\item The domain $\eta'((-\infty, 0])$ is simply connected and $\eta'|_{(-\infty, 0]}$ is a space-filling $\SLE_{\kappa'}(\frac{\kappa'}{2} - 4; 0)$ curve from $\infty$ to $0$ in  this domain.
\item If $\kappa \in (0,2]$, the domain $\eta'([0,\infty))$ is simply connected and $\eta'|_{[0,\infty)}$ is a chordal space-filling $\SLE_{\kappa'}$ from $0$ to $\infty$ in this domain. 
\item If $\kappa \in (2,4)$, the domain $\eta'|_{[0,\infty)}$ is not simply connected and $\eta'|_{[0,\infty)}$ is the concatenation of conditionally independent chordal space-filling SLE$_{\kappa'}$ curves in the connected components of the interior of $\eta'([0,\infty))$, each going between the first and last points on the boundary of the component which are hit by the interface $\eta'((-\infty, 0])\cap\eta'([0,\infty))$.
\end{itemize} 
\end{lemma}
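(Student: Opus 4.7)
The plan is an application of the imaginary geometry framework of \cite{ig1,ig4}. Couple $\eta'$ with its imaginary geometry field $h^{\IG}$, a Dirichlet GFF on $\bbH$ with constant boundary value $-\lambda'$ (as described just above the lemma). The past/future interface at time $0$ is a flow line of $h^{\IG}$ from the boundary point $0$ into $\bbH$; specifically, it is the $\frac{\pi}{2}$-angle flow line, appearing as the limit of the left outer boundaries of $\eta'$ at interior points approaching $0$ through the future region. Using the identity $\pi\chi/2 = \lambda - \lambda'$ (which follows from $\lambda'/\lambda = \kappa/4$) and the IG-to-SLE dictionary, the shifted field $h^{\IG} + \pi\chi/2$ has constant boundary value $\lambda(1 - \kappa/2)$, so its angle-$0$ flow line from $0$ has law $\SLE_\kappa(\kappa/2 - 2; -\kappa/2)$ from $0$ to $\infty$, giving the first claim.

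For the conditional laws, I would apply the GFF Markov property (Lemma~\ref{lem-markov}(b)) conditional on the interface $\eta$: the restrictions of $h^{\IG}$ to the past component (containing $\R_-$) and the future component (containing $\R_+$) of $\bbH\setminus\eta$ are conditionally independent GFFs, with boundary data $-\lambda'$ on $\partial\bbH$ and $\pm\lambda - \pi\chi/2$ along $\eta$ ($+\lambda-\pi\chi/2=\lambda'$ on the future/right side, $-\lambda-\pi\chi/2 = -2\lambda+\lambda'$ on the past/left side). Since $\eta'$ is a.s.\ determined by $h^{\IG}$ by the construction in~\cite{ig4}, the restrictions $\eta'|_{(-\infty,0]}$ and $\eta'|_{[0,\infty)}$ are determined by these conditional fields. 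I would then conformally map each component to $\bbH$ and match the resulting boundary data (including the $\chi\arg((\phi^{-1})')$ term arising in the IG change-of-coordinates) against the standard IG boundary data formulas for space-filling $\SLE_{\kappa'}(\ul\rho)$ processes in~\cite{ig4}, to identify the past as space-filling $\SLE_{\kappa'}(\kappa'/2 - 4; 0)$ from $\infty$ to $0$ and, for $\kappa \in (0,2]$, the future as chordal space-filling $\SLE_{\kappa'}$ from $0$ to $\infty$. Conditional independence of past and future follows from the independence of the two conditional fields.

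For the topological claims: the interface $\SLE_\kappa(\kappa/2 - 2; -\kappa/2)$ is a simple curve since $\kappa < 4$. The left force point weight $\kappa/2 - 2$ sits exactly at the boundary-hitting threshold, so the interface a.s.\ does not touch $\R_-$ on a set of positive Lebesgue measure, making the past simply connected. The right weight satisfies $-\kappa/2 \leq \kappa/2 - 2$ iff $\kappa \geq 2$; thus for $\kappa \in (0,2]$ the future is simply connected, while for $\kappa \in (2,4)$ the interface hits $\R_+$ on a set of positive measure, partitioning the future region into countably many disk-homeomorphic components. In each such component, the same imaginary geometry argument identifies the restriction of $\eta'$ as an independent chordal space-filling $\SLE_{\kappa'}$ between the first and last points of the component's boundary hit by the interface.

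The hard part will be the careful bookkeeping of $\SLE_{\kappa'}(\ul\rho)$ force point values from the IG boundary data under the conformal maps of the past/future components to $\bbH$; in particular, the $\chi\arg((\phi^{-1})')$ correction in the IG coordinate change is what ultimately produces the specific value $\kappa'/2 - 4$ for the past force weight. Handling the $\kappa \in (2,4)$ case also requires managing the countable collection of pinch-off components and verifying their identification one by one.
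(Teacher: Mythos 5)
Your proposal follows essentially the same route as the paper: couple $\eta'$ with the imaginary geometry field of constant boundary value $-\lambda'$, identify the interface as the angle-$\pi/2$ flow line from $0$ and read off $(\rho_L,\rho_R)=(\frac{\kappa}{2}-2,-\frac{\kappa}{2})$ from the shifted boundary data, use the boundary-hitting threshold $\rho<\frac{\kappa}{2}-2$ for the topological claims, and match IG boundary data on the two sides of the interface to get the conditional laws. One small correction: the relevant dichotomy is whether the interface hits $\R_\pm$ \emph{at all} (even a single interior hit disconnects a component), not whether it does so on a set of positive Lebesgue measure — at the critical weight $\rho_L=\frac{\kappa}{2}-2$ the curve a.s.\ avoids $\R_-$ entirely, while for $\kappa\in(2,4)$ it hits $\R_+$ on an uncountable Lebesgue-null set, producing the countably many beads.
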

\begin{proof}
Recall the imaginary geometry parameters \eqref{eqn: ig parameter}. Let $h^{\op{IG}}$ be the imaginary geometry GFF on $\bbH$ with constant boundary value $-\lambda'$, which is coupled with $\eta'$. Let $\eta^L_0$ be the flow line of $h^{IG}$ started at the origin with angle $\pi/2$, i.e. the flow line of the field $h^{IG} + \frac{\pi \chi}{2}$. By the definition of space-filling $\SLE_{\kappa'}$, this flow line $\eta^L_0$ is the interface $\eta'((-\infty, 0]) \cap \eta'([0,\infty))$ of the regions filled in by $\eta'$ before and after hitting 0. 

Since $h^{\op{IG}} +  \frac{\pi \chi}{2}$ has boundary value $-\lambda' +\frac{\pi \chi}{2} = -\lambda + \pi \chi$, we see from~\cite[Theorem 1.1]{ig1} that $\eta^L_0$ is a $\SLE_{\kappa} (\rho_L; \rho_R)$ curve from $0$ to $\infty$, where $\rho_L$ and $\rho_R$ satisfy
\[-\lambda(1+\rho_L) = -\lambda + \pi \chi, \quad \lambda (1 + \rho_R) = -\lambda +\pi \chi. \]
Solving, we have $\rho_L = \frac{\kappa}{2} - 2$ and $\rho_R = -\frac\kappa2$, so the interface $\eta'((-\infty, 0]) \cap \eta'([0,\infty))$ is a $\SLE_{\kappa}(\frac{\kappa}{2}-2;-\frac{\kappa}{2})$ curve from $0$ to $\infty$.

We now comment on the topologies of the regions to the left and right of $\eta^L_0$. For all $\kappa \in (0,4)$, the region $\eta'((-\infty, 0])$ has simply connected interior. For $\kappa \in (0,2]$, the region $\eta'([0,\infty))$ has simply connected interior, but for $\kappa \in (2,4)$, the region $\eta'([0,\infty))$ does not have simply connected interior. Indeed, this follows from the above description of $\eta'((-\infty, 0]) \cap \eta'([0,\infty))$ and the fact that SLE$_\kappa(\rho_L;\rho_R)$ hits $(-\infty,0)$ (resp.\ $(0,\infty)$) if and only if $\rho_L < \kappa/2-2$ (resp.\ $\rho_R < \kappa/2-2$)~\cite[Lemma 15]{dubedat-duality} (see also~\cite[Section 4]{ig1}).

Now, by looking at the boundary values of $h^{\IG}$ on $\R_-$ and on the left of $\eta^L_0$, we can determine (via~\cite[Theorem 1.1]{ig1}) the law of $\eta'|_{(-\infty, 0]}$ in the domain $\eta'((-\infty,0])$. It is a space-filling $\SLE_{\kappa'}(\widetilde \rho_L;\widetilde \rho_R)$ curve from $\infty$ to $0$ with $\widetilde \rho_L, \widetilde \rho_R$ satisfying
\begin{align}
\lambda'(1+\rho_L) = -\lambda' + \pi \chi, \qquad 
-\lambda'(1+\rho_R)  = -\lambda' ,
\end{align}
so $( \widetilde \rho_L, \widetilde \rho_R) = (\frac{\kappa'}{2} - 4, 0)$. Likewise we can solve for the $\rho$-weights of the curve $\eta'|_{[0,\infty)}$ in the domain $\eta'([0,\infty))$, and we find that it is just ordinary space-filling $\SLE_{\kappa'}$ from $0$ to $\infty$ (if $\kappa \in (2,4)$, then $\eta'$ is a concatenation of independent ordinary space-filling $\SLE_{\kappa'}$s in each bead). 
\end{proof}

\subsection{Quantum area and quantum boundary length} \label{section-lqg-intro}
Let $h = \wt h + g$ be a field on $D$, where $\wt h$ is one of the aforementioned types of GFF on $D$ in Section~\ref{subsection: GFF} and $g$ is a random continuous function on $D$. If $\wt h$ is a Neumann GFF, its additive constant must be fixed in some way. Examples include fields of quantum wedges/disks (Section~\ref{subsection: quantum wedges and disks}). Following~\cite[Proposition 1.1]{shef-kpz}, for $\gamma \in (0,2)$ we define the $\gamma$-LQG area measure by
\[\mu_h = \lim_{\eps \to 0} \eps^{\gamma^2/2} e^{\gamma h_\eps(z)} dz,  \]
where $h_\eps(z)$ is the average of $h$ on the circle $\partial B_\eps(z)$, $dz$ is Lebesgue measure on $D$, and the convergence occurs a.s.\ when $\eps \to 0$ is taken along a dyadic sequence.
We refer to~\cite[Section 3.1]{shef-kpz} for more background on the circle average process.

On a linear segment of $\partial D$ where $h$ has Neumann boundary conditions, if $g$ extends continuously to this boundary segment, we can similarly define the quantum boundary length measure $\nu_h$. In particular, following~\cite[Section 6]{shef-kpz}, we define
\[
\nu_h = \lim_{\eps \to 0} \eps^{\gamma^2/4} e^{\gamma h_\eps(x) /2} \,dx, 
\]
where $h_\eps(x)$ is the average of $h$ over the semicircle $\partial B_\eps(x) \cap D$, $dx$ is Lebesgue measure on the linear boundary segment, and the convergence occurs a.s.\ when $\eps \to 0$ is taken along a dyadic sequence. 

The measures $\mu_h$ and $\nu_h$ are a special case of a more general family of random measures constructed from log-correlated Gaussian fields called \emph{Gaussian multiplicative chaos}, which was initiated in~\cite{kahane}. See~\cite{rhodes-vargas-review,berestycki-gmt-elementary,aru-gmc-survey} for expository works on this theory.

Several observations are immediate from the limit definitions of quantum area and boundary length. Firstly, these measures are local functions of the field, i.e. the restriction of $\mu_h$ to an open set $U$ is determined by the restriction of $h$ to $U$, and the restriction of $\nu_h$ to a boundary interval $I$ is determined by the restriction of $h$ to any neighborhood of $I$. Secondly, for a constant $C$ we have $\mu_{h+C} = e^{\gamma C} \mu_h$ and similarly $\nu_{h+C} = e^{\gamma C/2} \nu_h$.

\subsection{Quantum wedges and disks}\label{subsection: quantum wedges and disks}
We recall the definitions of the quantum surfaces we will be working with. These surfaces are most easily described when parametrized by the strip $\cS = \R \times [0,\pi]$; parametrizations by other domains (like the half-plane or disk) can be obtained by applying a conformal map and using the coordinate change formula~\eqref{eqn: quantum surface defn}. For a more comprehensive introduction to these quantum surfaces, see \cite[Section 4]{wedges} and \cite[Section 2]{sphere-constructions}. 

When we work in the strip $\cS$, since the horizontal translation $T_c: z \mapsto z - c$ satisfies $Q \log |T_c'| = 0$, the quantum surface $(\cS, h)$ (possibly with marked points $\pm \infty$) is equivalent to the quantum surface $(\cS, h \circ T_c)$. Thus, we can horizontally translate the field without changing the quantum surface. We will often do so when it is notationally convenient.

Recall the decomposition \eqref{eqn: orthogonal decomposition}. In the subsections below, we define the quantum wedge and quantum disk by their projections onto the subspaces $\cH_1(\cS), \cH_2(\cS)$. Their projections onto $\cH_2(\cS)$ will be given by the projection of a Neumann GFF on $\cS$ to $\cH_2(\cS)$ (this projection is a well defined field, not just up to additive constant). We will also refer to the projection of a field to $\cH_1(\cS)$ as its \emph{field average process}. 
\subsubsection{Thick quantum wedges}\label{subsection: wedge}
\begin{definition}\label{def: thick wedge}
For $\alpha \in (-\infty, Q)$, the $\alpha$-quantum wedge $(\cS, h, +\infty, -\infty)$ is the quantum surface with $h$ sampled in the following way \cite[Remark 4.6 (second field description)]{wedges}: 
\begin{itemize}
\item Let $(X_s)_{s \in \R}$ be the projection of $h$ onto $\cH_1(\cS)$, i.e. $X_s$ is the average of $h$ on $[s,s+i\pi]$. Then $X$ is obtained by first sampling independent Brownian motions $(B_s)_{s\geq 0}$ and $(\widehat{B}_s)_{s \geq 0}$ such that
\begin{itemize}
\item $(B_s)_{s\geq 0}$ has variance 2, initial value $B_0 = 0$, and downward linear drift of $(\alpha - Q)$;

\item $(\widehat B_s)_{s\geq 0}$ has variance 2, initial value $\widehat B_0 = 0$, and upward linear drift of $(Q - \alpha)$; moreover it is conditioned to satisfy $\widehat B_s > 0$ for all $s > 0$.
\end{itemize}
Then $(X_s)_{s \in \R}$ is given by the concatenation
\[X_s = \left\{
	\begin{array}{ll}
		B_s  & \mbox{if } s \geq 0 \\
		\widehat B_{-s} & \mbox{if } s < 0.
	\end{array}
\right. \]
See Figure~\ref{fig: brownian_infinite} for a sketch.

\item Independently of $(X_s)_{ s \in \R}$, the projection of $h$ onto $\cH_2(\cS)$ is given by the projection of a Neumann Gaussian free field on $\cS$ onto $\cH_2(\cS)$. 
\end{itemize}
\end{definition}
\begin{figure}[ht!]
\begin{center}
\includegraphics[scale=0.4]{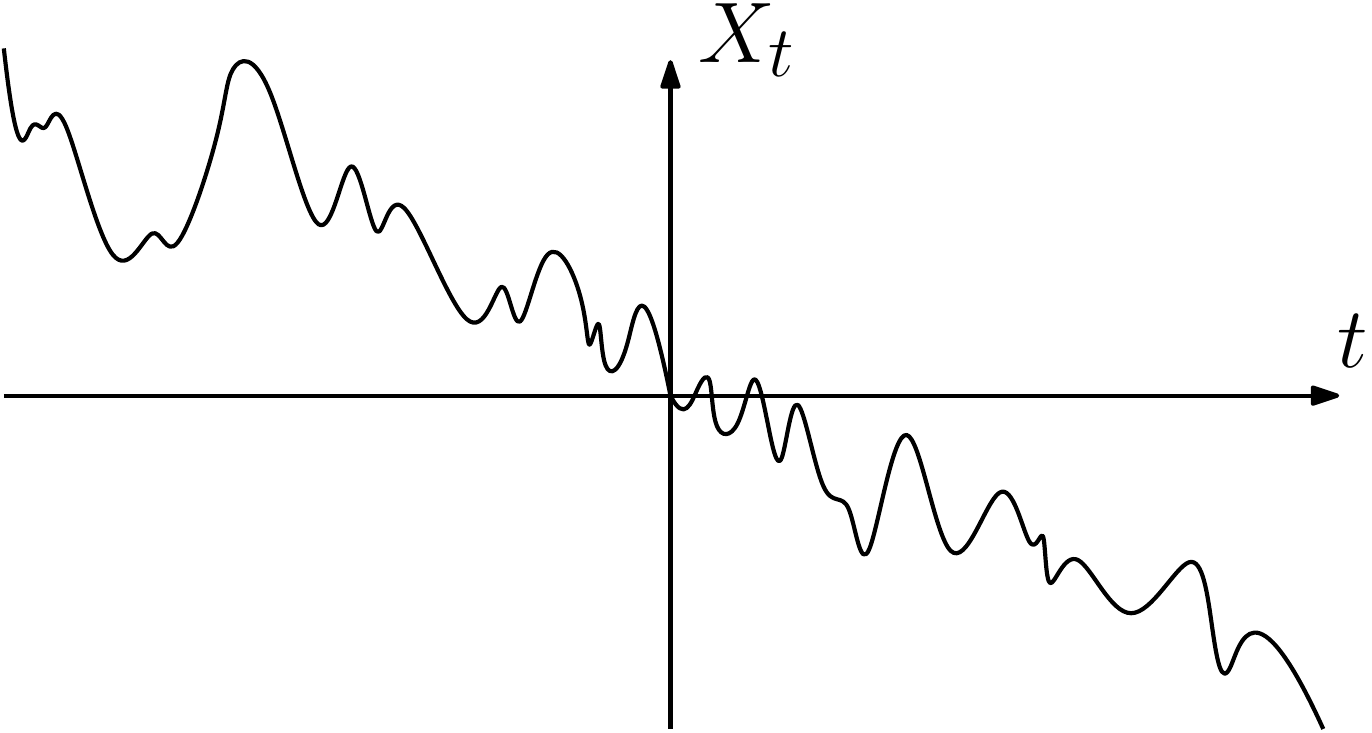}
\end{center}
\caption{\label{fig: brownian_infinite}Sketch of the field average process $X_t$ of an $\alpha$-quantum wedge with $\alpha < Q$; see Definition~\ref{def: thick wedge}. Note that for $t < 0$ we have $X_t > 0$.}
\end{figure}
A more informal way of describing the process $(X_s)_{s \in \R}$ is as a variance 2 Brownian motion with negative drift $(\alpha - Q)$, starting from $+\infty$ at time $-\infty$ and going to $-\infty$ at time $+\infty$, and with time parametrized so that it first hits $0$ at time 0. From this perspective, it is clear that quantum wedges are \emph{scale invariant} --- if $(\cS, h , +\infty,-\infty)$ is an $\alpha$-quantum wedge, then for any deterministic constant $C$ the quantum surface $(\cS, h+C, +\infty,-\infty)$ also has the law of an $\alpha$-quantum wedge. That is, there exists a random $t\in\BB R$ such that $h(\cdot+t) + C$ agrees in law with $h$.

The thick quantum wedge comes with two distinguished points $\pm \infty$. Every neighborhood of $-\infty$ has infinite quantum mass and boundary length, and the complement of any such neighborhood has finite quantum mass and boundary length. In other descriptions of the $\alpha$-quantum wedge, the process $(X_s)_{s \geq 0}$ is instead taken to have \emph{positive} drift rather than negative (the roles of $\pm \infty$ are switched around). Here, as in \cite{sphere-constructions}, we choose our notation so that the distinguished point having infinite neighborhoods is at $-\infty$, since we will usually be exploring the quantum surface from the ``infinite area'' end to the ``finite area'' end, and it seems notationally more natural for this exploration to proceed from left to right. We record below a description of the unexplored field in one such left-to-right exploration.

\begin{lemma}\label{lem-wedge-tip}
Let $(\cS, h^\cS, +\infty, -\infty)$ be a thick $\alpha$-quantum wedge ($\alpha \in (-\infty, Q)$). Fix $r \in \R$ and choose the horizontal centering of $h^\cS$ so that the left-to-right field average process first takes the value $-r$ at $0$. Then we can write
\begin{equation}\label{eqn: wedge future is GFF}
h^\cS|_{\cS_+} = \widetilde h + (\alpha - Q) \Re (\cdot) - r,
\end{equation}
where $\widetilde h$ is a Neumann GFF in $\cS$ normalized to have average 0 on $[0,i\pi]$ and restricted to $\cS_+$. 
\end{lemma}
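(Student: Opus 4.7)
The plan is to reduce to the case $r = 0$ via scale invariance of the wedge, and then verify~\eqref{eqn: wedge future is GFF} in distribution by matching both sides under the orthogonal decomposition $H(\cS_+) = \cH_1(\cS_+) \oplus \cH_2(\cS_+)$ analogous to~\eqref{eqn: orthogonal decomposition}.

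For the reduction, let $h^{(0)}$ denote the wedge field with the ``default'' centering (field average first hits $0$ at time $0$) as in Definition~\ref{def: thick wedge}. Then $h^{(0)} - r$ has field average first hitting $-r$ at time $0$, and by scale invariance of the quantum wedge the quantum surface $(\cS, h^{(0)} - r, +\infty, -\infty)$ again has the $\alpha$-wedge law. So by uniqueness of the centered representative, the field $h^\cS$ satisfying the centering condition of the lemma has the law of $h^{(0)} - r$. It therefore suffices to prove~\eqref{eqn: wedge future is GFF} in the case $r = 0$ with $h^\cS = h^{(0)}$.

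For the main argument, I would project both sides onto $\cH_1(\cS_+)$ and $\cH_2(\cS_+)$ and compare laws and independence. By Definition~\ref{def: thick wedge}, the $\cH_1(\cS_+)$-projection of $h^{(0)}|_{\cS_+}$ is $(X_s)_{s\geq 0} = B_s$, i.e.\ a variance-$2$ Brownian motion with drift $\alpha - Q$ starting from $0$; and its $\cH_2(\cS_+)$-projection is independent of this and equals in law the restriction to $\cS_+$ of the $\cH_2(\cS)$-projection of a Neumann GFF on $\cS$. For the right side $\widetilde h + (\alpha - Q)\Re(\cdot)$, applying Remark~\ref{remark: GFF on strip} to the Neumann GFF whose restriction is $\widetilde h$, the $\cH_1(\cS)$-projection is a two-sided standard Brownian motion of variance $2$ taking the value $0$ at time $0$, independent of the $\cH_2(\cS)$-projection whose law is exactly as above. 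Adding the drift $(\alpha - Q)\Re(\cdot) \in \cH_1(\cS_+)$ shifts the $\cH_1(\cS_+)$-projection to $B_s$ and leaves the $\cH_2(\cS_+)$-projection untouched, matching the left side in both marginals and in independence structure.

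The only point requiring care is that the restriction map respects the decomposition, i.e.\ $f \in \cH_i(\cS) \Rightarrow f|_{\cS_+} \in \cH_i(\cS_+)$, which is immediate from the definitions (vertical-line averages and vertical-line means-zero are preserved under restriction). There is no substantive obstacle here; the lemma is essentially a bookkeeping reformulation of the definition of the wedge via its $\cH_1 \oplus \cH_2$ projections.
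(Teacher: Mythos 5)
Your proof is correct and follows essentially the same route as the paper: the paper's own (very brief) proof likewise handles $r=0$ directly from Definition~\ref{def: thick wedge} together with Remark~\ref{remark: GFF on strip}, and then deduces general $r$ from the scale invariance of the quantum wedge. Your write-up simply makes explicit the comparison of the $\cH_1$/$\cH_2$ projections and the recentering step that the paper leaves implicit.
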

\begin{proof}
For the case $r = 0$, this follows from Definition~\ref{def: thick wedge} and Remark~\ref{remark: GFF on strip}. The case for general $r$ is an immediate consequence of the scale-invariance of the quantum wedge. 
\end{proof}
 
\begin{remark}\label{rem: right to left}
The quantum wedge field description of Definition~\ref{def: thick wedge} is natural from the perspective of exploring the field from left to right (i.e. from the marked point with infinite neighborhoods to the marked point with finite neighborhoods). Sometimes, however, it will be useful to explore the field from right to left. We provide an equivalent definition\footnote{For $\alpha \in (-\infty, Q)$, this definition can be recovered from Definition~\ref{def: thick wedge} by horizontally shifting so that 0 is the \emph{last} time that $X_s$ hits zero, rather than the first time. For $\alpha = Q$, the field average process of a $Q$-quantum wedge is defined to be a certain reparametrized log Bessel process \cite[Section 4.4]{wedges}, and by \cite[Proposition 3.4]{wedges} and subsequent discussion this process has the description given in Remark~\ref{rem: right to left}.} of the field of an $\alpha$-quantum wedge for $\alpha \in (-\infty, Q]$. 
The $\alpha$-quantum wedge $(\cS, h, +\infty, -\infty)$ is the quantum surface such that the field $h$ has independent projections to $\cH_1(\cS)$ and $\cH_2(\cS)$ which are sampled as follows:
\begin{itemize}
\item Sample independent Brownian motions $(B_s)_{s\geq 0}$ and $(\wh B_{s})_{s \geq 0}$ such that
\begin{itemize}
\item $(\wh B_s)_{s\geq0}$ has variance 2, initial value $\wh B_0 = 0$, and downward linear drift of $(\alpha - Q)$; moreover it is conditioned to satisfy $\wh B_s < 0$ for all $s > 0$;
\item $(  B_s)_{s\geq0}$ has variance 2, initial value $ B_0 = 0$, and upward linear drift of $(Q-\alpha)$. 
\end{itemize}
Then the field average process $(X_s)_{s \in \R}$ is given by the concatenation
\[X_s = \left\{
	\begin{array}{ll}
		\wh B_s  & \mbox{if } s \geq 0 \\
		 B_{-s} & \mbox{if } s < 0.
	\end{array}
\right. \]

\item Independently of $(X_s)_{ s \in \R}$, the projection of $h$ onto $\cH_2(\cS)$ is given by the projection of a Neumann Gaussian free field on $\cS$ onto $\cH_2(\cS)$. 
\end{itemize}
\end{remark}

Roughly speaking, the next lemma can be considered an application of a ``strong Markov property'' of the GFF: we explore the thick quantum wedge $(\cS,h,+\infty, -\infty)$ from right to left until a stopping time, then, conditioned on the explored field, we deduce the conditional law of the unexplored field. Technically, however, $h$ is not a GFF, so we first use Remark~\ref{rem: right to left} to explore a small neighborhood of $+\infty$ (so the unexplored region has exactly the law of a GFF plus drift), then conclude by using the theory of local sets of the GFF (analogous to the strong Markov property of Brownian motion).

\begin{lemma}\label{lem: rest of thick wedge}
Let $(\cS, h, +\infty, -\infty)$ be an $\alpha$-quantum wedge with $\alpha \in (-\infty,Q]$ with field defined as in Remark~\ref{rem: right to left}.  
\begin{enumerate}[(a)]
\item Fix any $l>0$ and horizontally recenter $h$ so that $\nu_h(\R_+) = l$. Then conditioned on $h|_{\cS_+}$, we can sample $h|_{\cS_-}$ from its regular conditional distribution by sampling a GFF on $\cS_-$ with Neumann boundary conditions on $\R_-$ and $\R_- + i\pi$ and Dirichlet boundary conditions on $[0,i\pi]$ specified by $h|_{\cS_+}$, and adding a linear drift of $-(Q-\alpha) \Re  \cdot $. 
\item Writing $\kappa' = 4/\gamma^2$, let $\eta'$ be an independently sampled counterclockwise space-filling $\SLE_{\kappa'}$ on $\cS$ from $-\infty$ to $-\infty$. Fix any $q_1, q_2 > 0$, and let $x_1 \in \R$ and $x_2 \in \R + i\pi$ satisfy $\nu_h(\R_+ + x_1) = q_1, \nu_h(\R_+ +x_2) = q_2$. Let $U$ be the region explored by $\eta'$ between the times it hits $x_1$ and $x_2$, and horizontally recenter $h, \eta'$ so that $\inf_{z\in U} \Re(z) = 0$. Then conditioned on $h|_{\cS_+}$ and $\eta'$, we can sample $h|_{\cS_-}$ from its regular conditional distribution by sampling a GFF on $\cS_-$ with Neumann boundary conditions on $\R_-$ and $\R_- + i\pi$ and Dirichlet boundary conditions on $[0,i\pi]$ specified by $h|_{\cS_+}$, and adding a linear drift of $-(Q-\alpha) \Re  \cdot $. 
\end{enumerate}
\end{lemma}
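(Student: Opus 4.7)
The plan is to apply the explicit wedge field description from Remark~\ref{rem: right to left} together with the Markov property of the GFF (Lemma~\ref{lem-markov}(b)) in a strong-Markov-type argument, as outlined in the paragraph preceding the lemma. Part~(b) will follow from part~(a) by first conditioning on the independent curve $\eta'$.

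For part~(a), the key intermediate step, asserted in the preceding paragraph, is the following: in the centering of Remark~\ref{rem: right to left}, upon exploring the field in a large right-neighborhood of $+\infty$, the law of the unexplored region is exactly that of a mixed-boundary GFF plus the linear drift $-(Q-\alpha)\Re(\cdot)$, with Neumann conditions on the horizontal boundary and Dirichlet conditions on the vertical boundary given by the harmonic extension of the exposed field. To prove this, I would decompose the wedge field into its $\cH_1(\cS)$ and $\cH_2(\cS)$ projections (independent by Remark~\ref{rem: right to left}) and combine the strong Markov property of the Brownian motion appearing in the $\cH_1$-projection with the Markov property of the Neumann GFF applied to the $\cH_2$-projection.

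Once the above description is established, one defines the reverse-time stopping time $s^* := \sup\{s \in \R : \nu_h([s, \infty) \cap \R) \geq l\}$ and sets $h^*(\cdot) := h(\cdot + s^*)$, so that $\nu_{h^*}(\R_+) = l$. Since $s^*$ is measurable with respect to the $\sigma$-algebra generated by the field to its right, the half-strip $[s^*, \infty) \times [0, \pi]$ is a local set for the mixed-boundary-GFF-plus-drift description obtained above, and the strong Markov property for local sets of the GFF (analogous to the strong Markov property of Brownian motion at a stopping time) yields the statement of part~(a) after translating by $s^*$ and absorbing the resulting deterministic drift shift into the field's additive constant.

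For part~(b), since $\eta'$ is sampled independently of $h$, conditioning on $\eta'$ first does not alter the distribution of $h$. Given $\eta'$, the boundary points $x_1, x_2$, the region $U$, and the translation aligning $\inf_{z\in U}\Re(z) = 0$ are all measurable with respect to $\eta'$ and $h|_{\cS_+}$ after the translation, so part~(a) applied to this random translation (with boundary length $l = \nu_h(\R_+)$ read from $h|_{\cS_+}$) gives the claim. The principal technical obstacle is the intermediate step in the second paragraph: carefully justifying the mixed-boundary-GFF-plus-drift description of the unexplored region. This requires an accurate treatment of both the one-sided Brownian motion conditioning in Remark~\ref{rem: right to left} and the Markov property of the $\cH_2$-projection of the Neumann GFF appearing in the wedge construction, as well as consistent tracking of drift terms under horizontal translation.
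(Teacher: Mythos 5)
Your overall strategy---the description of Remark~\ref{rem: right to left}, the Markov property of Lemma~\ref{lem-markov}, and a strong-Markov/local-set argument at the reverse stopping time where the remaining right-boundary length equals $l$---is the same as the paper's, but there is a genuine gap where you invoke the local-set property. The mixed-boundary-GFF-plus-drift description you set up in your second paragraph is valid only for the field strictly to the \emph{left} of the origin of the Remark~\ref{rem: right to left} centering (equivalently, to the left of whatever deterministic vertical line bounds your explored neighborhood of $+\infty$): to the right of that origin the field average process is the Brownian motion $\wh B$ \emph{conditioned to stay negative}, not an unconditioned drifted Brownian motion. Your recentering point $s^*$, defined by $\nu_h([s^*,\infty))=l$, lies to the right of that origin exactly when $\nu_h(\R_+)>l$ in the Remark~\ref{rem: right to left} centering, and this event has positive probability for every fixed $l$. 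On that event $\cS_-+s^*$ contains the rectangle $[0,s^*]\times[0,\pi]$, on which the conditional law given the field to the right of $s^*$ is still affected by the conditioning of $\wh B$, so the set you want to treat as a local set does not sit inside the domain of the GFF description and the strong Markov property for local sets cannot be applied. Exploring a ``larger'' right-neighborhood of $+\infty$ only makes this worse, since the unexplored GFF region shrinks and one would need $s^*$ even further to the left.

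The paper closes precisely this gap with a scale-invariance step that your proposal is missing: it couples $h$ with $\wt h = g - C$, where $g$ is the field of Remark~\ref{rem: right to left} and $C\gg 0$, so that $\P[\nu_{\wt h}(\R_+)\le l]\to 1$ as $C\to\infty$; on this event the recentering point $\tau$ satisfies $\tau\le 0$, the set $[\tau,0]\times[0,\pi]$ is a genuine local set of the GFF on $\cS_-$ (given $\wt h|_{\cS_+}$), and the desired conditional law follows; since the law of the recentered field does not depend on $C$, sending $C\to\infty$ removes the exceptional event. You need this, or some other treatment of $\{s^*>0\}$ (a direct analysis of the conditioned Brownian motion would be messier), to complete part (a). A smaller point: part (b) is not literally an application of part (a) to a random translation with ``$l$ read from $h|_{\cS_+}$''---the recentering there is the leftmost point of $U$, not a level set of boundary length, and (a) is proved for deterministic $l$. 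What does work, and is what the paper means by ``the proof of (b) is similar,'' is that given the independent curve $\eta'$ the point $\inf_{z\in U}\Re(z)$ is again a reverse stopping time (whether it lies to the right of $s$ is determined by $h|_{\cS_+ + s}$ and $\eta'$), so the same truncation-plus-local-set argument runs verbatim conditionally on $\eta'$.
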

\begin{proof}
We justify (a); the proof of (b) is similar. 

First, we express $h$ in a more convenient way, as follows. Let $g$ be the field described in Remark~\ref{rem: right to left}. Fix any $C \gg 0$ and define $\wt h = g - C$; we have $\nu_{\wt h} = e^{-\gamma C/2}\nu_{g}$ and so $\lim_{C \to \infty} \P[\nu_{\wt h}(\R_+) \leq l] = \lim_{C \to \infty} \P[e^{-\gamma C/2} \nu_g(\R_+) \leq l ] = 1$. By the scale invariance of the quantum wedge (see the discussion just after Definition~\ref{def: thick wedge}) we see that $(\cS,\wt h,+\infty, -\infty)$ is an $\alpha$-quantum wedge. That is, the quantum surfaces $(\cS,h,+\infty,-\infty)$ and $(\cS,\wt h,+\infty, -\infty)$ have the same law, so we can couple $h, \wt h$ to agree a.s.\ modulo horizontal centering. 

By Lemma~\ref{lem-markov} applied to the restriction of $\wt h$ to $\cS_+$ and Remark~\ref{rem: right to left}, conditioned on $\wt h|_{\cS_+}$, the field $\wt h|_{\cS_-}$ is a GFF with Neumann boundary conditions on $\R_-$ and $\R_- + i\pi$ and Dirichlet boundary conditions on $[0,i\pi]$ specified by $ \wt h|_{\cS_+}$, with $-(Q-\alpha)\Re (\cdot)$ added. On the event $\{\nu_{\wt h} (\R_+) \leq l\}$, set $\tau \leq 0$ to be the point such that $\nu_{\wt h}(\R_+ + \tau) = l$. Note that $h(\cdot) = \wt h(\cdot + \tau)$.

Since $ \lim_{C \to \infty} P[ \nu_{\wt h}(\R_+) \leq l] = 1$, we are done once we verify the following claim: on the event $\{ \nu_{\wt h}(\R_+) \leq l\}$, the conditional law of the field $\wt h|_{\cS_-+\tau}$ given $\wt h|_{\cS_+ + \tau}$ is that of a mixed boundary GFF with Neumann boundary conditions on the horizontal segments and specified Dirichlet boundary conditions on the vertical segment, plus a drift term $-(Q-\alpha) \Re  \cdot $. 

One way to prove this claim is to follow the proof of the strong Markov property of Brownian motion: treat $\tau$ as a right-to-left exploration stopping time, approximate $\tau$ by stopping times taking countably many values (e.g. round $\tau$ to the next multiple of $2^{-n}$), use Lemma~\ref{lem-markov} and take a limit. Alternatively, one can use the machinery of ``local sets of the GFF\footnote{
We use \cite[Lemma 3.9]{ss-contour}, which was stated for Dirichlet GFFs on simply connected domains, whereas we have a mixed-boundary GFF. To apply it, we map $\cS_-$ to $ \D\cap \bbH$ by the exponential map, and note that the mixed-boundary GFF in this domain is the even part of a Dirichlet GFF on $\D$ (see Section~\ref{subsection: GFF}). 
}'': conditioned on $\wt h|_{\cS_+}$, the random set $A = [\tau, 0] \times [0,\pi]$ is a local set of the GFF $\wt h|_{\cS_-}$ \cite[Lemma 3.9 Condition 1]{ss-contour}, and hence the conditional law of $\wt h|_{\cS_- +\tau}$ is that of a mixed-boundary GFF with Dirichlet boundary conditions on $[\tau, \tau + i\pi]$ given by $\wt h|_{\cS_+ + \tau}$ and Neumann boundary conditions elsewhere, plus the drift term \cite[Lemma 3.9 Condition 4]{ss-contour}. Since $h(\cdot) = \wt h(\cdot + \tau)$, we have shown Lemma~\ref{lem: rest of thick wedge} (a).
\end{proof}  

\subsubsection{Thin quantum wedges}\label{subsection: thin wedge}
For $\alpha \in (Q, Q+\frac\gamma2)$, the $\alpha$-quantum wedge is a random ordered sequence of surfaces (``beads''), each bead being a disk-homeomorphic quantum surface decorated by two marked points (i.e., each bead can be parametrized by $(\D,h,-1,1)$ for some field $h$). We give a brief definition below; for further discussion see \cite[Definition 4.15]{wedges}.

To sample an $\alpha$-thin quantum wedge, let $\delta = 2+\frac{2(Q-\alpha)}{\gamma}<2$, and let $Y: [0,\infty) \to [0,\infty)$ be a Bessel process of dimension $\delta$. The process $Y$ decomposes as a countable ordered collection of Bessel excursions. From each such excursion $e$ we create a disk-homeomorphic quantum surface $B_e = (\cS, h, -\infty, +\infty)$ (an $\alpha$-quantum bead), as follows:
\begin{itemize}
\item The excursion $e = (e_t)_{t \in I}$ is defined on some finite interval $I$. Take the process $\frac2\gamma \log e_t$ and apply any time-reparametrization $s = s(t)$ such that the reparametrized process $(X_s)_{s \in \R}$ has quadratic variation $2 ds$. Then the projection of $h$ to $\cH_1(\cS)$ is given by $(X_s)_{s \in \R}$;

\item The projection of $h$ to $\cH_2(\cS)$ is given by the projection of a Neumann GFF on $\cS$ to $\cH_2(\cS)$. This Neumann GFF is independent of both the Bessel process $Y$ and the other beads.
\end{itemize}
Then the $\alpha$-thin quantum wedge is given by the ordered collection $(B_e)$.

The Bessel excursion measure is infinite, and consequently so is the measure on $\alpha$-quantum beads. However, for any $l>0$ the measure of beads $(\cS, h, -\infty, +\infty)$ satisfying $\nu_h(\R)>l$ is finite. We give a partial description of the law of a bead conditioned on $\{ \nu_h(\R) > l\}$ here, which is proved in the same way as Lemma~\ref{lem: rest of thick wedge}.
\begin{lemma}\label{lem: alpha bead}
For $\alpha \in (Q, Q + \frac\gamma2)$ and $l>0$, let $(\cS, h, -\infty, +\infty)$ be an $\alpha$-quantum bead conditioned on $\{\nu_h(\R)>l\}$ and suppose that we have horizontally translated the field so that $\nu_h((-\infty, 0])=l$. Then conditioned on the field $h|_{\cS_-}$, we can sample $h|_{\cS_+}$ by sampling a GFF on $\cS_+$ with Neumann boundary conditions on $\R_+$ and $\R_+ + i\pi$, and Dirichlet boundary conditions on $[0,i\pi]$ specified by $h|_{\cS_-}$, and adding a downward linear drift of $(Q - \alpha) \Re \cdot$.
\end{lemma}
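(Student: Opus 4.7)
The plan is to adapt the proof of Lemma~\ref{lem: rest of thick wedge}(a) to the bead setting, replacing the Brownian-motion-with-drift description of the field-average process by the Bessel-excursion description. By the definition of the $\alpha$-quantum bead with $\alpha \in (Q, Q+\gamma/2)$, we can write $h = X + h^{(2)}$, where $X$ is a time-reparametrization of $\frac{2}{\gamma} \log e$ for a Bessel excursion $e$ of dimension $\delta = 2 + 2(Q-\alpha)/\gamma \in (1,2)$, and $h^{(2)}$ is the projection of an independent Neumann GFF on $\cS$ onto $\cH_2(\cS)$. A standard Ito computation (analogous to the calculation connecting Bessel processes to drifted Brownian motion under a logarithmic time change) shows that in the reparametrized time, $X$ is a variance-$2$ Brownian motion with drift $(Q-\alpha)$, conditioned to form an excursion whose paths go to $-\infty$ at both $s = -\infty$ and $s = +\infty$.

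The main new ingredient beyond the thick-wedge argument is a Markov property for the Bessel excursion: conditional on $X|_{(-\infty, 0]}$, and in particular on $X_0$, the restriction $X|_{[0, \infty)}$ is distributed as an unconditioned variance-$2$ Brownian motion with drift $(Q - \alpha)$ starting from $X_0$. This follows from the standard excursion-theoretic fact that, at a fixed time interior to a Bessel excursion, the future is an independent Bessel process of dimension $\delta$ from the current value, run until it hits $0$; since $\delta < 2$, this hitting time is a.s.\ finite, so after the log-time change the future covers all of $[0, \infty)$ with $X_s \to -\infty$. Combining this with the Markov property for $h^{(2)}$ (obtained from Lemma~\ref{lem-markov}(b) applied to the Neumann GFF obtained by augmenting $h^{(2)}$ with an auxiliary independent Brownian motion on $\R$) yields, conditional on $h|_{\cS_-}$, that $h|_{\cS_+}$ is the claimed mixed-boundary GFF on $\cS_+$ plus the linear drift $(Q-\alpha)\Re \cdot$.

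The main obstacle is rigorously handling the horizontal centering $\tau$ defined by $\nu_h((-\infty, \tau]) = l$, since $\tau$ depends on both $X$ and $h^{(2)}$ and is therefore a stopping time for their joint filtration rather than a deterministic parameter in the Markov statements above. As in the proof of Lemma~\ref{lem: rest of thick wedge}(a), this can be handled either by approximating $\tau$ from above by stopping times taking countably many values and passing to the limit, or by invoking the local-set theory of the GFF to treat $\cS_-$ as a local set of an appropriate underlying GFF-plus-drift, so that conditioning on $h|_{\cS_-}$ produces the claimed mixed-boundary GFF on $\cS_+$.
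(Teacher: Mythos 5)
Your proposal is correct and follows essentially the same route as the paper, which simply asserts that the lemma ``is proved in the same way as Lemma~\ref{lem: rest of thick wedge}''; you correctly identify the one genuinely new ingredient, namely the Markov property of the It\^o excursion measure for the dimension-$\delta$ Bessel process (with $\delta\in(1,2)$), which under the $\frac{2}{\gamma}\log$ and quadratic-variation-$2ds$ time change turns the future of the field-average process into an unconditioned variance-$2$ Brownian motion with drift $Q-\alpha$ covering all of $[0,\infty)$. The remaining issues (recombining with the $\cH_2(\cS)$ part via Lemma~\ref{lem-markov}(b) and handling the random horizontal centering via countably-valued stopping-time approximation or local sets) are handled exactly as in the paper's proof of Lemma~\ref{lem: rest of thick wedge}, so no further comment is needed.
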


\subsubsection{Quantum disks}\label{subsection: disk}
A quantum disk is a kind of quantum surface $(\cS, \psi, +\infty, -\infty)$ decorated by two boundary points $\pm \infty$. 
There is a natural infinite measure $\mathsf{M}$ on quantum disks $(\cS, \psi, +\infty, -\infty)$, which again has independent projections to $\cH_1(\cS)$ and $\cH_2(\cS)$, the latter having the law of a Neumann GFF projected to $\cH_2(\cS)$. The field average process $(X_t)_{t \in \R}$ is obtained by ``sampling'' an excursion $e$ from the infinite excursion measure of a Bessel process of dimension $3-\frac{4}{\gamma^2}$, then setting $(X_t)_{t \in \R}$ to be the process $2\gamma^{-1} \log e$ reparametrized to have quadratic variation $2dt$ \cite[Section 4.5]{wedges}.

Informally, given the quantum surface $(\cS,\psi)$, the two marked points are chosen uniformly and independently from the quantum boundary length measure. More precisely, the law of the field $\psi$ (defined modulo horizontal translations of $\cS$) is invariant under the operation of independently sampling two boundary points $x,y$ from $\nu_\psi$, then replacing $\psi$ with the field $\psi \circ \varphi + Q \log |\varphi'|$ where $\varphi: \cS \to \cS$ is a conformal map sending $\pm \infty$ to $x,y$.

Although $\mathsf{M}$ is infinite, for reasonable notions of ``large'' we have $\mathsf{M}(\{\text{quantum disk is ``large''} \}) < \infty$. In particular, for any $l > 0$ the measure $\mathsf{M}$ assigns a finite mass to the set of quantum disks with boundary length at least $l$, so it makes sense to discuss the law of a quantum disk conditioned on boundary length being at least $l$. We can further define the regular conditional law of $\mathsf{M}$ given the probability zero event $\{\nu_\psi(\partial \cS) = l\}$.

We now introduce the $(a,b)$-length quantum disk. It comes with two marked points dividing the boundary into two segments of quantum lengths $a,b$, but given the quantum surface and one of the marked points, the other may be deterministically recovered. 
\begin{definition}
An $(a,b)$-length quantum disk $(D, \psi, x,y)$ is a quantum surface decorated by two marked boundary points, which is sampled as follows. First sample a quantum disk $(D,\psi)$ conditioned to have boundary length $a+b$, then sample $x \in \partial D$ from the boundary length measure, and finally define $y$ to be the point on $\partial D$ such that the counterclockwise arc from $x$ to $y$ has quantum length $a$.
\end{definition}
\begin{remark}
Since the marked points $\pm \infty$ of the quantum disk $(\cS, \psi, +\infty, -\infty)$ are conditionally independent uniform samples from the $\gamma$-LQG boundary length measure if we condition on $(\cS , \psi)$~\cite[Proposition A.8]{wedges}, one can equivalently define the $(a,b)$-length quantum disk by conditioning $\psi$ on the event that $\nu_\psi(\R) = a, \nu_\psi(\R + i\pi) = b$.
\end{remark}

Another way to measure the ``size'' of a quantum disk is to look at the maximum value attained by its field average process. In this case also, $\mathsf{M}$ assigns finite mass to quantum disks which are large.
\begin{proposition}\label{prop: large quantum disk}
Let $(\cS, \psi, +\infty, -\infty)$ be a quantum disk. Writing $X_s$ for the average of $\psi$ on $[s,s+i\pi]$, the event
\begin{equation}\label{eqn: large quantum disk}
E'_\beta = \{\sup_t X_t \geq -\beta \} 
\end{equation}
satisfies $\mathsf{M}(E'_\beta) < \infty$. 

Let $\psi$ be sampled from the probability measure obtained by conditioning $\mathsf{M}$ on $E_\beta'$. For notational convenience we horizontally translate the field $\psi$ so that $\inf \{ t \in \R \: : \: X_t = -\beta \} = 0$. We can then explicitly describe the conditional law of $\psi$:

\begin{itemize}
\item First sample independent Brownian motions $(B_s)_{s\geq 0}$ and $(\widehat{B}_s)_{s \geq 0}$ such that
\begin{itemize}
\item $(B_s)_{s\geq 0}$ has variance 2, initial value $B_0 = -\beta$, and downward linear drift of $(\gamma - Q)$;

\item $(\widehat B_s)_{s\geq 0}$ has variance 2, initial value $\widehat B_0 = -\beta$, and downward linear drift of $(\gamma - Q)$; moreover it is conditioned to satisfy $\widehat B_s < -\beta$ for all $s > 0$.
\end{itemize}
Then the projection $(X_s)_{s \in \R}$ of $\psi$ to $\cH_1(\cS)$ is given by the concatenation
\[X_s = \left\{
	\begin{array}{ll}
		B_s  & \mbox{if } s \geq 0 \\
		\widehat B_{-s} & \mbox{if } s < 0.
	\end{array}
\right. \]

\item Independently of $(X_s)_{ s \in \R}$, the projection of $\psi$ onto $\cH_2(\cS)$ is given by the projection of a Gaussian free field onto $\cH_2(\cS)$. 

\end{itemize}
\end{proposition}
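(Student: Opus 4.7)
My plan is to reduce everything to a statement about the underlying Bessel excursion and then apply the strong Markov property together with a time-reversal argument. Recall from Section~\ref{subsection: disk} that under $\mathsf M$ the field-average process $(X_s)_{s\in\R}$ is obtained from a Bessel excursion $e$ of dimension $\delta=3-4/\gamma^2\in(0,2)$ via $X_{s}=2\gamma^{-1}\log e_{t(s)}$, after a time reparametrization making the quadratic variation equal to $2\,ds$. A standard Itô computation (of the kind carried out in~\cite[Section 4]{wedges}) shows that, in the $s$-parametrization, $X$ evolves as a Brownian motion with variance $2\,ds$ and drift $\gamma(\delta-2)/2=\gamma-Q$ whenever $e>0$. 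The event $E'_\beta$ corresponds to $\{\sup_t e_t\ge e^{-\gamma\beta/2}\}$. Since the Bessel excursion measure of dimension $\delta<2$ assigns finite mass to the event that the maximum exceeds a positive constant (via the explicit density $c\,m^{\delta-3}\,dm$ of the maximum under this excursion measure, see e.g.\ Pitman--Yor), one obtains $\mathsf{M}(E'_\beta)<\infty$.

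For the $\cH_2(\cS)$ statement, note that under $\mathsf M$ the projections onto $\cH_1(\cS)$ and $\cH_2(\cS)$ are independent, and the event $E'_\beta$ depends only on the $\cH_1(\cS)$ projection $(X_s)_{s\in\R}$. Hence conditioning on $E'_\beta$ leaves the $\cH_2$ projection unchanged, and it remains the projection of a Neumann GFF as claimed. The rest of the proof is therefore about the law of $(X_s)_{s\in\R}$ conditional on $E'_\beta$, with the recentering $\inf\{t:X_t=-\beta\}=0$.

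For the forward part $s\ge 0$, I would apply the strong Markov property of the underlying Bessel process at the stopping time $\tau_m=\inf\{t:e_t=m\}$ with $m=e^{-\gamma\beta/2}$. Under $\mathsf M(\,\cdot\,\cap E'_\beta)$, on the event that $e$ hits $m$, the process $(e_{t+\tau_m})_{t\ge 0}$ is, conditional on the past, an ordinary BES$(\delta)$ started at $m$ run until it first returns to $0$. Translating back through $X=2\gamma^{-1}\log e$ and the time reparametrization, this says that $(X_s)_{s\ge 0}$ is a Brownian motion with variance $2\,ds$ and drift $\gamma-Q$ started at $-\beta$ and run until it hits $-\infty$, which is exactly the description of $B$ in the proposition.

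For the backward part $s\le 0$, I would use the time-reversal invariance of the Bessel excursion measure of dimension $\delta<2$ (a classical identity going back to Williams/Pitman--Yor) to replace the process $(e_{\tau_m-t})_{0\le t\le\tau_m}$ by the excursion under $\mathsf M(\,\cdot\,\cap E'_\beta)$ seen from the other end; after this reversal, the analog of the strong Markov argument of the previous paragraph applies at the first hitting time of $m$, giving a BES$(\delta)$ started at $m$ conditioned to reach $0$ without returning to $m$. Translated through the log change of variable and the time reparametrization, and then re-reversed to pre-time-$0$ coordinates in $s$, this produces exactly the conditioned Brownian motion $\hat B$ in the statement: variance $2\,ds$, drift $\gamma-Q$, started at $-\beta$, and conditioned to stay below $-\beta$. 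Combined with the strong Markov property across time $s=0$, the pre- and post-time-$0$ processes are conditionally independent given $X_0=-\beta$, matching the proposition. The main technical obstacle is formulating the time-reversal step cleanly on the infinite Bessel excursion measure and justifying that the $h$-transform interpretation of ``conditioned to stay below $-\beta$'' coincides with the law produced by this reversal; this can be handled by approximating with Bessel bridges of fixed length and passing to the limit using the explicit disintegration of the excursion measure by its maximum.
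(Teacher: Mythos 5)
Your reduction to the underlying Bessel excursion, the finiteness of $\mathsf{M}(E'_\beta)$ via the law of the excursion maximum, the observation that the conditioning only touches the $\cH_1(\cS)$ projection, and the forward part of the argument (strong Markov at $\tau_m$, $m=e^{-\gamma\beta/2}$, giving an unconditioned $\mathrm{BES}^\delta$ from $m$ killed at $0$, i.e.\ the process $B$) are all sound in spirit, and close in content to the paper's proof, which instead decomposes at the hitting time of a much lower level $-r$ (quoting \cite[Proposition 3.4 and Lemma 3.6]{wedges}) and then lets $r\to\infty$. However, there are two genuine problems with your sketch. First, the dimension $\delta=3-4/\gamma^2$ lies in $(0,2)$ only when $\gamma^2>4/3$; for $\gamma^2\le 4/3$ it is nonpositive, and then the classical inputs you invoke --- the Pitman--Yor density of the maximum, the Markov property of the It\^o excursion measure, and especially ``time-reversal invariance of the Bessel excursion measure'' --- are not available off the shelf: the excursion measure itself must be defined by an extension (as is done in \cite{wedges}), and the reversal and last-exit identities would have to be re-justified in that setting. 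The paper's citations are to statements formulated so as to cover the whole range $\gamma\in(0,2)$.

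Second, and more seriously, the backward step is incorrect as written. After reversing the excursion, the segment $(e_{\tau_m-t})_{0\le t\le\tau_m}$ corresponds to the portion of the reversed excursion \emph{after its last visit to $m$}, not after its first hitting time of $m$; applying the strong Markov property at the first hitting time of $m$ (your ``analog of the previous paragraph'') produces an \emph{unconditioned} $\mathrm{BES}^\delta$ from $m$ killed at $0$, which cannot be the law of a path staying strictly below $m$. What is actually required is a last-exit decomposition / Doob $h$-transform identification of the post-last-visit segment, together with a verification that this $h$-transform coincides, after the $\frac{2}{\gamma}\log$ change of variables and the time change, with the null-event conditioning ``$\widehat B_s<-\beta$ for all $s>0$'' in the sense used in \cite{wedges}. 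You flag exactly this as ``the main technical obstacle'' and only gesture at a bridge-approximation scheme, so the key identification is missing from the proposal. Note that the paper's proof sidesteps the reversal entirely: starting from $\tau_{-r}$ the conditioned field-average process is an up-drift Brownian motion until it reaches $-\beta$ and a down-drift Brownian motion thereafter, and the description of $\widehat B$ is then obtained in the $r\to\infty$ limit via the standard duality between ``up-drift Brownian motion run until it first hits $-\beta$, read backwards from the hitting time'' and ``down-drift Brownian motion conditioned to stay below $-\beta$'' (cf.\ the footnote to Remark~\ref{rem: right to left}).
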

\begin{proof}
Let $(X_s)$ be the projection of a quantum disk field conditioned on $E'_\beta$ onto $\cH_1(\cS)$. Choose any $r > \beta$, and write $\tau_{-r} = \inf \{ x \: : \: \text{average of }X \text{ on }[x,x+i\pi]\}$. From \cite[Proposition 3.4]{wedges} and \cite[Lemma 3.6]{wedges}, the law of $X(\cdot + \tau_{-r})|_{\R_+}$ conditioned on $E'_\beta$ is Brownian motion with variance 2 started at $-r$ with upward linear drift of $(Q - \gamma)$ until it hits $-\beta$, and subsequently downward linear drift of $(\gamma - Q)$. Taking $r \to \infty$ yields this description. \end{proof}

\subsection{The whole-plane mating of trees theorem}\label{sec-MOT}
In this section, we review the main mating of trees result of \cite{wedges}, explain why Theorem~\hyperref[thm: peanosphere alpha wedge]{A} is an immediate consequence, and explain that the curve-field pair can be locally recovered from the boundary length process. 

\begin{theorem}[{\cite[Theorem 1.9]{wedges}}]\label{thm-MOT}
Let $(\BB C , \wh h , 0 ,\infty)$ be a $\gamma$-quantum cone, let $\wh\eta'$ be an independent whole-plane space-filling SLE$_{\kappa'}$ from $\infty$ to $\infty$ parametrized by $\gamma$-LQG mass, and let $(\wh L ,\wh R) : \BB R\rta\BB R^2$ be its associated left/right boundary length process. Precisely, for $t > 0$, the pair $(\wh L_t, \wh R_t)$ are defined as in Figure~\ref{fig: alpha_wedge}, and analogously for $t < 0$. Then $(\wh L, \wh R)$ evolves as two-sided Brownian motion with covariances given by~\eqref{eqn: covariance}. 
Moreover, for each $t \in \R$ the joint law of $(\wh h, \wh \eta')$ as a curve-decorated quantum surface is invariant under shifting by $t$ units of time, i.e.  $(\wh h, \wh \eta') \stackrel{d}= (\wh h(\cdot +\wh \eta'(t)), \wh \eta'(\cdot + t))$. 
Finally, the quantum surfaces $(\wh\eta'((-\infty,0]), \wh h|_{\eta'((-\infty,0])} , 0 , \infty)$ and $(\wh \eta'([0,\infty)), \wh h|_{\eta'([0,\infty))} , 0 , \infty)$ are independent $\frac{3\gamma}2$-quantum wedges.
\end{theorem}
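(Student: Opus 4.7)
The plan is to decompose the proof into three pieces: (a) identify the interface between past and future at time $0$ as a conformal welding curve, (b) use Sheffield's quantum zipper together with imaginary geometry to show that cutting along this interface produces two independent $\frac{3\gamma}{2}$-quantum wedges, and (c) leverage scale invariance and iteration of (b) to extract the Brownian structure of $(\wh L,\wh R)$. To carry out (a), recall from Section~\ref{subsection: space-filling SLE} that whole-plane space-filling $\SLE_{\kappa'}$ is constructed from a whole-plane imaginary geometry GFF. The boundary separating $\wh\eta'((-\infty,0])$ from $\wh\eta'([0,\infty))$ is exactly the pair of angle $\pm\pi/2$ flow lines from $0$, which together form a (two-sided) whole-plane $\SLE_\kappa(\rho_L;\rho_R)$-type curve through $0$, with weights determined by the imaginary geometry boundary data --- essentially the whole-plane analogue of Lemma~\ref{lem: identify-wedges}.

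For (b), I would invoke Sheffield's quantum zipper / conformal welding theorem for $\gamma$-quantum cones: cutting such a cone along an independent $\SLE_\kappa(\rho_L;\rho_R)$-type curve through the marked point yields a pair of independent quantum wedges whose weight parameters are read off from $(\rho_L,\rho_R)$ via the LQG coordinate-change formula~\eqref{eqn: quantum surface defn}. A drift computation matches both resulting wedges to parameter $\alpha = \tfrac{3\gamma}{2}$, proving the last claim of the theorem and showing that conditionally on the interface the two sides are independent. This conformal welding identification is the technical heart of the argument, and is where the specific value $\tfrac{3\gamma}{2}$ of the wedge parameter emerges.

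For the Brownian structure in (c), scale invariance of the $\gamma$-quantum cone combined with the LQG-area parametrization of $\wh\eta'$ yields $(\wh h,\wh\eta')\stackrel{d}{=}(\wh h(\cdot+\wh\eta'(t)),\wh\eta'(\cdot+t))$ for every $t\in\R$, giving the stated time-shift invariance and in particular stationarity of the increments of $(\wh L,\wh R)$. Applying the cutting result at a deterministic time $t$, the past and future are independent curve-decorated quantum surfaces; since the boundary length increments of $\wh\eta'$ over $[t,\infty)$ (resp.\ $(-\infty,t]$) are measurable functions of the future (resp.\ past) surface, $(\wh L,\wh R)$ has independent increments. A modulus-of-continuity estimate for $\gamma$-LQG boundary length (ensuring that the quantum lengths of the left and right sides of $\wh\eta'([s,t])$ vanish as $t-s\downarrow 0$) rules out jumps, so by L\'evy's characterization $(\wh L,\wh R)$ is a two-dimensional Brownian motion.

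Finally one must pin down the covariance structure~\eqref{eqn: covariance}. The equal variances $\Var(\wh L_t)=\Var(\wh R_t)=\BB a^2 t$ follow from the left-right symmetry of the construction, with $\BB a$ defined as this common value. Computing the correlation $-\cos(\pi\gamma^2/4)$ is the step I expect to be the main obstacle. The approach is to relate a small right-boundary-length increment of $\wh\eta'$ to the boundary-length process of the $\SLE_\kappa(\rho_L;\rho_R)$-type interface curve inside the $\frac{3\gamma}{2}$-wedge, which via the quantum-zipper correspondence can be analyzed as a Bessel-type process whose dimension encodes the correlation. For $\gamma\in[\sqrt 2,2)$ the bubble decomposition of non-simple space-filling $\SLE_{\kappa'}$ makes this explicit, but for $\gamma\in(0,\sqrt 2)$ the identification is genuinely harder and in the original work was completed only later in \cite{kappa8-cov}.
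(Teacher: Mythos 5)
This statement is not proved in the paper at all: it is quoted verbatim as \cite[Theorem 1.9]{wedges} (with the explicit covariance for $\gamma\in(0,\sqrt 2)$ supplied by \cite{kappa8-cov}), so there is no in-paper argument to compare yours against. Your sketch is a fair outline of the strategy of those external references: identifying the past/future interface as the angle-$\pm\pi/2$ flow lines, cutting the cone along them via conformal welding to get two independent wedges whose weights sum correctly ($2-\gamma^2/2$ each, totalling the cone weight $4-\gamma^2$, matching $\alpha=\tfrac{3\gamma}{2}$), and then deriving the Brownian structure from stationarity, independence of increments, continuity, and L\'evy's characterization, with the correlation $-\cos(\pi\gamma^2/4)$ as the genuinely hard final step. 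But be aware that the two load-bearing ingredients --- the welding/cutting theorem for quantum cones along flow lines, and the covariance identification --- are each the subject of long arguments in \cite{wedges} and \cite{kappa8-cov} and are treated as black boxes in your proposal; as written it is a correct roadmap of the known proof rather than a proof. Within the logic of the present paper the appropriate ``proof'' of this theorem is simply the citation.
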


\begin{theorem}[{\cite[Theorem 1.11]{wedges}}]\label{thm-measurability}
With the notation and setup of Theorem~\ref{thm-MOT}, the pair $(\wh L, \wh R)$ a.s. determines the quantum surface $(\C, \wh h, \wh \eta', 0, \infty)$ (i.e. determines $(\wh h, \wh \eta')$ up to conformal automorphisms of $\C$ fixing $0$ and $\infty$). 
\end{theorem}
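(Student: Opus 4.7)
The plan is to recover the curve-decorated quantum surface $(\C, \wh h, \wh\eta', 0, \infty)$ from $(\wh L, \wh R)$ in three stages: first the topology of the curve-decorated surface, then the LQG area and boundary length measures, and finally the conformal structure itself. For the topological stage, I apply the peanosphere graph-quotient construction (the whole-plane analogue of the procedure illustrated in Figure~\ref{fig: boundary_lengths_disk_bottom}, as described in~\cite[Section 1.3]{wedges}): for a large constant $C$, plot the graphs of $t \mapsto \wh L_t$ and $t \mapsto C - \wh R_t$, and form the quotient of a strip by identifying points joined by vertical segments between the graphs or by horizontal segments in the exterior. By Theorem~\ref{thm-MOT}, this quotient is homeomorphic to $(\C, \wh\eta')$ via a curve-preserving homeomorphism that transports the time parametrization. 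Since $\wh\eta'$ is parametrized by LQG mass, the pushforward of Lebesgue measure under this homeomorphism is $\mu_{\wh h}$, so the area measure is determined by $(\wh L, \wh R)$.

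For the measure stage, note that for all rational $s < t$ the LQG boundary lengths of the left and right outer arcs of $\partial \wh\eta'([s,t])$ are read off as functionals of the increments of $\wh L, \wh R$ on $[s,t]$ (as in Figure~\ref{fig: alpha_wedge}). By density and continuity of $\nu_{\wh h}$, this pins down the boundary length measure on every SLE interface $\wh\eta'((-\infty, t]) \cap \wh\eta'([t, \infty))$ and more generally on $\partial \wh\eta'([s,t])$ for all $s < t$. Thus, from $(\wh L, \wh R)$ I recover the topological sphere-with-space-filling-curve, the area measure, and the boundary length measure on a rich dense collection of interfaces. The remaining and main task is to upgrade this information to a determination of the quantum surface itself modulo conformal automorphisms fixing $0$ and $\infty$. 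My plan is a cell-decomposition argument: for each $n \in \N$, partition $\R$ into dyadic intervals $I_k^n = [k 2^{-n}, (k+1) 2^{-n}]$ and consider the cells $\wh\eta'(I_k^n)$ together with their adjacency structure (which pairs share nontrivial common boundary, and of what $\nu_{\wh h}$-length) — all measurable functions of $(\wh L, \wh R)$. Using the shift-invariance from Theorem~\ref{thm-MOT}, each cell, conditioned on its boundary length data and area, has a known conditional law as a doubly-marked quantum surface.

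The hardest step, and the main obstacle, is to show that the conditional law of $(\wh h, \wh\eta')$ given $(\wh L, \wh R)$ is degenerate, i.e.\ concentrated on a single curve-decorated quantum surface (modulo the conformal automorphisms of $\C$ fixing $0, \infty$). Equivalently, any two couplings of $(\wh h, \wh\eta')$ producing the same $(\wh L, \wh R)$ must agree. To attack this, I would exploit the imaginary geometry coupling of Section~\ref{subsection: space-filling SLE}: $\wh\eta'$ is a measurable function of an auxiliary GFF $h^{\IG}$ which is independent of $\wh h$, and the left/right boundaries of $\wh\eta'(t)$ are flow lines of $h^{\IG}$ from $\wh\eta'(t)$. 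One then argues that the countable family of LQG-lengths along the SLE-flow-line interfaces, via a quantum-zipper / conformal-welding type argument (in the spirit of~\cite{shef-zipper}), pins down how the pieces of the surface fit together, and combined with the known area measure recovers $\wh h$ as a distribution. The technical crux is making this welding/refinement argument precise in the limit $n \to \infty$, showing that the dyadic cell-decompositions converge to determine $\wh h$ as a distribution on $\C$ rather than merely specifying local measures.
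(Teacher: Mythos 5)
This statement is not proved in the paper at all: it is quoted verbatim as \cite[Theorem 1.11]{wedges}, and the present paper uses it as a black box (its own contribution, Lemma~\ref{lem-measurability}, is a localization of this cited result). So the relevant question is whether your sketch actually constitutes a proof of the cited theorem, and it does not. Your first two stages are fine but recover only intrinsic data: the quotient construction gives the curve-decorated topological space and the pushforward of Lebesgue/boundary-length data on it, i.e.\ the ``mated-CRT'' structure. The entire content of the theorem is the third stage --- that this intrinsic data determines the conformal structure, equivalently that the conditional law of $(\wh h,\wh\eta')$ given $(\wh L,\wh R)$ is a point mass --- and there you only write ``one then argues'' and concede that ``the technical crux'' is open. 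That crux is the theorem.

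Moreover, the route you propose for the crux does not obviously close it. Conformal welding/removability (as in Step~2 of the proof of Proposition~\ref{prop: E given F}) says: \emph{given} the two quantum surfaces on either side of an interface as equivalence classes, the glued surface is uniquely determined. It does not say that knowing the interface lengths, cell areas, and adjacency structure determines the quantum surfaces inside the cells; a priori each cell could carry extra randomness beyond its Brownian increment, and your argument gives no mechanism to rule that out. This is exactly the difficulty that the actual proof in \cite[Section 9]{wedges} is built to handle: one shows that two conditionally independent resamplings of the surface given $(\wh L,\wh R)$ must a.s.\ coincide, using the independence of the cells across dyadic scales together with a tail-triviality/zero-one argument for the leftover randomness, rather than a direct welding reconstruction. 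Without an argument of that kind (or an explicit inverse construction as in \cite{gms-tutte}), the degeneracy of the conditional law is unproven and the proposal has a genuine gap at its main step.
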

We note again that the above results were shown in \cite{wedges}, except for the explicit form of the covariances \eqref{eqn: covariance} for $\gamma \in (0,\sqrt 2)$ which was later established in \cite{kappa8-cov}. Theorem~\hyperref[thm: peanosphere alpha wedge]{A} follows from the above by restricting the curve-decorated quantum cone to the region explored by $\wh \eta'([0,\infty))$.

The  following  lemma statement is implicit in the proof of the measurability statement of Theorem~\ref{thm-measurability}, as given in~\cite[Section 9]{wedges}. 
However, for the sake of clarity we will deduce the lemma from the two theorems above. 

\begin{lemma}\label{lem-measurability}
Consider the setup of Theorem~\hyperref[thm: peanosphere alpha wedge]{A}, where we have a $\frac{3\gamma}{2}$-quantum wedge with field $h$ decorated by a space-filling $\SLE_{\kappa'}$ $\eta'$, and we write $(L_t, R_t)_{t\in [0,\infty)}$ for the boundary length process (in the case when $\gamma \in (\sqrt 2 , 2)$, so the wedge is thin, $h$ is an ordered sequence of random distributions, one for each bead of the surface). 
	For $a , b \in [0,\infty] $ with $a<b$, the restricted left/right boundary length process $(L-L_a , R-R_a)|_{[a,b]}$ a.s. determines the curve-decorated quantum surface $(\eta'([a,b]) , h|_{\eta'([a,b])} , \eta'|_{[a,b]} )$.
\end{lemma}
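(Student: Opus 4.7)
The plan is to embed the wedge into a $\gamma$-quantum cone so that we can invoke the cone decomposition of Theorem~\ref{thm-MOT} at two different times, and then finish with a standard intersection-of-independent-sigma-algebras argument. By the last assertion of Theorem~\ref{thm-MOT}, we can sample an independent $\frac{3\gamma}{2}$-quantum wedge decorated by an independent space-filling $\SLE_{\kappa'}$ and glue it to our wedge $(h,\eta')$ along the finite ends to realize $(h,\eta')$ as the future half $\wh\eta'([0,\infty))$ of a space-filling-SLE-decorated $\gamma$-quantum cone $(\wh h, \wh\eta')$. The wedge lemma then reduces to the analogous cone statement: for $-\infty \leq a < b \leq \infty$, the shifted process $(\wh L - \wh L_a, \wh R - \wh R_a)|_{[a,b]}$ a.s.\ determines the curve-decorated quantum surface $(\wh\eta'([a,b]), \wh h|_{\wh\eta'([a,b])}, \wh\eta'|_{[a,b]})$.

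If $b = \infty$ (or dually $a = -\infty$) this is immediate from Theorem~\hyperref[thm: peanosphere alpha wedge]{A} applied to the future wedge $\wh\eta'([a,\infty))$ (respectively, the past wedge $\wh\eta'((-\infty,b])$), which is a $\frac{3\gamma}{2}$-wedge by Theorem~\ref{thm-MOT}. So suppose $-\infty < a < b < \infty$. Applying Theorem~\ref{thm-MOT} at time $a$ and then Theorem~\hyperref[thm: peanosphere alpha wedge]{A} to $\wh\eta'([a,\infty))$, we see that $\wh\eta'([a,b])$, viewed as a curve-decorated quantum surface, is $\cF^+$-measurable for $\cF^+ := \sigma((\wh L - \wh L_a, \wh R - \wh R_a)|_{[a,\infty)})$. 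Symmetrically, applying Theorem~\ref{thm-MOT} at time $b$ and an analog of Theorem~\hyperref[thm: peanosphere alpha wedge]{A} for the past wedge $\wh\eta'((-\infty,b])$, the same subsurface $\wh\eta'([a,b])$ is $\cF^-$-measurable for $\cF^- := \sigma((\wh L - \wh L_b, \wh R - \wh R_b)|_{(-\infty,b]})$.

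Writing $X_- = (\wh L - \wh L_a, \wh R - \wh R_a)|_{(-\infty,a]}$, $X_0 = (\wh L - \wh L_a, \wh R - \wh R_a)|_{[a,b]}$, and $X_+ = (\wh L - \wh L_b, \wh R - \wh R_b)|_{[b,\infty)}$, the three processes are mutually independent by the independent-increments property of two-dimensional Brownian motion, and one verifies $\cF^+ = \sigma(X_0, X_+)$ and $\cF^- = \sigma(X_-, X_0)$. Any $Z \in \cF^+ \cap \cF^-$ can be written as $Z = f(X_0, X_+) = g(X_-, X_0)$ a.s.; upon conditioning on $X_0 = x_0$, the random variable $f(x_0, X_+) = g(X_-, x_0)$ is measurable with respect to both $\sigma(X_+)$ and $\sigma(X_-)$, which remain independent under the conditioning, so this common value is a.s.\ constant and hence $Z = h(X_0)$ a.s. This gives $\cF^+ \cap \cF^- = \sigma(X_0)$, which yields the desired measurability of $\wh\eta'([a,b])$ from $(\wh L - \wh L_a, \wh R - \wh R_a)|_{[a,b]}$. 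The hard part will be the past-wedge version of Theorem~\hyperref[thm: peanosphere alpha wedge]{A} used to obtain $\cF^-$-measurability: one must check that the past wedge, decorated by the cone's space-filling SLE traversed in reverse, fits the setup of Theorem~\hyperref[thm: peanosphere alpha wedge]{A}, which ultimately hinges on time-reversal and reflection symmetries of whole-plane space-filling $\SLE_{\kappa'}$ coming from the flow-line construction of~\cite{ig4}.
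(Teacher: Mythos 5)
Your overall architecture is the same as the paper's: pass to the $\gamma$-quantum cone, show the sub-surface $\wh\eta'([a,b])$ is determined both by the forward increment process on $[a,\infty)$ and by the backward increment process on $(-\infty,b]$, and then identify the intersection of the two $\sigma$-algebras with $\sigma\bigl((\wh Z-\wh Z_a)|_{[a,b]}\bigr)$ (your explicit Fubini-style proof of that intersection identity is a nice touch; the paper just asserts it). The forward half and the final step are fine. The genuine gap is exactly where you flag it: the $\cF^-$-measurability. You propose to get it from a ``past-wedge version of Theorem~\hyperref[thm: peanosphere alpha wedge]{A}'', i.e.\ from time-reversal symmetry of whole-plane space-filling $\SLE_{\kappa'}$. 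This is not a routine check: for $\kappa'\in(4,8)$ the time-reversal of a (counterclockwise or chordal) space-filling $\SLE_{\kappa'}$ is \emph{not} a space-filling $\SLE_{\kappa'}$ of the same type --- the paper makes this point explicitly in Section~\ref{subsection: space-filling SLE} --- so Theorem~\hyperref[thm: peanosphere alpha wedge]{A} does not apply verbatim to the past wedge decorated by the reversed curve, and you would have to prove a separate mating-of-trees measurability statement for the reversed exploration. As written, the proof is incomplete at this step.

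The gap is avoidable, and the paper's route shows how: instead of reversing the curve, use Theorem~\ref{thm-measurability}, which says the \emph{full two-sided} process $(\wh L,\wh R)$ a.s.\ determines the entire curve-decorated cone, hence both $\mcl W^*_{-\infty,b}$ and $\mcl W^*_{b,\infty}$. By Theorem~\ref{thm-MOT} these two curve-decorated surfaces are independent, and each clearly determines its own half of the increment process, $(\wh Z-\wh Z_b)|_{(-\infty,b]}$ and $(\wh Z-\wh Z_b)|_{[b,\infty)}$ respectively. A conditioning/Fubini argument of exactly the type you already use in your last paragraph then upgrades ``the pair of halves determines the pair of surfaces'' to ``each half determines its own surface''; in particular $(\wh Z-\wh Z_b)|_{(-\infty,b]}$ a.s.\ determines $\mcl W^*_{-\infty,b}$ and hence $\mcl W^*_{a,b}$. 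Substituting this for your reversed-Theorem-A step closes the gap with no new input about SLE reversibility.
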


We emphasize that $(L-L_a , R-R_a)|_{[a,b]}$ only determines $(\eta'([a,b]) , h|_{\eta'([a,b])} , \eta'|_{[a,b]} )$ as a curve-decorated quantum surface, i.e., modulo coordinate changes of the form~\eqref{eqn: quantum surface defn}. The Brownian motion increment $(L-L_a , R-R_a)|_{[a,b]}$ \emph{does not} a.s.\ determine $\eta'([a,b])$.

\begin{proof}[Proof of Lemma~\ref{lem-measurability}]
Recall the relationship between Theorem~\hyperref[thm: peanosphere alpha wedge]{A} and Theorem~\ref{thm-MOT} discussed above. Let $(\BB C , \wh h , 0 ,\infty)$ be a $\gamma$-quantum cone, $\wh\eta'$ an independent whole-plane space-filling SLE$_{\kappa'}$ from $\infty$ to $\infty$ parametrized by $\gamma$-LQG mass, and $(\wh L ,\wh R) : \BB R\rta\BB R^2$ the associated left/right boundary length process. It suffices to show that if $0\leq a < b \leq \infty$ then  $(\wh L-\wh L_a , \wh R-\wh R_a)|_{[a,b]}$ a.s.\ determines $(\wh\eta'([a,b]) , h|_{\wh\eta'([a,b])} , \wh\eta'|_{[a,b]})$. 

For this purpose, to lighten notation we define $\wh Z := (\wh L , \wh R)$ and for $a,b\in\BB R \cup \{-\infty,\infty\}$ with $a <b$ we define the curve-decorated quantum surface
\eqbn
\mcl W_{a,b}^* :=  (\wh\eta'([a,b]) , h|_{\wh\eta'([a,b])} , \wh\eta'|_{[a,b]}) .
\eqen
\sloppy
By Theorem~\ref{thm-MOT}, for $a\in\BB R$ the quantum surfaces $(\wh\eta'((-\infty,a]) , \wh h|_{\eta'((-\infty,a])} , \wh\eta'(a) , \infty  )$ and $(\wh\eta'|_{[a,\infty)} , \wh h|_{\eta'([a,\infty)} , \wh\eta'(a) , \infty )$ are independent. 
By the construction of whole-plane space-filling SLE described in Section~\ref{subsection: space-filling SLE} and the conformal invariance of SLE, it follows that the curve-decorated quantum surfaces $\mcl W_{-\infty,a}^*$ and $\mcl W_{a,\infty}^*$ are independent.  

\fussy
It is clear that $\mcl W_{-\infty,a}^*$ (resp.\ $\mcl W_{a,\infty}^*$) a.s.\ determines $(\wh Z-\wh Z_a)|_{(-\infty , a]}$ (resp.\ $(\wh Z-\wh Z_a)|_{[a,\infty)}$).  Since $\wh Z$ a.s.\ determines both of the above two curve-decorated quantum surfaces (Theorem~\ref{thm-measurability}), it follows that  $(\wh Z-\wh Z_a)|_{(-\infty , a]}$ (resp.\ $(\wh Z-\wh Z_a)|_{[a,\infty)}$) a.s.\ determines  $\mcl W^*_{-\infty,a}$ (resp.\ $\mcl W^*_{a,\infty}$). 

For $a,b\in\BB R$ with $a<b$, the curve-decorated quantum surface $\mcl W^*_{a,b}$ is a.s.\ determined by each of the pairs $\mcl W^*_{-\infty,b}$ and $\mcl W^*_{a,\infty}$. Consequently, the previous paragraph implies that $\mcl W^*_{a,b}$ is a.s.\ determined by each of $(\wh Z-\wh Z_b)|_{(-\infty , b]}$  and $(\wh Z-\wh Z_a)|_{[a,\infty)}$.
The intersection of the $\sigma$-algebras generated by these two restricted Brownian motions is the $\sigma$-algebra generated by $(\wh Z-\wh Z_a)|_{[a,b]}$. Consequently, $\mcl W^*_{a,b}$ is a.s.\ determined by $(\wh Z-\wh Z_a)|_{[a,b]}$.  
\end{proof}

\subsection{Regularity of the $(a,b)$-length quantum disk in $a,b$} 
\label{subsection: continuity of field}
In this section, we modify the procedure of Proposition \ref{prop: large quantum disk} to give an alternate description of the field of a quantum disk conditioned on $E'_\beta$, and show that when we condition on the side lengths of the field being $(a,b)$, this description of the field is in some sense continuous in $(a,b)$. We also show that $\P[E'_\beta \mid (\nu_\psi(\R), \nu_\psi(\R + i\pi)) = (y_1,y_2)] = 1-o_\beta(1)$ uniformly for $y_1,y_2 \in [\frac12, 1]$. Combining these we deduce that if $\psi$ is a quantum disk field conditioned on $(\nu_\psi(\R) , \nu_\psi(\R + i\pi))=(a,b)$ (with $a+b> \frac12$) and $\sigma \in \R$ satisfies $\nu_\psi(\R_+ + \sigma) + \nu_\psi(\R_+ + i\pi + \sigma) = \frac12$, then for any $N$ the law of the field $\psi(\cdot + \sigma)|_{\cS_+ -N}$ is continuous w.r.t. total variation distance as we vary $(a,b)$.

Proposition \ref{prop: large quantum disk} describes the law of the field $\psi$ of a quantum disk $(\cS, \psi, +\infty, -\infty)$ conditioned on $E'_{\beta}$. Write $(X_s)_{s\in \R}$ for the field average process. Recall (Remark~\ref{remark: GFF on strip}) that if we let $\{f_j\}_{j \in \N}$ be an orthonormal basis for $\cH_2(\cS)$, then we can sample the projection of $\psi$ to $\cH_2(\cS)$ as $\sum_{j\in \N} \alpha_j f_j$, where $\{\alpha_j\}_{j \in \N}$ are i.i.d. standard Gaussians. We can take $f_1, f_2 \in \cH_2(\cS)$ to be smooth functions supported on $[- 3, 0 ] \times [0,\frac\pi2]$ and $[- 3, 0 ] \times [\frac\pi2,\pi]$ respectively, such that 
\begin{itemize}
\item $\| f_1\|_\nabla = 1$, and $f_1$ is nonnegative on $[-3,0]$ and strictly positive on $[-2,-1]$.

\item $\|f_2\|_\nabla = 1$, and $f_2$ is nonnegative on $[-3 +i\pi,i\pi]$ and strictly positive on $[-2+i\pi,-1+i\pi]$.
\end{itemize}
We may then sample the projection of $\psi$ onto $\cH_2(\cS)$ as $f + \alpha_1 f_1 + \alpha_2 f_2$, where $f$ is a random distribution on $\cS$, $\alpha_1,\alpha_2$ are standard Gaussians, and $f, \alpha_1, \alpha_2$ are mutually independent. Now, the field $\psi$ conditioned on $E'_\beta$ is given by 
\begin{equation}\label{eqn: field decomposition}
\psi = X_{\Re \cdot} + f +\alpha_1 f_1 + \alpha_2 f_2.
\end{equation}
Note that in future uses of this decomposition, we will horizontally recenter the field, so that $\tau_{-\beta} := \inf \{ u \: : \: \text{average of }\psi \text{ on } [u,u+i\pi] \text{ is } -\beta\}$ is not necessarily zero. After horizontally translating, the functions $f_1,f_2$ will be compactly supported on $[\tau_{-\beta} - 3, \tau_{-\beta}] \times [0,\pi]$ instead. 

\eqref{eqn: field decomposition} allows us to tweak the field by varying $\alpha_1, \alpha_2$, while keeping $(X_t)_{t \in \R}$ and $f$ fixed. Note that for fixed $(X,f)$, the side lengths $\nu_\psi(\R)$ and $\nu_\psi(\R_+)$ are strictly increasing in $\alpha_1, \alpha_2$ respectively. For any particular choice of $X,f$, the quantum length pair $(\nu_\psi(\R), \nu_\psi(\R + i\pi))$ has a conditional density $d^\beta_{\mathrm{disk}}(\cdot, \cdot \mid X,f)$ with respect to Lebesgue measure in $\R_+^2$. 

\begin{proposition} \label{prop: continuity of field}
With this decomposition of $\psi$ conditioned on $E'_\beta$, let $\cL^{\beta, a,b}_{\mathrm{disk}}$ be the conditional law of $(X,f)$ given $E'_\beta \cap \{\nu_\psi(\R) = a, \nu_\psi(\R) = b\}$. Then for fixed $\beta$, $\cL^{\beta, a,b}_{\mathrm{disk}}$ is continuous in $(a,b)$ w.r.t.\ total variation distance.
\end{proposition}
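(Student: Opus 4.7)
My plan is to derive an explicit formula for the conditional density of $(\nu_\psi(\R), \nu_\psi(\R + i\pi))$ given $(X, f)$, express $\cL^{\beta, a, b}_{\mathrm{disk}}$ via Bayes' rule as a reweighting of the joint law of $(X, f)$ under $\mathsf{M}(\,\cdot \mid E'_\beta)$, and then deduce TV-continuity via Scheff\'e's lemma.

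The key point is that the support conditions on $f_1, f_2$ decouple how $\alpha_1$ and $\alpha_2$ affect the two boundary lengths: since $f_1|_{\R + i\pi} \equiv 0$ and $f_2|_\R \equiv 0$, applying the standard change-of-field rule $\nu_{h + \varphi} = e^{\gamma \varphi/2}\nu_h$ for continuous $\varphi$ (valid for the smooth bumps $\alpha_i f_i$) gives, conditionally on $(X, f)$,
\[
g_1(\alpha_1) := \nu_\psi(\R) = \int_\R e^{\gamma \alpha_1 f_1(x)/2}\, \nu_{X + f}(dx),
\]
and an analogous formula for $g_2(\alpha_2) := \nu_\psi(\R + i\pi)$, with each $g_i$ depending on only one of the $\alpha_j$'s. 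Since $f_1|_\R$ is strictly positive on a subinterval of $[\tau_{-\beta}-3,\tau_{-\beta}]$ on which $\nu_{X + f}$ places positive mass a.s., $g_1$ is a smooth strictly increasing bijection $\R \to (c_1(X, f), \infty)$ with $c_1(X, f) = \nu_{X + f}(\R \setminus \supp(f_1|_\R))$, and similarly $g_2$ maps $\R$ bijectively onto $(c_2(X, f), \infty)$. A change of variables from $(\alpha_1, \alpha_2)$ to $(a, b)$ then yields
\[
d^\beta_{\mathrm{disk}}(a, b \mid X, f) = \frac{1}{2\pi}\, \frac{\exp\bigl(-\tfrac12 g_1^{-1}(a)^2 - \tfrac12 g_2^{-1}(b)^2\bigr)}{g_1'(g_1^{-1}(a))\, g_2'(g_2^{-1}(b))}\, \1_{\{a > c_1(X, f),\, b > c_2(X, f)\}},
\]
which is jointly continuous in $(a, b)$ on the open set where it is nonzero.

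Writing $\nu^\beta$ for the law of $(X, f)$ under $\mathsf{M}(\,\cdot \mid E'_\beta)$ and $p^\beta(a, b) := \int d^\beta_{\mathrm{disk}}(a, b \mid X, f)\, \nu^\beta(d(X, f))$ for the marginal density of $(\nu_\psi(\R), \nu_\psi(\R + i\pi))$, Bayes' rule yields
\[
\frac{d\cL^{\beta, a, b}_{\mathrm{disk}}}{d\nu^\beta}(X, f) = \frac{d^\beta_{\mathrm{disk}}(a, b \mid X, f)}{p^\beta(a, b)}.
\]
For $(a_n, b_n) \to (a, b)$ within the support, the pointwise continuity from the previous paragraph gives $d^\beta_{\mathrm{disk}}(a_n, b_n \mid X, f) \to d^\beta_{\mathrm{disk}}(a, b \mid X, f)$ for $\nu^\beta$-a.e.\ $(X, f)$ (on a set of full $\cL^{\beta, a, b}_{\mathrm{disk}}$-measure we have $a > c_1(X, f)$ and $b > c_2(X, f)$, hence eventually $a_n > c_1(X,f)$ and $b_n > c_2(X,f)$). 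Combined with continuity of $p^\beta$ at $(a, b)$, Scheff\'e's lemma upgrades this to $L^1(\nu^\beta)$-convergence of the normalized densities, which is exactly TV-convergence $\cL^{\beta, a_n, b_n}_{\mathrm{disk}} \to \cL^{\beta, a, b}_{\mathrm{disk}}$.

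I expect the main technical obstacle to be the continuity and strict positivity of $p^\beta$ at $(a, b)$. I would prove this by a dominated-convergence argument in the defining integral: for $(a', b')$ ranging over a small compact neighborhood of $(a, b)$, the Gaussian factors $\exp(-\tfrac12 g_i^{-1}(\cdot)^2)$ force $g_i^{-1}$ to stay bounded outside a set of small $\nu^\beta$-mass, while the Jacobian $g_1'\, g_2'$ is controlled from below by the GMC masses $\int f_i\, d\nu_{X + f}$, which admit good moment and tail estimates from the Gaussian multiplicative chaos theory recalled in Section~\ref{section-lqg-intro}. Strict positivity of $p^\beta(a, b)$ reduces to showing that on a positive-$\nu^\beta$-measure set of $(X, f)$ one has $c_1(X,f) < a$ and $c_2(X, f) < b$, which is clear since $c_i$ can be made arbitrarily small by varying the field configuration.
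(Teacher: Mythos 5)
Your proposal is correct and follows essentially the same route as the paper's proof: the paper likewise expresses the Radon--Nikodym derivative between $\cL^{\beta,a',b'}_{\mathrm{disk}}$ and $\cL^{\beta,a,b}_{\mathrm{disk}}$ as a ratio of the conditional density $d^\beta_{\mathrm{disk}}(\cdot,\cdot\mid X,f)$ and the marginal density $d^\beta_{\mathrm{disk}}(\cdot,\cdot)$, and concludes from continuity of both. Your explicit change of variables through the monotone maps $g_1,g_2$ and the Scheff\'e step simply fill in details the paper leaves implicit.
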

\begin{proof}
Let $\psi$ be the field of a quantum disk conditioned on $E'_\beta$, and let $d^\beta_\mathrm{disk}(\cdot, \cdot)$ be the density of $(\nu_\psi(\R), \nu_\psi(\R + i\pi))$ with respect to Lebesgue measure. Then we have the Radon-Nikodym derivative
\[\frac{d \cL^{\beta, a',b'}_{\mathrm{disk}}}{d \cL^{\beta, a,b}_{\mathrm{disk}}} (X,f) = \frac{d^\beta_\mathrm{disk}(a',b' \mid X,f)}{d^\beta_\mathrm{disk}(a, b \mid X,f)} \cdot \frac{d^\beta_\mathrm{disk}(a,b)}{d^\beta_\mathrm{disk}(a',b')}.\]
Both $d^\beta_\mathrm{disk}(\cdot, \cdot)$ and $d^\beta_\mathrm{disk}(\cdot, \cdot \mid X,f)$ are continuous functions, so for any fixed $(X,f)$, we have $\frac{d \cL^{\beta,a',b'}_\mathrm{disk}}{d \cL^{\beta,a,b}_\mathrm{disk}} (X,f) \to 1$ as $(a',b') \to (a,b)$.  
\end{proof}

Next, write $\P^{y_1, y_2}_\mathrm{disk}$ for the law of a quantum disk field $\psi$ conditioned on $\{(\nu_\psi(\R), \nu_\psi(\R + i\pi)) = (y_1, y_2)\}$. As $\beta \to \infty$, the event $E'_\beta$ is of uniformly high probability w.r.t. $\P^{y_1,y_2}_\mathrm{disk}$ for all $y_1, y_2 \in [\frac12,1]$:
\begin{proposition}\label{prop: E' uniformly likely for all y1,y2}
With $E'_\beta$ as in \eqref{eqn: large quantum disk}, we have
\[\P^{y_1,y_2}_\mathrm{disk} [E'_\beta] \geq 1 - o_\beta(1) \quad \text{uniformly over all } y_1, y_2 \in \left[ \frac12, 1 \right],\] 
\end{proposition}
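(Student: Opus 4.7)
My approach is a density-ratio argument combined with monotone convergence and Dini's theorem. Let $p_\beta(y_1,y_2)$ denote the density with respect to two-dimensional Lebesgue measure of the joint law of $(\nu_\psi(\R),\nu_\psi(\R+i\pi))$ under the finite measure $\mathsf{M}|_{E_\beta'}$, and let $p_\infty(y_1,y_2)$ denote the corresponding density under $\mathsf{M}$. By disintegration,
\[
\P^{y_1,y_2}_{\mathrm{disk}}[E_\beta'] \;=\; \frac{p_\beta(y_1,y_2)}{p_\infty(y_1,y_2)}.
\]
The events $E_\beta'$ are increasing in $\beta$ and $\bigcup_{\beta>0} E_\beta'$ has full $\mathsf{M}$-mass, since the field-average process of a quantum disk is proportional to the logarithm of a Bessel excursion, which a.s.\ attains a finite maximum. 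Hence $p_\beta \uparrow p_\infty$ pointwise as $\beta \to \infty$.

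The plan is to upgrade this pointwise monotone convergence to uniform convergence on the compact set $[\tfrac12,1]^2$ via Dini's theorem, which requires (i) continuity of each $p_\beta$, (ii) continuity of $p_\infty$, and (iii) a positive lower bound on $p_\infty$ over $[\tfrac12,1]^2$. For (i), I would use the explicit decomposition~\eqref{eqn: field decomposition} from Proposition~\ref{prop: large quantum disk}: conditional on $E_\beta'$ we have $\psi = X_{\Re\cdot} + f + \alpha_1 f_1 + \alpha_2 f_2$ with $(\alpha_1,\alpha_2)$ i.i.d.\ standard Gaussians independent of $(X,f)$, and for each fixed $(X,f)$ the map $(\alpha_1,\alpha_2)\mapsto (\nu_\psi(\R),\nu_\psi(\R+i\pi))$ is smooth with strictly positive Jacobian. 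Thus $d^\beta_{\mathrm{disk}}(y_1,y_2\mid X,f)$ is jointly continuous in $(y_1,y_2)$, and dominated convergence after integrating against $(X,f)$ gives continuity of $p_\beta$. Positivity (iii) then follows from the fact that $p_\beta > 0$ on $[\tfrac12,1]^2$ for any fixed $\beta$, combined with $p_\infty \geq p_\beta$.

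For (ii), I would invoke the scaling property of $\mathsf{M}$: the Bessel-excursion measure underlying the definition of the quantum disk has a scaling symmetry under $e \mapsto \lambda e(\cdot/\lambda^2)$, which translates into the statement that the shift $\Phi_c : \psi\mapsto \psi+c$ satisfies $(\Phi_c)_*\mathsf{M} = e^{\lambda_0 c}\mathsf{M}$ for a constant $\lambda_0$ depending only on $\gamma$. Together with $\nu_{\psi+c} = e^{\gamma c/2}\nu_\psi$ this forces a homogeneity relation $p_\infty(\mu y_1,\mu y_2) = \mu^{-\alpha_0} p_\infty(y_1,y_2)$ for all $\mu>0$, so continuity of $p_\infty$ on $\R_+^2$ reduces to continuity along the arc $\{y_1+y_2=1\}$. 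That can be obtained from continuity of $p_\beta$ (for any fixed $\beta$) plus a bound on $p_\infty - p_\beta$: the same scaling identity writes $(p_\infty-p_\beta)(y_1,y_2)$ as an explicit multiple of the density of $\mathsf{M}|_{(E_0')^c}$ at the rescaled point $(e^{\gamma\beta/2}y_1,e^{\gamma\beta/2}y_2)$, which can be seen to decay as $\beta\to\infty$ using that the Bessel excursion conditioned on a small maximum carries only very small boundary length. Once (i)--(iii) are in place, Dini's theorem applied to the monotone continuous family $p_\beta$ on the compact set $[\tfrac12,1]^2$ gives uniform convergence to $p_\infty$, and dividing by the positive continuous function $p_\infty$ yields $\P^{y_1,y_2}_{\mathrm{disk}}[E_\beta']\to 1$ uniformly on $[\tfrac12,1]^2$. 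The main obstacle is the continuity of $p_\infty$: unlike $p_\beta$ it arises from the infinite measure $\mathsf{M}$ and cannot be presented directly as an integral against a nondegenerate Gaussian, so verifying this requires the careful scaling argument sketched above rather than a one-line dominated convergence.
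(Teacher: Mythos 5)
Your skeleton coincides with the paper's own argument. The paper also expresses $\P^{y_1,y_2}_\mathrm{disk}[E'_\beta]$ through a ratio of densities (its $d^\beta_\mathrm{disk}$ against $d_\mathrm{disk}$), uses pointwise convergence at each point of $[\tfrac12,1]^2$ together with local near-constancy (i.e.\ continuity and positivity) of the two densities, and finishes with a finite cover of the compact square; your appeal to Dini's theorem is exactly that covering argument in packaged form, and your point (i) is the same Gaussian-coefficient argument via the decomposition~\eqref{eqn: field decomposition} that underlies Proposition~\ref{prop: continuity of field}. Where you genuinely go beyond the paper is in attempting to \emph{prove} continuity of $p_\infty$ (the paper simply chooses balls on which $d_\mathrm{disk}$ and $d^\beta_\mathrm{disk}$ are nearly constant, taking their regularity for granted), and it is precisely there that your proposal has a gap.

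Your route to (ii) — homogeneity of $p_\infty$ from the scaling of $\mathsf{M}$, plus writing $(p_\infty-p_\beta)(y)$ as an explicit scaling factor times the density, call it $q_0$, of the boundary-length pair under $\mathsf{M}|_{(E'_0)^c}$ evaluated at $e^{\gamma\beta/2}y$ — is fine in outline, but the step ``which can be seen to decay \dots using that the Bessel excursion conditioned on a small maximum carries only very small boundary length'' is not justified as stated. The small-maximum estimates available (in the paper these are Lemma~\ref{wedges-lem} and Corollary~\ref{cor-large-boundary}) give moment and tail bounds, i.e.\ control of the \emph{measure} $\mathsf{M}|_{(E'_0)^c}$ of sets of large boundary length; your scaling identity instead requires a \emph{pointwise} polynomial decay bound on the density $q_0$ at large arguments, strong enough to beat the scaling prefactor. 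Passing from $\int |z|^p\,q_0(z)\,dz<\infty$ to such a pointwise bound needs an extra regularity input (for instance a locally uniform a priori bound on the conditional density of the lengths given $(X,f)$, which is not automatic since the Jacobian $\partial\nu/\partial\alpha_j$ can be small), and you have not supplied it. Note also a structural redundancy: if you do establish this uniform bound on $p_\infty-p_\beta$ over a compact set away from the axes, you have proved uniform convergence of $p_\beta$ to $p_\infty$ directly, and Dini's theorem (whose only role was to manufacture uniformity from continuity of the limit) becomes superfluous. So either supply the pointwise density estimate (or obtain continuity of $p_\infty$ by another means, e.g.\ running the bump-function decomposition anchored near the maximum of the field-average process rather than at $\tau_{-\beta}$), or else argue as the paper does, treating the continuity of both densities at the same level and concluding by the finite-cover/compactness step alone.
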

\begin{proof}
Let $d_\mathrm{disk}(\cdot, \cdot)$ be the probability density of the side lengths $(\nu_\psi(\R), \nu_\psi(\R + i\pi))$ of a quantum disk conditioned on $\{ \nu_\psi(\R), \nu_\psi(\R + i\pi) \in \left[ \frac12, 1 \right]\}$. As above, let $d^\beta_\mathrm{disk}(\cdot, \cdot)$ be the probability density of $(\nu_\psi(\R), \nu_\psi(\R + i\pi))$ conditioned on $E'_\beta$.

For any $\delta > 0$, for each point $(y_1, y_2) \in [1/2, 1]^2$ we can choose some sufficiently large $\beta$ so that $\P^{y_1,y_2}_\mathrm{disk}[E'_\beta] > 1-\delta$, and choose a ball $B\ni(y_1,y_2)$ so that $d_\mathrm{disk}, d^\beta_\mathrm{disk}$ are close to constant in $B$ (so for all $(y_1',y_2' )\in B$, we have $\P^{y_1',y_2'}_\mathrm{disk}[E'_\beta] > 1-2\delta$). Using the compactness of the square $\left[\frac12,1\right]^2$, we can cover the square by some finite collection $B^1, \dots, B^N$ (with corresponding values $\beta^1, \dots, \beta^N$), and conclude that for $\beta = \max_j \beta^j$ we have $\P^{y_1,y_2}_\mathrm{disk}[E'_\beta] > 1- 2\delta$ for all $y_1, y_2 \in [\frac12,1]$. 
\end{proof}

\begin{corollary}\label{cor: Psi continuous}
Fix $N>0$ and let $\sigma \in \R$ be the unique number such that $\nu_\psi(\R_+ + \sigma) + \nu_\psi(\R_+ + i\pi + \sigma) = \frac12$. Then 
\eqb \label{eqn: E' and tau likely}
\P^{y_1,y_2}_\mathrm{disk}[E'_\beta \cap \{\tau_{-\beta} < \sigma - N\} ] > 1-o_\beta(1) \quad \text{ uniformly for }y_1, y_2 \in [\frac12, 1].
\eqe
Consequently, for $y_1, y_2 \in [\frac12,1]$, the law of $\psi(\cdot + \sigma)|_{\cS_+ - N}$ sampled from $\P^{y_1,y_2}_\mathrm{disk}$ is continuous in $(y_1,y_2)$ w.r.t.\ the total variation distance.
\end{corollary}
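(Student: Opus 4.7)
The plan is to combine Propositions~\ref{prop: E' uniformly likely for all y1,y2} and~\ref{prop: continuity of field} via the decomposition~\eqref{eqn: field decomposition}. The key geometric observation is that on the event $\{\tau_{-\beta} < \sigma - N\}$, the horizontal support $[\tau_{-\beta}-3,\tau_{-\beta}]$ of $\alpha_1 f_1 + \alpha_2 f_2$ lies strictly to the left of $\sigma - N$, hence is disjoint from the right half-strip $[\sigma - N,\infty)\times[0,\pi]$ and from the boundary segments $\R_+ + \sigma$ and $\R_+ + i\pi + \sigma$. Consequently both the implicit defining equation for $\sigma$ (which only involves $\nu_\psi$ on these right boundary segments) and the values of $\psi$ on $[\sigma - N,\infty)\times[0,\pi]$ see only $X_{\Re\cdot} + f$ and not the coefficients $\alpha_1,\alpha_2$. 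In other words, there is a measurable function $\Phi$ such that $\psi(\cdot + \sigma)|_{\cS_+ - N} = \Phi(X,f)$ on $\{\tau_{-\beta} < \sigma - N\}$, and the event itself is measurable with respect to $(X,f)$.

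To prove~\eqref{eqn: E' and tau likely}, Proposition~\ref{prop: E' uniformly likely for all y1,y2} already gives $\P^{y_1,y_2}_\mathrm{disk}[E'_\beta] = 1-o_\beta(1)$ uniformly, so it suffices to show $\P^{y_1,y_2}_\mathrm{disk}[\tau_{-\beta} \geq \sigma - N] = o_\beta(1)$ uniformly in $(y_1,y_2) \in [\tfrac12,1]^2$. In the pre-recentering parametrization, the field average of a quantum disk a.s.\ tends to $-\infty$ at both ends, so $\tau^{\mathrm{orig}}_{-\beta} \to -\infty$ a.s., while $\sigma^{\mathrm{orig}}$ remains an a.s.\ finite random variable; hence the recentering-invariant difference $\sigma - \tau_{-\beta} = \sigma^{\mathrm{orig}} - \tau^{\mathrm{orig}}_{-\beta}$ tends to $+\infty$ a.s.\ under any fixed $\P^{y_1^0,y_2^0}_\mathrm{disk}$. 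Uniformity over $(y_1,y_2) \in [\tfrac12,1]^2$ then follows from exactly the same compactness-cum-continuous-density covering argument used in the proof of Proposition~\ref{prop: E' uniformly likely for all y1,y2}.

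For the total-variation continuity, fix $(a_0,b_0)\in[\tfrac12,1]^2$ and $\epsilon>0$, and use~\eqref{eqn: E' and tau likely} to choose $\beta$ so large that $F := E'_\beta \cap \{\tau_{-\beta} < \sigma - N\}$ has $\P^{y_1,y_2}_\mathrm{disk}$-probability at least $1-\epsilon$ uniformly in $(y_1,y_2)\in[\tfrac12,1]^2$. Conditioned on $E'_\beta$, the law of $(X,f)$ under $\P^{y_1,y_2}_\mathrm{disk}$ is $\cL^{\beta,y_1,y_2}_\mathrm{disk}$, and both $F$ and $\Phi(X,f)\mathbf{1}_F$ are $(X,f)$-measurable by the observation of the first paragraph. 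Thus for any Borel set $A$,
\[
\bigl|\P^{a,b}_\mathrm{disk}[\psi(\cdot+\sigma)|_{\cS_+ - N}\in A] - \P^{a_0,b_0}_\mathrm{disk}[\psi(\cdot+\sigma)|_{\cS_+ - N}\in A]\bigr| \leq 2\epsilon + o_\beta(1) + \mathrm{TV}\bigl(\cL^{\beta,a,b}_\mathrm{disk},\cL^{\beta,a_0,b_0}_\mathrm{disk}\bigr).
\]
The last term vanishes as $(a,b)\to(a_0,b_0)$ by Proposition~\ref{prop: continuity of field}; sending $\epsilon\to 0$ yields the claimed continuity.

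The main obstacle is the measurability observation underlying the whole scheme: because $\sigma$ is defined implicitly through a quantum-length equation involving the full field $\psi$, one must carefully exploit the support properties of $f_1,f_2$ to rule out hidden dependence on $\alpha_1,\alpha_2$ precisely on the event $\{\tau_{-\beta} < \sigma - N\}$. Once this is in place, the rest is a relatively routine combination of the two input propositions with a covering argument to upgrade pointwise convergence to uniform convergence on the compact square $[\tfrac12,1]^2$.
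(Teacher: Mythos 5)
Your proposal is correct and follows essentially the same route as the paper: pointwise convergence plus continuity in $(y_1,y_2)$ and compactness of $[\frac12,1]^2$ to make \eqref{eqn: E' and tau likely} uniform, and then the $(X,f)$-measurability of the event and of $\psi(\cdot+\sigma)|_{\cS_+-N}$ on it, combined with Proposition~\ref{prop: continuity of field}, for the total-variation continuity. The support observation you spell out for $f_1,f_2$ is exactly the justification the paper leaves implicit when it asserts that the restricted, recentered field ``is a function of $(X,f)$''.
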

\begin{proof}
For fixed $N$ and for each $y_1, y_2 \in [\frac12, 1]$, we have $\P^{y_1,y_2}_\mathrm{disk}[\tau_{-\beta} < \sigma - N \mid E'_\beta] = 1-o_\beta(1)$, so by Proposition~\ref{prop: continuity of field} and the compactness of $[\frac12,1]^2$, we have $\P^{y_1,y_2}_\mathrm{disk}[\tau_{-\beta} < \sigma - N \mid E'_\beta] > 1-o_\beta(1)$ uniformly for all $y_1,y_2 \in [\frac12,1]$. Combining this with Proposition~\ref{prop: E' uniformly likely for all y1,y2} yields~\eqref{eqn: E' and tau likely}.

On the event $E'_\beta \cap \{(\nu_\psi(\R), \nu_\psi(\R + i\pi)) = (y_1,y_2) \}\cap \{\tau_{-\beta} < \sigma - N\}$, the field $\psi(\cdot + \sigma)|_{\cS_+ -N }$ is a function of $(X,f)$, so by Proposition~\ref{prop: continuity of field} we obtain the second assertion of Corollary~\ref{cor: Psi continuous}.
\end{proof}

\subsection{Bounds on quantum lengths}\label{subsection: qualitative bounds on boundary lengths}
In our subsequent arguments, we will want to say that if the field averages of various quantum surfaces are small, then their quantum boundary lengths are small with high probability. The results of this section will be used in the proofs of Lemmas~\ref{lem: conditioned on start, field looks like disk} and~\ref{lem: choose q1, q2}.

Let $\wt h$ be a Neumann GFF on $\cS$ restricted to $\cS_+$ and normalized so its average on $[0,i\pi]$ is 0. For $r > 0$, define 
 \eqb \label{eq-defh}
h = \wt h + (\gamma - Q) \Re (\cdot)  - r, 
\eqe
and let $E'_{r, \beta}$ be the event that the field average process of $h$ attains the value $-\beta$. 
\begin{lemma}[Variant of {\cite[(A.10)]{wedges}}]\label{wedges-lem}
Fix $p \in (0,\frac4{\gamma^2})$ and $R= [0,S] \times [0,\pi]$ for some $S>0$. Almost surely, there is a random constant $C = C(p, \wt h|_R)$ such that, uniformly over $r > \beta > 0$, we have
\[\E\left[ \left( \frac{\nu_h(\R_+) + \nu_h(\R_+ + i\pi)}{e^{-\gamma \beta/2}}\right)^p \Big| E'_{r,\beta}, \wt h|_R\right]  < C. \]

For the quantum disk $(\cS, \psi, +\infty, -\infty)$ conditioned on $E'_{\beta}$ (defined in \eqref{eqn: large quantum disk}), there is a constant $C = C(p)$ such that uniformly over $\beta$, 
\[\E\left[ \left( \frac{\nu_\psi(\R) + \nu_\psi(\R +  i\pi)}{e^{-\gamma \beta/2}}\right)^p\Big| E'_{\beta}\right]  < C. \]
\end{lemma}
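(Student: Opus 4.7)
The plan is to adapt the strategy of \cite[Lemma A.10]{wedges}, combining the Markov property of the GFF at the first time the field average hits $-\beta$ with a standard moment bound for Gaussian multiplicative chaos on $\R_+$ in the presence of downward linear drift. First I would set $\tau = \inf\{t \geq 0 : X_t = -\beta\}$, where $X_t$ denotes the average of $h$ on $[t, t+i\pi]$, and split the relevant boundary length as $\nu_h(\R_+) + \nu_h(\R_+ + i\pi) = A_{\leq \tau} + A_{\geq \tau}$ according to whether the argument lies to the left or right of $\tau$.

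For $A_{\geq \tau}$, I would apply the strong Markov property for the GFF, in the spirit of Lemma~\ref{lem: rest of thick wedge}(a): conditional on the field up to time $\tau$ and on $E'_{r,\beta}$, the restriction $h|_{\cS_+ + \tau}$ is a mixed-boundary GFF on $\cS_+ + \tau$ (Neumann on the horizontal sides, Dirichlet on $[\tau, \tau+i\pi]$) plus the linear drift $(\gamma - Q)\Re(\cdot - \tau)$. After shifting $\tau$ to the origin and subtracting the constant $-\beta$, the field average now starts at zero with drift $\gamma - Q < 0$, so by the scaling $\nu_{h-\beta} = e^{-\gamma\beta/2}\nu_h$ of the boundary length measure I can write $A_{\geq \tau} = e^{-\gamma \beta /2}\, \widetilde A$ where $\widetilde A$ has $p$-th moment bounded uniformly in $r, \beta$. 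This uniform bound follows from standard GMC estimates for $p < 4/\gamma^2$, using that the downward drift makes $\int_0^\infty e^{\gamma(\gamma - Q)t/2}\,dt < \infty$, together with a comparison that absorbs the Dirichlet data on $[\tau, \tau+i\pi]$ (whose fluctuations are governed by the $\cH_2(\cS)$-part of $\wt h$ and are tail-bounded independently of $\tau$).

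For $A_{\leq \tau}$, I would exploit the fact that on $[0, \tau]$ the field average is bounded above by $-\beta$. Performing a dyadic decomposition $[0, \tau] = \bigsqcup_{k \geq 0} I_k$ with $I_k = \{t \in [0,\tau] : X_t \in [-\beta - k - 1, -\beta - k)\}$, on each $I_k$ subtracting $-\beta - k$ yields a field whose average lies in $[-1, 0]$, and the corresponding boundary length contribution is at most $e^{-\gamma(\beta + k)/2} M_k$ for some random $M_k$ whose $p$-th moment is bounded uniformly in $k, r, \beta$ after a Cameron--Martin/Girsanov change of measure that removes the conditioning on hitting $-\beta$. Summing via Minkowski gives $\|A_{\leq \tau}\|_p \leq C e^{-\gamma \beta /2}$. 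The deterministic contribution from the conditioned rectangle $[0, S] \times [0, \pi]$ is bounded by $C(\wt h|_R)\, e^{-\gamma r /2} \leq C(\wt h|_R)\, e^{-\gamma \beta /2}$ since $r > \beta$.

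For the quantum disk statement, I would feed Proposition~\ref{prop: large quantum disk} into the argument above: conditioned on $E'_\beta$ the field average equals a Brownian motion with drift $\gamma - Q$ started at $-\beta$ on the right half of $\cS$, and the analogous process conditioned to stay below $-\beta$ on the left half. The right half reduces directly to the wedge bound just established; the left half is handled by the same dyadic decomposition argument. The main obstacle throughout is controlling $A_{\leq \tau}$ (and its disk analogue), since the conditioning on hitting $-\beta$ distorts the law of $h$ via a Doob $h$-transform; verifying that the constants in the dyadic GMC moment bound do not grow with the level $k$, uniformly in $r > \beta$, is the technical heart of the proof and is what the Girsanov comparison is designed to accomplish.
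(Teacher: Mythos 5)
Your outline can be made to work, but it is organized quite differently from the paper's proof and is heavier than necessary. The paper never splits the boundary length at the hitting time $\tau$, never invokes a strong Markov property of the two-dimensional field across the random vertical segment $[\tau,\tau+i\pi]$, and uses no Girsanov/level-set decomposition. Instead it works with the orthogonal decomposition of $h$ into its projections $(X_t)_{t\ge 0}$ onto $\cH_1(\cS_+)$ and $\wh h$ onto $\cH_2(\cS_+)$: since $E'_{r,\beta}$ is measurable with respect to $(X_t)$ and hence independent of $\wh h$, and since on $\bdy\cS_+$ the field splits as $X_{\Re(\cdot)}+\wh h$ with $X$ continuous, the lemma reduces to two separate estimates — the one-dimensional bound $\E\left[\left(\int_S^\infty e^{\gamma X_t/2}\,dt\right)^p \mid E'_{r,\beta},\wt h|_R\right]\le C e^{-\gamma\beta p/2}$ of~\eqref{eq-field-average-moment}, which follows directly from the explicit law of the conditioned field-average path (upward drift $Q-\gamma$ until it hits $-\beta$, then downward drift) and covers your pre-$\tau$ and post-$\tau$ regimes in a single formula, and the uniform unit-interval moment bound~\eqref{eq-lateral-moment} for $\nu_{\wh h}$ given $\wt h|_R$, proved via the Markov property, Borell--TIS and the maximum principle. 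Your route uses the same underlying ingredients (the conditioned drift description, GMC moments for $p<4/\gamma^2$, tail control of the $\wt h|_R$-dependence) but creates two extra burdens that the paper's factorization-plus-independence shortcut avoids: (i) the strong Markov step at $\tau$ must be justified by a local-set argument as in Lemma~\ref{lem: rest of thick wedge}, and the Dirichlet data on $[\tau,\tau+i\pi]$, whose harmonic extension feeds into the post-$\tau$ boundary length, must be controlled uniformly over $r>\beta>0$, including when $\tau$ falls near $R$ — this is essentially a relocated version of~\eqref{eq-lateral-moment} rather than something you get for free; (ii) for the pre-$\tau$ bands you need occupation-time tails for the Doob-transformed field-average path that are uniform in the band index $k$ and in $r>\beta$, together with the same uniform lateral moments, before the sum over $k$ converges (and note that Minkowski applies only for $p\ge 1$; for $p\in(0,1)$ one should instead use $\E[(\sum_k a_k)^p]\le\sum_k \E[a_k^p]$). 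So the proposal is correct in outline, but the paper's argument is shorter because the exact product structure of $\nu_h$ on the boundary and the independence of $E'_{r,\beta}$ from $\wh h$ make both of your harder steps unnecessary.
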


We sketch the proof for the GFF case (the quantum disk case is almost identical); see Lemmas A.4, A.5 and A.6 of \cite{wedges} for details. Let $(X_t)_{t \geq 0}$ and $\wh h$ be the projections of $h$ to $\cH_1(\cS_+)$ and $\cH_2(\cS_+)$. We establish bounds for each projection, and combine them to conclude. 

Firstly, conditioned on $\wt h|_R$ the process $(X_t)_{t \geq S}$ is Brownian motion with initial value $X_S = - r + \nu_{\wt h}([S,S+i\pi])$ and with constant negative drift $-(Q-\gamma)$ and variance 2. Further conditioning on $E'_{r,\beta}$ amounts to further conditioning on $\sup_t X_t \geq -\beta$; under this conditioning $(X_t)_{t \geq S}$ evolves first as variance 2 Brownian motion with upward drift of $(Q-\gamma)t$ until it hits $-\beta$, then as  variance 2 Brownian motion with downward drift $-(Q-\gamma)t$ \cite[Lemma 3.6]{wedges}. Thus,  for a constant $C = C(p, \wt h|_R)$ we have uniformly over all $r>\beta > 0$ that
\eqb\label{eq-field-average-moment}
\E\left[ \left( \int_S^\infty e^{\frac\gamma2 X_t} \ dt \right)^p \Big| E'_{r,\beta} , \wt h|_R \right] \leq C e^{-\frac{\gamma \beta p}{2}}.
\eqe

With the field average process bound~\eqref{eq-field-average-moment}, since $E'_{r,\beta}$ is independent of $\wh h$, we only need the following to conclude the proof:
\eqb\label{eq-lateral-moment}
\E \left[\nu_{\wh h} ([u,u+1] \times \{0,\pi\})^p \:\big|\: \wt h|_R \right] <C \quad \text{ for all } u \geq S.
\eqe
For $u \leq S+1$, this follows from the a.s. inequality
\[\E\left[ \nu_{\wh h}([S,S+2] \times \{0,\pi\})^p \:\Big|\: \wt h|_R\right] < \infty, \]
which holds since the expectation of the above quantity over $\wt h|_R$ is a $p$-th GMC moment which is finite by \cite[Theorem 2.11]{rhodes-vargas-review}.

We turn to the case\footnote{The argument in the $u \leq S+1$ case gives finiteness of the expectation in~\eqref{eq-lateral-moment} for each choice of $u$ and $\wt h|_R$, but does not guarantee a uniform bound across $u$ for each $\wt h|_R$, so we need an alternative argument.} $u \geq S+1$. 
Let $h'$ be the projection to $\cH_2(\cS_+ + S)$ of a GFF on $\cS_+ +S$ with zero boundary conditions on $[S, S+i\pi]$ and Neumann boundary conditions elsewhere. We claim that
\eqb\label{eq-lateral-moment-zero}
\E \left[\nu_{h'} ([u,u+1] \times \{0,\pi\})^p \right] <C \quad \text{ for all } u \geq S+1.
\eqe
Indeed, the Markov property of the GFF allows us to sample $h'$ via $\mathring h = h' + \mathfrak h$; here $\mathring h$ is a Neumann GFF on $\cS_+ + S$ projected onto $\cH_2(\cS_+ +S)$, and $\mathfrak h$ is a random harmonic function on $\cS_+ + S$ with zero normal derivatives on $\R_+ + S$ and $\R_+ + S + i\pi$, such that $\mathfrak h$ and $h'$ are independent. Take any $p' \in (p, 4/\gamma^2)$. The translation invariance of the Neumann GFF on $\cS$ implies that $\E \left[(\nu_{\mathring h} ([u,u+1] \times \{0,\pi\})^{p'} \right]$ is a finite constant independent of $u$. Also, the Borell-TIS inequality gives a lognormal tail bound on $\sup_{\cS_+ + S+1} |\mathfrak h|$. An application of H\"older's inequality then yields~\eqref{eq-lateral-moment-zero}.

To conclude the proof of~\eqref{eq-lateral-moment} for $u \geq S+1$, the Markov property of the GFF tells us that conditioned on $\wt h|_R$ we can write $\wh h|_{\cS_+ +S}$ as a field with the law of $h'$, plus a harmonic function depending only on $\wt h_R$ with zero normal derivative on $\R_+ + S$ and $\R_+ + S + i\pi$. The maximum principle says that this harmonic function is uniformly bounded on $\cS_+ +S+1$, so~\eqref{eq-lateral-moment} for $u \geq S+1$ follows from~\eqref{eq-lateral-moment-zero}. 

Combining~\eqref{eq-field-average-moment} and~\eqref{eq-lateral-moment} gives Lemma~\ref{wedges-lem} in the GFF case.

\begin{corollary}\label{cor-large-boundary}
Let $(\cS, \psi, +\infty, -\infty)$ be a quantum disk conditioned on $E'_\beta$ (defined in \eqref{eqn: large quantum disk}). Then for any $p < \frac{4}{\gamma^2}$ there is a constant $C = C(p)$ such that for all $\beta  >0$, 
\[\P\left[ \nu_\psi(\R_+) + \nu_\psi(\R_+ + i\pi) > \frac12  \mid E'_{\beta} \right] \leq C e^{-\frac{\gamma \beta p}2}. \] 
\end{corollary}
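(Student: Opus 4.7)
The plan is to deduce this corollary directly from the quantum disk moment bound in Lemma~\ref{wedges-lem} via Markov's inequality, together with the trivial monotonicity of the quantum boundary length measure.

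First, I would observe that the two quantities controlled by Lemma~\ref{wedges-lem} and by the corollary are related by the elementary inequality
\[
\nu_\psi(\R_+) + \nu_\psi(\R_+ + i\pi) \;\leq\; \nu_\psi(\R) + \nu_\psi(\R + i\pi),
\]
since $\nu_\psi$ is a positive measure and $\R_+ \subset \R$ (and similarly on the top boundary). Therefore the event $\{\nu_\psi(\R_+) + \nu_\psi(\R_+ + i\pi) > 1/2\}$ is contained in $\{\nu_\psi(\R) + \nu_\psi(\R + i\pi) > 1/2\}$, and it suffices to bound the conditional probability of the latter by $C e^{-\gamma\beta p/2}$.

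Next I would apply Markov's inequality to the $p$-th power of the total boundary length. Writing $N := \nu_\psi(\R) + \nu_\psi(\R + i\pi)$, we have
\[
\P\left[ N > \tfrac{1}{2} \,\big|\, E'_\beta \right] \;=\; \P\left[ N^p > 2^{-p} \,\big|\, E'_\beta \right] \;\leq\; 2^p \, \E\left[ N^p \,\big|\, E'_\beta \right].
\]
By the quantum disk statement of Lemma~\ref{wedges-lem}, for any fixed $p \in (0,4/\gamma^2)$ there is a constant $C = C(p)$ such that uniformly in $\beta$,
\[
\E\left[ N^p \,\big|\, E'_\beta \right] \;\leq\; C \, e^{-\gamma\beta p/2}.
\]
Substituting this estimate into the Markov bound and absorbing the $2^p$ factor into the constant gives the desired inequality.

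There is essentially no hard step here, since Lemma~\ref{wedges-lem} already does all of the real work (controlling the field-average process contribution via the Brownian description from Proposition~\ref{prop: large quantum disk}, and controlling the lateral GMC contribution via finiteness of GMC moments). The only thing to verify is that the monotonicity reduction to $\nu_\psi(\R) + \nu_\psi(\R + i\pi)$ is legitimate, and that we have chosen $p$ in the admissible range $(0,4/\gamma^2)$ so that Lemma~\ref{wedges-lem} applies with the same exponent as in the conclusion.
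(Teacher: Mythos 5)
Your proof is correct and is exactly the paper's argument: the paper's proof of this corollary is the one-line statement that it follows from Lemma~\ref{wedges-lem} and Markov's inequality, and your write-up simply spells out those two steps (plus the harmless monotonicity reduction from $\R_+$ to $\R$, which the paper leaves implicit).
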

\begin{proof}
This follows from Lemma~\ref{wedges-lem} and Markov's inequality. 
\end{proof}

\begin{corollary}\label{cor-medium-boundary}
Let $(\cS, \psi, +\infty, -\infty)$ be a quantum disk conditioned on $E'_{\beta}$ (defined in \eqref{eqn: large quantum disk}). For $r > \beta$, let $\tau^\psi_{-r} := \inf \{ t \in \R\: : \: \text{average of } \psi \text{ on } [t, t+i\pi] \text{ is }-r \}$.
Then 
\[
\lim_{r \to \infty} \P\left[\nu_\psi((-\infty, \tau_{-r}^\psi] \times \{0,\pi\}) \geq e^{-\gamma r / 4} \right] = 0.
\]
Similarly, with the setup of Lemma~\ref{wedges-lem}, 
let $\tau^h_{-r/2} = \inf \{ t > 0 \: :\: \text{average of } h \text{ on } [t,t+i\pi] \text{ is } -r/2\}$. Almost surely the field $\wt h|_R$ is such that when we condition on $\wt h|_R$, we have the a.s. limit
\[\lim_{r \to \infty} \P[ \nu_h( [0,\tau_{-r/2}] \times \{0,\pi\}) > e^{-\gamma r / 8} \mid E'_{r,\beta}, \wt h|_R ] = 0. \]
\end{corollary}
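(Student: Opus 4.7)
The plan is to handle both parts by decomposing each field into its vertical-average process $X_t$ (a function only of $\Re z$) and its lateral part $\hat h$, the projection onto $\cH_2$. By Remark~\ref{remark: GFF on strip} and Proposition~\ref{prop: large quantum disk}, these two components are independent and $\hat h$ has a translation-invariant law; since the conditioning events $E'_\beta$ and $E'_{r,\beta}$ depend only on $X$, this independence persists under conditioning. The boundary-length measure then factors as $\nu(dt \times \{0,\pi\}) = e^{\gamma X_t/2}\, d\hat\nu(t)$, where $\E[\hat\nu([a,b])] = O(b-a)$ by translation invariance together with finiteness of first GMC moments (which holds since $\gamma \in (0,2)$).

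For the quantum disk part, I would use the description of $(X_s)_{s\leq 0}$ under $E'_\beta$ from Proposition~\ref{prop: large quantum disk} together with the classical post-last-passage (Williams-type) decomposition of drifted Brownian motion: writing $\sigma_r = \sup\{s \geq 0 : \hat B_s = -r\}$ so that $\tau^\psi_{-r} = -\sigma_r$, the process $u \mapsto \hat B_{\sigma_r + u}$ is conditionally a Brownian motion with drift $\gamma - Q$ started at $-r$ and conditioned to stay strictly below $-r$. Hence $X_{\tau^\psi_{-r} - u} = -r + Z_u$ with $Z_u \leq 0$ drifting linearly to $-\infty$, so $\int_0^\infty \E[e^{\gamma Z_u/2}]\, du$ is a finite constant independent of $r$. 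Combining this with the independence of $X$ and $\hat h$ and the linear growth of $\E[\hat\nu]$ via Fubini will yield
\[
\E\big[\nu_\psi((-\infty, \tau^\psi_{-r}] \times \{0,\pi\}) \,\big|\, E'_\beta\big] = O(e^{-\gamma r/2}),
\]
and Markov's inequality with threshold $e^{-\gamma r/4}$ concludes.

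For the GFF part, the classical reversal-of-drift formula (applicable once $r > 2\beta$, which holds eventually as $r \to \infty$) shows that under $E'_{r,\beta}$ the process $X_t$ starts from approximately $-r$ and evolves as a Brownian motion with \emph{upward} drift $Q-\gamma$ until first hitting $-\beta$. Hence $\tau^h_{-r/2}$ is a first hitting time with $\E[\tau^h_{-r/2} \mid E'_{r,\beta}, \tilde h|_R] = O(r)$ and exponential tails. Since $X_t \leq -r/2$ on $[0, \tau^h_{-r/2}]$, the factorization bounds the contribution from $[S, \tau^h_{-r/2}]$ by $C(\tilde h|_R)\, r\, e^{-\gamma r/4}$; the remaining piece $\nu_h([0, S])$ equals $e^{-\gamma r/2}$ times a finite function of $\tilde h|_R$, since $h|_{[0,S]}$ is a deterministic function of $\tilde h|_R$ shifted by $-r$. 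Markov's inequality with threshold $e^{-\gamma r/8}$ then concludes.

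The hardest step will be verifying that $\E[\hat\nu([a,b]) \mid \tilde h|_R] \leq C(\tilde h|_R)(b-a)$ holds a.s.\ uniformly for $a,b \geq S$, together with the exponential-moment bound on the conditioned process $Z_u$. Both ingredients are essentially present in the proof of Lemma~\ref{wedges-lem}: the Markov property of the GFF (Lemma~\ref{lem-markov}(b)) decomposes $\hat h$ on $\cS_+ \setminus R$ into a translation-invariant lateral field plus a harmonic function with zero normal derivative on the horizontal boundaries, which is uniformly bounded on sub-strips at positive distance from $R$ by the Borell--TIS inequality, after which standard GMC moment estimates absorb this into the a.s.\ finite constant $C(\tilde h|_R)$.
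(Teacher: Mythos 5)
Your proposal is correct and, at its core, follows the same route as the paper: the paper's proof consists of (i) the observation that the law of the field to the left of $\tau^\psi_{-r}$ given $E'_\beta$ does not depend on $\beta<r$ (your Williams/last-passage decomposition is exactly this reduction, and likewise the drift-reversal step in the GFF case), followed by (ii) an application of Lemma~\ref{wedges-lem} and Markov's inequality. Where you differ is that instead of invoking Lemma~\ref{wedges-lem} you re-derive its $p=1$ case by hand via the factorization $\nu = e^{\gamma X_t/2}\,d\hat\nu(t)$ and Fubini. This works, and has the mild advantage of only needing first moments, but it reproduces machinery already in place: the uniform bound $\E[\hat\nu([u,u+1])\mid \wt h|_R]\le C(\wt h|_R)$ is precisely \eqref{eq-lateral-moment}, and the exponential-moment integral is the content of the field-average estimate in the proof of Lemma~\ref{wedges-lem} (Lemmas A.4--A.6 of \cite{wedges}).

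One step deserves more care than your ``hence'' suggests: the finiteness of $\int_0^\infty \E[e^{\gamma Z_u/2}]\,du$ for the conditioned process. For the \emph{unconditioned} Brownian motion with variance $2$ and drift $-(Q-\gamma)$ one computes $\E[e^{\gamma W_u/2}] = e^{(\gamma^2/2-1)u}$, which \emph{grows} in $u$ when $\gamma\in(\sqrt 2,2)$; so ``$Z_u\le 0$ drifting linearly to $-\infty$'' is not by itself sufficient, and the conditioning to stay below the level must genuinely enter the estimate. The claim is nevertheless true for all $\gamma\in(0,2)$ --- an explicit computation with the $h$-transformed transition density (with $h(x)=1-e^{-(Q-\gamma)x}$), or a citation of \cite[Lemma A.5]{wedges}, closes it --- but as written this is the one unproved assertion in your argument, and it is exactly the point where the restriction $p<4/\gamma^2$ in Lemma~\ref{wedges-lem} originates.
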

\begin{proof}
The first inequality for the case $\beta = r$ follows immediately from Lemma~\ref{wedges-lem} and Markov's inequality. But by the Markov property of Brownian motion the law of $\psi(\cdot + \tau^\psi_{-r})|_{\cS_+}$ conditioned on $E'_{\beta}$ does not depend on the value of $\beta < r$. The second inequality follows by the same argument. 
\end{proof}

\subsection{Distortion estimate for conformal maps on the strip}\label{subsection: conformal estimates}
To prove Theorem \ref{thm: peanosphere disk}, we will perform some cutting and gluing operations on quantum surfaces. The purpose of this section is to bound the effect of these operations.

Let $\bbH$ and $\widehat \C = \C \cup \{\infty\}$ be the half plane and the Riemann sphere respectively. We say that $K \subset \bbH$ (resp. $K \subset \widehat \C$) is a \emph{hull} if $K$ is bounded and has simply connected complement w.r.t. $\bbH$ (resp. $\widehat \C$). We may identify the strip $\cS$ with $\bbH$ via the map $z \mapsto e^z$, and say a set $K \subset \cS$ is a hull if $\exp(K) \subset \bbH$ is a hull in $\bbH$. Let $\cQ$ be the infinite cylinder; concretely, define $\cQ = \R \times [-\pi,\pi]$ with the lines $\R-i\pi$ and $\R + i\pi $ identified. Define $K \subset \cQ$ to be a hull if $\exp(K) \subset \widehat \C$ is a hull in $\widehat \C$.  

\begin{lemma}[{Variant of \cite[Lemma 2.4]{sphere-constructions}}]\label{lem: distortion estimate}
There exist universal constants $C_1, C_2 > 0$ such that the following holds. Suppose that $K_1 \subset \cS_-$ and $K_2 \subset \cS$ are hulls, and $\varphi: \cS \backslash K_1 \to \cS \backslash K_2$ is a conformal map with $|\varphi(w) - w| \to 0$ as $w \to +\infty$. Then
\begin{equation}\label{eq-distortion}
|\varphi(w) - w| \leq C_2 \exp (-\Re (w))  \text{ for all } w \in \cS_+ + C_1,
\end{equation}
and
\begin{align*}
|\varphi'(w)|^{-1}, |\varphi'(w)|, |\varphi''(w)| \leq C_2 \text{ for all }w \in \cS_+ + C_1.
\end{align*}
\end{lemma}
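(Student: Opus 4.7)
The strategy is to push the problem from the strip to the half-plane via the exponential map $\Phi(w) = e^w$ (the analogue of the device used in \cite[Lemma 2.4]{sphere-constructions} for the cylinder). Since $\Phi$ is a biholomorphism $\cS \to \bbH$ sending $+\infty$ to $\infty$, the images $\widetilde K_i := \Phi(K_i) \subset \overline{\bbH}$ are half-plane hulls, and $\widetilde\varphi := \Phi \circ \varphi \circ \Phi^{-1}$ is a conformal map $\bbH \setminus \widetilde K_1 \to \bbH \setminus \widetilde K_2$. The hypothesis $|\varphi(w)-w|\to 0$ as $\Re w\to\infty$ translates, via $\widetilde\varphi(e^w) = e^w \cdot e^{\varphi(w)-w}$, to $\widetilde\varphi(z)/z \to 1$ as $z\to\infty$, i.e., $\widetilde\varphi$ has the hydrodynamic normalization at $\infty$.

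The key analytic input is that such a $\widetilde\varphi$ extends via Schwarz reflection across $\R$ (after removing the compact set $\widetilde K_1 \cup \widetilde K_2 \cup $ its reflection) to a holomorphic function in a neighborhood of $\infty$ in $\widehat\C$, so it admits a Laurent expansion $\widetilde\varphi(z) = z + a_0 + a_{-1}z^{-1} + a_{-2}z^{-2} + \cdots$ valid for $|z| > R_0$, where $R_0$ is comparable to $\diam(\widetilde K_1 \cup \widetilde K_2)$. The stronger assumption $\varphi(w)-w \to 0$ (as opposed to merely $\varphi(w)/w \to 1$) forces $a_0 = 0$, so $\widetilde\varphi(z) - z = O(1/|z|)$; term-by-term differentiation then gives $\widetilde\varphi'(z)-1 = O(|z|^{-2})$ and $\widetilde\varphi''(z) = O(|z|^{-3})$ for $|z|\geq 2R_0$. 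Taking $C_1 := \log(2R_0)$ and transferring back through $\varphi(w) = \log \widetilde\varphi(e^w)$, one gets
\begin{equation*}
\varphi(w) - w \;=\; \log\!\bigl(1 + O(e^{-\Re w})\bigr) \;=\; O(e^{-\Re w}) \qquad \text{for } w \in \cS_+ + C_1,
\end{equation*}
which is \eqref{eq-distortion}. The chain rule then yields $\varphi'(w) = \widetilde\varphi'(e^w)\,e^w / \widetilde\varphi(e^w) = 1 + O(e^{-\Re w})$, controlling $|\varphi'|^{\pm 1}$; and a similar computation for $\varphi''(w)$ (applying the chain rule to $\log\widetilde\varphi(e^w)$ twice) gives a uniform bound on $|\varphi''|$.

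The main delicate point is the claim that $C_1, C_2$ are \emph{universal}, i.e.\ independent of the hulls. The threshold $R_0$ in the Laurent expansion a priori depends on $\diam(\widetilde K_i)$. I expect this to be handled by observing that in the intended application the hulls are automatically confined to a universally bounded region (and that $K_1 \subset \cS_-$ together with $|\varphi(w)-w|\to 0$ already forces $\widetilde K_1 \subset \overline{\bbH} \cap \overline{\BB D}$); alternatively one can obtain universal constants by a normal families argument on the set of admissible triples $(\varphi, K_1, K_2)$, combined with the capacity-type identity $a_{-1} = \operatorname{hcap}(\widetilde K_1) - \operatorname{hcap}(\widetilde K_2)$. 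Everything else is routine Laurent/hydrodynamic bookkeeping, so this is where I would expect the real work to lie.
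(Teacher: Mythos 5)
Your overall route is the same as the paper's: exponentiate to the half-plane, expand the conjugated map $G=\exp\circ\varphi\circ\log$ at infinity, use the normalization to kill the constant term, and transfer back through the logarithm (the derivative bounds are then routine; the paper gets them from Cauchy's integral formula where you differentiate term by term). The genuine gap is exactly at the point you defer: the universality of $C_1,C_2$, and neither of your proposed fixes closes it. Nothing in the hypotheses confines $\exp(K_2)$, or its half-plane capacity, to a universally bounded region --- only $K_1\subset\cS_-$, i.e.\ $\exp(K_1)\subset\ol\D\cap\ol\bbH$, is given --- and a normal-families argument over the non-compact family of all admissible triples does not by itself produce uniform constants. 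With your bookkeeping, the threshold $R_0$ and, more importantly, the implied constants in $\widetilde\varphi(z)-z=O(1/|z|)$ depend on the Laurent coefficients, hence a priori on the hulls, which is precisely what the lemma forbids.

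The paper's mechanism for universality is the classical \emph{area theorem}, and it is the real content of the proof. Since $\exp(K_1)\subset\ol\D$, the map $G$ is univalent on $\{|z|>1\}\cap\bbH$ and, after Schwarz reflection across $\R\setminus[-1,1]$ (this reflection step, which your argument also needs, is where the boundary rays being mapped into $\partial\cS$ is used), extends to a univalent function on $\{|z|>1\}$ of the form $G(z)=z+\sum_{n\geq1}a_n z^{-n}$, with no $a_0$ term by the hypothesis $|\varphi(w)-w|\to0$. The area theorem then gives $\sum_{n\geq1} n|a_n|^2\leq 1$; this bound uses only univalence on $|z|>1$ and the normalization at $\infty$, so it is completely insensitive to the size, position, or capacity of the image hull. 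Cauchy--Schwarz yields $|G(z)-z|\lesssim |z|^{-1}$ for $|z|\geq 2$ with an absolute constant, whence $C_1$ can be taken to be an absolute constant (essentially $\log 2$) and $C_2$ absolute, with the bounds on $|\varphi'|^{\pm1},|\varphi''|$ following from Cauchy's integral formula on disks of fixed radius. So the ``real work'' is not a compactness or capacity argument but this coefficient inequality; without it, your proof does not establish the statement as claimed.
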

\begin{proof}
Exponentiation sends $\cS_+$ to $\bbH \backslash \D$ and $\cS$ to $\bbH$. Consider the map $G: \bbH \backslash \D \to \bbH$ defined by $\exp \circ \varphi \circ \log$; this admits a power series expansion $G(z) = z + \sum_{n=1}^\infty a_nz^{-n}$ (the first two coefficients are $a_{-1} = 1$ and $a_0 = 0$ since $\lim_{w \to +\infty} |\varphi(w) - w| = 0$). By the Schwarz reflection principle, $G$ extends to a map $\C \backslash \D \to \C$ with the same power series expansion. Consequently, the area theorem of complex analysis tells us that $\sum_{n=1}^\infty n |a_n|^2 \leq 1$.
Cauchy-Schwarz then yields for $|z| > 2$ that 
\[  \left|\sum_{n=1}^\infty a_nz^{-n} \right| \leq \left(\sum |a_n|^2\right)^{1/2} \left(\sum_{n = 1}^\infty |z|^{-2n}\right)^{1/2} \lesssim |z|^{-1}. \]
Thus, for $w \in \cS_+ + \log 2$ we have, using $| \log (1 + u)| = O(|u|)$ for small $|u|$,
\[|\varphi(w) - w| = |\log G(e^w) - \log e^w| \lesssim \left| \sum_{n=1}^\infty a_n e^{-nw} \right| \lesssim e^{-\Re w},\]
so we have shown~\eqref{eq-distortion}. To obtain the bounds on $1/|\varphi'|$, $|\varphi'|$ and $|\varphi''|$, we combine~\eqref{eq-distortion} with Cauchy's integral formula. 
\end{proof}

\section{$\gamma$-quantum wedge as a mating of trees} \label{section: wedge mating}

The goal of this section is to prove Theorem \ref{thm: peanosphere gamma wedge}. Roughly speaking, starting with the mating-of-trees result for the $\frac{3\gamma}{2}$-quantum wedge with field $h$ decorated by an independent space-filling $\SLE$ $\eta'$, we pick a boundary point at quantum distance one from the origin and zoom in on it. The field $h$ near this point is close to that of a $\gamma$-quantum wedge in total variation, and the curve $\eta'$ near this point is close to an independent counterclockwise space-filling $\SLE$ in total variation. Since we already know the boundary length process of $\eta'$ in the $\frac{3\gamma}{2}$-quantum wedge by Theorem~\hyperref[thm: peanosphere alpha wedge]{A}, we can deduce that of an independent space-filling $\SLE$ on a $\gamma$-quantum wedge. 

Our first task is to formalize what it means to ``locally look like a $\gamma$-quantum wedge''.

\begin{definition}
For $\delta,\eps > 0$ and a field $h$ defined on a neighborhood of 0 in $\bbH$, we say that \emph{the $\eps$-neighborhoods of $h$ and a $\gamma$-quantum wedge are $\delta$-close in total variation} if there exists a coupling of $h$ with a quantum wedge $(\bbH,\wt h, 0, \infty)$ such that with probability $1-\delta$ the following two fields agree:
\begin{itemize}
\item The field $h|_{B_d(0) \cap \bbH}$, where $ d>0$ satisfies $\nu_h([- d, d]) = \eps$;
\item The field $\wt h|_{B_d(0) \cap \bbH}$, where we have fixed the embedding of $(\bbH, \wt h, 0, \infty)$ so that $\wt h$ satisfies $\nu_{\wt h}([- d,  d]) = \eps$. 
\end{itemize}
\end{definition}

The following is a rephrasing\footnote{There, the parameter $C$ that they send to $\infty$ corresponds to our $\eps$ that goes to $0$, and they use the area version of ``canonical description'' while we use the boundary length variant (see \cite[paragraph after (1.8)]{shef-zipper}).} of \cite[Proposition 5.5]{shef-zipper}, and roughly says that when we condition on the quantum boundary length of a quantum surface and zoom in on a boundary point chosen by quantum length, the field looks like that of a $\gamma$-quantum wedge.
\begin{lemma}{\cite[Proposition 5.5]{shef-zipper}}\label{lem-zipper}
Let $D \subset \bbH$ be a bounded domain for which $I = \partial D \cap \R$ is a segment of positive length. Let $\wt h$ be a mixed-boundary GFF on $D$ with Dirichlet boundary conditions on $\partial D \cap \bbH$ and Neumann boundary conditions on $I$. For fixed $L_1,L_2 > 0$, condition on $\nu_h(I) = L_1 + L_2$, and let $y \in I$ be the point splitting $I$ into segments of $\nu_h$-lengths $L_1$ and $L_2$ respectively. Then for $\eps  >0$, the $\eps$-neighborhoods of $\wt h(\cdot + y)$ and a $\gamma$-quantum wedge are within $o(\eps)$ in total variation.
\end{lemma}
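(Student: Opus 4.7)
This statement is a rephrasing of \cite[Proposition 5.5]{shef-zipper}, so my plan is to follow Sheffield's proof strategy. The guiding principle is that zooming in on a boundary-length-typical point of any $\gamma$-LQG surface should produce a $\gamma$-quantum wedge, because the $\gamma$-quantum wedge is precisely the scale-invariant local model of a GFF-type field reweighted by the quantum boundary measure. The argument consists of a conformal reduction, a Girsanov/rooted-measure shift identifying the zoomed-in law, and a smoothness argument absorbing the macroscopic conditioning on $L_1,L_2$.

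\textbf{Conformal reduction.} By the conformal covariance rule~\eqref{eqn: quantum surface defn} and the local nature of the claim, I can reduce to a convenient domain. Namely, choose a conformal map from a neighborhood of $y$ in $D$ to the half-disk $\D \cap \bbH$ sending $y$ to $0$ and transport the field via~\eqref{eqn: quantum surface defn}; working modulo coordinate changes is exactly what the definition of ``$\eps$-close in total variation'' permits. After subtracting the harmonic function from the Markov decomposition (Lemma~\ref{lem-markov}(b)), the restriction to a small half-disk neighborhood of $0$ is close in total variation to a Neumann GFF on $\bbH$, by the same Radon-Nikodym argument used in Proposition~\ref{prop: adaptation of ig4 proposition 2.10} and Proposition~\ref{prop: zoom in Dirichlet boundary}(b): the harmonic part is a.s.\ smooth, so its Dirichlet energy localized to $B_r(0) \cap \bbH$ is $o_r(1)$.

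\textbf{Girsanov shift and identification of the wedge.} By the Cameron-Martin theorem applied to the GMC measure $\nu_{\wt h}$, sampling a point $y \in I$ from $\nu_{\wt h}/\nu_{\wt h}(I)$ and reweighting the field accordingly corresponds, at the level of laws of the field, to reweighting by a limit of $e^{(\gamma/2)\wt h_\eps(y)} \eps^{\gamma^2/4}$ as $\eps \to 0$. This reweighting shifts $\wt h$ by $(\gamma/2)$ times the Neumann boundary covariance kernel $G(\cdot,y) = -2\log|\cdot - y|$, i.e., adds the harmonic-like function $-\gamma \log|\cdot - y|$. Translating so $y = 0$ and mapping $\bbH \to \cS$ via $w \mapsto e^{-w}$, the coordinate-change term $Q\log|\varphi'|$ in~\eqref{eqn: quantum surface defn} contributes $-Q\Re(w)$, while the singularity $-\gamma \log|z|$ becomes $\gamma \Re(w)$. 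The resulting field on $\cS_+$ is a Neumann GFF plus linear drift $(\gamma-Q)\Re(w)$, which by Lemma~\ref{lem-wedge-tip} is precisely the description of the $\gamma$-quantum wedge restricted to $\cS_+$, with the horizontal centering $\nu_h([-d,d]) = \eps$ matching the wedge's canonical boundary-length parametrization.

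\textbf{Macroscopic conditioning and the main obstacle.} The conditioning $\nu_{\wt h}(I) = L_1+L_2$ together with the requirement that $y$ split $I$ into arcs of lengths $L_1, L_2$ is a two-dimensional conditioning on a smooth, strictly positive density depending on the ``bulk'' of the field away from $y$. Away from $y$, it corresponds to a Radon-Nikodym factor that is a continuous function of the field on the complement of $B_d(0)\cap\bbH$, where $d$ is the zoom scale; for $\eps \ll L_1,L_2$, this factor is essentially constant across the local coupling window and contributes only a $1+o(\eps)$ multiplicative error to the law of $\wt h|_{B_d(0)\cap\bbH}$. The key obstacle is quantifying the $o(\eps)$ rate: this requires showing that the conditional density of the pair of boundary lengths on either side of $y$, given the local field, is Lipschitz (or at least uniformly continuous) in the conditioning values --- exactly analogous to Proposition~\ref{prop: continuity of field}. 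Combining this smoothness with the Radon-Nikodym / Dirichlet-energy bound from Proposition~\ref{prop: adaptation of ig4 proposition 2.10} applied at scale $d \asymp \eps^{2/\gamma}$ yields the $o(\eps)$ total variation estimate.
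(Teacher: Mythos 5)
The paper does not prove this statement at all: Lemma~\ref{lem-zipper} is imported verbatim (modulo rephrasing) from \cite[Proposition 5.5]{shef-zipper}, with only a footnote explaining the translation between Sheffield's parameter $C\to\infty$ and the $\eps\to 0$ here, and between the area and boundary-length normalizations. So there is no internal argument to compare against; what you have written is a sketch of the \emph{external} proof.

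As such a sketch, your outline follows the standard (and, as far as I recall, essentially Sheffield's) strategy, and the two computational cores are right: the boundary-measure-typical point contributes a $-\gamma\log|\cdot-y|$ singularity via the Girsanov/rooted-measure shift, and under $w\mapsto e^{-w}$ the coordinate-change term $-Q\Re(w)$ combines with $\gamma\Re(w)$ to give the $(\gamma-Q)$ drift of Lemma~\ref{lem-wedge-tip}. Two points are glossed over. First, the statement does not sample $y$ from $\nu_{\wt h}$; it conditions on $\nu_{\wt h}(I)=L_1+L_2$ and takes the \emph{deterministic} (given the field) point splitting $I$ into arcs of lengths $L_1,L_2$. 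Passing between this and the reweighted/rooted measure you invoke requires a disintegration of the rooted measure over the pair of arc lengths, which is exactly where the smoothness of the joint density of $(\nu_{\wt h}(\text{left arc}),\nu_{\wt h}(\text{right arc}))$ enters; you identify this as the obstacle but do not supply it (the analogue in this paper is Proposition~\ref{prop: continuity of field}, proved by isolating two basis coefficients, and a similar device works here). Second, the zoom scale $d$ with $\nu_{\wt h}([-d,d])=\eps$ is random and field-dependent, so the total-variation coupling must be built on the quantum-length parametrization rather than at a fixed Euclidean scale $d\asymp\eps^{2/\gamma}$; this is where the scale invariance of the wedge is actually used. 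Neither issue is fatal, and your reading of ``$o(\eps)$'' as ``$o_\eps(1)$'' matches the paper's own loose usage (compare the proof of Lemma~\ref{lem: zoom in on alpha wedge}).
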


As an easy consequence of Lemma~\ref{lem-zipper}, we check that when one zooms in on a boundary point at unit quantum distance from the origin of a $\frac{3\gamma}{2}$-quantum wedge, the surface is locally close to a $\gamma$-quantum wedge. This is easier in the regime $\gamma \in (0,\sqrt2]$ because the $\frac{3\gamma}{2}$-quantum wedge is thick. When $\gamma \in (\sqrt2,2)$, the $\frac{3\gamma}{2}$-quantum wedge is thin and comprises countably many beads, and we will need to zoom in on a boundary point of one of these beads. 
 
We emphasize that in the next two proofs, for the case $\gamma \in (0,\sqrt2]$ we will be working with a $\frac32\gamma$-quantum wedge parametrized by $\cS$ so that neighborhoods of $-\infty$ (resp. $+\infty$) have finite (resp. infinite) quantum area. This is the opposite convention from Section~\ref{subsection: wedge}, so when we invoke Lemma~\ref{lem: rest of thick wedge} we will need to rotate its statement by a half-turn.

\begin{lemma}\label{lem: zoom in on alpha wedge}
Let $\alpha = \frac32 \gamma$.
\begin{enumerate}[(a)]
\item
Let $\gamma \in (0,\sqrt2]$, and fix $\eps > 0$. Let $(\cS, h, -\infty, +\infty)$ be an $\alpha$-quantum wedge. Let $y \in \R $ be the point satisfying $\nu_h((-\infty, y]) = 1$, and let $d > 0$ satisfy $\nu_h([y-d,y+d]) = \eps$. Then the $\eps$-neighborhoods of $h(\cdot + y)$ and a $\gamma$-quantum wedge are within $o(\eps)$ in total variation.
\item Let $\gamma \in (\sqrt2,2)$, and fix $\eps > 0$. Consider an $\alpha$-quantum wedge with field $h$. Let $y >0 $ be the point on the right boundary of this thin wedge satisfying $\nu_h([0,y]) = 1$, and parametrize the bead containing $y$ by $(\cS, h, -\infty, +\infty)$. As $\eps \to 0$, the probability of $\{\nu_h(\R) > \eps\}$ goes to 1. On this event define $d > 0$ via $\nu_h([y-d,y+d]) = \eps$. Then the $\eps$-neighborhoods of $h(\cdot + y)$ and a $\gamma$-quantum wedge are within $o(\eps)$ in total variation.

\end{enumerate}
\end{lemma}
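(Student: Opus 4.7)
The plan is to reduce both parts of the lemma to direct applications of Lemma~\ref{lem-zipper}, which says that a mixed-boundary GFF on a bounded domain in $\bbH$, conditioned on the $\nu$-length of its Neumann boundary interval and zoomed in at the length-typical split point, is within $o(\eps)$ in total variation of a $\gamma$-quantum wedge. The task is to produce such a setup for our $h$ near $y$ by revealing $h$ outside a bounded rectangular neighborhood of $y$, so that $y$ becomes the length-typical split point on the unique Neumann edge of the remaining bounded domain.

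For part (a), let $h$ be the thick $\frac{3\gamma}{2}$-wedge field. Using Lemma~\ref{lem-wedge-tip} in the half-turn-rotated form appropriate to the Section~3 convention (in which $+\infty$ is the infinite-area marked point), we write $h = \wt h + g$ on a finite-area neighborhood of $-\infty$ containing $y$, where $\wt h$ is a Neumann GFF on $\cS$ restricted to this neighborhood and $g$ is an explicit affine function. Fix $M > 0$ large and set $D := (y-M, y+M) \times [0, \pi/2] \subset \cS$; map this into $\bbH$ via $z \mapsto e^{z-y}$, so that the bottom edge of $D$ (a Neumann edge of $\cS$ containing $y$) becomes a segment of $\R_+$, while the three remaining edges map into $\bbH$ and become Dirichlet after conditioning. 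Lemma~\ref{lem-markov}(b) then shows that $\wt h|_D$ given $\wt h$ outside $D$ is a mixed-boundary GFF of precisely the type in Lemma~\ref{lem-zipper}. Writing $L_1 := \nu_h([y-M,y])$ and $L_2 := \nu_h([y,y+M])$, conditioning on $(L_1,L_2)$ makes $y$ the length-typical split point, so Lemma~\ref{lem-zipper} yields the $o(\eps)$ total variation closeness of the $\eps$-neighborhood of $h(\cdot+y)$ to a $\gamma$-quantum wedge; sending $M \to \infty$ removes the cutoff. Part (b) follows by applying the same argument inside the bead containing $y$, with Lemma~\ref{lem: alpha bead} playing the role of Lemma~\ref{lem-wedge-tip}; the event $\{\nu_h(\R)>\eps\}$ appears because an $\eps$-neighborhood of $y$ must fit inside the bead, and it has probability $1-o_\eps(1)$ since the bead's right-boundary $\nu_h$-length is a.s.\ positive.

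The main obstacle is the discrepancy between our setup, in which the split lengths $(L_1,L_2)$ are random, and the setup of Lemma~\ref{lem-zipper}, in which they are fixed. Standard truncation resolves this: on a high-probability event, $(L_1,L_2)$ lies in a compact subset of $(0,\infty)^2$ bounded away from $0$ and $\infty$ and the revealed Dirichlet data lies in a bounded ball in an appropriate test-function norm, so the $o(\eps)$ bound of Lemma~\ref{lem-zipper} is uniform over the truncation event. Integrating the conditional bound over the random $(L_1,L_2)$ and boundary data preserves the overall $o(\eps)$ total variation estimate.
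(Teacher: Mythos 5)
Your overall strategy (run a GFF Markov decomposition near the finite end of the wedge and then invoke Lemma~\ref{lem-zipper}) is the same as the paper's, but there is a genuine gap at the key step. You set $D := (y-M,y+M)\times[0,\pi/2]$ and then assert that Lemma~\ref{lem-markov}(b) gives the conditional law of $h|_D$ given the field outside $D$. But $D$ is not a deterministic domain: its center $y$ is defined by $\nu_h((-\infty,y])=1$, so the location of $D$ depends on the field \emph{inside} $D$ (through $\nu_h([y-M,y]]$). The Markov property as stated applies to fixed domains, and for a window selected to be centered at a boundary-length-typical point the claim ``inside given outside is a mixed-boundary GFF'' is precisely the kind of re-rooting/size-biasing statement that requires proof --- it is essentially what \cite[Proposition 5.5]{shef-zipper}/Lemma~\ref{lem-zipper} is designed to capture, so asserting it here is close to circular. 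A second, related mismatch: because you center the window at $y$, the point $y$ is the \emph{Euclidean midpoint} of the Neumann edge and you condition on the lengths of \emph{both} halves $[y-M,y]$ and $[y,y+M]$; Lemma~\ref{lem-zipper} instead conditions only on the total length of a \emph{fixed} boundary interval and takes $y$ to be the (field-dependent) point splitting it into prescribed lengths. These are different conditionings, and your truncation paragraph (which handles the randomness of $(L_1,L_2)$ and of the Dirichlet data) does not address either issue.

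The paper's proof avoids both problems by keeping everything deterministic except the split point: one recenters the embedding so that $\nu_h(\R_-)=\tfrac12$ (a choice of equivalence-class representative, not a conditioning), uses Lemma~\ref{lem: rest of thick wedge} (rotated) to describe $h|_{\cS_+}$ given $h|_{\cS_-}$, takes the \emph{deterministic} rectangle $D=[0,R]\times[0,\pi/2]$, and conditions on $h|_{\cS\setminus D}$ together with $L=\nu_h([0,R])$. Under this conditioning $y$ is exactly the point splitting the fixed interval $[0,R]$ into lengths $\tfrac12$ and $L-\tfrac12$, so Lemma~\ref{lem-zipper} applies verbatim; one then lets $R\to\infty$ so that $\{L>\tfrac12\}$, i.e.\ $\{y\in[0,R]\}$, has probability tending to $1$. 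To repair your argument you should replace the $y$-centered window by such a fixed rectangle (your use of Lemma~\ref{lem-wedge-tip}, suitably rotated, can serve as the initial decomposition, provided you also argue $y$ lies in the described half-strip with high probability). Part (b) of your proposal inherits the same gap, although invoking Lemma~\ref{lem: alpha bead} there is the right ingredient and matches the paper; note also that the paper first conditions on the cumulative length $l$ of the preceding beads and recenters so $\nu_h(\R_-)=l/2$ before repeating the argument of (a).
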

\begin{proof}[Proof of Lemma~\ref{lem: zoom in on alpha wedge}]

\noindent\textit{Proof of (a).}
Since $+\infty$ is the marked point of the quantum wedge with infinite neighborhoods, all boundary segments bounded away from $+\infty$ have finite quantum length. Thus we can horizontally recenter the field so that $\nu_h(\R_-) = \frac12$. By Lemma~\ref{lem: rest of thick wedge}  (rotated by a half-turn) we know that, conditioned on $h|_{\cS_-}$, the law of $h|_{\cS_+}$ is a GFF with Neumann boundary conditions on $\R_+$ and $\R_+ + i\pi$ and Dirichlet boundary conditions on $[0,i\pi]$, plus an upward linear drift of $(Q-\alpha) \Re \cdot$. Choose $R>0$ so large that $y \in [0,R]$ with high probability, and let $D = [0,R] \times [0,\frac\pi2]$. Further condition on the realizations of $h|_{\cS \backslash D}$ and $L := \nu_h([0,R])$. By Lemma~\ref{lem-markov} (b), the conditional law of $h|_D$ is given by a GFF with Dirichlet boundary conditions on $\partial D \cap \cS$ (specified by $h|_{\cS \backslash D}$), Neumann boundary conditions on $[0,R]$, and conditioned on $\nu_h([0,R]) = L$. 

Recalling the definition of $y$, for any choice of $L > \frac12$ we see that conditioning on $\{\nu_h([0,R]) = L\}$ is the same as conditioning on both $\{\nu_h([0,y]) = \frac12\}$ and $\{\nu_h([y,R])=L - \frac12\}$. Thus by Lemma~\ref{lem-zipper}, for sufficiently small $\eps > 0$, under this conditioning the $\eps$-neighborhoods of $h(\cdot + y)$ and a $\gamma$-quantum wedge are close in total variation. Here, we need to choose $\eps$ small in terms of $L$ and $h|_{\cS \backslash D}$. Nevertheless we can choose $\eps$ so small that, with high probability w.r.t. the realizations of $L$ and $h|_{\cS \backslash D}$, conditioned on $L$ and $h|_{\cS \backslash D}$ the $\eps$-neighborhoods of $h(\cdot + y)$ and a $\gamma$-quantum wedge are close in total variation. Finally, as $R \to \infty$ the probability of $\{L > \frac12\}$ tends to 1, so (a) holds.

\medskip

\noindent \textit{Proof of (b).} In the regime $\gamma \in (\sqrt2, 2)$, our proof strategy is basically the same, but with additional details. Let $1-l$ be the sum of the right-boundary lengths of all the beads that come before the bead $(\cS, h, -\infty, +\infty)$ containing $y$, so that $y \in \R$ satisfies $\nu_h((-\infty, y]) = l$. Condition on $l$, so $(\cS, h, -\infty, +\infty)$ has the law of a thin wedge bead conditioned on $\nu_h(\R) > l$, and $y$ is the point satisfying $\nu_h((-\infty, y]) = l$. Now we can horizontally recenter the field so $\nu_h(\R_-) = \frac{l}{2}$, use Lemma~\ref{lem: alpha bead} and follow the proof of (a).
\end{proof}

Now, we are ready to prove Theorem \ref{thm: peanosphere gamma wedge}. 

\begin{figure}[ht!]
\begin{center}
\includegraphics[scale=0.65]{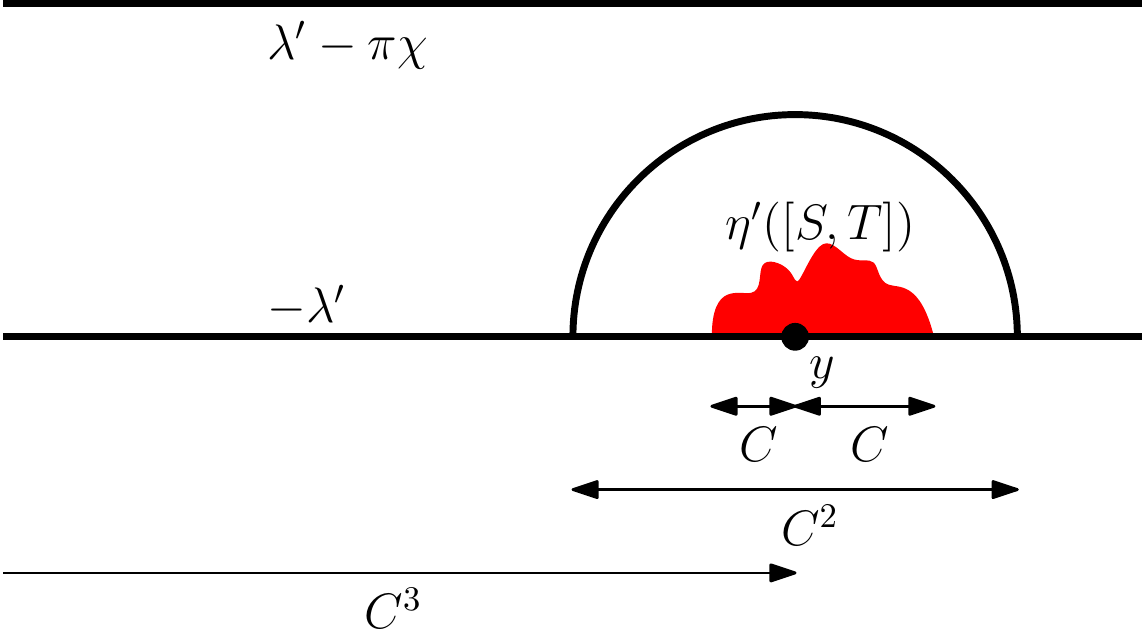}
\end{center}
\caption{\label{fig: gamma_wedge} Consider the case $\gamma \in (0,\sqrt2]$. On an $\alpha$-quantum wedge $(\cS, \widehat h, -\infty, +\infty)$, look at the point $y\in \R$ such that $\nu_{\widehat h}(\R_- + y) = C^3$, and draw a half-disk $D$ centered at $y$ such that $\nu_{\widehat h}(\partial D \cap \R) = C^2$. Consider an independent space-filling curve $\eta'$ from 0 to $\infty$ (coupled with a GFF $h^{\IG}$ with boundary values $-\lambda'$ on $\R$ and $\lambda'-\pi \chi$ on $\R+i\pi$). The $C^2$-neighborhoods of $\wh h(\cdot + y)$ and a $\gamma$-quantum wedge are close in total variation, and the curve $\eta'$ restricted to an interval of time where it's close to $y$ is close in total variation to a counterclockwise space-filling $\SLE$.}
\end{figure}

\begin{proof}[Proof of Theorem \ref{thm: peanosphere gamma wedge}]
Here we flesh out the proof for the regime $\gamma \in (0,\sqrt2]$. The case $\gamma \in (\sqrt2, 2)$ is proved in the same way with minor modifications. 

Pick some large $C$. Take a $\frac{3\gamma}2$-quantum wedge $(\cS, \widehat h, -\infty, +\infty)$ parametrized so $-\infty$ (resp. $+\infty$) has neighborhoods of finite (resp. infinite) quantum area. Mark the point $y \in \R$ such that $\nu_{\widehat h}((-\infty,y]) = C^3$ (see Figure \ref{fig: gamma_wedge}). Let $D$ be the half-disk centered at $y$ such that $\nu_{\widehat h}(\partial D \cap \R) = C^2$. By Lemma \ref{lem: zoom in on alpha wedge} and the scale invariance of the quantum wedge, the $C^2$-neighborhoods of $\wh h(\cdot + y)$ and a $\gamma$-quantum wedge have total variation distance going to zero as $C \to \infty$; moreover, the Euclidean diameter of $D$ converges in probability to $0$ as $C \to \infty$. Now, with $\lambda', \chi$ as in~\eqref{eqn: ig parameter}, we sample an independent Dirichlet GFF $h^{\op{IG}}$ on $\bbH$ with boundary value $-\lambda'$ on $\R$ and $\lambda' - \pi \chi$ on $\R + i\pi$, and consider its associated space-filling $\SLE_{\kappa'}$ $ \eta'$ from $-\infty$ to $+\infty$ as in Section~\ref{subsection: space-filling SLE}. We parametrize $\eta'$ by $\gamma$-quantum mass with respect to $\widehat h$. 
Let $S$ (resp. $T$) be the first time that $\eta'$ hits the point $C$ units of quantum length to the left (resp. right) of $y$; for $C$ large, we have with high probability that $\eta'[S,T] \subset D$. By Proposition \ref{prop: zoom in Dirichlet boundary}(b), we know that as $C \to \infty$ the field $h^{\op{IG}}|_D$ converges in total variation distance to the restriction to $D$ of a $\GFF$ in $\bbH$ with constant Dirichlet boundary conditions $-\lambda'$. Consequently, by \cite[Lemma 2.4]{gms-harmonic} we know that the path segment $\eta'|_{[S,T]}$ is $o_C(1)$-close in total variation to the path segment we would get by replacing $\eta'$ with a counterclockwise space-filling $\SLE_{\kappa'}$ on $\bbH$ independent of $\wh h$.

Thus, to understand the boundary length process in the setting of Theorem \ref{thm: peanosphere gamma wedge}, it suffices to describe the boundary length process of $\eta'|_{[S,T]}$ on $\wh h$ (with time reparametrized so the curve hits $y$ at time zero), then send $C \to \infty$. 

Let $(\widehat L_t,\widehat   R_t)_{t \geq 0}$ be the boundary length process of $\eta'$ on the $\frac{3\gamma}{2}$-quantum wedge; by Theorem~\hyperref[thm: peanosphere alpha wedge]{A} this is a two-dimensional Brownian motion with initial value $\widehat L_0 = \widehat R_0 = 0$ and having covariances given by \eqref{eqn: covariance}. By definition, $S$ (resp. $T$) is the first time that $\widehat R_t = -C^3 + C$ (resp. $\widehat R_t = -C^3 - C$). Let $\tau \in (S,T)$ be the first time that $\widehat R_t = -C^3$ (i.e. the time that $\eta'$ hits $y$). Let $(L_t, R_t)_{[S-\tau, T - \tau]}$ be the time-translation of the process $(\widehat L_t, \widehat R_t)_{[S,T]}$, with additive constant normalized so that $L_0 = R_0 = 0$. Then $(L_t, R_t)_{[0,T-\tau]}$ is Brownian motion with covariances \eqref{eqn: covariance} started at the origin and stopped when $R_t$ hits $-C$, and $(L_t,R_t)_{[S-\tau, 0]}$ is the time-reversal of Brownian motion $(\widetilde L_t, \widetilde R_t)_{[0,\tau - S]}$ with covariances \eqref{eqn: covariance} started at the origin, conditioned on $\widetilde R_t \geq 0$ for all time, and stopped at the last time $\widetilde R_t$ takes the value $C$. By the strong Markov property of Brownian motion, $(L_t,R_t)_{[S-\tau, 0]}$ and $(L_t, R_t)_{[0,T-\tau]}$ are independent. Thus, this boundary length process $(L_t, R_t)_{[S-\tau, T - \tau]}$ converges in total variation on compact time intervals to the process described in Theorem~\ref{thm: peanosphere gamma wedge}.

Finally, we show that if $\cW$ is a $\gamma$-quantum wedge and $\eta'$ an independent counterclockwise space-filling $\SLE_{\kappa'}$ from $\infty$ to $\infty$ on $\cW$, then the boundary length process of $\eta'$ on $\cW$ a.s. determines the curve-decorated quantum surface $\cW^* := (\cW, \eta')$. 
Write $\wt \cW^*$ for the space-filling $\SLE_{\kappa'}$-decorated $\frac{3\gamma}2$-quantum wedge of Theorem~\hyperref[thm: peanosphere alpha wedge]{A}. Lemma~\ref{lem-measurability} tells us that a.s., for any finite interval of time $I$, the boundary length process of $\wt \cW^*$ restricted to $I$ locally determines the curve-decorated quantum surface $(\eta'(I), h, \eta'|_{I})$. In the above proof we obtained local approximations (in total variation) of $\cW^*$ by conditioning $\wt \cW^*$ on events of positive probability. Thus, given the boundary length process on $\cW^*$ in any compact interval $I$ we can a.s. recover the curve-decorated surface explored during that interval. Letting $I$ increase to all of $\BB R$ concludes the proof of Theorem~\ref{thm: peanosphere gamma wedge}.

\end{proof}

\begin{remark}\label{rem-recover}
The above argument proves that $(L_t, R_t)$ determines the curve-decorated $\gamma$-quantum wedge $(\bbH, h, \eta', 0, \infty)$, and, more strongly,  a.s. for every interval $I = [a,b]$, the process $(L_t -L_a , R_t -R_b )_{t \in I}$ determines the curve-decorated quantum surface $(\eta'(I), h|_{\eta'(I) , \eta'|_I})$ explored by $\eta'$ during the interval $I$. 
\end{remark}

Now, we identify the curve-decorated quantum surfaces parametrized by the regions explored by $\eta'$ before and after hitting 0. We recall that to decorate a thin quantum wedge by an independent space-filling $\SLE_{\kappa'}$ curve, we independently sample for each bead a chordal space-filling $\SLE_{\kappa'}$ from one marked point to the other, then concatenate these curves according to the ordering of the beads. 

\begin{theorem}\label{thm: wedge laws}
For $\gamma \in (0,2)$, let $(\bbH, h, 0, \infty)$ be a $\gamma$-quantum wedge, and $\eta'$ an independent space-filling counterclockwise $\SLE_{\kappa'}$ curve from $\infty$ to $\infty$ parametrized so that $\eta'(0) = 0$. Let $U $ be the interior of $\eta' ((-\infty, 0])$ and let $V$ be the interior of $\eta'([0,\infty))$. Then $(U, h, 0, \infty, \eta'_{(-\infty,0]})$ is a $Q$-quantum wedge decorated by an independent space-filling  $\SLE_{\kappa'}(\frac{\kappa'}{2}-4;0)$ curve from $\infty$ to $0$, and $(V, h, 0, \infty, \eta'|_{[0,\infty)})$ is a $\frac{3\gamma}{2}$-quantum wedge decorated by an independent space-filling  $\SLE_{\kappa'}$ curve from $0$ to $\infty$.
\end{theorem}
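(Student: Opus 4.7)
The plan is to combine Theorem~\ref{thm: peanosphere gamma wedge} with the local measurability assertions of Lemma~\ref{lem-measurability} and Remark~\ref{rem-recover}, together with Lemma~\ref{lem: identify-wedges} and an SLE/LQG wedge-cutting result from~\cite{wedges}.

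By Theorem~\ref{thm: peanosphere gamma wedge}, the past BLP $(L_{-t}, R_{-t})_{t \geq 0}$ and the future BLP $(L_t, R_t)_{t \geq 0}$ are independent; by Remark~\ref{rem-recover}, each a.s.\ determines the curve-decorated quantum surface explored during the corresponding time interval via a canonical measurable functional. Hence the past surface $(U, h|_U, \eta'|_{(-\infty,0]}, 0, \infty)$ and the future surface $(V, h|_V, \eta'|_{[0,\infty)}, 0, \infty)$ are independent, and identifying their marginal laws suffices. For the future, the BLP is a two-dimensional Brownian motion started at $(0,0)$ with covariance~\eqref{eqn: covariance}, which by Theorem~\hyperref[thm: peanosphere alpha wedge]{A} agrees in law with the BLP of a $\frac{3\gamma}{2}$-quantum wedge decorated by an independent space-filling $\SLE_{\kappa'}$ from $0$ to $\infty$. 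Since the curve-decorated surface is recovered from its BLP via the same canonical functional in both settings (Lemma~\ref{lem-measurability} and Remark~\ref{rem-recover}), the future has the stated law.

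For the past, I would use imaginary geometry together with SLE/LQG cutting. By Lemma~\ref{lem: identify-wedges}, the interface $\eta'((-\infty,0]) \cap \eta'([0,\infty))$ is an $\SLE_\kappa(\kappa/2 - 2; -\kappa/2)$ curve from $0$ to $\infty$, and, conditional on this interface, $\eta'|_{(-\infty, 0]}$ is a space-filling $\SLE_{\kappa'}(\kappa'/2 - 4; 0)$ from $\infty$ to $0$ in $U$. Since the interface is a deterministic function of the imaginary-geometry field $h^{\IG}$ generating $\eta'$, and $h^{\IG}$ is independent of the LQG field $h$, the interface is independent of $h$. I would then invoke the SLE/LQG wedge-cutting theorem of~\cite{wedges}: the $\gamma$-quantum wedge cut by this independent $\SLE_\kappa(\kappa/2 - 2; -\kappa/2)$ curve yields two independent quantum wedges of parameters $Q$ and $\frac{3\gamma}{2}$; since $U$ is always simply connected while a $Q$-quantum wedge is always thick and a $\frac{3\gamma}{2}$-quantum wedge is thin exactly when $\gamma > \sqrt{2}$, the topology forces $U$ to carry the $Q$-quantum wedge structure and $V$ to carry the $\frac{3\gamma}{2}$-quantum wedge structure. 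The decorating space-filling $\SLE_{\kappa'}(\kappa'/2 - 4; 0)$ is a function of $h^{\IG}$ given the interface and is therefore independent of the $Q$-wedge structure on $U$.

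The main obstacle is the wedge-cutting step: one must verify that the parameter formulas from~\cite{wedges} produce precisely the pair $(Q, \frac{3\gamma}{2})$ for the weights $(\rho_L, \rho_R) = (\kappa/2 - 2, -\kappa/2)$, and check that the independence of the decorating space-filling SLEs from the LQG fields in each cut region is preserved after cutting.
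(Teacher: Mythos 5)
Your core argument for $U$ is the paper's own proof: apply Lemma~\ref{lem: identify-wedges} to identify the interface as an $\SLE_\kappa(\frac{\kappa}{2}-2;-\frac{\kappa}{2})$ independent of $h$, then cut the $\gamma$-quantum wedge along it using the wedge-cutting theorem of~\cite{wedges}. The paper phrases the bookkeeping in terms of weights, $W = \gamma(\frac\gamma2 + Q - \alpha)$, so that the $\gamma$-wedge has weight $2$ and splits into independent wedges of weights $W_L = \rho_L + 2 = \frac{\gamma^2}{2}$ and $W_R = \rho_R+2 = 2 - \frac{\gamma^2}{2}$, which convert to $\alpha = Q$ and $\alpha = \frac{3\gamma}{2}$ respectively; this handles \emph{both} sides at once, so the paper never needs your separate boundary-length-process argument for $V$. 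That BLP argument (Theorem~\ref{thm: peanosphere gamma wedge} plus Theorem~\hyperref[thm: peanosphere alpha wedge]{A} plus the fact that the recovery functional of Lemma~\ref{lem-measurability}/Remark~\ref{rem-recover} is the same in both settings) is a legitimate alternative route to the law of $V$, since the measurability in Remark~\ref{rem-recover} is itself inherited from the $\frac{3\gamma}{2}$-wedge setting via local total-variation approximation; it is just more machinery than needed.

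One step of yours does not work as stated: you assign the weight $Q$ to $U$ and $\frac{3\gamma}{2}$ to $V$ by arguing that ``topology forces'' it, because $U$ is simply connected while a $\frac{3\gamma}{2}$-wedge is thin. But the $\frac{3\gamma}{2}$-wedge is thin only for $\gamma \in (\sqrt2,2)$; for $\gamma \in (0,\sqrt2]$ both wedges are thick and both $U$ and $V$ are simply connected, so topology distinguishes nothing in that regime. The fix is immediate: the cutting theorem of~\cite{wedges} already assigns the weight $W_L = \rho_L + 2$ to the region on the left of the interface (which is $U$, by the flow-line description in Lemma~\ref{lem: identify-wedges}) and $W_R$ to the right, so no auxiliary argument is needed; alternatively your independent identification of $V$ as the weight-$(2-\frac{\gamma^2}{2})$ piece determines the assignment by elimination. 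Finally, your closing concern about the independence of the decorating curves from the fields in each piece is resolved exactly as you suggest: conditionally on the interface, the curves in $U$ and $V$ are determined by $h^{\IG}$, which is independent of $h$, and their conditional laws (given by Lemma~\ref{lem: identify-wedges}) do not depend on the realization of the interface once one passes to curve-decorated quantum surfaces.
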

\begin{proof} 
Following~\cite[Table 1.1]{wedges}, we define the \emph{weight} of an $\alpha$-quantum wedge by
\eqbn
W = \gamma\left( \frac{\gamma}{2} + Q - \alpha \right) .
\eqen 
Note that a $\gamma$-quantum wedge has weight $W=2$.
By Lemma~\ref{lem: identify-wedges}, we know that the interface $\eta'((-\infty,0])\cap \eta'([0,\infty))$ is an $\SLE_{\kappa}(\rho_L; \rho_R)$ curve from 0 to $\infty$ in $\BB H$ with force points immediately to the left and right of 0, where $\rho_L = \frac{\gamma^2}{2}-2$ and $\rho_R = -\frac{\gamma^2}{2}$. Let $  W_L = \rho_L + 2$, and $W_R = \rho_R + 2$. 
By \cite[Theorem 1.2]{wedges}, the flow line described above splits the $\gamma$-quantum wedge into two independent wedges of weights $W_L$ and $W_R$. As such, we see that $(U, h, 0, \infty)$ is a wedge of weight $ W_L = \frac{\gamma^2}{2}$. Converting, we conclude that this is an $\alpha$-quantum wedge for $\alpha = Q$. Likewise, $(V,h,0,\infty)$ is a quantum wedge of weight $2-\frac{\gamma^2}{2}$, so it is an $\alpha$-quantum wedge for $\alpha = \frac{3\gamma}{2}$.
\end{proof}

\begin{remark}\label{remark: topology of wedges}
For $\gamma \in (0,\sqrt2]$, both quantum wedges satisfy $\alpha \leq Q$, so they are thick wedges. Thus $U,V$ are simply connected.
For $\gamma \in (\sqrt2,2)$, the wedge $(U, h, 0, \infty)$ is thick, but since $\frac{3\gamma}{2} > Q$, the wedge $(V,h,0,\infty)$ is thin. Thus $U$ is simply connected but $V$ has countably many simply connected components or ``beads''. The beads come with a natural ordering (the order in which the boundaries are drawn by $\eta'$), and so we can define the left (resp. right) boundary of $V$ by concatenating the left (resp. right) boundaries of the beads according to their natural ordering. See Figure~\ref{fig: alpha_wedge}, right.
\end{remark}

\section{Brownian excursions in the cone}\label{section: cone excursions}
The goal of this section is to define and discuss the properties of the correlated Brownian excursion mentioned in Theorem \ref{thm: peanosphere disk}, to prove an approximation result for correlated Brownian excursions in $\R_+ \times \R_+$, and to identify the law of the quantum area of a unit boundary length quantum disk assuming Theorem~\ref{thm: peanosphere disk}. In Section~\ref{subsection: uncorrelated brownian excursions}, we recall the properties of uncorrelated Brownian excursions. In Section~\ref{subsection: correlated brownian excursions}, we define correlated Brownian excursions, and discuss an approximation scheme for the correlated excursion of Theorem \ref{thm: peanosphere disk}; this approximation result will be used in Section~\ref{subsection: decomposing wedge} to show that the boundary length process of a space-filling SLE in a bottlenecked region of a quantum wedge converges to the correlated excursion of Theorem \ref{thm: peanosphere disk}. In Section~\ref{subsection-shimura}, we discuss Brownian motion started at the vertex of a cone and conditioned to stay in the cone for some time; this was studied in \cite{shimura1985}. Finally in Section~\ref{subsection: area of disk} we build on Section~\ref{subsection-shimura} to identify the unit boundary length quantum disk area law (Theorem~\ref{thm: area of disk}) conditional on Theorem~\ref{thm: peanosphere disk}. 

\subsection{Uncorrelated Brownian excursions} \label{subsection: uncorrelated brownian excursions}
In this section, we summarize the properties of uncorrelated Brownian excursions. Most properties in this section are taken verbatim from \cite[Sections 2 and 3]{lawler-werner-soup}, and we will justify the remainder via results in \cite{lawler-werner-soup}.

We start with some notation. Let $\cK$ be the space of all parametrized continuous planar curves $\eta$ defined on a time-interval $[0,t_\eta]$, endowed with the metric
\begin{equation} \label{eqn: metric on space of curves}
d_\cK (\eta, \eta^1) = \inf_\theta \left\{ \sup_{s \in [0,t_\eta]} |s - \theta(s)| + |\eta (s) - \eta^1(\theta(s))|\right\}, 
\end{equation}
where the infimum is taken over all increasing homeomorphisms $\theta: [0,t_\eta] \to [0,t_{\eta^1}]$. Note that this metric does not identify curves which are the same under time-reparametrization. Sometimes, we will deal with curves $\eta:[s,t] \to \C$ with $s \neq 0$; by translating the domain of $\eta$ to $[0, t-s]$, we will view $\eta$ as an element of $\cK$. For a simply-connected domain $D$, we write $\cK_D$ for the curves $\eta$ such that $\eta ((0, t_\eta)) \subset D$. Note that we allow the endpoints of $\eta$ to lie in $\partial D$. The space of probability measures on $\cK$ is a metric space, under the Prohorov metric.

Given a conformal map $\varphi: D \to \widetilde D$ and a curve $\eta$, if the quantity
\[s_t = \int_0^t |\varphi'(\eta(s))|^2 \ ds \]
is finite for all $t \in [0,t_\eta]$, we can define the curve $\varphi_* \eta$ in $\widetilde D$ via $\varphi_*  \eta (s_t) := \varphi(\eta(t))$. We choose this time-parametrization for the pushforward for the following reason: if $\eta$ is Brownian motion parametrized to have quadratic variation $2dt$, then $\varphi_* \eta$ is also Brownian motion parametrized to have quadratic variation $2dt$. If $\mu$ is any measure on $\cK_D$ supported on the set of curves $\eta$ such that $\varphi_* \eta$ is well defined and in $\cK_{\widetilde D}$, we write $\varphi_* \mu$ for the induced measure 
\[\varphi_* \mu (V) = \mu \{ \eta \: : \: \varphi_* \eta \in V\}. \]

Now, we are ready to discuss the various kinds of normalized bridge and excursion measures we need in this paper. Consider first the case where $D$ is a simply connected domain in $\C$ whose boundary is a finite union of analytic curves; as we will explain later the results can be extended to all simply connected $D$ via conformal invariance. For $z \in D$ and any harmonically nontrivial boundary interval $I \subset \partial D$, let $\mu^\#_D(z,I)$ be the probability measure on $\cK_D$ given by standard Brownian motion started at $z$ and conditioned to exit $D$ at some boundary point in $I$. Let $H_D(z, dw)$ be the \textit{Poisson kernel}; that is, for any $I \subset \partial D$ we have 
\[\P(\text{Brownian motion started at }z \text{ exits }D \text{ in } I) = \int_I H_D(z, dw). \]

For $z \in D$ and $w \in \partial D$, the \textit{normalized\footnote{In \cite{lawler-werner-soup}, the measures $\mu_D(z,w)$ are not necessarily probability measures. Using their notation, we write $\mu^\#_D(z,w)$ to refer to the normalized measure having total mass one.} interior-to-boundary measure} $\mu^\#_D(z, w)$ is a probability measure on $\cK_D$ supported on paths starting at $z$ and ending at $w$. 
For any harmonically nontrivial boundary interval $I \subset \partial D$, we can decompose $\mu^\#_D(z,I)$ in terms of the normalized interior-to-boundary Brownian measure (this is a rephrasing of \cite[Section 3.1.2]{lawler-werner-soup}):
\eqb\label{eqn: decomposition of BM to excursions}
\mu^\#_D(z, I) = \int_I \mu_D^\#(z,w) \frac{H_D(z,dw)}{H_D(z,I)}. 
\eqe
The probability measure $\mu^\#_D(z,w)$ is conformally invariant, i.e. for conformal $\varphi: D \to \widetilde D$ we have
\[\mu^\#_{\widetilde D}(\varphi(z), \varphi(w)) = \varphi_* \mu^\#_D(z,w). \]

For $z,w \in \partial D$, the \textit{normalized boundary-to-boundary excursion measure} $\mu^\#_D(z,w)$ is a probability measure on $\cK_D$ supported on paths starting at $z$ and ending at $w$. By normalizing the measures in \cite[(6)]{lawler-werner-soup} to be probability measures, we have the following: for any boundary points $z,w \in \partial D$ such that $\partial D$ is locally analytic at $z$, this normalized excursion measure satisfies 
\begin{equation}\label{eqn: approximating brownian excursion}
\lim_{\eps \to 0} \mu^\#_D (z + \eps \mathbf{n}_z, w) = \mu^\#_D (z,w), \text{ with convergence under the Prohorov metric.}
\end{equation}
Here, $\mathbf{n}_z$ is the inward pointing normal vector at $z$, and $\mu^\#_D (z + \eps \mathbf{n}_z, w)$ is the normalized interior-to-boundary Brownian measure. As with the interior-to-boundary case, this probability measure is conformally invariant; for $\varphi: D \to \widetilde D$ a conformal map we have 
\[\mu^\#_{\widetilde D}(\varphi(z), \varphi(w)) = \varphi_* \mu^\#_D(z,w). \]
These measures make sense for $D$ with boundary a finite union of analytic curves. By conformal invariance, we can make sense of these measures for \emph{all} simply connected $D$, by conformally mapping to a domain for which we have defined these measures. 

The normalized boundary-to-boundary Brownian excursion measure is reversible: for $z,w \in \partial D$, one can sample from $\mu^\#_D(z,w)$ by taking the time-reversal of a path sampled from $\mu^\#_D(w,z)$. This follows immediately from \cite[(6)]{lawler-werner-soup} and the reversibility of Brownian motion.

One can often prove statements of the form $\lim_{n\to \infty} \mu^\#_D(x_n, y_n) = \mu^\#_D(x,y)$ for sequences of points satisfying $x_n \to x$ and $y_n \to y$ (for instance \eqref{eqn: approximating brownian excursion}). See \cite[Sections 3.2.2, 3.3.2]{lawler-werner-soup}.

\subsection{Correlated Brownian excursions and approximations} \label{subsection: correlated brownian excursions}

Recalling the unknown constant $\BB a$ in the mating-of-trees Brownian motion covariance \eqref{eqn: covariance}, let
\begin{equation}\label{eqn: Lambda}
\theta := \frac{\pi \gamma^2}{4}, \quad \Lambda := \frac1{\BB a}\begin{pmatrix}
\frac{1}{\sin \theta}& \frac{1}{\tan \theta},\\
0 & 1
\end{pmatrix},
\end{equation}
and define
\begin{equation} \label{eqn: sheared coordinates}
\cC_\theta := \Lambda \R_+^2 = \{z \in \C \: : \: \arg(z) \in [0,\theta]\}, \quad x := \Lambda \binom10 = \frac1{\BB a}\binom{\frac{1}{\sin \theta}}{0}, \quad y := \Lambda \binom01 = \frac1{\BB a}\binom{\frac{1}{\tan \theta}}{1}.
\end{equation}
It is easy to check that the shear transformation $\Lambda^{-1}$ sends standard Brownian motion into Brownian motion with covariances \eqref{eqn: covariance}. Hence, the following definition is natural.
 
\begin{definition}\label{def: cone BM}
Fix $\gamma \in (0,2)$ and $z,w \in \partial \R_+^2$. We define the \emph{sheared normalized boundary-to-boundary excursion measure} from $z$ to $w$ with with covariances given by \eqref{eqn: covariance} as follows: To get a sample $(L_t, R_t)$ from it, first sample $\eta \sim  \mu_{\cC_\theta}^\# (\Lambda z,\Lambda w)$ (with random duration $t_\eta$), and then set $\binom{L_t}{R_t}:= \Lambda^{-1} \eta_t$ for all $t \in [0,t_\eta]$.
We similarly define the \emph{sheared normalized interior-to-boundary measure}.
\end{definition}
See Figure~\ref{fig: boundary_lengths_disk_bottom} for a simulation of a sheared excursion from $(0,0)$ to $(0,1)$. This definition extends by translation to cones $\R_+^2 + (a,b)$ with arbitrary vertex $(a,b)$.
For the rest of this section, we work to prove Proposition \ref{prop: approx BM}, which says that we can approximate the sheared normalized boundary-to-boundary measure by suitable sheared normalized interior-to-boundary measures. In the proof of Theorem~\ref{thm: peanosphere disk} we approximate a quantum disk by conditioning on the existence of a bottleneck in a quantum wedge; Proposition~\ref{prop: approx BM} tells us that the boundary length process of space-filling SLE in the bottlenecked region converges to the desired correlated Brownian excursion.

\begin{proposition}\label{prop: approx BM}
Let $(L_t, R_t)$ be two-dimensional Brownian motion with covariances given by \eqref{eqn: covariance}. For $\delta, c> 0$, start at $(L_0,R_0) = (0,c)$, and run until the stopping time $T$ when $(L_t, R_t)$ first exits the cone $(\R_+ - \delta) \times \R_+ $. Condition on $\{L_T \in [\delta, 2\delta], R_T = 0\}$. Then as $\delta \to 0$ and $c \to 1$, in the Prohorov topology, the law of $(L_t, R_t)$ converges to the sheared normalized boundary-to-boundary excursion in $\R_+^2$ defined in Definition \ref{def: cone BM}, starting at $(0,1)$ and ending at $(0,0)$. 
\end{proposition}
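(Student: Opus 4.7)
The plan is to reduce to standard (uncorrelated) Brownian motion via the shear $\Lambda$ of \eqref{eqn: Lambda}, then to uniformize the shifted cone by a conformal map to the upper half-plane $\bbH$. In $\bbH$ the continuity of normalized excursion measures from \cite[Sections 3.2.2, 3.3.2]{lawler-werner-soup} (referenced in the excerpt) applies directly, and the desired Prohorov convergence follows by integrating the decomposition \eqref{eqn: decomposition of BM to excursions} against the Poisson kernel.

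Concretely, first set $\eta_t := \Lambda (L_t, R_t)^\top$, which is standard two-dimensional Brownian motion. Under $\Lambda$, the domain $(\R_+ - \delta) \times \R_+$ maps to $\widetilde D_\delta := \cC_\theta - \delta x$; the starting point $(0, c)$ maps to $cy$; and the exit condition $\{L_T \in [\delta, 2\delta],\, R_T = 0\}$ maps to $\{sx : s \in [\delta, 2\delta]\}$. By Definition~\ref{def: cone BM}, the target sheared excursion corresponds under $\Lambda$ to $\mu^\#_{\cC_\theta}(y, 0)$. Note that $cy$ lies on the upper ray of $\partial \cC_\theta$ but is interior to $\widetilde D_\delta$ at distance of order $\delta$, while the exit window shrinks to the corner $0$ of $\cC_\theta$. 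To handle this, consider the conformal map $f_\delta : \widetilde D_\delta \to \bbH$ defined by $f_\delta(z) = (z + \delta x)^{\pi/\theta}$, sending the apex $-\delta x$ to $0$, the lower ray to $\R_+$ and the upper ray to $\R_-$. These maps converge, uniformly on compact subsets of $\cC_\theta \setminus \{0\}$, to $f_0(z) := z^{\pi/\theta}$. Since $y = |y|e^{i\theta}$, one computes $f_\delta(cy) \to f_0(y) = -|y|^{\pi/\theta}$ as $(\delta,c) \to (0,1)$, and $J_\delta := f_\delta([\delta x, 2\delta x]) = [(2\delta)^{\pi/\theta}, (3\delta)^{\pi/\theta}]\cdot |x|^{\pi/\theta}\subset \R_+$ shrinks to $\{0\}$. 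Crucially, both limit points $-|y|^{\pi/\theta}$ and $0$ are \emph{analytic} boundary points of $\bbH$, i.e., the corner of $\cC_\theta$ has been unfolded.

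By conformal invariance of standard Brownian motion (with the time-change convention of Section~\ref{subsection: uncorrelated brownian excursions}), the pushforward under $f_\delta$ of the conditioned law of $\eta$ is the conditional law of standard BM in $\bbH$ started at $f_\delta(cy)$ and conditioned to exit in $J_\delta$. The decomposition \eqref{eqn: decomposition of BM to excursions} expresses this as
\begin{equation*}
\int_{J_\delta} \mu^\#_\bbH(f_\delta(cy), w) \, \frac{H_\bbH(f_\delta(cy), dw)}{H_\bbH(f_\delta(cy), J_\delta)}.
\end{equation*}
Since $f_\delta(cy) \to -|y|^{\pi/\theta}$ and $J_\delta \to \{0\}$ at analytic boundary points of $\bbH$, the continuity of $\mu^\#_\bbH(z_n, w_n)$ in $(z_n, w_n)$ yields $\mu^\#_\bbH(f_\delta(cy), w) \to \mu^\#_\bbH(-|y|^{\pi/\theta}, 0)$ uniformly in $w \in J_\delta$. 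Thus the above integral converges in Prohorov topology to $\mu^\#_\bbH(-|y|^{\pi/\theta}, 0)$. Pulling back via $f_0^{-1}$ (and using that $f_\delta \to f_0$ locally uniformly away from the apex, together with the fact that the conditioned BM paths stay away from the apex with high probability) yields $\mu^\#_{\cC_\theta}(y, 0)$ on the cone side, and applying $\Lambda^{-1}$ gives the claimed limit in $\R_+^2$. The main obstacle is the corner of $\cC_\theta$ at which the target excursion ends: no inward normal is available there, so \eqref{eqn: approximating brownian excursion} does not apply on its own, and one must simultaneously vary the domain, the starting point, and the exit window. The conformal map $f_\delta$ was chosen precisely to resolve this by unfolding the corner into the analytic boundary point $0 \in \partial \bbH$, where the standard continuity results close the argument.
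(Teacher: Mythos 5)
Your overall strategy matches the paper's at the top level (shear by $\Lambda$, reduce to a continuity statement for unsheared excursion measures in the cone, absorb the $\delta$-translation at the end), but your route to the continuity statement is genuinely different: you uniformize the shifted cone to $\bbH$ via $f_\delta(z)=(z+\delta x)^{\pi/\theta}$ and invoke continuity of $\mu^\#_{\bbH}(z_n,w_n)$ there, whereas the paper (Lemma~\ref{lem: sheared cone approximation}) first truncates to a bounded subdomain $D_R$ of the cone and then uses \emph{near-identity conformal self-maps} $\phi^{\delta,c,w}:D_R\to D_R$, controlling $\E[d_\cK(\eta,\phi_*\eta)]$ via \cite[Lemmas 2 and 3]{lawler-werner-soup} before applying~\eqref{eqn: approximating brownian excursion} and~\eqref{eqn: decomposition of BM to excursions}. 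Your observation that $f_0^{-1}\circ f_\delta$ is just the translation $z\mapsto z+\delta x$ is a nice simplification of the final translation step.

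However, there is a genuine gap in your pullback step. You justify transferring the Prohorov convergence from $\bbH$ back to the cone by saying that ``the conditioned BM paths stay away from the apex with high probability.'' They do not: the exit window $[\delta x,2\delta x]$ shrinks to the corner of $\cC_\theta$, and the limiting excursion $\mu^\#_{\cC_\theta}(y,0)$ \emph{terminates} at the corner, so every relevant path ends arbitrarily close to the point where $f_\delta\to f_0$ fails to be controlled. The real difficulty is that $(f_0^{-1})'(w)=\lambda^{-1}w^{1/\lambda-1}$ blows up at $w=0$ (since $\lambda=\pi/\theta>1$), and the pushforward convention of Section~\ref{subsection: uncorrelated brownian excursions} reparametrizes time by $\int_0^t|(f_0^{-1})'(\eta(s))|^2\,ds$. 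Curves that are $d_\cK$-close in $\bbH$ but end near $0$ can therefore have pullbacks whose durations differ substantially, so continuity of $(f_0^{-1})_*$ at measures supported on curves ending at $0$ is exactly what must be proved — e.g.\ a uniform-in-$\delta$ bound showing that the terminal portion of the path within distance $\rho$ of the corner contributes $o_\rho(1)$ to $d_\cK$ after pullback. This is precisely the distortion control that the paper obtains from \cite[Lemmas 2 and 3]{lawler-werner-soup}; your detour through $\bbH$ does not avoid the corner, it only relocates the difficulty to the inverse map. A secondary, more minor omission: the continuity results of \cite[Sections 3.2.2, 3.3.2]{lawler-werner-soup} are applied after restricting to a bounded region (the paper's truncation to $D_R$, justified by a uniform non-escape estimate); you should include the analogous truncation, and also note that $f_\delta(cy)$ approaches $-|y|^{\pi/\theta}$ non-tangentially but not along the inward normal, so~\eqref{eqn: approximating brownian excursion} as stated does not apply verbatim and the more general two-point continuity must be cited.
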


In order to prove Proposition \ref{prop: approx BM}, we first prove the statement for unsheared excursion measures. The argument is messy but not difficult, and follows \cite[Section 3.3.2]{lawler-werner-soup}.

\begin{lemma} \label{lem: sheared cone approximation}
Recall the definitions in \eqref{eqn: Lambda} and \eqref{eqn: sheared coordinates}. For $\delta > 0$, let $I_\delta$ be the boundary interval $[2\delta x, 3 \delta x] \subset \partial \cC_\theta$. Under the Prohorov metric, we have
\begin{equation}
\mu^\#_{\cC_\theta}(cy + \delta x, I_\delta) \to \mu^\#_{\cC_\theta}(y,0) \quad \text{ as } (\delta,c) \to (0,1) .
\end{equation}
\end{lemma}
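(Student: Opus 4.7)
The plan is to apply the decomposition formula \eqref{eqn: decomposition of BM to excursions} to express the left-hand side as a convex combination of interior-to-boundary probability measures, and then use joint continuity of these measures in their endpoints. Concretely, \eqref{eqn: decomposition of BM to excursions} with $D = \cC_\theta$, $z = cy + \delta x$, and $I = I_\delta$ gives
\[
\mu^\#_{\cC_\theta}(cy + \delta x, I_\delta) = \int_{I_\delta} \mu^\#_{\cC_\theta}(cy + \delta x, w) \, \frac{H_{\cC_\theta}(cy + \delta x, dw)}{H_{\cC_\theta}(cy + \delta x, I_\delta)} ,
\]
which realizes the left-hand side as a convex combination, in the space of Borel probability measures on $\cK_{\cC_\theta}$, of the interior-to-boundary probability measures $\{\mu^\#_{\cC_\theta}(cy + \delta x, w)\}_{w \in I_\delta}$. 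Convergence in Prohorov distance $d_P$ to the target $\mu^\#_{\cC_\theta}(y,0)$ therefore follows once we show
\[
\sup_{w \in I_\delta} d_P\!\left(\mu^\#_{\cC_\theta}(cy + \delta x, w),\ \mu^\#_{\cC_\theta}(y, 0)\right) \xrightarrow[(\delta, c) \to (0,1)]{} 0.
\]

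To establish this uniform convergence I would conformally map $\cC_\theta$ to $\bbH$ via $\psi(z) = z^{\pi/\theta}$, which is smooth on $\overline{\cC_\theta} \setminus \{0\}$ and sends the cone vertex to the smooth boundary point $0 \in \bdy\bbH$. Under $\psi$ the point $y$ maps to some $-a \in \bdy\bbH$ with $a > 0$, the interior starting point $cy + \delta x$ maps to an interior point of $\bbH$ approaching $-a$ as $(\delta,c) \to (0,1)$, and $I_\delta$ maps to an interval $J_\delta \subset \R_+$ shrinking to $\{0\}$. By the conformal invariance of both the interior-to-boundary and boundary-to-boundary normalized Brownian measures, the required uniform convergence in $\cC_\theta$ is equivalent to its analogue in $\bbH$. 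This in turn reduces to joint continuity of the map $(z,w) \mapsto \mu^\#_{\bbH}(z,w)$ on $\{(z,w) \in \overline{\bbH} \times \bdy\bbH : z \neq w\}$, which is the general principle recorded just after~\eqref{eqn: approximating brownian excursion} (and proved in \cite[Sections 3.2.2 and 3.3.2]{lawler-werner-soup}).

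The main (mildly technical) obstacle is upgrading pointwise joint continuity to the uniform-in-$w$ statement above. This follows from a standard compactness/subsequence argument: if the supremum failed to vanish, one could extract $(\delta_n, c_n) \to (0,1)$ and $w_n \in J_{\delta_n}$ (hence $w_n \to 0$) along which $d_P\!\left(\mu^\#_{\bbH}(\psi(c_n y + \delta_n x), w_n),\, \mu^\#_{\bbH}(-a, 0)\right)$ stayed bounded below, contradicting joint continuity at the pair $(-a, 0)$. The joint continuity itself can be verified in $\bbH$ by realizing BM conditioned to exit at $w \in \bdy\bbH$ as the Doob $h$-transform of Brownian motion by $h(\cdot) = H_{\bbH}(\cdot, w)$ and observing that the resulting drift depends continuously on $w$ away from the starting point, giving weak continuity of the conditioned law in $(z,w)$.
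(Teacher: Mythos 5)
Your opening move (writing $\mu^\#_{\cC_\theta}(cy+\delta x, I_\delta)$ as a convex combination via \eqref{eqn: decomposition of BM to excursions} and reducing to uniform closeness of $\mu^\#_{\cC_\theta}(cy+\delta x,w)$ to $\mu^\#_{\cC_\theta}(y,0)$ over $w\in I_\delta$) is exactly the skeleton of the paper's argument. But the step that carries the whole analytic content is the one you outsource: the uniform convergence of an interior-to-boundary measure whose \emph{starting point degenerates to the boundary}, simultaneously with the target boundary point moving. You reduce this to ``joint continuity of $(z,w)\mapsto \mu^\#_{\bbH}(z,w)$ on $\ol{\bbH}\times\partial\bbH$ off the diagonal,'' but nothing in the paper's toolkit gives this as a black box: the remark after \eqref{eqn: approximating brownian excursion} is an informal ``one can often prove,'' and \eqref{eqn: approximating brownian excursion} itself is stated only for approach along the inward normal, whereas $\psi(cy+\delta x)$ approaches $\psi(y)=-a$ along a generally non-normal direction. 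Your $h$-transform sketch only addresses continuity in $w$ for a fixed interior starting point; it says nothing about the degeneration $z\to -a\in\partial\bbH$, which is where the interior-to-boundary law turns into a boundary-to-boundary excursion (itself only defined through a limit). So the crux of the lemma is asserted rather than proved.

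There is a second, more technical gap in the transfer between domains. Conformal invariance identifies the measures on $\cC_\theta$ and on $\bbH$, but the Prohorov convergence in the lemma is with respect to $d_\cK$ on \emph{parametrized} curves, and the pushforward under $\psi^{-1}(u)=u^{\theta/\pi}$ reparametrizes time by $\int |(\psi^{-1})'(\eta(s))|^2\,ds$, whose integrand blows up at $u=0$ --- precisely where all the relevant curves terminate. Hence ``the required uniform convergence in $\cC_\theta$ is equivalent to its analogue in $\bbH$'' needs an argument (uniform control of the time-change near the endpoint), and $d_\cK$-convergence in $\bbH$ alone does not supply it. The paper avoids both problems at once: it first truncates to $D_R = \cC_\theta\cap\Lambda B_R(0)$ uniformly over endpoints, then compares curves \emph{within the same domain} using conformal self-maps $\phi^{\delta,c,w}:D_R\to D_R$ sending $(cy+\delta x, w)\mapsto (y+\delta \BB a^{-1}\mathbf{n},0)$, quantifies $\E[d_\cK(\eta,\phi^{\delta,c,w}_*\eta)]=o(1)$ uniformly in $w$ via the distortion estimates of \cite[Lemmas 2 and 3]{lawler-werner-soup}, and only then invokes \eqref{eqn: approximating brownian excursion}, which applies because $y+\delta\BB a^{-1}\mathbf n\to y$ is a normal approach; finally it averages over $w$ with \eqref{eqn: decomposition of BM to excursions} and sends $R\to\infty$. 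To repair your write-up you would either need to prove the joint-continuity statement up to the boundary (essentially redoing this work in $\bbH$, including non-normal approach) and justify the pullback of Prohorov convergence through the singular time-change, or adopt the paper's truncation-and-self-map device.
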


\begin{proof}
Consider any large $R > 0$, and define the truncated domain $D_R = \cC_\theta \cap \Lambda B_R(0)$. Clearly, for the following family of normalized interior-to-boundary measures we have
\begin{equation}
\lim_{R \to \infty} \mu^\#_{\cC_\theta}(cy+sx,tx) (\{\eta : \eta[0,t_\eta] \subset D_R \}) = 1 \quad \text{ uniformly over } c,s,t \in [0,2).
\end{equation}
Put somewhat less precisely, if we start Brownian motion close to the origin and run it until it exits $\cC_\theta$ near the origin, then uniformly over the choice of starting and ending points, with high probability the Brownian motion does not wander too far from the origin. As a result, the Prohorov distance between $\mu^\#_{D_R}(cy + \delta x, I_\delta)$ and $\mu^\#_{\cC_\theta}(cy + \delta x, I_\delta)$ goes to zero as $R \to \infty$, uniformly for $c, \delta \in [0,2]$.  

Pick any $w \in I_\delta$ and let $\mathbf{n}$ be the unit inward normal vector to the ray $\{ty \: : \: t \in \R_+\}$. Let $\phi^{\delta,c,w}: D_R \to D_R$ be the unique conformal map which sends $w \mapsto 0$ and $cy+\delta x \mapsto y + \delta {\BB a}^{-1} \mathbf{n}$. Using \cite[Lemmas 2 and 3]{lawler-werner-soup}, uniformly in the choice of $(\delta, c)$ close to $(0,1)$ and $w \in I_\delta$, for $\eta\sim \mu_{D_R}^\#(cy + \delta x, w)$ we have $\E[ d_\cK(\eta, \phi^{\delta, c, w}_* \eta)] = o(1)$. By conformal invariance, the law of $\phi^{\delta, c, w}_* \eta$ is simply $\mu_{D_R}^\#(y + \delta {\BB a}^{-1} \mathbf{n}, 0)$, so the Prohorov distance between $\mu_{D_R}^\#(cy + \delta x, w)$ and $\mu_{D_R}^\#(y + \delta {\BB a}^{-1} \mathbf{n}, 0)$ goes to zero as $(\delta,c) \to (0,1)$, uniformly in $w$. But by \eqref{eqn: approximating brownian excursion}, we see that the Prohorov distance between $\mu_{D_R}^\#(y + \delta {\BB a}^{-1} \mathbf{n}, 0)$ and $\mu_{D_R}^\#(y, 0)$ goes to 0 as $(\delta,c) \to (0,1)$. Finally, since $\mu^\#_{D_R}(cy + \delta x, I_\delta)  = \int_{I_\delta}  \mu^\#_{D_R}(cy + \delta x, w) \ \frac{H_{D_R}(cy + \delta x, dw)}{H_{D_R}(cy+\delta x, I_\delta)} $, we conclude that $\mu^\#_{D_R}(cy + \delta x, I_\delta)\to\mu_{D_R}^\#(y, 0)$ as $(\delta,c) \to (0,1)$. Taking $R \to \infty$, we conclude that $\mu^\#_{\cC_\theta}(cy + \delta x, I_\delta)  \to \mu_{\cC_\theta}^\#(y,0)$ as desired. 
\end{proof}

\begin{proof}[Proof of Proposition \ref{prop: approx BM}]
By Lemma~\ref{lem: sheared cone approximation}, we know that as $(\delta, c) \to (0,1)$, the Prohorov distance between the law of $(L_t, R_t)$ and the sheared normalized boundary-to-boundary cone excursion in $(\R_+ - \delta) \times \R_+ $ from $(-\delta,1)$ to $(-\delta,0)$ goes to zero. But the Prohorov distance between this latter law and the sheared normalized boundary-to-boundary cone excursion in $\R_+\times \R_+ $ from $(0,1)$ to $(0,0)$ is at most $\delta$, since we can couple the laws via $(\eta - \delta, \eta)$ (so $\eta$ is an excursion in $\R_+ \times \R_+$, and $\eta - \delta$ an excursion in $(\R_+ - \delta) \times \R_+$) to have distance $d_\mathcal{K} (\eta - \delta, \eta) = \delta$. Thus Proposition~\ref{prop: approx BM} holds.
\end{proof}

\subsection{Shimura's Brownian excursion in the cone}\label{subsection-shimura}
In this section we summarize the main result we need from \cite[Theorem 2]{shimura1985}, which makes sense of ``Brownian motion started at the vertex of a cone and conditioned to stay in the cone for some time'' via a limiting procedure. We prove a decomposition analogous to~\eqref{eqn: decomposition of BM to excursions} for this cone excursion, which will be used in Section~\ref{subsection: area of disk} to analyze the law of the cone excursion duration.

Let $\cC_\theta$ be the $\theta$-angle cone defined in~\eqref{eqn: sheared coordinates}. This section deals with multiple probability measures on Brownian-type paths in $\cC_\theta$, e.g. $\mu^\#_{\cC_\theta}(z,w)$ and $\P^\eps_z$ (defined below). To avoid notational clutter, we will always refer to such paths as $(Z_t)_{[0,\tau]}$, where $\tau$ is the random exit time of $\cC_\theta$.  
For $z \in \cC_\theta$ and $\eps > 0$, define $\P^\eps_z$ to be the probability measure of Brownian motion $(Z_t)_{[0,\tau]}$ started at $Z_0 = z$ and conditioned on $\{\tau > \eps\}$. Finally, we set
\eqb\label{eq-lambda}
\lambda := \frac{\pi}{\theta}  = \frac{4}{\gamma^2};
\eqe
this is called $\mu$ in~\cite{shimura1985} but we rename it to avoid ambiguity.

\begin{proposition}{\cite[Theorem 2]{shimura1985}}\label{prop-shimura}
The measures $\P_z^\eps$ converge weakly as $z \to 0$ to a limit measure $\P^\eps$, which can be interpreted as the law of Brownian motion in $\cC_\theta$ started at $0$ and conditioned to stay in $\cC_\theta$ for time $\eps$. Moreover, the $\P^1$-law of the Brownian motion evaluated at time 1 is
\eqb \label{eqn-Z_1-density}
\BB P^1\left[ Z_1 \in \,dz \right] =    c_1 |z|^{\mu} \sin\left( \lambda \op{arg}(z) \right)   \exp\left(  - \frac{|z|^2}{2 }  \right)  \BB 1_{(z\in \cC_\theta)} \,dz
 .
\eqe 
\end{proposition}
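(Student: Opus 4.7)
The plan is to exploit the explicit eigenfunction decomposition of the Dirichlet heat kernel on the cone $\cC_\theta$. Writing points in polar coordinates $z = re^{i\phi}$, $w = \rho e^{i\psi}$ with $\phi,\psi \in (0,\theta)$, separation of variables gives the Dirichlet heat kernel
\[ p_t(z,w) = \frac{1}{t}\exp\!\left(-\frac{r^2+\rho^2}{2t}\right)\sum_{n=1}^\infty \frac{2}{\theta}\, I_{n\lambda}\!\left(\frac{r\rho}{t}\right)\sin(n\lambda\phi)\sin(n\lambda\psi), \]
with $\lambda = \pi/\theta$. The small-argument asymptotic $I_\nu(x) \sim (x/2)^\nu/\Gamma(\nu+1)$ as $x\to 0$ shows that as $r\to 0$ the $n=1$ term dominates, yielding
\[ p_t(z,w) = (1+o_r(1))\cdot C_t\, r^\lambda\sin(\lambda\phi)\cdot \rho^\lambda\sin(\lambda\psi)\, e^{-\rho^2/(2t)} \]
for an explicit constant $C_t$. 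The function $h(z) := |z|^\lambda\sin(\lambda\op{arg} z)$ is (up to a constant) the minimal positive harmonic function on $\cC_\theta$ vanishing on $\partial\cC_\theta$, and the construction below is essentially an $h$-transform.

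Integrating over $w\in\cC_\theta$ gives $\P_z[\tau>\eps] = (1+o_r(1))\,D_\eps\, r^\lambda\sin(\lambda\phi)$ for an explicit $D_\eps$. Then by the Markov property, for $0 < t_1 < \cdots < t_k < \eps$, the joint density of $(Z_{t_1},\ldots,Z_{t_k})$ under $\P_z^\eps$ equals the product of transition densities times $\P_{w_k}[\tau>\eps - t_k]$ divided by $\P_z[\tau>\eps]$. Plugging in the leading asymptotics cancels the prefactor $r^\lambda\sin(\lambda\phi)$ and produces a well-defined limit as $z\to 0$. Thus the finite-dimensional distributions of $\P_z^\eps$ converge.

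For tightness I would use a standard modulus-of-continuity argument together with an explicit Radon-Nikodym derivative: on $[0,\eps-\delta]$ the law $\P_z^\eps$ differs from the law of ordinary Brownian motion started at $z$ by the multiplicative factor $\P_{Z_{\eps-\delta}}[\tau>\delta]/\P_z[\tau>\eps]$, which is uniformly bounded using the leading-order asymptotics above. Combined with the finite-dimensional convergence, this gives weak convergence to a measure $\P^\eps$, which we identify as the claimed conditional law of Brownian motion started at $0$.

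For the marginal density at time $1$ under $\P^1$, take $t=\eps=1$ in
\[ \P_z^1[Z_1\in dw] = \frac{p_1(z,w)\,\mathbf{1}_{\cC_\theta}(w)}{\P_z[\tau>1]}\,dw \]
and let $z\to 0$. The leading asymptotics cancel the factor $r^\lambda\sin(\lambda\phi)$ between numerator and denominator, leaving $\P^1[Z_1\in dw] = c_1\,|w|^\lambda\sin(\lambda\op{arg} w)\,e^{-|w|^2/2}\,\mathbf{1}_{\cC_\theta}(w)\,dw$, with $c_1$ determined by normalization. The main obstacle is uniform control of the tail modes $n\geq 2$ when integrating over $w$ in the denominator $\P_z[\tau>1]$, since pointwise leading-order behavior is not automatically preserved under integration; here the Gaussian factor $e^{-\rho^2/2}$ supplies the needed decay in $\rho$ while the angular integrals are bounded, justifying the interchange of limit and integral.
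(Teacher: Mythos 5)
The paper does not prove this proposition: it cites \cite[Theorem 2]{shimura1985} and only adds a remark that the convergence of $(Z_t)_{t\in[0,1]}$ extends to $(Z_t)_{t\in[0,\tau]}$ by the Markov property and to general $\eps$ by Brownian scaling. You are therefore reconstructing the cited result from scratch, via the classical eigenfunction expansion of the Dirichlet heat kernel on the wedge and the $h$-transform with $h(z)=|z|^\lambda\sin(\lambda\arg z)$. This is essentially Shimura's own method, and most of your steps are sound: the heat kernel formula and the small-argument asymptotics of $I_{n\lambda}$ are correct, the asymptotic $\P_z[\tau>\eps]\sim D_\eps\,r^\lambda\sin(\lambda\phi)$ is exactly Shimura's (4.2) (used elsewhere in the paper), the cancellation of $r^\lambda\sin(\lambda\phi)$ in the finite-dimensional densities is the right mechanism, the density formula for $Z_1$ follows correctly (note the exponent $\mu$ in the statement is the paper's typo for $\lambda$), and your concern about uniform control of the $n\ge 2$ modes under the integral is well placed and correctly resolved by the Gaussian factor (e.g.\ via $I_\nu(x)\le (x/2)^\nu e^{x^2/4}/\Gamma(\nu+1)$).

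The genuine gap is in the tightness step. You claim that on $[0,\eps-\delta]$ the Radon--Nikodym derivative of $\P_z^\eps$ against unconditioned Brownian motion, namely $\mathbf 1_{\{\tau>\eps-\delta\}}\,\P_{Z_{\eps-\delta}}[\tau>\delta]\big/\P_z[\tau>\eps]$, is \emph{uniformly} bounded. It is not: the numerator is at most $1$ while the denominator is $\asymp |z|^\lambda\sin(\lambda\arg z)\to 0$ as $z\to 0$, so the derivative blows up precisely in the limit you care about. Equicontinuity near $t=0$ is the actual content of Shimura's theorem (the same difficulty as in constructing the Brownian meander as a limit of conditionings), and your argument does not address it. The standard repair is to split: on $[\delta',\eps]$, condition on $Z_{\delta'}$ (whose law under $\P_z^\eps$ converges, by your f.d.d.\ computation, to an explicit density) and use the $h$-transform bound there; separately prove a uniform-in-$z$ estimate of the form $\P_z^\eps\bigl[\sup_{t\le\delta'}|Z_t|>\eta\bigr]\to 0$ as $\delta'\to 0$, e.g.\ from the explicit limiting density of $Z_{\delta'}$ (which concentrates at $0$ as $\delta'\to0$) combined with a maximal inequality for the unconditioned motion run backward from time $\delta'$. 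Without some such estimate the weak convergence on $C([0,\eps])$ is not established.
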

Strictly speaking, \cite[Theorem 2]{shimura1985} only proves weak convergence of the curves of $\P_z^1$ \emph{restricted} to time interval $[0,1]$ to some limit law (i.e. the curves considered are $(Z_t)_{[0,1]}$ rather than $(Z_t)_{[0,\tau]}$); since $(Z_t)_{[0,\tau]}$ evolves as standard Brownian motion after time 1, our extension is valid. Brownian rescaling allows us to consider general $\eps > 0$.

To prove a decomposition similar to~\eqref{eqn: decomposition of BM to excursions} for $\P^\eps$, we will need the following continuity lemma.
\begin{lemma}\label{lem-cone-cont}
Let $\mu^\#_{\cC_\theta,\eps}(0,u)$ be the measure $\mu^\#_{\cC_\theta}(0,u)$ conditioned on the event $\{\tau > \eps\}$, i.e. conditioned on the excursion duration being greater than $\eps$. Then the measures $\mu^\#_{\cC_\theta}(0,u),\mu^\#_{\cC_\theta,\eps}(0,u)$ vary continuously in the weak topology for $u \in \partial \cC_\theta \backslash \{ 0\}$. 
\end{lemma}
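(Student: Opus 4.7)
The strategy is to use conformal invariance to transfer the problem to the upper half-plane. The map $\varphi \colon \cC_\theta \to \bbH$ defined by $\varphi(z) = z^\lambda$ sends the vertex $0$ to $0$ and each $u \in \partial \cC_\theta \setminus \{0\}$ to a point $\varphi(u) \in \R \setminus \{0\}$. Since $\mu^\#_{\cC_\theta}(0, u)$ is defined via conformal invariance in the first place (as $0$ is a corner, not a smooth boundary point of $\cC_\theta$), we have the identity $\mu^\#_{\cC_\theta}(0, u) = \varphi^{-1}_* \mu^\#_{\bbH}(0, \varphi(u))$, where the pushforward incorporates the time-change described in Section~\ref{subsection: uncorrelated brownian excursions}. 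Continuity of $v \mapsto \mu^\#_{\bbH}(0, v)$ on $\R \setminus \{0\}$ is standard and can be obtained by imitating the proof of Lemma~\ref{lem: sheared cone approximation}: given $v_n \to v$, work in a truncated domain $B_R(0) \cap \bbH$ for large $R$, and apply a conformal automorphism of this domain sending $v_n \mapsto v$ and $0 \mapsto 0$, using the distortion estimates of \cite[Lemmas 2 and 3]{lawler-werner-soup} to show that the pushforward measure is close to $\mu^\#_{\bbH}(0, v)$ on the truncated domain, then send $R \to \infty$.

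The main technical step is to verify that the pushforward $\varphi^{-1}_*$ preserves weak convergence. This is not immediate, because $|(\varphi^{-1})'(w)| = \lambda^{-1}|w|^{1/\lambda - 1}$ is unbounded near $0$ (as $\lambda = 4/\gamma^2 > 1$ for $\gamma \in (0,2)$), so the time-change defining the pushforward is singular at the vertex. To handle this, given $u_n \to u$ in $\partial \cC_\theta \setminus \{0\}$, I would use Skorokhod representation to couple $\eta_n \sim \mu^\#_{\bbH}(0, \varphi(u_n))$ so that $\eta_n \to \eta \sim \mu^\#_{\bbH}(0, \varphi(u))$ a.s.\ in $d_\cK$. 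For small $r > 0$ split each $\eta_n$ at the last time it lies in $\overline{B_r(0)} \cap \bbH$; on the outer portion $\varphi^{-1}$ is smooth, so the pushforward converges by the continuous mapping theorem, while the contribution of the inner portion to $d_\cK(\varphi^{-1}_*\eta_n, \varphi^{-1}_*\eta)$ can be made uniformly small (in $n$) by sending $r \to 0$. This uses H\"older continuity of $\varphi^{-1}$ on $\bbH$ to bound the diameter contribution, together with uniform moment bounds on the time spent by $\eta_n$ near $0$ against the singular weight $|\cdot|^{2/\lambda - 2}$ to bound the duration contribution; both follow from the explicit description of $\mu^\#_{\bbH}(0, v)$ as a Bessel-type diffusion.

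For the $\eps$-conditioned case, observe that $\mu^\#_{\cC_\theta, \eps}(0, u)$ is the normalized restriction of $\mu^\#_{\cC_\theta}(0, u)$ to the event $\{\tau > \eps\}$. The duration functional $\eta \mapsto t_\eta$ is continuous on $(\cK, d_\cK)$, and the law of $\tau$ under $\mu^\#_{\cC_\theta}(0, u)$ is atomless (e.g.\ by absolute continuity of Brownian exit times from a smooth domain, or as a consequence of the convergence $\P^\eps_z \to \P^\eps$ recorded in Proposition~\ref{prop-shimura}), so $\{\tau > \eps\}$ is a continuity set of $\mu^\#_{\cC_\theta}(0,u)$ and weak convergence is preserved under the conditioning. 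The main obstacle in the whole argument will be carrying out the localization in the second paragraph with sufficient care to control the pushforward near the singularity of $\varphi^{-1}$ at the vertex.
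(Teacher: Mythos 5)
Your argument is correct in outline, but it is organized differently from the paper's, and the difference is worth noting. The paper proves continuity of $u\mapsto \mu^\#_{\cC_\theta}(0,u)$ by running the argument of Lemma~\ref{lem: sheared cone approximation} \emph{directly in the cone}: truncate to $\cC_\theta\cap \Lambda B_R(0)$, and for $u_n\to u$ use a conformal self-map of the truncated cone fixing the vertex and sending $u_n\mapsto u$; conjugating by $z\mapsto z^\lambda$ shows such a map behaves like $z\mapsto c^{1/\lambda}z(1+O(z^\lambda))$ at the vertex, so it is nonsingular there and the distortion estimates \cite[Lemmas 2 and 3]{lawler-werner-soup} bound $\E[d_\cK(\eta,\phi_*\eta)]$ with no singular time-change to control. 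Your route instead transfers everything to $\bbH$ via $z\mapsto z^\lambda$ (legitimate, since the measure at the corner is defined by conformal invariance), proves continuity of $v\mapsto\mu^\#_{\bbH}(0,v)$ by the same Lemma~\ref{lem: sheared cone approximation}-style argument, and then must show that the pushforward under $w\mapsto w^{1/\lambda}$ preserves weak convergence despite the unbounded derivative at $0$ --- this is exactly the extra localization/coupling step you flag as the main obstacle, and it is an avoidable complication rather than an intrinsic one; your sketch of it (split at the last exit of $B_r(0)$, continuous mapping off the vertex, uniform smallness of the inner contribution using integrability of $|\cdot|^{2/\lambda-2}$ along the excursion) is plausible but is where all the remaining work sits. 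For the $\eps$-conditioned measures your argument coincides with the paper's: the duration is $d_\cK$-continuous and its law under $\mu^\#_{\cC_\theta}(0,u)$ has a density, so $\{\tau>\eps\}$ is a continuity set of positive mass and conditioning passes to the limit. One small caveat there: justify atomlessness via the density/Markov-property argument rather than ``as a consequence of'' Proposition~\ref{prop-shimura}, which concerns the conditioned Brownian motion $\P^\eps$ started at the vertex, not the excursion measure to a fixed boundary point, and does not directly yield the claim.
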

\begin{proof}
We can show the continuity of $\mu^\#_{\cC_\theta}(0,u)$ by the same methods used to prove Lemma~\ref{lem: sheared cone approximation} (or, similarly, \cite[Lemma 4]{lawler-werner-soup}). 
The continuity of $\mu^\#_{\cC_\theta,\eps}(0,u)$ follows from that of $\mu^\#_{\cC_\theta}(0,u)$ together with the fact that the $\mu^\#_{\cC_\theta}(0,u)$-law of $\tau$ has a density against Lebesgue measure. 
\end{proof}
\begin{proposition}\label{prop-shimura-decomp}
Let $\mu^\#_{\cC_\theta,\eps}(z,w)$ be as in Lemma~\eqref{lem-cone-cont}. The measure $\P^\eps$ of Proposition~\ref{prop-shimura} can be decomposed as
\begin{equation}\label{eqn: decompose P}
\P^\eps = \int_{\cC_\theta} \mu^\#_{\cC_\theta,\eps}(0,u) \P^\eps (Z_\tau \in du).
\end{equation}
\end{proposition}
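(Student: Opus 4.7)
The plan is to obtain~\eqref{eqn: decompose P} by taking limits of the analogous decomposition for $\P_z^\eps$ as $z \to 0$ through the interior of $\cC_\theta$. For each such $z$, the regular conditional distribution of the path under $\P_z^\eps$ given the exit point $Z_\tau = u$ is, by definition, $\mu^\#_{\cC_\theta,\eps}(z,u)$ (Brownian motion from $z$ to $u$ conditioned on $\{\tau > \eps\}$), so I would first write the tautological identity
\[
\P_z^\eps = \int_{\partial \cC_\theta} \mu^\#_{\cC_\theta,\eps}(z,u) \, \P_z^\eps[Z_\tau \in du]
\]
for every $z$ in the interior. The goal is then to pass to the limit $z \to 0$ in each of the three objects appearing here.

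By Proposition~\ref{prop-shimura}, $\P_z^\eps \Rightarrow \P^\eps$ weakly on $\cK$, and by the continuous mapping theorem applied to the continuous functional $\eta \mapsto \eta(\tau)$, the exit-point laws satisfy $\P_z^\eps[Z_\tau \in \cdot] \Rightarrow \P^\eps[Z_\tau \in \cdot]$ on $\partial \cC_\theta$. Next, I would extend Lemma~\ref{lem-cone-cont} to joint continuity of $(z,u) \mapsto \mu^\#_{\cC_\theta,\eps}(z,u)$ as $(z,u) \to (0,u_0)$ for any $u_0 \in \partial\cC_\theta \setminus\{0\}$. The argument mirrors Lemma~\ref{lem: sheared cone approximation}: for small $z$ and $u$ bounded away from $0$ and $\infty$, pick a conformal automorphism of a truncated cone that maps the pair $(z,u)$ to $(0,u_0)$ and differs from the identity by a Prohorov-negligible amount; conformal invariance of Brownian measure then identifies the pushforward with $\mu^\#_{\cC_\theta}(0,u_0)$, and the further conditioning $\{\tau > \eps\}$ is continuous because $\tau$ under $\mu^\#_{\cC_\theta}(z,u)$ has a density against Lebesgue measure.

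To conclude, I would test the identity against a bounded continuous functional $F:\cK\to\R$ and send $z\to 0$. The left-hand side $\int F\,d\P_z^\eps$ converges to $\int F\,d\P^\eps$. For the right-hand side, the inner integral $u \mapsto \int F\, d\mu^\#_{\cC_\theta,\eps}(z,u)$ converges, by the extended continuity above, to $u\mapsto \int F\, d\mu^\#_{\cC_\theta,\eps}(0,u)$, uniformly in $u$ on compact subsets of $\partial\cC_\theta\setminus\{0\}$; combined with the weak convergence $\P_z^\eps[Z_\tau\in du] \Rightarrow \P^\eps[Z_\tau\in du]$, the product converges to $\int_{\partial\cC_\theta}\!\big(\!\int F\, d\mu^\#_{\cC_\theta,\eps}(0,u)\big)\P^\eps[Z_\tau\in du]$.

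The main obstacle is the last step: the joint passage to the limit requires controlling the contribution of exit points $u$ near the cone vertex $0$ (and near $\infty$), where the map $z \mapsto \mu^\#_{\cC_\theta,\eps}(z,u)$ becomes singular. This is handled by an a priori tightness estimate showing that, uniformly for $z$ in a small neighborhood of $0$, the $\P_z^\eps$-law of $|Z_\tau|$ has no mass escaping to $0$ or $\infty$; such an estimate follows from the explicit density~\eqref{eqn-Z_1-density} at time $1$ (suitably rescaled by Brownian scaling to time $\eps$) together with the fact that after time $\eps$ the path evolves as unconditioned Brownian motion and therefore exits on a bounded scale with overwhelming probability.
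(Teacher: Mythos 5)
Your proposal is correct and follows essentially the same route as the paper: disintegrate $\P^\eps_z$ over the exit point via the conditioned version of~\eqref{eqn: decomposition of BM to excursions}, then pass to the limit $z\to 0$ using the weak convergence of $\P^\eps_z$ and of $\mu^\#_{\cC_\theta,\eps}(z,u)$ together with the continuity in $u$ from Lemma~\ref{lem-cone-cont}. The only difference is that you spell out the tightness of the exit-point laws near $0$ and $\infty$ needed to justify the joint limit, which the paper leaves implicit.
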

\begin{proof}
Conditioning~\eqref{eqn: decomposition of BM to excursions} on the event $\{\tau > \eps\}$ and using Bayes' theorem, we get the following decomposition:
\begin{equation}\label{eqn: decompose Pz}
\P^\eps_z = \int_{\partial \cC_\theta}  \mu^\#_{\cC_\theta, \eps} (z,u)  \P^\eps_z( Z_\tau \in du).
\end{equation}
The boundary measure $\P^\eps_z( Z_\tau \in du)$ converges weakly to $\P^\eps(Z_\tau \in du)$ as $z \to 0$ (Proposition~\ref{prop-shimura}). By \eqref{eqn: approximating brownian excursion} (e.g. after mapping to the domain $\bbH$ with smooth boundary), we also have the weak convergence $\mu^\#_{\cC_\theta}(z,u) \to \mu^\#_{\cC_\theta}(0,u)$ as $z \to 0$, and hence $\mu^\#_{\cC_\theta, \eps}(z,u) \to \mu^\#_{\cC_\theta, \eps}(0,u)$ as $z \to 0$. Finally, $\mu^\#_{\cC_\theta,\eps}(0,u)$ is continuous in $u$ by Lemma~\ref{lem-cone-cont}. Thus we can take $z \to 0$ in \eqref{eqn: decompose Pz} to obtain \eqref{eqn: decompose P}.
\end{proof}

\subsection{Conditional law of the area of a quantum disk given its boundary length}\label{subsection: area of disk}
In this section, we compute the law of the exit time of a sheared normalized Brownian excursion with covariances~\eqref{eqn: covariance} in the cone $\R_+^2$ from $(0,0)$ to $(1,0)$, and hence deduce the law of the area of a unit boundary length quantum disk modulo the unknown constant ${\BB a}$. Once we know Theorem~\ref{thm: peanosphere disk}, because Brownian excursions are reversible, the following is a restatement of Theorem~\ref{thm: intro area disk} in terms of Brownian excursions. 

\begin{theorem}\label{thm: area of disk}
The duration of the sheared normalized Brownian excursion with covariances~\eqref{eqn: covariance} in the cone $\R_+^2$ from $(0,0)$ to $(1,0)$ is distributed according to the law 
\eqb \label{eqn-area-law}
  \frac{1}{c t^{1+4/\gamma^2}}  \exp\left(-\frac{1}{2 ( \BB a \sin(\pi\gamma^2/4) )^2  t } \right)  \, dt
  \quad \text{for} \quad c =  2^{4/\gamma^2} \Gamma(4/\gamma^2) (\BB a \sin(\pi\gamma^2/4) )^{ 8/\gamma^2 } .
\eqe    
\end{theorem}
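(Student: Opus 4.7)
The plan is to unshear, apply reversibility of Brownian excursions, and read off the density from the joint law of the exit time and exit point of Brownian motion in the cone. By Definition~\ref{def: cone BM}, it suffices to compute the duration $\tau$ of the normalized Brownian excursion $\mu^\#_{\cC_\theta}(0, w)$, where $w = \Lambda \binom{1}{0}$ satisfies $|w| = 1/(\BB a \sin\theta)$. By time-reversal symmetry of Brownian excursions (Section~\ref{subsection: uncorrelated brownian excursions}), this duration has the same law as the exit time of $\mu^\#_{\cC_\theta}(w, 0)$, which I will extract as the $|u| \to 0$ limit of the conditional density of $\tau$ given $Z_\tau = u$ under $P_w$, the law of Brownian motion started at $w$; this is the standard characterization of the normalized boundary-to-boundary excursion measure when one endpoint is a corner.

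The key input is the polar-coordinate eigenfunction expansion of the Dirichlet heat kernel on the wedge $\cC_\theta$,
\[
p^{\cC_\theta}(t; (r, \phi), (r', \phi')) = \frac{2}{\theta t}\, e^{-(r^2+r'^2)/(2t)} \sum_{n \geq 1} \sin(n\lambda\phi) \sin(n\lambda\phi')\, I_{n\lambda}(rr'/t),
\]
with $\lambda = \pi/\theta = 4/\gamma^2$ as in \eqref{eq-lambda}, together with the identity $P_w[\tau \in dt, Z_\tau \in du] = \tfrac{1}{2}\partial_n p^{\cC_\theta}(t; w, u)\, dt\, du$, where $\partial_n$ is the inward normal derivative at $u \in \partial \cC_\theta$. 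Applying the small-argument asymptotic $I_\nu(y) \sim (y/2)^\nu/\Gamma(\nu+1)$, only the $n=1$ mode survives in the limit $|u| \to 0$, yielding
\[
P_w[\tau \in dt, Z_\tau \in du] \sim \frac{\lambda\, h(w)}{\pi\Gamma(\lambda)\, 2^\lambda}\, t^{-\lambda - 1} e^{-|w|^2/(2t)}\, |u|^{\lambda - 1}\, dt\, du,
\]
where $h(w) := |w|^\lambda \sin(\lambda \arg w)$. Integrating in $t$ via the substitution $s = |w|^2/(2t)$ and the Gamma integral $\int_0^\infty s^{\lambda-1}e^{-s}\,ds = \Gamma(\lambda)$ gives $P_w[Z_\tau \in du] \sim \frac{\lambda \sin(\lambda\arg w)}{\pi |w|^\lambda}\, |u|^{\lambda - 1}\, du$. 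The factors $h(w)$ and $|u|^{\lambda-1}$ cancel in the ratio, producing
\[
\mu^\#_{\cC_\theta}(w, 0)[\tau \in dt] = \frac{|w|^{2\lambda}}{2^\lambda \Gamma(\lambda)}\, t^{-\lambda - 1} e^{-|w|^2/(2t)}\, dt.
\]
Substituting $|w| = 1/(\BB a\sin\theta)$ and $\lambda = 4/\gamma^2$ recovers \eqref{eqn-area-law}.

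The main obstacle is justifying the leading-order expansion of the heat kernel normal derivative uniformly enough in $t$ to legitimately integrate and exchange the $|u| \to 0$ limit with the normalization; this is routine but requires careful tail bounds on the Bessel series. An alternative that stays closer to the tools developed in the excerpt is to compute $\P^\eps[\tau \in dt, Z_\tau \in du]$ by applying the Markov property at time $\eps$ to Shimura's formula (Proposition~\ref{prop-shimura}) for the density of $Z_\eps$, then invoke the decomposition of Proposition~\ref{prop-shimura-decomp} and take $\eps \to 0$, with Lemma~\ref{lem-cone-cont} guaranteeing continuity of the limit.
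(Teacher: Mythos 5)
Your route is genuinely different from the paper's, and it does land on \eqref{eqn-area-law}, constant included. The paper never touches the heat kernel on the wedge: it works from the corner, combining Shimura's conditioned Brownian motion (Proposition~\ref{prop-shimura}), the decomposition of Proposition~\ref{prop-shimura-decomp} and Bayes' rule \eqref{eqn-bayes}, explicit harmonic-measure and time-$t$ densities (Lemmas~\ref{lem-time-t-law}, \ref{lem-cone-hm}, \ref{lem-tau-marginal}), and the $\ep\to 0$ asymptotics of $\P^\ep[\tau>t]$ and $\P^\ep[Z_\tau\in du]$ (Lemmas~\ref{lem-cone-prob}, \ref{lem-Z_tau-asymp}), pinning down the multiplicative constant only at the end by letting $t\to 0$. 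Your reversibility-plus-heat-kernel computation is more direct in that the $n=1$ Bessel mode produces the normalizing constant for free, and your asymptotics are consistent with the paper's intermediate formulas (they reproduce the harmonic measure near the corner appearing in Lemma~\ref{lem-cone-hm}).

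There is, however, a concrete gap at the starting point of the excursion. The point $w=\Lambda(1,0)$ lies on $\bdy\cC_\theta$, so ``$P_w$, the law of Brownian motion started at $w$'' is degenerate: $\tau=0$ a.s., the killed heat kernel $p^{\cC_\theta}(t;w,\cdot)$ vanishes identically, and indeed your displayed formulas carry the factor $h(w)=|w|^\lambda\sin(\lambda\arg w)=0$, so both the joint density and $P_w[Z_\tau\in du]$ are identically zero as written and the claimed cancellation is a $0/0$ cancellation. This is not ``the standard characterization'' until you regularize: run the computation from an interior point $w_\ep=w+\ep\mathbf{n}_w$ (there the angular factor is of order $\ep$ and cancels honestly in the ratio), then pass to $\mu^\#_{\cC_\theta}(w,u)$ via \eqref{eqn: approximating brownian excursion} together with \eqref{eqn: decomposition of BM to excursions}, checking that the conditional duration laws converge along with the path measures. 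You also need continuity of $\mu^\#_{\cC_\theta}(w,u)$ in the \emph{target} point as $u\to 0$, i.e.\ at the corner; Lemma~\ref{lem-cone-cont} does not cover this (it fixes the corner as the starting point and varies $u$ away from $0$), so this step must be argued separately in the spirit of Section~\ref{subsection: uncorrelated brownian excursions}. Finally, the limit exchange you flag yourself --- uniform-in-$t$ control of the Bessel series in both tails so that the $u\to 0$ and $\ep\to 0$ limits commute with integrating and normalizing in $t$ --- is where the real work lies; note that your proposed fallback via Propositions~\ref{prop-shimura} and~\ref{prop-shimura-decomp} is essentially the argument the paper actually carries out.
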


Recall the definitions of the various constants in~\eqref{eqn: Lambda} and~\eqref{eqn: sheared coordinates} (which depend on $\gamma$), and the notations of Section~\ref{subsection-shimura}. This section deals with multiple probability measures on Brownian-type paths in $\cC_\theta$, e.g. $\mu^\#_{\cC_\theta,\eps}(0,w)$ and $\P^\eps$. To avoid notational clutter, we will always refer to such a path as $(Z_t)_{[0,\tau]}$, where $\tau$ is the random exit time of $\cC_\theta$. 

By Definition~\ref{def: cone BM}, it suffices to understand the law of $\tau$ under $\mu^\#_{\cC_\theta}(0,u)$ for a boundary point $u \in \partial \cC_\theta$, then substitute the choice $u = \Lambda(1,0) = ( ( \BB a \sin \theta)^{-1} ,0)$. To that end, we will first compute $\mu^\#_{\cC_\theta, \eps}(0,u) \{\tau > t\}$ for $t>\eps > 0$, then send $\eps \to 0$ to obtain $\mu^\#_{\cC_\theta}(0,u)\{ \tau > t\}$, and finally differentiate in $t$ to finish the proof of Theorem~\ref{thm: area of disk}.

Consider Proposition~\ref{prop-shimura-decomp}. By Bayes' theorem and the continuity of $\mu^\#_{\cC_\theta,\eps}(0,u)$ w.r.t. $u$ in the weak topology (Lemma~\ref{lem-cone-cont}), we obtain
\begin{equation}\label{eqn-bayes}
\mu^\#_{\cC_\theta, \eps}(0,u) \{\tau > t\} = \P^\eps \left[ \tau > t \: | \: Z_\tau = u\right] = \frac{\P^\eps \left[Z_\tau \in du  \: | \: \tau > t\right]\P^\eps[\tau > t]}{\P^\eps[Z_\tau \in du]}.
\end{equation}
We will compute asymptotic formulas for each of the three terms on the right side of~\eqref{eqn-bayes}. 

In the calculations that follow, we write $\lambda = \frac{4}{\gamma^2}$ (as in~\eqref{eq-lambda}), and write $c_1,c_2,\dots$ for explicit constants which depend only on $\gamma$. We will not keep track of the values of these constants during the calculation (although it is possible to do so). Rather, we will compute the multiplicative constant in~\eqref{eqn-area-law} at the very end of the argument using that $\mu^\#_{\cC_\theta} \{\tau  > t\} \rta 1$ as $t\rta 0$. 

We first study the first factor in the numerator in~\eqref{eqn-bayes}. To deal with this, we will compute both the $\P^\eps$-conditional law of $Z_t$ given $\{\tau > t\}$ and the $\P^\eps$-conditional law of $Z_\tau$ given $Z_t$ and $\{\tau > t\}$. We note that~\cite[Equation (8)]{iyengar-hitting-lines} gives a series expansion for $\P^\eps[Z_\tau \in \, du ,\, \tau > t   ]$. We prefer to instead give a direct calculation. 

\begin{lemma} \label{lem-time-t-law}
For $t\geq \ep$, the $\P^\eps$-conditional law of $Z_t$ given $\{\tau > t\}$ is
\eqb \label{eqn-time-t-law}
\P^\eps\left[ Z_t \in \,dz \,|\, \tau > t \right] =    
c_1 \frac{ |z|^{\lambda} \sin\left( \lambda \op{arg}(z) \right) }{  t^{1 + \lambda/2 }}  \exp\left(  - \frac{  |z|^2}{2 t }  \right) \BB 1_{(z\in \cC_\theta)} dz ,   
\eqe  
where $c_1 := 2^{- \lambda/2 } \Gamma\left( \lambda/2 \right)^{-1}$.
\end{lemma}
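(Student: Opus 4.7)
The plan is to reduce the claim to the case $t=1$ already treated in Proposition~\ref{prop-shimura}, and then invoke Brownian scaling together with the scale-invariance of the cone $\cC_\theta$.

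First I would observe that, for any $t \geq \eps$, the measure $\P^\eps$ conditioned on the event $\{\tau > t\}$ coincides with $\P^t$. Indeed, $\P^\eps$ is defined as the limit of the laws $\P^\eps_z$ of Brownian motion started at $z$ conditioned on $\{\tau > \eps\}$; further conditioning on $\{\tau > t\}$ and using $\{\tau > t\} \subset \{\tau > \eps\}$ together with the continuity argument in the proof of Proposition~\ref{prop-shimura-decomp} (or directly from~\cite{shimura1985}) gives $\P^\eps[\,\cdot\,\mid \tau > t] = \P^t$. In particular the left-hand side of \eqref{eqn-time-t-law} is just $\P^t[Z_t \in dz]$, and the dependence on $\eps$ drops out, as the notation on the right-hand side already suggests.

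Next I would use standard Brownian scaling. Under $\P^t$, the rescaled process $\widetilde Z_s := t^{-1/2} Z_{ts}$, $s \in [0,1]$, is distributed as Brownian motion conditioned to remain in the cone up to time $1$, because the cone $\cC_\theta$ is invariant under the scaling $z \mapsto t^{-1/2} z$. Hence the $\P^t$-law of $t^{-1/2} Z_t$ equals the $\P^1$-law of $\widetilde Z_1$, which is given explicitly by~\eqref{eqn-Z_1-density}. A straightforward change of variables $z \mapsto z/\sqrt{t}$ in $\BB R^2$ (with Jacobian $1/t$), using $\arg(z/\sqrt t) = \arg(z)$ and $z/\sqrt t \in \cC_\theta \iff z \in \cC_\theta$, then yields the stated formula
\[
\P^t[Z_t \in dz] = c_1 \, \frac{|z|^{\lambda} \sin(\lambda \arg(z))}{t^{1+\lambda/2}} \exp\!\left(-\frac{|z|^2}{2t}\right) \BB 1_{z \in \cC_\theta}\, dz,
\]
with the same constant $c_1$ as in Proposition~\ref{prop-shimura}.

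Finally I would identify $c_1$ by computing the normalization. Working in polar coordinates $z = r e^{i\phi}$ with $r > 0$ and $\phi \in [0, \pi/\lambda]$, the integrals in $r$ and $\phi$ separate: the $\phi$-integral of $\sin(\lambda \phi)$ over $[0,\pi/\lambda]$ equals $2/\lambda$, and substituting $u = r^2/2$ in the $r$-integral gives $\int_0^\infty r^{\lambda+1} e^{-r^2/2}\, dr = 2^{\lambda/2}\, \Gamma(\lambda/2 + 1) = 2^{\lambda/2}\,(\lambda/2)\,\Gamma(\lambda/2)$. Setting the product equal to $1$ forces $c_1 = 2^{-\lambda/2}\,\Gamma(\lambda/2)^{-1}$, matching the claim. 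The only mild subtlety is the identification $\P^\eps[\,\cdot \mid \tau > t] = \P^t$ at the start, but this is essentially immediate from the construction of $\P^\eps$ in Proposition~\ref{prop-shimura}; the rest of the argument is then pure Brownian scaling and a Gaussian integral.
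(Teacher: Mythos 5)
Your proposal is correct and follows essentially the same route as the paper: identify $\P^\eps[\,\cdot\mid \tau>t]$ with $\P^t$, apply Brownian scaling to reduce to the $\P^1$-law of $Z_1$ from Proposition~\ref{prop-shimura}, and change variables. The explicit polar-coordinate computation of the normalizing constant $c_1$ is a correct (and welcome) addition that the paper leaves implicit.
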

\begin{proof} 
By definition, the $\P^\eps$-conditional law of $Z_t$ given $\{ \tau > t\}$ is the same as the $\P^t$ law of $Z_t$, which by Brownian scaling is the same as the $\P^1$-law of $ t^{1/2} Z_1$.
Thus~\eqref{eqn-time-t-law} follows from Proposition~\ref{prop-shimura} by making a change of variables.
\end{proof}

\begin{lemma} \label{lem-cone-hm}
For $t \geq \eps$, the $\P^\eps$-regular conditional law of $Z_\tau$ given $Z|_{[0,t]}$ on the event $\{\tau  >t\}  \cap \{ Z_t = z\}$ is given by
\eqb \label{eqn-cone-hm}
   \P^\eps \left[Z_\tau \in du \,|\, Z_{[0,t]}, \left\{\tau > t, Z_t = z \right\}\right]  =  \frac{  |z|^{\lambda} |u|^{1 - \lambda} }{\theta |z^{\lambda} - u^{\lambda} |^2}  \sin\left(\lambda \op{arg}(z) \right) \,du .    
\eqe 
\end{lemma}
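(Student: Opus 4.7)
The plan is to identify the law in question as the hitting distribution on $\partial\cC_\theta$ of standard planar Brownian motion started at $z$, and then to compute that hitting distribution explicitly via the conformal map $\phi(w)=w^\lambda$ from $\cC_\theta$ onto $\bbH$.

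To effect the reduction, note that although $\P^\eps$ is defined (via Proposition~\ref{prop-shimura}) as the weak limit of the $h$-transformed measures $\P^\eps_{z_0}$ as $z_0\to 0$, the conditioning event $\{\tau>\eps\}$ is already realized at any time $t\geq\eps$. Hence for each fixed $z_0$ the ordinary Markov property of Brownian motion gives that, under $\P^\eps_{z_0}$, the conditional law of $(Z_{t+s})_{0\leq s\leq\tau-t}$ given $Z|_{[0,t]}$ on $\{\tau>t,\,Z_t=z\}$ is that of standard planar Brownian motion from $z$ killed on exiting $\cC_\theta$, independently of $z_0$. Passing $z_0\to 0$ preserves this description, so under $\P^\eps$ the conditional law of $Z_\tau$ is precisely the exit distribution on $\partial\cC_\theta$ of standard Brownian motion from $z$.

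Since $\lambda=\pi/\theta$, the map $\phi(w)=w^\lambda$ is a biholomorphism from $\cC_\theta$ onto $\bbH$ sending the boundary rays $\R_+$ and $e^{i\theta}\R_+$ respectively onto $\R_+$ and $\R_-$. By the conformal invariance of planar Brownian motion (up to time change), the pushforward by $\phi$ of the exit distribution on $\partial\cC_\theta$ of Brownian motion from $z$ equals the exit distribution on $\R$ of Brownian motion from $\bbH$ started at $\phi(z)=z^\lambda$, i.e.\ the Poisson kernel
\[
\frac{1}{\pi}\,\frac{\Im(z^\lambda)}{|y-z^\lambda|^2}\,dy \;=\; \frac{1}{\pi}\,\frac{|z|^\lambda\sin(\lambda\op{arg}z)}{|y-z^\lambda|^2}\,dy.
\]
Parametrising $\partial\cC_\theta$ by arc length $|u|$ and substituting $y=\phi(u)=\pm|u|^\lambda$ (with sign depending on which boundary ray $u$ lies on), one has $|dy|=\lambda|u|^{\lambda-1}\,d|u|$ and $|y-z^\lambda|^2=|u^\lambda-z^\lambda|^2$; combining with $\lambda/\pi=1/\theta$ yields the claimed density on $\partial\cC_\theta$.

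The only point requiring genuine care is the Markov-type reduction in the second paragraph: because the conditioning event defining $\P^\eps_{z_0}$ is already realized by time $\eps$, no $h$-transform subtleties appear in the post-$t$ evolution, and this is what makes the reduction to a plain Brownian hitting problem clean. The rest is a routine application of the Poisson kernel on $\bbH$ together with the change-of-variables $y=u^\lambda$ along each boundary ray.
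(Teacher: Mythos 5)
Your proof is correct and follows essentially the same route as the paper: reduce via the Markov property (the conditioning event $\{\tau>\eps\}$ is $\mathcal{F}_t$-measurable for $t\geq\eps$, so the post-$t$ evolution is unconditioned Brownian motion killed at $\partial\cC_\theta$) to the harmonic measure of $\cC_\theta$ seen from $z$, then map to $\bbH$ by $w\mapsto w^\lambda$ and transfer the Poisson kernel via the substitution $x=u^\lambda$. One small remark: that substitution (in your write-up and in the paper's own proof) actually produces the factor $\lambda|u|^{\lambda-1}/\pi=|u|^{\lambda-1}/\theta$, so the exponent $1-\lambda$ in the displayed statement is a typo of the paper which your final sentence silently reproduces; it is harmless downstream since this $|u|$-prefactor cancels in the Bayes ratio~\eqref{eqn-bayes}.
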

\begin{proof}
If we condition on $Z|_{[0,t]}$, then on the event $\{\tau > t\} \cap \{Z_t = z\}$, the regular conditional law of $Z_\tau$ is that of the first exit point from $\mcl C_\theta$ of a standard planar Brownian motion started from $Z_t$. By Brownian scaling, this is the same as the harmonic measure on $\mcl C_\theta$ seen from $z$.  
The map $z\mapsto z^{\lambda}$ takes $\mcl C_\theta$ to the upper half-plane $\BB H  $. By the conformal invariance of Brownian motion and the well-known formula for the Poisson kernel on $\BB H$, the density of harmonic measure as seen from $z^{\lambda}$ in $\BB H$ (which is a measure on $\BB R$) is 
\eqbn
\frac{\im (z^{\lambda})}{\pi |z^{\lambda} - x|^2} \,dx  .
\eqen
By substituting $x = u^{\lambda}$ and $dx = \lambda |u|^{\lambda-1} \, du$ and noting that $\im(z^{\lambda}) = |z|^{\lambda} \sin\left( \lambda \op{arg}(z) \right)$, we get~\eqref{eqn-cone-hm}. 
\end{proof}

By Lemmas~\ref{lem-time-t-law} and~\ref{lem-cone-hm}, the joint law of $Z_\tau$ and $Z_t$ given $\{\tau > t\}$ is 
\eqb \label{eqn-joint-law}
\P^\eps\left[ Z_\tau \in \,du ,\,  Z_t \in \, dz  \,|\, \tau > t \right]
=  \frac{ c_2 |u|^{1 - \lambda} }{   t^{1 + \lambda/2}  } 
\frac{ |z|^{2\lambda} \sin\left( \lambda \op{arg}(z) \right)^2 }{  |z^{\lambda} - u^{\lambda} |^2}  
 \exp\left( - \frac{  |z|^2}{2  t }  \right) \, dz  \,du ,
\eqe 
for a constant $c_2 > 0$ depending only on $\gamma$. 
By integrating out $z$ in~\eqref{eqn-joint-law}, we get an exact formula for the first factor in the numerator in~\eqref{eqn-bayes}. 

\begin{lemma} \label{lem-tau-marginal}
For $t > 0$, the $\P^\eps$-conditional law of $Z_\tau$ given $\{\tau  >t\}$ is given by
\eqb \label{eqn-tau-marginal}
\P^\eps\left[ Z_\tau \in \, du \,|\, \tau  > t \right] 
=  \frac{c_3 |u|^{1-\lambda}}{t^{1+\lambda/2}} \left( t e^{-\frac{|u|^2}{2t} } + \frac{ 2^\lambda t^{1+\lambda} }{ |u|^{2\lambda}}   \ul \Gamma\left( 1 + \lambda , \frac{|u|^2}{2t} \right)  \right)\,du
\eqe
where $c_3 > 0$ is a constant depending only on $\gamma$ and 
\eqb \label{eqn-truncated-gamma}
 \ul \Gamma\left( a,x \right)  = \int_0^x y^{a-1} e^{-y} \,dy 
\eqe
is the truncated $\Gamma$-function. 
\end{lemma}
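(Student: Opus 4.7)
The plan is to integrate the joint density~\eqref{eqn-joint-law} in the $z$ variable over $\cC_\theta$. A first observation is that the marginal depends only on $|u|$: if $u \in \partial\cC_\theta$ then $u^\lambda$ is real with $u^\lambda = \pm|u|^\lambda$ according to which ray $u$ lies on, and the substitution $\phi \mapsto \theta - \phi$ (which preserves $\sin^2(\lambda\phi)$ while flipping $\cos(\lambda\phi)$) shows that the resulting angular integrals for the two rays coincide. Hence I may assume $u \in (0,\infty)$, so $u^\lambda = |u|^\lambda$.

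Writing $z = re^{i\phi}$ with $r \in (0,\infty)$, $\phi \in (0,\theta)$, the $z$-integral becomes
\begin{equation*}
\int_0^\infty r^{2\lambda + 1} e^{-r^2/(2t)} \left( \int_0^\theta \frac{\sin^2(\lambda\phi)}{r^{2\lambda} - 2r^\lambda |u|^\lambda \cos(\lambda\phi) + |u|^{2\lambda}}\, d\phi \right) dr.
\end{equation*}
The key step is the angular integral. After substituting $\psi = \lambda\phi$, rewriting $\sin^2\psi = (1-\cos 2\psi)/2$, and using the classical Poisson-kernel identity
\begin{equation*}
\int_0^{2\pi} \frac{\cos(n\psi)\, d\psi}{a - b\cos\psi} = \frac{2\pi}{\sqrt{a^2-b^2}} \left( \frac{a - \sqrt{a^2-b^2}}{b} \right)^{\! n} \qquad (a > b > 0)
\end{equation*}
with $n = 0,2$, $a = r^{2\lambda} + |u|^{2\lambda}$, $b = 2 r^\lambda |u|^\lambda$ (so $\sqrt{a^2 - b^2} = |r^{2\lambda} - |u|^{2\lambda}|$ and $(a - \sqrt{a^2-b^2})/b = \min(r,|u|)^\lambda / \max(r,|u|)^\lambda$), the angular integral collapses to $\pi / \bigl( 2\lambda\, \max(r,|u|)^{2\lambda}\bigr)$.

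What remains is $\int_0^\infty r^{2\lambda+1} \max(r,|u|)^{-2\lambda} e^{-r^2/(2t)}\, dr$, which I split at $r = |u|$. The piece on $(|u|,\infty)$ evaluates directly to $\int_{|u|}^\infty r e^{-r^2/(2t)}\, dr = t\, e^{-|u|^2/(2t)}$. The piece on $(0,|u|)$, under the substitution $w = r^2/(2t)$, becomes
\begin{equation*}
\frac{2^\lambda t^{1+\lambda}}{|u|^{2\lambda}} \int_0^{|u|^2/(2t)} w^\lambda e^{-w}\, dw = \frac{2^\lambda t^{1+\lambda}}{|u|^{2\lambda}}\, \ul\Gamma\!\left(1+\lambda,\, \frac{|u|^2}{2t}\right).
\end{equation*}
Multiplying by the prefactor $c_2 |u|^{1-\lambda} t^{-1-\lambda/2}$ from~\eqref{eqn-joint-law} and absorbing $c_2 \pi/(2\lambda)$ into a new constant $c_3 > 0$ depending only on $\gamma$ yields exactly~\eqref{eqn-tau-marginal}.

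The only non-routine step is the angular integration, which is handled cleanly by the Poisson-kernel identity; everything else is a single case split and one substitution. Since the lemma only tracks multiplicative constants that depend on $\gamma$, there is no need to carry the numerical factor explicitly --- it can be pinned down later from the normalization $\mu^\#_{\cC_\theta}(0,u)\{\tau > t\} \to 1$ as $t \to 0$ if desired.
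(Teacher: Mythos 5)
Your proof is correct and follows essentially the same route as the paper: integrate out $z$ from~\eqref{eqn-joint-law} in polar coordinates, evaluate the angular integral via a Poisson-kernel identity (the paper isolates this as Lemma~\ref{lem-residue-int}, proved with the Poisson kernel for the disk; your Fourier-series form of the same identity gives the identical answer $\pi/(2\lambda\max(r,|u|)^{2\lambda})$), and split the radial integral at $r=|u|$. Your explicit symmetry argument for $u$ on the ray at angle $\theta$ (where $u^\lambda=-|u|^\lambda$) is a small detail the paper leaves implicit, but otherwise the two arguments coincide.
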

\begin{proof}
From~\eqref{eqn-joint-law}, we find that  
\allb \label{eqn-Z_tau-law}
&\P^\eps\left[ Z_\tau \in \,du    \,|\, \tau > t \right]
 =   \frac{ c_2 |u|^{1 - \lambda} }{   t^{1 + \lambda/2}  }  f_t(u)     \,du  \notag \\
&\qquad  \text{where} \quad 
f_t(u) := 
\int_{\mcl C_\theta} \frac{ |z|^{2\lambda} \sin\left( \lambda \op{arg}(z) \right)^2 }{  |z^{\lambda} - u^{\lambda} |^2}  
 \exp\left( - \frac{  |z|^2}{2  t }  \right) \, dz  .
\alle
To evaluate the integral $f_t(u)$, we first switch to polar coordinates to get
\eqbn
f_t(u)  =  \int_0^\infty   r^{2\lambda+1} \exp\left(   - \frac{  r^2}{2  t }  \right)  \left(  \int_0^\theta \frac{\sin(\lambda \phi)^2}{|r^\lambda e^{ i \lambda \phi} - u^\lambda|^2} \, d\phi \right) \, dr .
\eqen
Making the change of variables $\phi  =\wt\phi/\lambda$, $d\phi = \lambda^{-1} d\wt\phi$ and applying Lemma~\ref{lem-residue-int} below with $s = r^\lambda / |u|^\lambda$ shows that the inner integral equals 
\eqb \label{eqn-trig-int}
\frac{1}{\lambda} \int_0^\pi \frac{\sin(\wt\phi)^2}{|r^\lambda e^{ i \wt\phi} - u^\lambda|^2} \, d\wt\phi 
= 
\begin{cases}
\frac{\pi }{2 \lambda r^{2\lambda}  } ,\quad &r \geq |u| \\
\frac{\pi}{2 \lambda |u|^{2\lambda} },\quad &r < |u|   
\end{cases}   .
\eqe 
Hence, 
\alb
f_t(u) 
&=   \frac{\pi}{2\lambda} \left(  
\int_{|u|}^\infty   r    \exp\left(   - \frac{  r^2}{2  t }  \right)   \, dr + 
\frac{1}{|u|^{2\lambda}} \int_0^{|u|}  r^{2\lambda+1}    \exp\left(   - \frac{  r^2}{2  t }  \right)   \, dr
\right) \notag\\
&= \frac{\pi}{2\lambda} \left( t e^{-\frac{|u|^2}{2t} } + \frac{ 2^\lambda t^{1+\lambda} }{ |u|^{2\lambda}}   \ul \Gamma\left( 1 + \lambda , \frac{|u|^2}{2t} \right)  \right) .
\ale 
Note that to evaluate the second integral in the first line, we made the substitution $r = \sqrt{2t s}$. 
Combining this with~\eqref{eqn-Z_tau-law} now gives~\eqref{eqn-tau-marginal}.
\end{proof}

The following lemma was used in the proof of Lemma~\ref{lem-tau-marginal} to evaluate the integral~\eqref{eqn-trig-int}.

\begin{lemma} \label{lem-residue-int}
For $s > 0$,  
\eqb \label{eqn-residue-int}
\int_0^\pi \frac{(\sin \phi)^2}{|s e^{ i\phi} -1|^2} \,d\phi = 
\begin{cases}
\frac{\pi}{2},\quad &s \leq 1 \\
\frac{\pi}{2s^2},\quad &s > 1
\end{cases}
\eqe
\end{lemma}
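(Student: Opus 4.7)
My plan is to identify the integrand, up to a constant factor, with the Poisson kernel of the unit disk acting on a specific bounded harmonic function, and then invoke the Poisson integral formula. Expanding the denominator as $|se^{i\phi}-1|^2 = 1 - 2s\cos\phi + s^2$, for $s \in (0,1)$ we have
\[
\frac{1}{|se^{i\phi}-1|^2} = \frac{P_s(\phi)}{1-s^2}, \qquad \text{where } P_s(\phi) := \frac{1-s^2}{1-2s\cos\phi + s^2}
\]
is the standard Poisson kernel on $\D$ at radius $s$. The boundary function $\phi \mapsto (\sin\phi)^2$ extends to the bounded harmonic function $h(z) := \tfrac{1 - \Re(z^2)}{2}$ on $\D$, which satisfies $h(s) = (1-s^2)/2$.

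Since the integrand is even in $\phi$, I would fold $\int_0^\pi = \tfrac12 \int_{-\pi}^\pi$. The Poisson integral formula then gives
\[
\int_0^\pi \frac{(\sin\phi)^2}{|se^{i\phi}-1|^2}\,d\phi
= \frac{1}{2(1-s^2)} \int_{-\pi}^\pi (\sin\phi)^2 P_s(\phi)\,d\phi
= \frac{\pi\, h(s)}{1-s^2}
= \frac{\pi}{2},
\]
which is the desired value for $s < 1$. For $s > 1$, I would factor
$|se^{i\phi}-1|^2 = s^2\,|1 - (1/s)e^{-i\phi}|^2$
and apply the previous case with $s$ replaced by $1/s \in (0,1)$, producing the factor $\pi/(2s^2)$. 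The boundary value $s=1$ can be handled either by continuity or directly: the identities $|e^{i\phi}-1|^2 = 4\sin^2(\phi/2)$ and $\sin^2\phi = 4\sin^2(\phi/2)\cos^2(\phi/2)$ reduce the integrand to $\cos^2(\phi/2) = (1+\cos\phi)/2$, which integrates to $\pi/2$ over $[0,\pi]$.

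There is no real obstacle here, since everything reduces to a one-line harmonic-measure computation; if one wanted a proof more in the flavor of the residue calculations used elsewhere in the paper, an alternative would be to substitute $z = e^{i\phi}$ and evaluate the resulting contour integral on $|z|=1$ by residues. The integrand becomes a rational function with a double pole at $z=0$ and simple poles at $z=s$ and $z=1/s$, exactly one of which lies inside the unit disk according as $s < 1$ or $s > 1$; the two cases in~\eqref{eqn-residue-int} arise precisely from this dichotomy, and the on-boundary case $s=1$ is again recovered by continuity.
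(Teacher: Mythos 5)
Your proposal is correct and follows essentially the same route as the paper: both fold the integral to $\tfrac12\int_{-\pi}^\pi$, recognize the Poisson kernel, and apply the Poisson integral formula to the harmonic extension $\Re\big(\tfrac{1-z^2}{2}\big)$ of $(\sin\phi)^2$, with the two cases related by the inversion $s\mapsto 1/s$ (the paper merely treats $s>1$ first and handles $s=1$ by continuity, while you treat $s<1$ first and also give a direct check at $s=1$). The residue alternative you sketch is likewise the one the paper alludes to but does not carry out.
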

\begin{proof}
We will argue using the formula for the Poisson kernel for the disk. The integral can also be evaluated using the residue theorem. 
First consider the case when $s > 1$. 
Since $|s e^{ i\phi} -1|^2 = |s e^{- i \phi}-1|^2$,  
\eqb \label{eqn-symmetry-int}
\int_0^\pi \frac{( \sin \phi )^2}{|s e^{i \phi} - 1|^2} \,d\phi = \frac12 \int_{-\pi}^\pi \frac{( \sin \phi )^2}{|s e^{i \phi} - 1|^2} \,d\phi .
\eqe 
Let $\frk f(z) := \re\left( \frac{1-z^2}{2}\right)$, so that $\frk f$ is harmonic and $\frk f(e^{ i \phi}) = (\sin \phi)^2$. 
By the Poisson kernel formula for the disk, 
\eqb \label{eqn-poisson-kernel}
 \int_{-\pi}^\pi \frac{( \sin \phi )^2}{|s e^{ i \phi} - 1|^2} \,ds 
 = \frac{1}{s^2(1-1/s^2)} \int_{-\pi}^\pi \frk f( e^{i\phi} ) \frac{ 1-1/s^2 }{ |e^{i\phi} - 1/s|^2}  \, d\phi
 = \frac{2\pi \frk f(1/s)}{s^2 (1-1/s^2) } 
 = \frac{\pi}{s^2} .
\eqe 
Combining this with~\eqref{eqn-symmetry-int} gives~\eqref{eqn-residue-int} in the case when $s > 1$. For $s  \in (0,1)$, we have $|s e^{i\phi} - 1| = |e^{i \phi} - s| = s |e^{i \phi}/s-1|$, so the value of the integral in~\eqref{eqn-residue-int} for $s$ is $1/s^2$ times the value of the integral with $1/s > 1$ in place of $s$. Thus~\eqref{eqn-residue-int} for $s \in (0,1)$ follows from~\eqref{eqn-residue-int} for $s>1$. The case $s=1$ follows since the integral depends continuously on $s$. 
\end{proof}


Using Lemma~\ref{lem-tau-marginal}, we can get an asymptotic formula for the denominator in~\eqref{eqn-bayes}. 

\begin{lemma} \label{lem-Z_tau-asymp}
For each fixed $u\in \bdy\mcl C_\theta$, it holds as $\ep\rta 0$ that
\eqb \label{eqn-Z_tau-asymp}
\P^\eps\left[ Z_\tau \in \,du    \right] 
 = (c_4+o_\ep(1))      |u|^{1 - 3\lambda}       \ep^{\lambda/2}  du  ,  
\eqe 
for a constant $c_4 > 0$ depending only on $\gamma$. 
\end{lemma}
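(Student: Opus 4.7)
The plan is to apply Lemma~\ref{lem-tau-marginal} directly with $t=\ep$. Since under $\P^\ep$ we have $\tau > \ep$ almost surely, the conditional measure $\P^\ep[\,\cdot\, \mid \tau > \ep]$ coincides with $\P^\ep[\,\cdot\,]$. Thus Lemma~\ref{lem-tau-marginal} specialized to $t = \ep$ yields the exact identity
\[
\P^\ep[Z_\tau \in du]
= \frac{c_3 |u|^{1-\lambda}}{\ep^{1+\lambda/2}}\left( \ep e^{-|u|^2/(2\ep)} + \frac{2^\lambda \ep^{1+\lambda}}{|u|^{2\lambda}}\ul\Gamma\!\left(1+\lambda,\tfrac{|u|^2}{2\ep}\right)\right) du.
\]

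Next, I would analyze the two summands separately as $\ep \to 0$ with $u \in \bdy \mcl C_\theta\setminus\{0\}$ fixed. The first summand contributes $c_3 |u|^{1-\lambda}\ep^{-\lambda/2} e^{-|u|^2/(2\ep)}\,du$, which decays faster than any power of $\ep$ and is therefore negligible compared with the target rate $\ep^{\lambda/2}$. For the second summand, since $|u|^2/(2\ep) \to \infty$, we use the monotone convergence
\[
\ul\Gamma\!\left(1+\lambda, \tfrac{|u|^2}{2\ep}\right) \;=\; \int_0^{|u|^2/(2\ep)} y^{\lambda} e^{-y}\,dy \;\xrta{\ep\to 0}\; \Gamma(1+\lambda),
\]
so the second summand is $(1+o_\ep(1))\cdot c_3 \cdot 2^\lambda \Gamma(1+\lambda)\cdot |u|^{1-3\lambda} \ep^{\lambda/2}\,du$.

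Combining these two contributions gives~\eqref{eqn-Z_tau-asymp} with $c_4 = c_3 \cdot 2^\lambda \Gamma(1+\lambda)$. There is no genuine obstacle in this argument: once Lemma~\ref{lem-tau-marginal} is in hand, the derivation is a routine asymptotic expansion of the truncated $\Gamma$-function at $+\infty$. The only subtlety to mention is that $u$ is fixed in $\bdy\mcl C_\theta\setminus\{0\}$, so that $|u|^2/(2\ep)\rta\infty$ and the exponentially small first summand can indeed be absorbed into the $o_\ep(1)$ error, uniformly on compact subsets of $\bdy\mcl C_\theta\setminus\{0\}$.
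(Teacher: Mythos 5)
Your proof is correct and follows exactly the paper's own argument: set $t=\ep$ in Lemma~\ref{lem-tau-marginal}, use that the $\P^\ep$-law of $Z_\tau$ equals its conditional law given $\{\tau>\ep\}$, note that $\ul\Gamma(1+\lambda,|u|^2/(2\ep))\rta\Gamma(1+\lambda)$ while the $e^{-|u|^2/(2\ep)}$ term is negligible. The explicit value $c_4 = c_3\, 2^\lambda\,\Gamma(1+\lambda)$ you extract is a small bonus the paper does not bother to record.
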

\begin{proof}
By definition the $\P^\eps$-law of $Z_\tau$ is the same as the $\P^\eps$-conditional law of $Z_\tau$ given $\{\tau > \ep\}$. 
Since $\ul\Gamma(1+\lambda , |u|^2/(2\ep) ) \rta \Gamma(1+\lambda)$ and $e^{-|u|^2/(2\ep)}$ decays faster than any power of $\ep$ as $\ep\rta 0$, we obtain~\eqref{eqn-Z_tau-asymp} for an appropriate choice of $c_4$ by setting $t=\ep$ in Lemma~\ref{lem-tau-marginal}.  
\end{proof}

Next we will deal with the factor $\P^\eps[\tau > t]$ appearing in~\eqref{eqn-bayes}, again using formulas from~\cite{shimura1985}. 
 
\begin{lemma} \label{lem-cone-prob}
For each fixed $t \geq \ep$, 
\eqb \label{eqn-cone-prob}
\P^\eps\left[ \tau > t \right] = \left(  c_5 + o_\ep(1) \right) \ep^{ \lambda/2 } t^{-\lambda/2} ,\quad \text{as $\ep\rta 0$},
\eqe  
for a constant $c_5>0$ depending only on $\gamma$. 
\end{lemma}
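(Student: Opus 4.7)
My plan is to exploit the Brownian scaling invariance of $\P^\eps$: under the rescaling $\wt Z_s := \eps^{-1/2} Z_{\eps s}$ the cone $\cC_\theta$ is invariant and $(\wt Z_s)_{s\geq 0}$ has law $\P^1$, so
\eqbn
\P^\eps[\tau > t] = \P^1[\tau > t/\eps] .
\eqen
Setting $T := t/\eps$, it therefore suffices to prove the large-$T$ asymptotic $\P^1[\tau > T] = (c_5 + o_T(1))\, T^{-\lambda/2}$ for some $c_5 > 0$ depending only on $\gamma$.

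Writing $\P_z$ for standard planar Brownian motion started at $z$ and $\tau_z$ for its exit time from $\cC_\theta$, under $\P^1$ the process evolves as an unconditioned Brownian motion after time $1$, so the Markov property gives
\eqbn
\P^1[\tau > T] = \int_{\cC_\theta} \P_z[\tau_z > T-1]\, \P^1[Z_1 \in dz] .
\eqen
By Proposition~\ref{prop-shimura}, $\P^1[Z_1 \in dz]$ has density $c_1 |z|^\lambda \sin(\lambda \arg z)\, e^{-|z|^2/2}$ on $\cC_\theta$. The key analytic input is the classical cone-survival asymptotic
\eqbn
\P_z[\tau_z > s] = (c_6 + o_{s/|z|^2}(1))\, \frac{|z|^\lambda \sin(\lambda \arg z)}{s^{\lambda/2}} \quad \text{as } s/|z|^2 \to \infty,
\eqen
together with the uniform bound $\P_z[\tau_z > s] \leq C\,(|z|/\sqrt{s})^\lambda$ (obtained by combining the asymptotic with Brownian scaling and the trivial inequality $\P_z[\tau_z > s] \leq 1$). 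The uniform bound shows that $T^{\lambda/2} \P_z[\tau_z > T-1]$ is bounded by $C'|z|^\lambda$ uniformly for $T \geq 2$, which is integrable against the Gaussian density $c_1 |z|^\lambda \sin(\lambda \arg z)\, e^{-|z|^2/2}\,dz$. Dominated convergence then yields
\eqbn
T^{\lambda/2}\P^1[\tau > T] \to c_5 := c_1 c_6 \int_{\cC_\theta} |z|^{2\lambda} \sin(\lambda \arg z)^2 e^{-|z|^2/2}\,dz .
\eqen

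The one non-routine step is the cone-survival asymptotic itself. A direct derivation uses the eigenfunction expansion of the Dirichlet heat kernel on $\cC_\theta$,
\eqbn
p_s(z,z') = \frac{1}{\theta s}\sum_{n\geq 1} \sin(n\lambda \arg z)\sin(n\lambda \arg z')\, I_{n\lambda}\!\left(\tfrac{|z||z'|}{s}\right) e^{-(|z|^2+|z'|^2)/(2s)},
\eqen
and extracts the leading contribution ($n=1$) using the small-argument expansion $I_\nu(x)\sim (x/2)^\nu/\Gamma(\nu+1)$; integrating against $dz'$ over $\cC_\theta$ gives the stated form. This is a classical computation and is essentially the same input that underlies Proposition~\ref{prop-shimura}, so its invocation here should be viewed as the main (but entirely standard) obstacle in the proof.
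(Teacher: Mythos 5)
Your proposal is correct and is essentially the paper's proof reorganized: after Brownian scaling, both arguments come down to the classical cone-survival asymptotic $\P_z[\tau_z > s] = (c+o(1))\,|z|^{\lambda}\sin(\lambda \arg z)\, s^{-\lambda/2}$ (i.e.\ \cite[Equation (4.2)]{shimura1985}, which you could simply cite instead of re-deriving from the Bessel heat-kernel expansion) integrated against the explicit $\P^1$-law of $Z_1$, producing the same constant $c_5 \propto \E^1\left[ |Z_1|^{\lambda}\sin(\lambda\arg Z_1)\right]$. The only differences are cosmetic: the paper applies the Markov property at time $\ep$ and controls the error with Shimura's uniform $\delta$-estimate on $\{|z|\leq r_\delta\}$ plus moment bounds for $Z_\ep$, whereas you rescale first, decompose at time $1$, and handle the error by the uniform bound $\P_z[\tau_z>s]\lesssim (|z|/\sqrt s)^{\lambda}$ together with dominated convergence.
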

\begin{proof}
Let $\BB Q_z$ for $z\in\BB C$ be the law of a standard planar Brownian motion $Z^{\op{BM}}$ started from $z$ and let $\tau^{\op{BM}}$ be the exit time of such a Brownian motion from $\mcl C_\theta$. Let $\E^\eps$ be the expectation under $\P^\eps$. By the Markov property of Brownian motion, for $t\geq \ep$, 
\eqb
\BB P^\eps\left[ \tau > t \right] = \BB E^\eps\left[ \BB Q_{Z_\eps} \left[ \tau^{\op{BM}} > t-\ep \right] \right] .
\eqe
By~\cite[Equation (4.2)]{shimura1985}, for each $\delta \in (0,1)$ there exists $r_\delta  = r_\delta( t,\gamma) > 0$ such that 
\eqbn
\left| \BB Q_z\left[ \tau^{\op{BM}}  > t-\ep \right]  - c_5' (t-\ep)^{-\lambda/2} |z|^\lambda \sin(\lambda\op{arg} z) \right| \leq \delta |z|^\lambda    ,\quad \forall z \in \mcl C_\theta \quad \text{with} \quad |z| \leq r_\delta ,
\eqen
where $c_5' > 0$ is a constant depending only on $\gamma$. 
Setting $z = Z_\eps$ and taking expectations of both sides w.r.t. $\E^\eps$, we get that
\allb \label{eqn-tau_ep-asymp}
 \left| \BB P^\eps\left[ \tau > t \right]  
-   (t-\ep)^{-\lambda/2}  \BB E^\eps\left[   |Z_\eps|^\lambda \sin\left(\lambda\op{arg} Z_\eps\right)    \right]   \right|
 \leq \delta \BB E^\eps\left[  | Z_\eps|^\lambda  \right]  + \BB E^\eps\left[ (1 + c_5' (t-\ep)^{-\lambda/2} | Z_\eps|^\lambda ) \BB 1_{( |Z_\eps| > r_\delta )} \right]  .
\alle
By Brownian scaling the $\P^\eps$ law of $\ep^{-1/2} Z_\ep$ is equal to the $\P^1$ law of $Z_1$, so
\allb \label{eqn-tau_ep-to-tau_1}
&\BB E^\eps\left[  |Z_\eps |^\lambda  \sin\left( \lambda\op{arg} Z_\eps \right)  \right] = \ep^{\lambda/2}  \BB E^1\left[  | Z_1|^\lambda  \sin\left( \lambda\op{arg} Z_1 \right)  \right] ,\quad 
\BB E^\eps\left[  |Z_\eps |^\lambda    \right] = \ep^{\lambda/2}  \BB E^1\left[  | Z_1|^\lambda   \right]
 \quad \text{and} \notag \\
& \BB E^\eps\left[ (1 + c_5' (t-\ep)^{-\lambda/2} |Z_\eps|^\lambda ) \BB 1_{( |Z_\eps| > r_\delta )} \right] 
\leq O_\ep(1) \BB E^1\left[ (1 +  | Z_1|^\lambda ) \BB 1_{( |Z_1| > \ep^{-1/2} r_\delta )} \right] .
\alle
By the explicit formula for the $\P^1$-density of $Z_1$ given in~\eqref{eqn-Z_1-density}, we find that $ \BB E^1\left[  | Z_1|^\lambda  \sin\left( \lambda\op{arg} Z_1 \right)  \right]$ and $\BB E^1\left[  |Z_1|^\lambda   \right]$ are finite constants depending only on $\gamma$ and that $\BB E^1\left[ (1 +  | Z_1|^\lambda ) \BB 1_{( | Z_1| > \ep^{-1/2} r_\delta )} \right]$ decays faster than any positive power of $\ep$ as $\ep\rta 0$. 
Since $\delta \in (0,1)$ is arbitrary, combining this with~\eqref{eqn-tau_ep-to-tau_1} and plugging the result into~\eqref{eqn-tau_ep-asymp} yields~\eqref{eqn-cone-prob} for an appropriate choice of $c_5$.
\end{proof} 

 
\begin{proof}[Proof of Theorem~\ref{thm: area of disk}]
By plugging the formulas of Lemmas~\ref{lem-tau-marginal},~\ref{lem-Z_tau-asymp}, and~\ref{lem-cone-prob} into~\eqref{eqn-bayes}, we get
\eqbn
\mu^\#_{\cC_\theta, \eps}(0,u) \{\tau > t\} 
 =(c_6 +o_\ep(1))      \frac{ |u|^{2\lambda}}{t^{1+\lambda}} \left( t e^{-\frac{|u|^2}{2t} } + \frac{ 2^\lambda t^{1+\lambda} }{ |u|^{2\lambda}}   \ul \Gamma\left( 1 + \lambda , \frac{|u|^2}{2t} \right)  \right)     , 
\eqen
for a constant $c_6 > 0$ depending only on $\gamma$.
Setting $u =  ( ( \BB a \sin\theta)^{-1} ,0)$ and sending $\ep \rta 0$ (recall the discussion after~\eqref{eqn-area-law}) now shows that 
\allb \label{eqn-cone-law}
\mu^\#_{\cC_\theta}(0,u)\{\tau > t\}
= \frac{ c_6  }{ ( \BB a \sin\theta)^{2\lambda}  t^{1+\lambda}} \left( t e^{-\frac{1}{2 ( \BB a \sin\theta)^2  t} } +   2^\lambda  ( \BB a \sin\theta)^{2\lambda}  t^{1+\lambda}    \ul \Gamma\left( 1 + \lambda , \frac{1}{2 ( \BB a \sin\theta)^2  t} \right)  \right)  .
\alle
As $t\rta 0$, the left side of~\eqref{eqn-cone-law} converges to 1 and the right side converges to $c_6 2^\lambda \Gamma(1+\lambda)$, so $c_6 = 2^{-\lambda} \Gamma(1+\lambda)^{-1}$. 
Differentiating~\eqref{eqn-cone-law} with respect to $t$ and recalling that $\lambda = 4/\gamma^2$ and $\theta = \pi\gamma^2/4$ gives~\eqref{eqn-area-law}. 
\end{proof}

\section{Constructing a unit boundary length quantum disk from a quantum wedge}\label{section: pinching off a disk}

The goal of this section is to show that under suitable conditioning, we can ``pinch off'' a neighborhood near the marked point of a $\gamma$-quantum wedge to obtain a unit boundary length quantum disk. 

We embed a $\gamma$-quantum wedge in $\cS$ so that $-\infty$ (resp. $+\infty$) is the marked point with neighborhoods of infinite (resp. finite) mass, and explore the wedge from left to right. Roughly speaking, if we stop our exploration when the field becomes small, then condition on the quantum lengths of the unexplored boundary rays in $\R, \R+i\pi$ being close to $\frac12$, the remaining unexplored region resembles a $(\frac12, \frac12)$-length quantum disk. 

We restate this more precisely. Let $(\cS, h^{\cS}, -\infty, +\infty)$ be a $\gamma$-quantum wedge, and horizontally translate the field so that the field average process first attains the value $-r \ll 0$ on the vertical segment $[0,i\pi]$. Then $h := h^{\cS}|_{\cS_+}$ is a distribution on the positive strip $\cS_+$. Since the vertical field average at 0 is $-r \ll 0$ and the field average process has downward drift, the lengths $\nu_h(\R_+)$ and $\nu_h(\R_+ + i\pi)$ are typically on the order of $e^{-\gamma r/2} \ll 1$. If we condition on the rare event that $\nu_h(\R_+), \nu_h(\R_+ + i\pi) \approx \frac12$, then the field $h$ is ``close'' to that of a $(\frac12,\frac12)$-length quantum disk. 

The main technical difficulties in this section arise because we condition on $\nu_h(\R+), \nu_h(\R_+ + i\pi)$ lying in intervals whose lengths are exponentially short in $r$.
This is necessary to prove one of the equivalence-of-bottlenecks results (Proposition~\ref{prop:F given E}); c.f.\ Section~\ref{sec-outline}.
We now explain why this is necessary. With our setup, the segment $[0,i\pi]$ will correspond to to the ``pinch point'' defined via the quantum wedge field, and $h$ will be the field of the pinched-off region $\cS_+$. In Section \ref{subsection: decomposing wedge} we define a bottleneck via the space-filling $\SLE_{\kappa'}$ curve, in a way that specifies the exact boundary lengths to the right of the bottleneck. We need the quantum wedge pinch point $[0,i\pi]$ to be close to the space-filling $\SLE_{\kappa'}$ bottleneck in Euclidean distance to prove Proposition~\ref{prop:F given E}, so we need our quantum wedge bottleneck \eqref{eqn: E GFF} to specify \emph{both} lengths $\nu_h(\R+), \nu_h(\R_+ + i\pi)$ of the pinched-off region to an \emph{exponentially} close degree of precision.

Let $R$ be the rectangle $[0,S]\times [0,\pi]$ for some $S>0$; we keep $R$ fixed while sending $r \to \infty$. In Section~\ref{subsection: head of field conditioning on E} we prove Proposition~\ref{prop: head of GFF}, which describes the law of the triple $(h|_R, \nu_h(\R_+), \nu_h(\R_++i\pi))$ when we condition on $\{\nu_h(\R_+), \nu_h(\R_+ + i\pi) \approx \frac12\}$. In Section~\ref{subsection: pinched disk}, we prove Proposition~\ref{prop: conditioned on past, future is disk}, which roughly speaking says that if we condition on $\{\nu_h(\R_+), \nu_h(\R_+ + i\pi) \approx \frac12\}$ and on ``typical realizations'' of the triple $(h|_R, \nu_h(\R_+), \nu_h(\R_+ + i\pi))$, then the field $h$ is close to a quantum disk. Roughly speaking, Section~\ref{subsection: head of field conditioning on E} studies the conditional law of the field close to $[0,i\pi]$, and Section~\ref{subsection: pinched disk} studies the conditional law of the field far from $[0,i\pi]$ when we additionally condition on the field close to $[0,i\pi]$.

\subsection{Law of ${(h|_R, \nu_h(\R_+), \nu_h(\R_+ + i\pi))}$ given $E_{r,K,q_1,q_2}$} \label{subsection: head of field conditioning on E}
One of the main goals of this section is to prove that, if we condition on $\nu_h(\R_+)$ and $\nu_h(\R_+ + i\pi)$ being in intervals close to $\frac12$, then the conditional law of $h$ near $[0,i\pi]$ is close to an unconditioned GFF with an upward linear drift added to the field average process, and moreover $\nu_h(\R_+)$ and $\nu_h(\R_+ + i\pi)$ are close to being independent reals drawn uniformly from these intervals. 

Informally speaking, although the field average process of the unconditioned field has a downward drift of $(\gamma - Q)$, conditioning on $\nu_h(\R_+)$ and $\nu_h(\R_+ + i\pi)$ being large causes the field average process to grow.

We emphasize that in this section, $h := h^\cS|_{\cS_+}$ is a field on $\mcl S_+$ rather than on all of $\mcl S$. We will work with the equivalent definition~\eqref{eqn: unconditioned h} of $h$ (see Lemma~\ref{lem-wedge-tip} for a proof of equivalence).

\begin{proposition} \label{prop: head of GFF}
Let $\widetilde h$ be a Neumann GFF on $\cS$ restricted to $\cS_+$ and normalized so its average on $[0,i\pi]$ is $0$. For $r > 0$, let
\begin{equation} \label{eqn: unconditioned h}
h = h_r  =  \widetilde h  + (\gamma - Q) \Re (\cdot) - r. 
\end{equation}
For $K > 1$ and $q_1, q_2 > 0$, define the event
\begin{equation}\label{eqn: E GFF}
E_{r,K,q_1,q_2} = \left\{ \nu_ h (\R_+) \in \left[q_1,q_1 + e^{\gamma (-r+K)/2}\right], \nu_ h (\R_+ + i\pi) \in \left[q_2,q_2 + e^{\gamma (-r+K)/2}\right] \right\}. 
\end{equation}
Fix any $S > 0$, and let $R$ be the rectangle $[0,S] \times [0,\pi]$.

Then there exist functions $q_1 = q_1(r), q_2 = q_2(r)$ satisfying $\lim_{r \to \infty} q_1(r) = \lim_{r \to \infty} q_2(r) = \frac12$, such that as $r \to \infty$, the total variation distance between the following laws of triples of random variables goes to zero:
\begin{itemize}
\item The (normalized field, length, length) triple given by $(h|_R + r, \nu_h(\R_+), \nu_h(\R_++i\pi))$ conditioned on $E_{r,K,q_1(r) , q_2(r)}$ (the field is normalized in the sense that the average of $h|_R + r$ on $[0,i\pi]$ is zero);

\item The mutually independent (field, length, length) triple given by $(\phi, V_1, V_2)$, where $V_j$ is sampled from $\mathrm{Unif}([q_j(r) , q_j(r) + e^{\gamma(-r+K)/2}])$ for $j = 1,2$, and $\phi$ is a field on $R$ defined via 
\begin{equation}\label{eqn: initial field}
\phi =  \left(  \widehat h + (Q - \gamma) \Re (\cdot) \right)\Big|_R 
\end{equation}
with $\widehat h$ a Neumann GFF on $\cS$ normalized to have mean zero on $[0,i\pi]$.
\end{itemize}
Moreover, this holds when we instead condition on $E_{r,K,q_1(r),q_2(r)} \cap E'_{r,\beta}$ and send first $r \to \infty$, then $\beta \to \infty$, where 
\begin{equation}\label{eqn: E'}
E'_{r,\beta} = \{ \exists u \geq 0 \text{ such that average of }h \text{ on } [u,u+i\pi] \text{ is at least } -\beta\}.
\end{equation}
\end{proposition}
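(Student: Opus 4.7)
The plan is to decompose the Neumann GFF into its field-average process and its lateral ($\cH_2$) projection, thereby reducing the problem to a one-dimensional statement about Brownian motion with drift conditioned on a rare event, and then to apply a Girsanov/drift-reversal argument. Using the decomposition $H(\cS_+) = \cH_1(\cS_+) \oplus \cH_2(\cS_+)$, write $\widetilde h = X_{\Re(\cdot)} + \widehat h$, where $X_t$ is the average of $\widetilde h$ on $[t, t+i\pi]$ (so $X$ is a variance-$2$ Brownian motion with $X_0 = 0$) and $\widehat h \in \cH_2(\cS_+)$ is independent of $X$. Set $Y_t := X_t + (\gamma - Q)t$, so that $Y$ is Brownian motion with downward drift $-(Q-\gamma)$ starting at $0$, and the field-average process of $h$ at $t$ equals $Y_t - r$. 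For each realization of $\widehat h$ the boundary GMC of $\widehat h$ gives a random measure $d\mu_j$ on $\R_+ + (j-1)i\pi$ satisfying
\begin{equation*}
\nu_h(\R_+ + (j-1)i\pi) = e^{-\gamma r / 2} \int_0^\infty e^{\gamma Y_t / 2} \, d\mu_j(t), \qquad j = 1, 2 .
\end{equation*}
Thus $E_{r,K,q_1,q_2}$ becomes the joint event that these two integrals lie in windows of width $e^{\gamma K/2}$ centered at $q_j e^{\gamma r/2}$.

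Condition on $\widehat h$. Under the unconditioned law both integrals are of order $1$, so the conditioning forces each to be of order $e^{\gamma r/2}$, which in turn requires $Y$ to attain a running maximum of order $r$. By a Williams-type decomposition of Brownian motion with drift at its running maximum together with a Girsanov/Doob-$h$-transform computation (analogous to the calculations behind Proposition~\ref{prop: large quantum disk} and Remark~\ref{rem: right to left}), the conditional law of $Y|_{[0,S]}$ converges in total variation as $r \to \infty$ to variance-$2$ Brownian motion starting at $0$ with drift \emph{reversed} to $+(Q-\gamma)$. Consequently, the field average of $h|_R + r$ is close in total variation to the $\cH_1$-projection of $\phi$. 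Meanwhile the conditioning restricts the total mass of $d\mu_j$ on $\R_+$ to a window whose width is a vanishing fraction of the typical total mass under the reversed drift, and a GMC moment estimate (in the spirit of Lemma~\ref{wedges-lem}) shows that $d\mu_j|_{[0,S]}$ contributes only an $o(1)$ fraction of this total mass, so $\widehat h|_R$ is asymptotically unaffected by the conditioning. Combining, $h|_R + r$ converges in total variation to $\phi$.

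For the marginals of $(\nu_h(\R_+), \nu_h(\R_+ + i\pi))$: the joint density of the two integrals $\int e^{\gamma Y_t/2} d\mu_j$ under the reversed-drift law varies on scale $e^{\gamma r/2}$, so conditioning each coordinate on a window of width $e^{\gamma K/2} \ll e^{\gamma r/2}$ gives an asymptotically uniform joint distribution on the product of the two windows, asymptotically independent of $h|_R$ by the decoupling above. Taking $q_j(r)$ to be any sequence converging to $1/2$ is consistent with these windows lying in the bulk of the conditional distribution of the two integrals (after normalizing by $e^{-\gamma r/2}$), so we obtain $q_j(r) \to 1/2$ as required. Finally, the ``moreover'' statement about $E'_{r,\beta}$ is immediate: under the limit law, the field average of $h$ on $\R_+$ is a Brownian motion starting at $-r$ with upward drift, which hits any fixed level $-\beta$ with probability $1 - o_r(1)$, so additionally conditioning on $E'_{r,\beta}$ does not change the limit.

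The main obstacle is making the Girsanov/drift-reversal argument quantitative enough to give total-variation convergence given a \emph{joint} conditioning on two integrals restricted to exponentially narrow windows. This requires uniform (over typical $\widehat h$) regularity and non-degeneracy of the joint density of the two integrals near its mode under the reversed-drift law. I would establish this via the Williams-type decomposition of $Y$ at its running maximum --- after which the pre-maximum and post-maximum pieces become independent Bessel-type processes, allowing direct analysis of the two integrals in terms of fluctuations near the maximum --- combined with GMC moment and continuity estimates analogous to those in Section~\ref{subsection: qualitative bounds on boundary lengths}.
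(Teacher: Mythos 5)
Your heuristic is the same one the paper itself sketches after the statement (drift reversal of the field-average process under the rare conditioning, near-independence of $h|_R$ from the boundary lengths), but the proposal defers exactly the step that constitutes the real content of the proposition. Everything hinges on your claim that, uniformly over typical realizations of the lateral field $\widehat h$, the joint conditional density of the pair $\bigl(\nu_h(\R_+),\nu_h(\R_++i\pi)\bigr)$ is regular and non-degenerate near $(\tfrac12,\tfrac12)$ on the scale $e^{\gamma(-r+K)/2}$ — i.e.\ a local-limit statement under a conditioning that is both exponentially rare and exponentially narrow. A Girsanov/Williams decomposition of $Y$ at its maximum does give the drift-reversed picture when you condition on an event like $E'_{r,\beta}$ (hitting a high level), but it does not by itself give you control of the law of the path, or of $\widehat h|_R$, when you condition on the two GMC integrals landing in windows of width $e^{\gamma K/2}$ (after rescaling): ``the window contributes an $o(1)$ fraction of the mass'' does not imply total-variation insensitivity of $h|_R$ without the density regularity, and it is not even clear a priori that the joint law of the two integrals has a continuous density in a usable, uniform sense. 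This is precisely the obstacle you name at the end and propose to ``establish via the Williams-type decomposition combined with GMC moment and continuity estimates''; that is the missing proof, not a routine technicality.

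The paper's proof is built to avoid exactly this. It never proves density regularity at a fixed point such as $(\tfrac12,\tfrac12)$ (the authors explicitly say they expect the result for $q_1=q_2=\tfrac12$ but do not prove it, which is why the statement only asserts \emph{existence} of $q_1(r),q_2(r)\to\tfrac12$). Instead it (i) uses an exact coupling between $h$ conditioned on $E'_{r,\beta}$ and the quantum disk conditioned on $E'_\beta$ (Lemma~\ref{lem: coupling of disk and GFF}), where the field-average process has a clean Bessel-excursion description, so that the disk density $d^\beta_{\mathrm{disk}}$ serves as a reference; (ii) obtains matching lower and upper bounds on the conditional GFF densities against $d^\beta_{\mathrm{disk}}$ (Lemmas~\ref{lem: conditioned on start, field looks like disk} and~\ref{lem: choose q1, q2}); (iii) makes densities exist and be continuous at all via the device of splitting off two bump-function coefficients $\alpha_1 f_1+\alpha_2 f_2$ near the level $-\beta$ (the decomposition~\eqref{eqn: field decomposition}), in which the two side lengths are strictly monotone — a device your decomposition $\widetilde h = X_{\Re(\cdot)}+\widehat h$ lacks; and (iv) crucially, uses a Markov-inequality/pigeonhole argument over a grid of small squares to \emph{select} $(q_1(r),q_2(r))$ at which the upper bound~\eqref{eqn: most phi are improbable} holds. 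Your route, if completed, would prove the stronger fixed-$(q_1,q_2)$ statement, but as written the core local-limit estimate is asserted rather than established, so the proposal has a genuine gap.
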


Since $\lim_{r \to \infty} q_1(r) = \lim_{r \to \infty} q_2(r) = \frac12$, the event $E_{r,K,q_1,q_2}$ can be very roughly described as $\{\nu_h(\R_+) , \nu_h(\R_+ + i\pi) \approx \frac12 \}$. For large $r\gg 0$, Proposition~\ref{prop: head of GFF} describes the law of $(h|_R, \nu_h(\R_+), \nu_h(\R_+ + i\pi))$ conditioned on $E_{r,K,q_1,q_2}$.

For notational convenience, we will often just say $q_1,q_2$ rather than $q_1(r), q_2(r)$. 

Here is a rough explanation for why we expect Proposition \ref{prop: head of GFF} to hold. \cite[Lemma A.4]{wedges} states that uniformly in $r$, the probability $\P[E'_{r,\beta} \mid \nu_h(\R_+) + \nu_h(\R_+ + i\pi) > 1]$ is close to 1 for sufficiently large $\beta$. Conditioning on $E'_{r,\beta}$ and writing $\tau_{-\beta}$ for the leftmost point at which the average of $h$ on $[\tau_{-\beta}, \tau_{-\beta} + i\pi]$ equals $-\beta$, the lengths $\nu_h(\R_+)$ and $\nu_h(\R_+ + i\pi)$ are very close to being a function of $h|_{\cS_+ + \tau_{-\beta}}$, since the field is very small to the left of $\tau_{-\beta}$. Finally, because it takes a long time for the field average to grow from $-r$ to $-\beta$, by Proposition~\ref{prop: adaptation of ig4 proposition 2.10} we see that $h|_R$ is almost independent from $h|_{\cS_+ + \tau_{-\beta}}$. Thus, the event that $h$ has large boundary length is almost independent from $h|_	R$. It is therefore not a stretch to expect that, conditional on $E'_{r,\beta}$, the event $E_{r,K,q_1,q_2}$ is almost independent from $h|_R$.

Although the unconditioned field average process of $h$ has downward drift, when we condition on $E'_{r,\beta}$, the field average process will initially have upward linear drift until it hits $-\beta$, and subsequently have downward linear drift. This explains why we expect the field average process of $h|_R$ to have upward drift (see \eqref{eqn: initial field}). Furthermore, if we further condition on $h|_R$ then define the conditional density function $d: \R_+^2 \to \R$ for the conditional law of the side lengths $(\nu_h(\R_+), \nu_h(\R_+ + i\pi))$, then by the continuity of $d$ we expect it to be almost constant in a small neighborhood of $(\frac12, \frac12)$. In particular, $d$ should be almost constant in the square $[q_1, q_1 + \exp(\gamma (-r+K)/2)] \times [q_2, q_2 + \exp(\gamma (-r+K)/2)] $, so when we condition on $h|_R$ and on $E_{r,K,q_1,q_2}$, the side lengths should be close to being drawn from $\mathrm{Unif}[q_1, q_1 + \exp(\gamma (-r+K)/2)] \otimes \mathrm{Unif}[q_2, q_2 + \exp(\gamma (-r+K)/2)]$. While this brief explanation of Proposition \ref{prop: head of GFF} is quite imprecise, it is morally correct, and made formal in this section. 

We expect this proposition to hold even if we make for all $r$ the choice $q_1 = q_2 = \frac12$, but for technical reasons it is easier to avoid proving it for this specific choice of $q_1,q_2$. The reason for this is that at one point in the proof we will convert a statement about the average over a range of pairs $(q_1,q_2)$ to a statement about one particular pair (see Lemma~\ref{lem: choose q1, q2}).

We introduce some notation. Recall the events $E'_{r,\beta}$ and $E'_\beta$ defined in \eqref{eqn: E'} and \eqref{eqn: large quantum disk} respectively ($E'_\beta$ is the event that the field average process of the quantum disk field attains the value $-\beta$). For a quantum disk $(\cS, \psi, +\infty, - \infty)$ conditioned on $E'_\beta$, the random pair $(\nu_\psi(\R), \nu_\psi(\R + i\pi))$ has a probability density function $d^\beta_{\mathrm{disk}}(y_1,y_2)$ with respect to Lebesgue measure $d y_1 dy_2$ (this can be easily seen by considering the coefficients of two suitable functions in the orthonormal basis expansion of the mean-zero part of the field, as in Section~\ref{subsection: continuity of field}). Likewise, if we let $h$ be as in \eqref{eqn: unconditioned h} and condition on $E_{r,\beta}'$, the random pair $(\nu_h(\R_+), \nu_h(\R_+ + i\pi))$ has a probability density function $d^{\beta,r}_{\GFF} ( y_1, y_2)$ with respect to Lebesgue measure. 

By the Markov property of the GFF, we can sample $h$ by first sampling its restriction to the rectangle $R = [0,S] \times [0,\pi]$, then sampling $h|_{\cS_+ \backslash R }$ as a GFF with Neumann boundary conditions on $(\R_+ + S) \cup (\R_+ + S + i\pi)$ and Dirichlet boundary conditions on $[S,S+i\pi]$ specified by $h|_R$. Thus, if $\varphi$ is a distribution on $R$ with mean zero on $[0,i\pi]$, we can make sense of the regular conditional law of $h$ conditioned on the probability zero event $\{ h|_R = \varphi - r\}$ by sampling $h|_{\cS_+ \backslash R}$ as above. For each such $\varphi$, define the density $d^{\beta,r}_{\GFF} ( y_1, y_2\: \big| \: \varphi )$ for the random variable $(\nu_h(\R_+), \nu_h(\R_++ i\pi))$ conditioned on $E'_{r,\beta} \cap \{h|_R = \varphi - r\}$. More generally, for an event $A$, let $d^{\beta,r}_{\GFF}(y_1, y_2 \mid A)$ denote the conditional density of $(\nu_h(\R_+), \nu_h(\R_+ + i\pi))$ when we condition on $E'_{r,\beta}\cap A$. Likewise, for an event $A$, let $d^\beta_{\mathrm{disk}} (y_1, y_2 \mid A)$ denote the conditional density of $(\psi(\R), \psi(\R + i\pi))$ when we condition on $E'_\beta \cap A$. 

We now explain the broad outline for this section. In Lemma~\ref{lem: coupling of disk and GFF}, we show that when we appropriately truncate the field $\psi$ of a quantum disk conditioned on $E'_\beta$, the resulting field has exactly the law of $h$ conditioned on $E'_{r,\beta}$, so it is natural to study $h$ via comparisons with $\psi$. This follows almost immediately from the explicit field descriptions Proposition \ref{prop: large quantum disk} and Remark~\ref{remark: GFF on strip}. We also describe the large $r$ law of $h|_R +r$ conditioned on $E'_{r,\beta}$. In Lemma~\ref{lem: approx coupling of disk and GFF given varphi}, for any fixed distribution $\varphi$ on $R$ we produce a coupling between $h$ conditioned on $E'_{r,\beta} \cap \{ h|_R = \varphi - r\}$ and $\psi$ conditioned on $E'_\beta$. Using these couplings, in Lemma~\ref{lem: conditioned on start, field looks like disk} we obtain lower bounds $d^{\beta, r}_{\GFF} (y_1,y_2) \geq (1-o_r(1))d^{\beta}_{\mathrm{disk}} (y_1,y_2)$ for $y_1, y_2 \in [1/2,1]$, and $d^{\beta, r}_{\GFF} (y_1,y_2 \mid \varphi) \geq (1-o_r(1))d^{\beta}_{\mathrm{disk}} (y_1,y_2)$ for $y_1, y_2 \in [1/2,1]$ for fixed $\varphi$ and $\beta$. Using the first lower bound, in Lemma \ref{lem: choose q1, q2} we obtain a crude but matching upper bound on $d^{\beta, r}_{\GFF}(y_1,y_2)$. Combining the lower and upper bounds with Bayes' theorem tells us that the law of $h|_R$ given $E_{r,K,q_1,q_2}$ is close to its law given $E'_{r,\beta}$; by Lemma~\ref{lem: coupling of disk and GFF} this is the law of the field of Proposition \ref{prop: head of GFF}. Similar density arguments yield the law of the triple $(h|_R, \nu_h(\R_+), \nu_h(\R_+ + i\pi))$.

In this section, we have two parameters $\beta, r$ that we send to infinity, and one parameter $\eps$ that we send to $0^+$. We will always send $r\to \infty$, $\beta \to \infty$, $\eps \to 0$ in that order. We write $o_r(1)$ to mean an error term that, for each fixed $\eps,\beta$, goes to zero as $r \to \infty$; the error $o_r(1)$ need not be uniform in $\beta, \eps$. We write $o_\beta(1)$ to mean an error term that is, for any fixed $\eps$, close to 0 for all sufficiently large $\beta$ and all sufficiently large $r > r_0(\beta)$. In particular, we always have $o_\beta(1)+o_r(1) = o_\beta(1)$.

In the following lemmas, we will work with multiple fields. We will sometimes add superscripts to functions and stopping times to denote which object (GFF or disk) we are discussing. For instance, $\tau^\psi_s$ is defined by $\inf \{ x : \text{average of }\psi \text{ on } [x, x+i\pi] \text{ is }s\}$, and $\tau^h_s$ the analogous value for the field $h$. 

\begin{lemma}[Coupling of $\psi$ given $E'_\beta$ and $h$ given $E'_{r,\beta}$]\label{lem: coupling of disk and GFF}
Let $(\cS, \psi, +\infty, -\infty)$ be a quantum disk conditioned on $E'_{\beta}$. Then for any $r > \beta$, the field $\psi(\cdot + \tau^\psi_{-r})|_{\cS_+}$ has the same law as that of $h = h_r$ conditioned on $E'_{r,\beta}$ (note that $h$ is a field on $\mcl S_+$). Moreover, we have 
\eqb\label{eqn-initial-field}
\lim_{r \to \infty} d_{TV}\left(h|_R \text{ conditioned on }E'_{r,\beta}, \phi - r \right) = 0,
\eqe
where $\phi$ is the field on $R$ defined in \eqref{eqn: initial field}.
\end{lemma}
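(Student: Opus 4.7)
The plan is to directly compare the explicit field descriptions of $\psi$ and $h$, handling the $\cH_1$ and $\cH_2$ projections separately. By Proposition~\ref{prop: large quantum disk}, for $\psi$ sampled from $\mathsf M(\cdot \mid E'_\beta)$ and centered so that the leftmost time its field-average process $X_s$ attains $-\beta$ is $s=0$, the $\cH_1$ projection is $X_s = B_s \BB 1_{s\geq 0} + \widehat B_{-s} \BB 1_{s<0}$, where $B$ is a variance-$2$ BM with drift $(\gamma-Q)$ from $-\beta$ and $\widehat B$ is the same process conditioned to stay strictly below $-\beta$; its $\cH_2$ projection is an independent $\cH_2(\cS)$-projection of a Neumann GFF on $\cS$. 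On the other hand, by~\eqref{eqn: unconditioned h} and Remark~\ref{remark: GFF on strip} the $\cH_1$ projection of $h$ is a variance-$2$ BM with drift $(\gamma-Q)$ from $-r$, with independent $\cH_2$ projection coming from a Neumann GFF on $\cS$; the event $E'_{r,\beta}$ is $\cH_1$-measurable.

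For the first assertion, I would first observe that $\sigma:=\tau^\psi_{-r}<0$ a.s.: the conditioning preserves the negative drift so $\widehat B_s\rta -\infty$, hence $X_s\rta -\infty$ as $s\rta -\infty$, and $X_0=-\beta>-r$, so by continuity $X$ crosses $-r$ at some $s<0$. Writing $\zeta:=-\sigma$, this gives $\zeta = \sup\{s>0 : \widehat B_s = -r\}$, the last visit time of $\widehat B$ to $-r$. The shifted $\cH_1$ projection $(X_{\sigma+s})_{s\geq 0}$ splits at $s=\zeta$: for $s\in[0,\zeta]$ it equals the reversed segment $\widehat B_{\zeta-s}$, running from $-r$ up to $-\beta$; for $s\geq\zeta$ it equals $B_{\sigma+s}$, which by the strong Markov property is an unconditioned BM started at $-\beta$. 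Williams' last-exit path decomposition for BM conditioned to stay in a half-line identifies the reversed segment as a BM with drift $(Q-\gamma)$ started at $-r$ run until its first hit of $-\beta$. Independently, Doob's $h$-transform applied to $h$'s $\cH_1$ projection conditioned on $E'_{r,\beta}=\{\text{ever hit } -\beta\}$ yields the same description: BM with drift $(Q-\gamma)$ from $-r$ until hitting $-\beta$, followed by unconditioned BM with drift $(\gamma-Q)$ from $-\beta$. Thus the $\cH_1$ projections agree in law. For the $\cH_2$ parts, since $\sigma$ is $\cH_1$-measurable, conditionally on $\sigma$ the shifted $\cH_2$ projection of $\psi$ is a translate of a $\cH_2(\cS)$-projection of a Neumann GFF, so by translation invariance (Remark~\ref{remark: GFF on strip}) it has the same law as the unshifted one; restriction to $\cS_+$ gives the same law as $h$'s $\cH_2$ projection. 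Independence of $\cH_1$ and $\cH_2$ is preserved under the shift and restriction, yielding the identification in distribution.

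For the second assertion, write $X'$ for the $\cH_1$ projection of $h$ given $E'_{r,\beta}$ and $T=\inf\{s\geq 0 : X'_s = -\beta\}$. The first part shows $X'|_{[0,T]}$ is BM with drift $(Q-\gamma)$ from $-r$. On the event $\{T>S\}$, the restriction $X'|_{[0,S]}$ therefore coincides with the $\cH_1$ projection of $\phi-r$ on $[0,S]$. The $\cH_2$ parts of $h|_R$ (given $E'_{r,\beta}$) and of $(\phi-r)|_R$ are both restrictions to $R$ of the $\cH_2(\cS)$-projection of a Neumann GFF with mean zero on $[0,i\pi]$, hence equal in law and independent of the field averages. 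Coupling the two fields to share the $\cH_2$ realization and the drifted BM on $[0,T]$, on $\{T>S\}$ the restrictions to $R$ agree exactly, giving $d_{TV}(h|_R \text{ cond on } E'_{r,\beta},\, \phi-r)\leq \P[T\leq S]$. Since $X'|_{[0,T]}$ is BM with positive drift $(Q-\gamma)$ from $-r$, its hitting time of $-\beta$ has mean of order $(r-\beta)/(Q-\gamma)$ and Gaussian fluctuations, so $\P[T\leq S]\rta 0$ as $r\rta\infty$ for any fixed $S,\beta$.

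The main delicate step is the identification in the first part of the time-reversal of the conditioned BM $\widehat B$ (from its last visit to $-r$ back to its start at $-\beta$) with the Doob $h$-transform of drift-$(\gamma-Q)$ BM from $-r$ conditioned to hit $-\beta$. This is a standard Williams-type decomposition but requires care to state precisely; the remainder of the argument is a direct unpacking of the explicit field descriptions combined with a one-dimensional BM hitting-time estimate.
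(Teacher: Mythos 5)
Your proof is correct and follows essentially the same route as the paper's: compare the explicit $\cH_1$/$\cH_2$ descriptions of the two fields (using translation invariance of the $\cH_2$ projection and the fact that $E'_{r,\beta}$ is $\cH_1$-measurable), and for the second assertion bound the total variation distance by the probability that the conditioned field-average process hits $-\beta$ inside $R$. The only difference is that the paper reads off the law of $\psi(\cdot+\tau^\psi_{-r})|_{\cS_+}$ directly from the proof of Proposition~\ref{prop: large quantum disk} (via the drift-reversal-upon-conditioning lemma of the mating-of-trees paper), whereas you recover it from the \emph{statement} of that proposition by a Williams last-exit time-reversal of $\widehat B$ --- a correct but slightly longer detour through the same underlying one-dimensional path decompositions.
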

\begin{proof}
Proposition \ref{prop: large quantum disk} tells us that $\psi(\cdot + \tau^\psi_{-r})|_{\cS_+}$ conditioned on $E'_\beta$ has independent projections to $\cH_1(\cS_+)$ and $\cH_2(\cS_+)$, with these projections being:
\begin{itemize}
\item Brownian motion with variance $2$, started at $-r$ and having an upward linear drift $(Q- \gamma)$ until it hits $-\beta$, then having a downward drift of $(\gamma - Q)$;

\item The projection of a Neumann GFF on $\cS$ to $\cH_2(\cS)$, restricted to $\cS_+$. 
\end{itemize}
By Remark~\ref{remark: GFF on strip} and \cite[Lemma 3.6]{wedges}, this is precisely the same as the  law of $h$ conditioned on $E_{r,\beta}'$, so we may couple $h$ and $\psi$ so that $h = \psi(\cdot + \tau^\psi_{-r})|_{\cS_+}$ almost surely. 
Finally, \eqref{eqn-initial-field} holds since the probability that the field average process of $h$ within $R$ hits $-\beta$ goes to zero as $r \to \infty$. 
\end{proof}

The above coupling is exact, but if we further specify the restriction of $h$ to the rectangle $R = [0,S] \times [0,\pi]$, we can still obtain an approximate coupling.

\begin{lemma}[Approximate coupling of $\psi$ given $E'_\beta$ and $h$ given $E'_{r,\beta} \cap \{ h|_R = \varphi - r\}$]\label{lem: approx coupling of disk and GFF given varphi}
Let $\varphi$ be the restriction of a Neumann GFF on $\cS$ to $R$ normalized to have zero mean on $[0,i\pi]$. Then $\varphi$-a.s. the following is true when we fix $\varphi$.
We can couple the quantum disk field $\psi$ conditioned on $E'_\beta$ with the GFF $h$ conditioned on $E'_{r,\beta} \cap \{ h|_R = \varphi - r\}$ so that with probability $1-e_1$ we have $h(\cdot + \tau^h_{-r/2})|_{\cS_+} = \psi(\cdot + \tau^\psi_{-r/2} )|_{\cS_+}$. Here, $e_1 = e_1(\varphi, \beta, r)$ satisfies $\lim_{r \to \infty} e_1 = 0$ for each fixed $\varphi, \beta$.
\end{lemma}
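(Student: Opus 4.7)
The plan is to combine Lemma~\ref{lem: coupling of disk and GFF} with a Markov-property argument showing that the additional conditioning on $\{h|_R = \varphi - r\}$ is essentially ``forgotten'' at the slice $[\sigma, \sigma + i\pi]$, where $\sigma := \tau^h_{-r/2}$. Since $\sigma$ is typically of order $r/(2(Q-\gamma)) \to \infty$, this will bootstrap the exact coupling of Lemma~\ref{lem: coupling of disk and GFF} into the approximate coupling we want.

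The first step is to construct an exact coupling in the absence of the $\varphi$-conditioning. Applying Lemma~\ref{lem: coupling of disk and GFF} with $r/2$ in place of $r$ shows that $\psi(\cdot + \tau^\psi_{-r/2})|_{\cS_+}$ under $\BB P(\,\cdot\mid E'_\beta)$ has the same law as $h_{r/2}$ conditioned on $E'_{r/2,\beta}$. From the description (used in the proof of Lemma~\ref{lem: coupling of disk and GFF}) of the field average of $h_r$ given $E'_{r,\beta}$ as variance-$2$ Brownian motion starting at $-r$, with upward drift $(Q-\gamma)$ until hitting $-\beta$ and downward drift thereafter, together with the strong Markov property at $\tau^h_{-r/2}$ and the translation invariance of the $\cH_2(\cS_+)$-projection of a Neumann GFF (Remark~\ref{remark: GFF on strip}), I would deduce that $h_r(\cdot + \tau^h_{-r/2})|_{\cS_+}$ under $\BB P(\,\cdot\mid E'_{r,\beta})$ is also equal in law to $h_{r/2}$ conditioned on $E'_{r/2,\beta}$. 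Chaining these equalities in law produces an exact coupling of $\psi$ (under $\BB P(\,\cdot \mid E'_\beta)$) with an auxiliary field $h^\circ$ (under $\BB P(\,\cdot \mid E'_{r,\beta})$, without the $\varphi$-conditioning) such that $h^\circ(\cdot + \tau^{h^\circ}_{-r/2})|_{\cS_+} = \psi(\cdot + \tau^\psi_{-r/2})|_{\cS_+}$ almost surely.

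The crux is then to show that the total variation distance between the laws of $h(\cdot + \sigma)|_{\cS_+}$ under $\BB P(\,\cdot\mid E'_{r,\beta})$ and under $\BB P(\,\cdot\mid E'_{r,\beta} \cap \{h|_R = \varphi - r\})$ tends to zero as $r\to\infty$, for $\varphi$-a.e.\ $\varphi$. For large $r$, with high probability $\sigma > S$ and the field average does not reach $-\beta$ on $[0,\sigma]$, so on this event $E'_{r,\beta}$ is measurable with respect to $h|_{\cS_+ + \sigma}$. The Markov property of the GFF (Lemma~\ref{lem-markov}(b), applied at the stopping time $\sigma$) then implies that the conditional law of $h|_{\cS_+ + \sigma}$ given $h|_{[0,\sigma+i\pi]}$ depends on the latter only through the trace $h|_{[\sigma,\sigma+i\pi]}$, so the comparison reduces to comparing the marginal laws of this trace. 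Since the field average on $[\sigma,\sigma + i\pi]$ equals $-r/2$ by definition of $\sigma$ under either conditioning, one can work modulo the additive constant and invoke Proposition~\ref{prop: adaptation of ig4 proposition 2.10}: conditionally on $\{h|_R = \varphi - r\}$, the shifted field $h(\cdot + \sigma)|_{\cS_+}$ is a mixed-boundary GFF on a half-strip whose Dirichlet edge lies at distance $\sigma - S \to \infty$ from the origin, so Proposition~\ref{prop: adaptation of ig4 proposition 2.10} ensures convergence in total variation (modulo additive constant) to a Neumann GFF on $\cS$ restricted to $\cS_+$; translation invariance of the Neumann GFF yields the same limit without the $\varphi$-conditioning. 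Combining this with the exact coupling of the previous paragraph by a standard coupling argument then yields the desired approximate coupling with $e_1 \to 0$. The main obstacle is this comparison step: one must juggle three conditionings simultaneously (on the probability-zero event $\{h|_R = \varphi - r\}$, on the rare event $E'_{r,\beta}$, and at the stopping time $\sigma$) and verify that the ``forgetting'' of boundary data supplied by Proposition~\ref{prop: adaptation of ig4 proposition 2.10} is compatible with the rare-event conditioning on $E'_{r,\beta}$.
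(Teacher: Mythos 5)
Your Step 1 is fine: it is essentially Lemma~\ref{lem: coupling of disk and GFF} applied at level $r/2$ together with the strong Markov property of the field average process, and it does produce an exact coupling of $\psi$ given $E'_\beta$ with $h$ given $E'_{r,\beta}$ alone. The gap is in your crux step, and it is a genuine one, not a technicality. You want to show that the laws of $h(\cdot+\tau^h_{-r/2})|_{\cS_+}$ under $\P(\cdot\mid E'_{r,\beta})$ and under $\P(\cdot\mid E'_{r,\beta}\cap\{h|_R=\varphi-r\})$ are close in total variation by applying the spatial Markov property at the random line $[\sigma,\sigma+i\pi]$ and then invoking Proposition~\ref{prop: adaptation of ig4 proposition 2.10}. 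But that proposition concerns a \emph{deterministic} shift of a field with fixed Dirichlet data, whereas your shift $\sigma=\tau^h_{-r/2}$ is determined by the field itself, and, more seriously, the law you must control is reweighted by the indicator of $E'_{r,\beta}$, whose probability decays like $e^{-(Q-\gamma)(r-\beta)}$. A total variation estimate of size $o_r(1)$ obtained \emph{before} conditioning on an event of exponentially small probability gives nothing afterwards, and your reduction to ``comparing marginal laws of the trace'' likewise hides a reweighting of the trace law by $\P[E'_{r,\beta}\mid \text{trace}]$ that you would separately have to control. (Relatedly, the shifted field given $E'_{r,\beta}$ does not converge to a Neumann GFF even modulo additive constant -- its field average process changes drift at $\tau_{-\beta}$ -- so the limit you name for the unconditioned side is not correct for the full field.)

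The missing idea, which is what makes the paper's proof short, is to run the comparison through the orthogonal decomposition $H(\cS_+)=\cH_1(\cS_+)\oplus\cH_2(\cS_+)$ rather than through a spatial Markov decomposition. The conditioning $\{h|_R=\varphi-r\}$ factors into independent conditionings on the two projections, and both the rare event $E'_{r,\beta}$ and the random time $\tau^h_{-r/2}$ are functions of the $\cH_1$-projection (the field average process) alone. Consequently the $\cH_1$-projections of $h(\cdot+\tau^h_{-r/2})|_{\cS_+}$ and $\psi(\cdot+\tau^\psi_{-r/2})|_{\cS_+}$ have \emph{exactly} the same law (variance-$2$ Brownian motion from $-r/2$ with upward drift $Q-\gamma$ until hitting $-\beta$, then downward drift), so no total variation estimate is needed where the rare-event conditioning lives; and Proposition~\ref{prop: adaptation of ig4 proposition 2.10} is applied only to the $\cH_2$-projection, which is independent of $E'_{r,\beta}$ and of $\tau^h_{-r/2}$, so the random shift can be integrated out and the rare-event conditioning never touches it. You already use this decomposition in Step 1; deploy it in the crux step instead of the spatial Markov property and the argument closes.
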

\begin{proof}
We note that the projections of each of these fields to $\cH_1(\cS_+)$ and $\cH_2(\cS_+)$ are independent, so it suffices to couple their projections separately. As in the previous lemma, we know that these projections to $\cH_1(\cS_+)$ have exactly the same law: Brownian motion with variance 2 started at $-\frac{r}2$, with upward drift of $(Q - \gamma)$ until it hits $-\beta$, then downward drift of $(\gamma  -Q)$ subsequently. 

Next, observe that $\tau^h_{-r/2} \to \infty$ in probability as $r \to \infty$, and that $\tau^h_{-r/2}$ is independent of the projection of $h$ to $\cH_2(\cS_+)$. Thus by Proposition~\ref{prop: adaptation of ig4 proposition 2.10}, we see that as $r \to \infty$ the total variation distance between the laws of the following two fields goes to zero as $r \to \infty$:
\begin{itemize}
\item The projection of $h(\cdot + \tau_{-r/2}^h)|_{\cS_+}$ to $\cH_2(\cS_+)$;
\item The projection of a Neumann GFF on $\cS$ to $\cH_2(\cS)$, restricted to $\cS_+$. 
\end{itemize}
By the definition of the quantum disk, this latter law is the law of the projection of $\psi(\cdot + \tau^\psi_{-r/2})|_{\cS_+}$ to $\cH_2(\cS_+)$. We conclude that we can couple $h$ conditioned on $E'_{r,\beta} \cap \{ h|_R = \varphi - r\}$ and $\psi$ conditioned on $E'_\beta$ so that with probability $1-o_r(1)$ we have $h(\cdot + \tau_{-r/2}^h)|_{\cS_+} = \psi(\cdot + \tau^\psi_{-r/2})|_{\cS_+}$.
\end{proof}

The above two couplings allow us to lower bound the densities $d^{\beta,r}_{\GFF}(\cdot, \cdot \mid \varphi)$ and $d^{\beta, r}_{\GFF} (\cdot, \cdot)$.

\begin{lemma}[Lower bounds on $d^{\beta, r}_{\GFF}(\cdot, \cdot \: \big| \: \varphi)$ and $d^{\beta, r}_{\GFF} (\cdot, \cdot)$]\label{lem: conditioned on start, field looks like disk}
Let $\varphi$ be the restriction of a Neumann GFF on $\cS$ to $R$ normalized to have zero mean on $[0,i\pi]$. Then $\varphi$-a.s. we have 
\begin{equation}\label{eqn: lower bound conditioned GFF density}
d^{\beta,r}_{\GFF} ( y_1, y_2\: \big| \: \varphi ) \geq (1-e_1)d^\beta_{\mathrm{disk}}(y_1, y_2) \qquad \text{ uniformly over }y_1, y_2 \in \left [\frac12, 1 \right ],
\end{equation}
where the error $e_1 = e_1(\varphi, \beta,r)$ satisfies for each fixed $\varphi, \beta$ the limit $ \lim_{r \to \infty} e_1 = 0$.

Similarly, we have for fixed $\beta$ that 
\begin{equation}\label{eqn: lower bound density GFF}
d^{\beta,r}_{\GFF}(y_1, y_2) \geq (1-o_r(1)) d^\beta_{\mathrm{disk}} (y_1,y_2) \qquad \text{ uniformly over }y_1, y_2 \in \left [\frac12, 1 \right ].
\end{equation}
\end{lemma}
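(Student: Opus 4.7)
My approach is to use the couplings from Lemmas~\ref{lem: coupling of disk and GFF} and~\ref{lem: approx coupling of disk and GFF given varphi} to transfer the density $d^\beta_{\mathrm{disk}}$ into a pointwise lower bound on $d^{\beta,r}_{\GFF}(\cdot,\cdot)$ and on $d^{\beta,r}_{\GFF}(\cdot,\cdot\mid\varphi)$, respectively. The key observation is that, under either coupling, the boundary length pairs $(\nu_h(\R_+), \nu_h(\R_+ + i\pi))$ and $(\nu_\psi(\R), \nu_\psi(\R + i\pi))$ agree except for small ``tail'' contributions supported far from $[0, i\pi]$, which can be controlled via Corollary~\ref{cor-medium-boundary}.

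For inequality~\eqref{eqn: lower bound density GFF}, the exact coupling of Lemma~\ref{lem: coupling of disk and GFF} gives
\[
\nu_h(\R_+) = \nu_\psi(\R) - \nu_\psi((-\infty, \tau^\psi_{-r}] \times \{0\}),
\]
with an analogous identity for the top boundary. By Corollary~\ref{cor-medium-boundary}, each subtracted tail term is at most $\eta_r := e^{-\gamma r/4}$ with probability $1 - o_r(1)$ as $r \to \infty$. Consequently, for any Borel set $A \subset \R_+^2$,
\[
\P\!\left[(\nu_h(\R_+), \nu_h(\R_+ + i\pi)) \in A \,\Big|\, E'_{r,\beta}\right] \geq \P\!\left[(\nu_\psi(\R), \nu_\psi(\R + i\pi)) \in A^{-\eta_r} \,\Big|\, E'_\beta\right] - o_r(1),
\]
where $A^{-\eta_r}$ denotes the inner parallel set at distance $\eta_r$. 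Applying this with $A = B_\delta(y_1, y_2)$, dividing by the area $|B_\delta|$, and choosing $\delta = \delta(r) \to 0$ slowly enough that $\delta \gg \eta_r$ and $\delta^2 \gg o_r(1)$, we obtain~\eqref{eqn: lower bound density GFF} via the Lebesgue differentiation theorem applied to the continuous densities $d^{\beta,r}_{\GFF}$ and $d^\beta_{\mathrm{disk}}$. Uniformity in $(y_1, y_2) \in [1/2, 1]^2$ then follows from compactness and uniform continuity of the densities on this compact set.

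For inequality~\eqref{eqn: lower bound conditioned GFF density}, the argument is identical except that the approximate coupling of Lemma~\ref{lem: approx coupling of disk and GFF given varphi} is used in place of the exact one. This coupling holds with probability $1 - e_1$ rather than almost surely, which accounts for the multiplicative factor $(1-e_1)$ on the right-hand side; moreover, one uses the analogous tail bound on $h$ from Corollary~\ref{cor-medium-boundary}, namely $\nu_h([0, \tau^h_{-r/2}] \times \{0, \pi\}) \leq e^{-\gamma r/8}$ with probability $1 - o_r(1)$ conditional on $\varphi$.

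The main technical hurdle is the pointwise-versus-integrated density comparison: extracting a pointwise density inequality from the integrated bound above requires continuity of both $d^\beta_{\mathrm{disk}}(\cdot,\cdot)$ and $d^{\beta,r}_{\GFF}(\cdot,\cdot\mid\varphi)$, together with a quantitative choice of averaging scale $\delta(r)$ compatible with the tail estimate $\eta_r$. Both continuity statements follow from the orthonormal-basis decomposition of Section~\ref{subsection: continuity of field}, where the boundary length pair is represented as a smooth, strictly monotone function of the two Gaussian coefficients $\alpha_1, \alpha_2$, so that the induced densities are explicit smooth push-forwards.
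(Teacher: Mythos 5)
Your reduction to an integrated inequality is fine, but the step that converts it into the required pointwise density bound is a genuine gap. After smoothing at scale $\delta(r)$ you have only shown that the \emph{average} of $d^{\beta,r}_{\GFF}$ over $B_{\delta(r)}(y_1,y_2)$ dominates $(1-o_r(1))\,d^\beta_{\mathrm{disk}}(y_1,y_2)$; to deduce the value at the centre you invoke the Lebesgue differentiation theorem, but that theorem applies to a \emph{fixed} function as the ball shrinks, whereas here the function $d^{\beta,r}_{\GFF}(\cdot,\cdot\mid\varphi)$ changes with $r$ simultaneously with $\delta(r)\to 0$. What is actually needed is an $r$-uniform modulus of continuity (equicontinuity) of the family $\{d^{\beta,r}_{\GFF}(\cdot,\cdot\mid\varphi)\}_r$ on $[1/2,1]^2$ at scales comparable to $\delta(r)$; continuity for each fixed $r$, which is all the orthonormal-basis decomposition of Section~\ref{subsection: continuity of field} gives, does not suffice, and no such uniform estimate is proved or obviously true --- a priori the perturbation of the boundary lengths by the tail terms of size $\eta_r=e^{-\gamma r/4}$ could distort the density at scales finer than $\delta(r)$. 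The same issue is compounded in~\eqref{eqn: lower bound conditioned GFF density}: a coupling failure of probability $e_1$ produces an \emph{additive} error at the level of probabilities of events, and your conversion of it into a multiplicative factor $(1-e_1)$ on the density again rests on the unproved equicontinuity. Note also that the pointwise form is what gets used downstream: in Lemma~\ref{lem: choose q1, q2} and Proposition~\ref{prop: head of GFF} the bound is integrated over squares of side $e^{\gamma(-r+K)/2}$, far below any smoothing scale $\delta(r)\gg\eta_r$ your argument can afford, so an averaged substitute would not do.

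The paper sidesteps this by never comparing the unconditional densities directly. It conditions on the pair $(X,f)$ from the decomposition~\eqref{eqn: field decomposition}, so the only remaining randomness is the two Gaussian coefficients $(\alpha_1,\alpha_2)$; on the event that the coupling holds, the conditional density $d^{\beta,r}_{\GFF}(\cdot,\cdot\mid\varphi,X^h,f^h)$ \emph{equals} the disk-side conditional density $d^\beta_{\mathrm{disk}}(\cdot,\cdot\mid X^\psi,f^\psi)$ evaluated at an argument shifted by the $o_r(1)$ tail lengths (controlled by Corollary~\ref{cor-medium-boundary}). Thus the only continuity needed is the uniform continuity of the single, $r$-independent random function $d^\beta_{\mathrm{disk}}(\cdot,\cdot\mid X^\psi,f^\psi)$ on the compact square, and the unconditioned statement is recovered by averaging over $(X,f)$, using the total-variation continuity of $\cL^{\beta,a,b}_{\mathrm{disk}}$ in $(a,b)$ (Proposition~\ref{prop: continuity of field}) together with a finite covering of $[1/2,1]^2$. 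To salvage your route you would have to supply a quantitative, $r$-uniform equicontinuity estimate for $d^{\beta,r}_{\GFF}$; it is simpler to shift the continuity burden onto the disk side as the paper does.
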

\begin{proof}
We will prove \eqref{eqn: lower bound conditioned GFF density} using Lemma~\ref{lem: approx coupling of disk and GFF given varphi}; the proof of~\eqref{eqn: lower bound density GFF} using Lemma~\ref{lem: coupling of disk and GFF} is the same. By Lemma~\ref{lem: approx coupling of disk and GFF given varphi} we can couple the field $\psi$ of a quantum disk conditioned on $E'_\beta$ with $h$ conditioned on $E'_{r,\beta} \cap \{ h|_R = \varphi - r\}$ so that $\psi( \cdot - \tau^\psi_{-r/2})|_{\cS_+} = h(\cdot - \tau^h_{-r/2})|_{\cS_+}$ with probability $1-o_r(1)$. 
As in \eqref{eqn: field decomposition}, write
\begin{equation} \label{eqn: decomposition g}
h = X^h_{\Re \cdot} + f^h +\alpha_1^h f_1^h + \alpha_2^hf_2^h ,
\end{equation}
where $f_1^h$ and $f_2^h$ are compactly supported on $[\tau^h_{-\beta} - 3, \tau^h_{-\beta}] \times [0,\pi]$, and likewise decompose the field $\psi$ in the same way:
\begin{equation}
\psi = X^\psi_{\Re \cdot} + f^\psi +\alpha_1^\psi f_1^\psi + \alpha_2^\psi f_2^\psi.
\end{equation}
Note that $(X^h, f^h, \alpha_1^h, \alpha_2^h)$ is a function of $h$, and likewise the components of the $\psi$-decomposition are a function of $\psi$. 

Define $d^{\beta, r}_{\GFF}(\cdot, \cdot \mid \varphi, X^h, f^h)$ to be the probability density of $(\nu_h(\R_+), \nu_h(\R_+ + i\pi))$ conditioned on $E'_{r,\beta} \cap \{h|_R = \varphi-r\}$ and on the realizations of $X^h, f^h$ (i.e. it is the density w.r.t. the remaining randomness of $\alpha_1^h, \alpha_2^h$). This is a random function (depending on the random $X^h, f^h$) which is almost surely continuous because $\alpha_1^h, \alpha_2^h$ have continuous densities. Similarly define $d^\beta_{\mathrm{disk}}(\cdot, \cdot \mid X^\psi, f^\psi)$ to be the density of $(\nu_\psi(\R), \nu_\psi(\R + i\pi))$ conditioned on $E'_\beta$ and on the realizations of $X^\psi, f^\psi$. Let $\delta,\delta'> 0$ be values we choose later.
\medskip

\noindent
\textbf{Step 1:} In the probability space of the coupled random variables $(h, \psi)$ (with marginals given by $h$ conditioned on $E'_{r,\beta} \cap \{ h|_R = \varphi - r\}$ and $\psi$ conditioned on $E'_\beta$), let $A_{r,\delta'}$ be the event that the following both hold: 
\begin{align}
h(\cdot + \tau_{-r/2}^h)|_{\cS_+}&=\psi(\cdot + \tau_{-r/2}^\psi)|_{\cS_+}, &\text{(coupling holds)} \label{eqn: coupling with head}\\
d^{\beta, r}_{\GFF} (y_1,y_2 \: \big| \: \varphi, X^h, f^h) &\geq  d^\beta_\mathrm{disk} (y_1,y_2 \: \big| \: X^\psi, f^\psi ) - \delta' \quad &\text{ for all } y_1,y_2 \in \left[\frac12,1\right] \label{eqn: coupling lower bound}.
\end{align}
We show that for $r$ large we have $\P[A_{r,\delta'}] \geq 1-\delta'$. 

To see this, note that when the coupling holds we have $(X^h(\cdot+ \tau_{-r/2}^h)|_{\R_+}, f^h(\cdot + \tau_{-r/2}^h)|_{\cS_+})=(X^\psi(\cdot + \tau^\psi_{-r/2})|_{\R_+}, f^\psi(\cdot+ \tau^\psi_{-r/2})|_{\cS_+})$, and also $f_j^h(\cdot + \tau_{-r/2}^h) = f_j^\psi(\cdot + \tau^\psi_{-r/2})$ for $j=1,2$. Consequently,
\begin{align*}
 &d^{\beta, r}_{\GFF} (y_1,y_2 \: \big| \: \varphi, X^h, f^h) \\
 =&  d^\beta_\mathrm{disk} \left(y_1 + \nu_\psi((-\infty, \tau^\psi_{-r/2}] ) - \nu_h([0,\tau_{-r/2}^h]),y_2 + \nu_\psi((-\infty,\tau^\psi_{-r/2}]+i\pi) - \nu_h([0,\tau_{-r/2}^h] + i\pi)\: \Big| \: X^\psi, f^\psi \right) .
\end{align*}
By Corollary~\ref{cor-medium-boundary}, we know that with probability $1-o_r(1)$ each of $\nu_h([0,\tau_{-r/2}^h])$, $\nu_h([0,\tau_{-r/2}^h] + i\pi)$, $\nu_\psi((-\infty, \tau^\psi_{-r/2}] )$, $\nu_\psi((-\infty, \tau^\psi_{-r/2}] +i\pi)  $ is of order $o_r(1)$, so $\P[A_{r,\delta'}] \geq 1-\delta'$ follows from the uniform continuity of $d^\beta_\mathrm{disk}(\cdot, \cdot \mid X^\psi, f^\psi)$ on $\left[\frac12, 1 \right]^2$.
\medskip

\noindent
\textbf{Step 2:} We use a compactness argument. Let $\cL^{\beta, y_1, y_2}_\mathrm{disk}$ be the law of $(X^\psi, f^\psi)$ conditioned on $E'_\beta \cap \{(\nu_\psi(\R), \nu_\psi(\R + i\pi)) = (y_1, y_2)\}$. Let $B^1, \cdots, B^N$ be a finite collection of open balls covering the square $\left[\frac12, 1\right]^2$ such that for any ball $B^j$ and pair of points $(y_1,y_2),(y_1',y_2') \in B^j$, the total variation distance between $\cL^{\beta, y_1, y_2}_\mathrm{disk}$ and $\cL^{\beta, y_1', y_2'}_\mathrm{disk}$ is at most $\delta$; the existence of these balls follows from the compactness of the square and Proposition \ref{prop: continuity of field}.

Observe that for each $j$ the law of the coupled random variables $(X^h, X^\psi, f^h, f^\psi)$ conditioned on $(\nu_\psi(\R), \nu_\psi(\R + i\pi)) \in B^j$ is absolutely continuous with respect to their unconditioned law. Thus, if we take $\delta'$ sufficiently small in Step 1, then for all $j=1,\dots, N$ and for all sufficiently large $r$ we have 
\begin{equation}\label{eqn: balls absolutely continuous}
\P[A_{r, \delta'}\mid (\nu_\psi(\R), \nu_\psi(\R + i\pi)) \in B^j] \geq 1 - \delta. 
\end{equation}
Notice that the law of $(X^\psi, f^\psi)$ conditioned on $E'_\beta \cap \{(\nu_\psi(\R), \nu_\psi(\R+i\pi)) \in B^j\}$ is a weighted average of the laws $\cL^{\beta,y_1',y_2'}_\mathrm{disk}$ for $(y_1',y_2') \in B^j$. Thus for any $(y_1,y_2) \in B^j$, the total variation distance between $(X^\psi, f^\psi) \sim \cL^{\beta,y_1,y_2}_\mathrm{disk}$ and $(X^\psi, f^\psi)$ conditioned on $\{(\nu_\psi(\R), \nu_\psi(\R + i\pi)) \in B^j\}$ is at most $\delta$. By taking expectations of \eqref{eqn: coupling lower bound} over $A_{r,\delta'}$ we obtain
\begin{align*}
d^{\beta, r}_{\GFF}(y_1,y_2 \mid \varphi) &= \E\left[d^{\beta,r}_{\GFF}(y_1,y_2 \mid \varphi, X^h, f^h)\right] \\
&\geq \E\left[\mathbbm{1}_{A_{r,\delta'}} d^{\beta}_{\mathrm{disk}}(y_1,y_2 \mid X^\psi, f^\psi)\right] -\delta' \\
&= \E\left[\mathbbm{1}_{A_{r,\delta'}} \mid (\nu_h(\R), \nu_h(\R + i\pi)) = (y_1,y_2) \right]d^\beta_\mathrm{disk}(y_1,y_2) -\delta' \\
&\geq \E\left[\mathbbm{1}_{A_{r,\delta'}} \mid (\nu_h(\R), \nu_h(\R + i\pi)) \in B^j \right]d^\beta_\mathrm{disk}(y_1,y_2) -\delta - \delta' \\
&\geq (1-\delta) d^\beta_\mathrm{disk} (y_1,y_2) - 2\delta,
\end{align*}
and taking $\delta$ small relative to $\inf_{[1/2,1]^2} d^\beta_\mathrm{disk}$ yields \eqref{eqn: lower bound conditioned GFF density}. 

To prove \eqref{eqn: lower bound density GFF}, we instead use the coupling of $h$ and $\psi$ provided by Lemma~\ref{lem: coupling of disk and GFF}. Steps 1 and 2 follow exactly as before.
\end{proof}

Lemma~\ref{lem: conditioned on start, field looks like disk} gives us a a lower bound on the density of GFF boundary lengths conditioned on $E'_{r,\beta}$. A slight modification of~\cite[Lemma A.4]{wedges}, combined with Markov's inequality, yields a (cruder) matching upper bound and an assertion that $\P[E'_{r,\beta} \mid E_{r,K,q_1,q_2}] \approx 1$. 

\begin{lemma}[Crude upper bound on $d^{\beta,r}_{\GFF}$ near $(q_1,q_2)$, and ${\mathbb{P} [E'_{r,\beta}|E_{r,K,q_1,q_2}] \approx 1}$]\label{lem: choose q1, q2}
Fix $\eps>0$ small and $N,K > 0$ large. There exists $e_2 = e_2(\beta, K,N,\eps)$ such that for $r$ sufficiently large in terms of $\beta,K,N,\eps$, we can choose $q_1, q_2 \in [\frac12, \frac12 + \eps]$ so that 
\begin{align}
\P[E_{r,\beta}' \mid E_{r,K,q_1,q_2}] &\geq 1 - e_2, \label{eqn: E' given E}\\
\P[E_{r,K,q_1,q_2} \mid E_{r,\beta}'] &\leq (1+e_2 ) e^{\gamma (-r+K)}d_{\mathrm{disk}}^\beta(q_1,q_2) , \label{eqn: most phi are improbable} \\
\P[\{\tau^h_{-\beta} < \sigma - N \} \cap E'_{r,\beta} \mid  E_{r,K,q_1,q_2}] &\geq 1- e_2, \label{eqn: beta far to the left}
\end{align}
where for fixed $K, N, \eps$ we have $e_2 \to 0$ as $\beta \to \infty$, and $\sigma$ is the unique real such that $h(\R_+ + \sigma) + h(\R_+ + i\pi + \sigma) = \frac12$.
\end{lemma}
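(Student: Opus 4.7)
The strategy is an averaging argument over $(q_1, q_2) \in Q := [\tfrac12, \tfrac12 + \eps]^2$: for each of the three claimed inequalities I will show that the subset of $Q$ on which it fails has Lebesgue measure at most $\eps^2/3$, so that a valid $(q_1,q_2)$ exists in the complement of the union of the three bad sets. Throughout, write $\delta := e^{\gamma(-r+K)/2}$.

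For~\eqref{eqn: most phi are improbable}: by Fubini, since the one-dimensional Lebesgue measure of $\{q \in [\tfrac12,\tfrac12+\eps] : y \in [q,q+\delta]\}$ is at most $\delta$,
\eqbn
\int_Q \P[E_{r,K,q_1,q_2} \mid E'_{r,\beta}] \, dq_1 \, dq_2 \leq \delta^2 \, \P\bigl[(\nu_h(\R_+),\nu_h(\R_+ + i\pi)) \in [\tfrac12,\tfrac12+\eps+\delta]^2 \mid E'_{r,\beta}\bigr].
\eqen
Lemma~\ref{lem: coupling of disk and GFF} together with Corollary~\ref{cor-medium-boundary} (bounding the correction $\nu_\psi((-\infty,\tau^\psi_{-r}])$) identifies this conditional law with that of a quantum disk, so the right side converges to $\delta^2(\eps+\delta)^2 d^\beta_{\mathrm{disk}}(\tfrac12,\tfrac12)(1+o_\eps(1))$ as $r \to \infty$ by continuity of $d^\beta_{\mathrm{disk}}$. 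Dividing by $\mu(Q)=\eps^2$ and applying Markov's inequality bounds the size of the bad set.

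For~\eqref{eqn: E' given E}, I combine the pointwise lower bound $\P[E'_{r,\beta} \cap E_{r,K,q_1,q_2}] \geq (1-o_r(1))\delta^2 d^\beta_{\mathrm{disk}}(q_1,q_2)\P[E'_{r,\beta}]$ (from Lemma~\ref{lem: conditioned on start, field looks like disk} integrated over the box, together with $\P[E'_{r,\beta}] = e^{-(Q-\gamma)(r-\beta)}$) with an averaged upper bound on $\P[E_{r,K,q_1,q_2} \cap (E'_{r,\beta})^c]$. The key input---the variant of~\cite[Lemma A.4]{wedges} flagged in the hint---is obtained by decomposing according to the maximum $M := \sup_{t\geq 0} X_t$ of the field-average process, with density $(Q-\gamma)e^{-(Q-\gamma)(r+m)}\,dm$, and bounding $\nu_h \leq e^{\gamma M/2} Y$ for a nonnegative $Y$ with finite $p$th moment for $p < 4/\gamma^2$. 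Markov then yields
\eqbn
\P\bigl[\nu_h(\R_+) \geq \tfrac12,\, (E'_{r,\beta})^c\bigr] \lesssim \int_{-\infty}^{-\beta} e^{-(Q-\gamma)(r+m)} e^{\gamma p m/2}\,dm \lesssim e^{-(Q-\gamma)r - \alpha(p)\beta},
\eqen
for any $p \in (4/\gamma^2-1,4/\gamma^2)$, where $\alpha(p) := \gamma p/2 - (Q-\gamma) > 0$ makes the integral convergent at $-\infty$. The same Fubini averaging then bounds the average of $\P[E_{r,K,q_1,q_2} \cap (E'_{r,\beta})^c]$ by $O_\eps(\delta^2 e^{-(Q-\gamma)r-\alpha(p)\beta})$; its ratio with the pointwise lower bound is $O_\eps(e^{-(\alpha(p)+(Q-\gamma))\beta}) = o_\beta(1)$, and Markov again controls the bad set.

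Finally,~\eqref{eqn: beta far to the left} follows by transferring Corollary~\ref{cor: Psi continuous} through the coupling of Lemma~\ref{lem: coupling of disk and GFF}: on $E'_{r,\beta}$ the field $h$ has the law of $\psi(\cdot+\tau^\psi_{-r})|_{\cS_+}$, so $\{\tau^h_{-\beta} < \sigma - N\}$ corresponds to $\{\tau^\psi_{-\beta} < \sigma - N\}$, which has probability $1-o_\beta(1)$ uniformly in $(q_1,q_2) \in Q$. Combined with~\eqref{eqn: E' given E} and a further averaging step, the corresponding bad set has measure at most $\eps^2/3$. The main obstacle is the tail estimate above: naive unconditional moment bounds on $\nu_h$ produce decay only in $r$ and miss the $e^{-\alpha(p)\beta}$ factor, which is essential since we fix $\beta$ before sending $r\to\infty$. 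The decomposition by $M$ is crucial because it couples the two competing costs---reaching field-average height $m$, and having a GMC factor large enough to lift $\nu_h$ from $e^{\gamma m/2}$ up to order 1---into a single integral whose restriction to $m \leq -\beta$ produces the needed $\beta$-decay.
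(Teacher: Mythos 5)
The main gap is in your treatment of \eqref{eqn: most phi are improbable}, which is the heart of the lemma. Writing $\delta = e^{\gamma(-r+K)/2}$ and $f(q_1,q_2)=\P[E_{r,K,q_1,q_2}\mid E'_{r,\beta}]$, your Fubini computation shows the average of $f$ over $Q=[\tfrac12,\tfrac12+\eps]^2$ is $\approx \delta^2 d^\beta_{\mathrm{disk}}$, but the bound you must produce is \emph{pointwise}, of the form $(1+e_2)\delta^2 d^\beta_{\mathrm{disk}}(q_1,q_2)$ with $e_2=o_\beta(1)$, i.e.\ matching the mean to within a factor $1+o(1)$. Markov's inequality applied to a nonnegative function whose mean is $\approx \delta^2 d^\beta_{\mathrm{disk}}$ only says that the set where $f$ exceeds $(1+e_2)$ times this value has measure at most $\eps^2/(1+e_2)$ --- which for small $e_2$ is essentially all of $Q$, so your ``bad set of measure $\leq \eps^2/3$'' is not established and no admissible $(q_1,q_2)$ is produced. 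The missing ingredient is the matching pointwise \emph{lower} bound that you already invoke for \eqref{eqn: E' given E}: by \eqref{eqn: lower bound density GFF} of Lemma~\ref{lem: conditioned on start, field looks like disk} one has $f(q_1,q_2)\geq (1-o_r(1))\delta^2 d^\beta_{\mathrm{disk}}(q_1,q_2)$ uniformly on $Q$, so the nonnegative excess $f-(1-o_r(1))\delta^2 d^\beta_{\mathrm{disk}}$ has small integral, and Markov applied to the \emph{excess} (not to $f$) gives the sharp upper bound off a small set. This is exactly how the paper proceeds, in the discretized form of a pigeonhole over the $\delta$-squares partitioning $Q$: since the good squares carry almost all the conditional mass and $\sum_{\mathrm{good}}\iint_{\mathsf s}d^\beta_{\mathrm{disk}}\geq (1+\delta')^{-1}\sum_{\mathrm{good}}\P[E_{\mathsf s}\mid E'_{r,\beta}]$, at least one good square satisfies the upper bound --- existence of a single pair is all the lemma needs, and ``most pairs'' is not actually claimed or required.

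The remaining parts are essentially the paper's argument in different clothes and are sound modulo details: your decomposition over the running maximum $M$ of the field-average process, with density $(Q-\gamma)e^{-(Q-\gamma)(r+m)}\,dm$ and conditional moment bounds $\nu_h\leq e^{\gamma M/2}Y$, reproduces the paper's discrete sum over levels $x\in[\beta,r]$ and yields the same tail $\lesssim e^{-(Q-\gamma)r}e^{-\alpha(p)\beta}$, which combined with the integrated lower bound from Lemma~\ref{lem: conditioned on start, field looks like disk} and $\P[E'_{r,\beta}]\asymp e^{-(Q-\gamma)(r-\beta)}$ gives \eqref{eqn: E' given E}. Two caveats. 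First, the rate you quote, $O_\eps(e^{-(\alpha(p)+(Q-\gamma))\beta})$, implicitly treats $d^\beta_{\mathrm{disk}}(q_1,q_2)$ as bounded below uniformly in $\beta$, which it is not (it decays as the mass of $E'_\beta$ grows); the conclusion $o_\beta(1)$ survives, but the stated rate does not. Second, for \eqref{eqn: beta far to the left} it is not enough to ``transfer Corollary~\ref{cor: Psi continuous} through the coupling of Lemma~\ref{lem: coupling of disk and GFF}'': that coupling and corollary control probabilities conditioned on $E'_{r,\beta}$ or on exact disk side lengths, whereas here the conditioning is on $E_{r,K,q_1,q_2}$. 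You need to bound $\P[\{\tau^h_{-\beta}\geq\sigma-N\}\cap E'_{r,\beta}\cap E_{r,K,q_1,q_2}]$ on average over $Q$ and divide by the pointwise lower bound on $\P[E_{r,K,q_1,q_2}\mid E'_{r,\beta}]$ --- i.e.\ the same excess/pigeonhole mechanism as above, which is how the paper's Steps 2--3 (``bad squares have small total conditional mass'') handle it.
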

Note that the RHS of \eqref{eqn: most phi are improbable} is roughly the probability of the event $\{ \nu_\psi(\R) \in [q_1, q_1 + \exp(\gamma (-r+K)/2)], \nu_\psi(\R + i\pi) \in [q_2, q_2 + \exp(\gamma (-r+K)/2)]\}$ conditional on $E'_\beta$; that is, it's roughly speaking the quantum disk equivalent of the LHS. The bound \eqref{eqn: beta far to the left} roughly says that the quantum area to the left of $\tau^h_{-\beta}$ is small; this estimate is not needed for the proof of Proposition~\ref{prop: head of GFF} but will be useful for Proposition~\ref{prop: conditioned on past, future is disk}. Of course~\eqref{eqn: beta far to the left} is stronger than~\eqref{eqn: E' given E}, but we include both for clearer referencing.

\begin{proof}
Just for this proof, we introduce some notation. Write 
\[\begin{gathered}
G_{r, \beta, \eps, N} := \{\tau^h_{-\beta} < \sigma - N \} \cap E'_{r,\beta},\\
E_{\mathsf{U} } := \left\{(\nu_h(\R_+), \nu_h(\R_+ + i\pi)) \in \mathsf{U} \right\} \quad \text{ for open } \mathsf U \subset \R^2_+. 
\end{gathered}\]
Define the square $\mathsf{S} := \left[ \frac12, \frac12 + \eps\right] \times \left[ \frac12, \frac12 + \eps\right]$. 
\medskip

\noindent\textbf{Step 1: Showing $\P[  G_{r, \beta, \eps, N}  \mid E_\mathsf{S}] \approx 1$.} Write $a = Q - \gamma > 0$. We show the following three inequalities:
\eqb\label{eq-three}
\P[ E_\mathsf{S} ] \gtrsim e^{-ar}, \quad \P[(E'_{r,\beta})^c \cap E_\mathsf{S}] = o_\beta(1)e^{-ar}, \quad \P[\{\tau^h_{-\beta} \geq \sigma - N \}\cap E'_{r,\beta} ] = o_\beta(1)e^{-ar}. 
\eqe
With these we obtain Step 1, since the first two inequalities imply that $\P[E'_{r,\beta} | E_\mathsf{S}] = 1-o_\beta(1)$, and the first and third inequalities imply $\P[\{ \tau^h_{-\beta} < \sigma - N\} \mid E'_{r,\beta} \cap E_\mathsf{S}] = 1-o_\beta(1)$, so combining these gives $\P[\{ \tau^h_{-\beta} < \sigma - N\} \cap E'_{r,\beta} \mid E_\mathsf{S}] = 1-o_\beta(1)$ as desired. 

By a standard Brownian motion computation, we have $\P[E'_{r,\beta}] \asymp e^{-a(r-\beta) }$. Since it's clear that $\P[E_{\mathsf{S}} \mid E'_{r,\beta=0}] > 0$ uniformly over $r > 0$, we get $\P[E_{\mathsf{S}}] \geq \P[E_{\mathsf{S}} \mid E'_{r,\beta=0}] \P[E'_{r,\beta = 0}] \asymp e^{-ar}$. This is  the first inequality of~\eqref{eq-three}.

Fix $p \in (0, \frac4{\gamma^2})$. Lemma~\ref{lem: coupling of disk and GFF} and Corollary~\ref{cor-large-boundary} tell us that for any $x < r$ we have $\P[\nu_h(\R_+) + \nu_h(\R_+ + i\pi) > \frac12 \mid E'_{r,x}] \lesssim e^{-\gamma x p /2}$. Thus 
\begin{align*}
&\P[\{ \nu_h(\R_+) +\nu_h(\R_+ + i\pi) > \frac12 \} \cap (E'_{r,\beta})^c] \notag\\
&\qquad \qquad \leq \sum_{x=\lfloor \beta+1 \rfloor}^{\lceil r \rceil} \P[\{ \nu_h(\R_+) +\nu_h(\R_+ + i\pi) > \frac12 \} \cap (E'_{r,x-1})^c \mid E'_{r,x}]\P[E'_{r,x}] \notag \\
&\qquad \qquad \lesssim  \sum_{x=\lfloor \beta+1 \rfloor}^{\lceil r \rceil} e^{-\gamma x p/2} e^{-a(r-x)}.
\end{align*}
Taking $p$ sufficiently close to $\frac{4}{\gamma^2}$, we have $\frac{\gamma p}2 \geq Q - \gamma = a$, and so the above sum contracts to $e^{-ar} e^{-\beta (\frac{\gamma p}2 - a)} = e^{-ar} o_\beta(1)$. Clearly $E_\mathsf{S} \subset \{\nu_h(\R_+) + \nu_h(\R_+ + i\pi) > \frac12 \}$, so this gives the second inequality of~\eqref{eq-three}.

Finally, we have $\{ \tau^h_{-\beta} > \sigma - N\} \subset \{ \nu_h([0,\tau^h_{-\beta} + N] \times \{0,\pi\}) \geq \frac12\}$. Thus the third inequality of~\eqref{eq-three} follows from Lemma~\ref{lem: coupling of disk and GFF} and Corollary~\ref{cor-large-boundary}.
\medskip

\noindent\textbf{Step 2: Showing ``most'' $q_1,q_2$ satisfy~\eqref{eqn: beta far to the left}.} We apply Markov's inequality to show that when we break $\mathsf{S}$ into many small squares, most of them satisfy \eqref{eqn: beta far to the left}.

We rewrite the result of Step 1 to say that for some function $\delta = \delta(\eps, \beta, N)$, for fixed $\eps,N$ we have $\delta \to 0$ as $\beta \to \infty$, and 
\begin{equation}\label{eqn: E' given E large}
\P\left[G_{r, \beta, \eps, N} \: \big| \: E_\mathsf{S}\right] \geq 1- \delta^2 \text{ for all large } r.
\end{equation}
Partition $\mathsf{S}$ into squares $\mathsf{s}_1, \mathsf{s}_2,\dots, \mathsf{s}_N$ of side-length $e^{\gamma (-r + K)/2}$. Call a square $\mathsf{s}$ \textit{bad} if $\P[G_{r, \beta, \eps, N} \mid E_\mathsf{s}] < 1-\delta$, and \textit{good} otherwise. We have
\begin{equation*}
\delta^2\P[E_\mathsf{S}] \geq \P[G_{r, \beta, \eps, N}^c  \mid E_\mathsf{S}]\P[E_\mathsf{S}] = \sum_{\mathsf{s}} \P[G_{r, \beta, \eps, N}^c \mid E_\mathsf{s}]\P[E_\mathsf{s}]\geq \delta \sum_{\mathsf{s} \text{ bad}}  \P[E_{\mathsf{s}}].
\end{equation*}
Dividing through by $\delta P[E'_{r,\beta}]$ and applying \eqref{eqn: E' given E large}, we obtain
\begin{equation}\label{eqn: total mass of bad squares small}
\frac{\delta}{1-\delta^2} \P[E_\mathsf{S} \mid E'_{r,\beta}]\geq \frac{\delta \P[E_\mathsf{S} \mid E'_{r,\beta}]}{\P[E'_{r,\beta} \mid E_\mathsf{S}]} = \frac{\delta \P[E_\mathsf{S}]}{\P[E'_{r,\beta}]} \geq \sum_{\mathsf{s} \text{ bad}} \frac{\P[E_\mathsf{s}]}{\P[E'_{r,\beta}]} \geq \sum_{\mathsf{s} \text{ bad}} \P[E_{\mathsf{s}} \mid E'_{r,\beta}],
\end{equation}
so ``most'' squares $\mathsf{s}$ are good.
\medskip

\noindent\textbf{Step 3: Finding a good square satisfying \eqref{eqn: most phi are improbable}.} Recall the coupling of Lemma~\ref{lem: coupling of disk and GFF}, where we sample a quantum disk $(\cS, \psi, +\infty, -\infty)$ conditioned on $E'_{\beta}$, and for $r > \beta$ set $h(\cdot) = \psi(\cdot + \tau^\psi_{-r})$. Since $\tau^\psi_{-r} \rta -\infty$ as $r\rta\infty$, in this coupling a.s.\ $\lim_{r \to \infty} \nu_h(\R_+) = \nu_\psi(\R)$ and $\lim_{r \to \infty} \nu_h(\R_++i\pi) = \nu_\psi(\R+i\pi)$, so by the bounded convergence theorem we have
\begin{equation*}
\iint_\mathsf{S} d^\beta_{\mathrm{disk}} (y_1,y_2) \ dy_1dy_2 = \lim_{r \to \infty} \P[E_\mathsf{S} \mid E'_{r,\beta}].
\end{equation*}
Let $\delta' = \sqrt{\delta}$, so $\lim_{\beta \to \infty} \delta' = 0$. For sufficiently large $\beta$, for sufficiently large $r$ we have $\frac{1}{1+\delta'} + \frac{\delta}{(1-o_r(1))(1-\delta^2)} < 1$, and so 
\begin{equation}\label{eqn: asdfghj}
\iint_\mathsf{S} d^\beta_{\mathrm{disk}} (y_1,y_2) \ dy_1dy_2 > \left(\frac{1}{1+\delta'} + \frac{\delta}{(1-o_r(1))(1-\delta^2)} \right) \P[E_\mathsf{S} \mid E'_{r,\beta}].
\end{equation}
On the other hand, applying Lemma~\ref{lem: conditioned on start, field looks like disk} to \eqref{eqn: total mass of bad squares small}, we conclude
\begin{equation}\label{eqn: contribution of bad squares}
 \frac{\delta}{(1-o_r(1))(1-\delta^2)} \P[E_\mathsf{S} \mid E'_{r,\beta}] \geq \sum_{\mathsf{s} \text{ bad}}  \iint_\mathsf{s} d^\beta_{\mathrm{disk}}(y_1,y_2) \ dy_1 dy_2 .
\end{equation}
Subtracting~\eqref{eqn: contribution of bad squares} from \eqref{eqn: asdfghj}, we conclude that
\[\sum_{\mathsf{s} \text{ good}} \iint_\mathsf{s} d^\beta_\mathrm{disk} (y_1,y_2) \geq \frac{1}{1+\delta'} \P[E_\mathsf{S} \mid E'_{r,\beta}] \geq \sum_{\mathsf{s} \text{ good}} \frac{1}{1+\delta'} \P[E_\mathsf{s} \mid E'_{r,\beta}]. \]
Thus there exists a good square $\mathsf{s} =[q_1, q_1 + e^{\gamma (-r+K)/2}] \times [q_2, q_2 + e^{\gamma (-r+K)/2}]$ satisfying $(1+\delta')\iint_\mathsf{s} d^\beta_\mathrm{disk} (y_1,y_2) \geq\P[E_\mathsf{s} \mid E'_{r,\beta}]$. Using the uniform continuity of $d^\beta_{\mathrm{disk}}$ on $\mathsf{S}$ and taking $r$ sufficiently large, we see that this square satisfies~\eqref{eqn: most phi are improbable}, and by the definition of good, this square also satisfies~\eqref{eqn: beta far to the left}.
Thus we have found the required $q_1, q_2$. 
\end{proof}

With our matching upper and lower bounds (Lemmas~\ref{lem: choose q1, q2},~\ref{lem: conditioned on start, field looks like disk}) in hand, we are now ready to prove Proposition \ref{prop: head of GFF}.

\begin{proof}[Proof of Proposition \ref{prop: head of GFF}] 
By Lemma~\ref{lem: coupling of disk and GFF}, the conditional law of $h|_R + r$ given $E'_{r,\beta}$ is close in total variation to the field $\phi$ defined in~\eqref{eqn: initial field}. In Step 1, we show that $h|_R + r$ further conditioned on $E_{r,K,q_1,q_2}\cap E'_{r,\beta}$ is close in total variation to $\phi$. In Step 2, we show that when we condition on most realizations of $h|_R$, the boundary lengths $(\nu_h(\R_+), \nu_h(\R_+ + i\pi))$ are close in total variation to independent samples from $[q_j, q_j + \exp(\gamma(-r+K)]$ for $j=1,2$. These two steps are done by carefully combining the bounds of Lemmas~\ref{lem: conditioned on start, field looks like disk} and~\ref{lem: choose q1, q2}.

We remark that by \eqref{eqn: E' given E}, it suffices to prove Proposition \ref{prop: head of GFF} with $h$ conditioned on $E_{r,K,q_1,q_2} \cap E'_{r,\beta}$, then take $\beta \to \infty$. This is our goal for the rest of this proof. To accomplish this, we take limits of three parameters in the following order: first we send $r \to \infty$, then $\beta \to \infty$, and finally $\eps \to 0$. Although $\eps$ has no role in the statement of Proposition~\ref{prop: head of GFF}, recalling that $q_1, q_2$ are chosen from $[\frac12, \frac12 + \eps]$ in Lemma~\ref{lem: choose q1, q2}, we see that $\eps \to 0$ guarantees $q_1, q_2 \to \frac12$.

We first restate a result of Lemma~\ref{lem: coupling of disk and GFF} here:
\begin{equation}\label{eqn: initial field conditioned on E'}
\lim_{r \to \infty} d_{TV}\left(h|_R \text{ conditioned on }E'_{r,\beta}, \phi - r \right) = 0. 
\end{equation}
\medskip

\noindent\textbf{Step 1: further conditioning on $E_{r,K,q_1,q_2}$.}
We want to prove that~\eqref{eqn: initial field conditioned on E'} continues to hold if we instead condition on $E_{r,K,q_1,q_2}\cap E'_{r,\beta}$ for an appropriate choice of $q_1,q_2$.
By Lemma \ref{lem: conditioned on start, field looks like disk}, there exists an error $e_3= e_3(r,\beta)$ so that with probability at least $1-e_3$ over the realization of $\phi$ sampled from \eqref{eqn: initial field} we have  
\begin{equation}\label{eqn: most phi are probable}
d^{\beta,r}_{\GFF}(y_1,y_2 \: \big| \:\phi ) \geq (1-e_3) d_{\op{disk}}^\beta(y_1, y_2)\quad \text{ for all }y_1, y_2 \in [\frac12, 1],
\end{equation}
and the error $e_3$ satisfies $\lim_{r \to \infty} e_3 = 0$ for fixed $\beta$.

Let $\eps>0$ and $q_1, q_2$ as in Lemma \ref{lem: choose q1, q2}. For each fixed distribution $\varphi$ on $R$, define $\P_\varphi$ to be the regular conditional law of $h$ conditioned on $\{h|_R = \varphi - r\}$. Using the uniform continuity of $d^\beta_\mathrm{disk}$ on $[1/2,1]^2$ and integrating \eqref{eqn: most phi are probable} over $y_j \in [q_j, q_j + \exp(\gamma (-r + K)/2)]$ for $j=1,2$, we conclude that with probability $1-e_3$ over the realization of $\phi$,  
\begin{equation}\label{eqn: e4}
\P_\phi\left[E_{r,K,q_1,q_2}\: \Big| \: E'_{r,\beta}\right] \geq (1-e_3 -o_r(1))  e^{\gamma (-r+K)} d_{\mathrm{disk}}^\beta(q_1,q_2).
\end{equation}
Applying \eqref{eqn: initial field conditioned on E'} to \eqref{eqn: e4}, we see that with probability $1-e_3 - o_r(1)$ over the realization of the random field $h|_R$ conditioned on $E'_{r,\beta}$, we have
\begin{equation}\label{eqn: e4 but for h}
\P_{h|_R + r}\left[E_{r,K,q_1,q_2}\: \Big| \: E'_{r,\beta}\right] \geq (1-e_3 -o_r(1))  e^{\gamma (-r+K)} d_{\mathrm{disk}}^\beta(q_1,q_2).
\end{equation}

Write $\cL_{E'}$ for the conditional law of $h|_R$ given $E'_{r,\beta}$, and $\cL_{E\cap E'}$ for the conditional law of $h|_R$ given $E_{r,K,q_1,q_2} \cap E'_{r,\beta}$. If we compare the lower bound \eqref{eqn: e4 but for h} with the upper bound \eqref{eqn: most phi are improbable} and use Bayes' rule under the conditional law given $E_{r,\beta}'$, we get the following lower bound on the Radon-Nikodym derivative with probability $1-e_3-o_r(1)$ over the realization of $h|_R \sim \cL_{E'}$:
\[\frac{d\cL'_{E\cap E'}}{d\cL_{E'}}(h|_R)  = \frac{\P_{h|_R +r} [E_{r,K,q_1,q_2} \mid E'_{r,\beta}]}{\P[E_{r,K,q_1,q_2} \mid E'_{r,\beta}]}  \geq \frac{1-e_3 - o_r(1)}{1+e_2}. \]
This implies that the total variation distance between $h|_R$ conditioned on $E'_{r,\beta}$ and $h|_R$ conditioned on $E_{r,K,q_1,q_2} \cap E'_{r,\beta}$ is $o_\beta(1)+o_r(1) = o_\beta(1)$. Comparing this to \eqref{eqn: initial field conditioned on E'}, we conclude that for all sufficiently large $r$ in terms of $\beta, \eps$, 
\begin{equation}\label{eqn: h|_R close to phi}
d_{TV} \left( h|_R + r \text{ conditioned on } E_{r,K, q_1, q_2} \cap E'_{r,\beta}, \phi \right) \leq e_4
\end{equation}
for some error $e_4(\eps,\beta)$ which goes to zero as $\beta \to \infty$. Thus, we have shown that the two fields in Proposition \ref{prop: head of GFF} are close in total variation.
\medskip

\noindent\textbf{Step 2: near-independence of boundary lengths.}
Finally, we need to show that when we sample $\phi$ via \eqref{eqn: initial field} and then condition on $\{h|_R = \phi -r\}\cap E_{r,K,q_1,q_2} \cap E'_{r,\beta}$, then with high probability over the realization of $\phi$, the side lengths $\nu_h(\R_+)$ and $\nu_h(\R_+ + i\pi)$ are close in total variation to being chosen independently from $\mathrm{Unif}([q_j, q_j + e^{\gamma (-r+K)/2}])$ for $j=1,2$ respectively. By \eqref{eqn: h|_R close to phi}, we can couple the fields ($h|_R$ conditioned on $E_{r,K,q_1,q_2} \cap E'_{r,\beta}$) and $\phi-r$ so that they agree with probability $1-e_4$. In the probability space of this coupling, define the random variable
\eqbn
Y = \P_\phi \left[ E_{r,K,q_1,q_2} \big| E'_{r,\beta}\right] \mathbbm1 \{h|_R = \phi - r\},
\eqen
where, as above, $\P_\varphi$ is defined to be the regular conditional law of $h$ conditioned on $\{h|_R = \varphi - r\}$. Averaging over the realization of $\phi$ and then applying \eqref{eqn: most phi are improbable} tells us that 
\eqb \label{eqn: Y mean}
\E[Y] \leq (1+e_2) e^{\gamma(-r + K)} d^\beta_{\mathrm{disk}} (q_1,q_2).
\eqe 
But \eqref{eqn: e4} together with the fact that $h|_R = \phi - r$ w.p. $1-e_4$ tell us that 
\begin{equation}\label{eqn: Y whp large}
Y   \geq (1-o_r(1)) e^{\gamma(-r + K)} d^\beta_{\mathrm{disk}} (q_1,q_2) \quad \text{ with probability } 1-e_3-e_4. 
\end{equation}
We now prove a high-probability upper bound for $Y$. Indeed, if we set $Y' :=  Y e^{-\gamma(-r + K)} d^\beta_{\mathrm{disk}} (q_1,q_2)^{-1}$, then combining \eqref{eqn: Y mean} and \eqref{eqn: Y whp large} shows that for each $\delta \in (0,1)$, it holds for large enough $\beta > 0$ and $r > r_0(\beta)$ that
\begin{align}
1 + \delta^2
\geq \BB E[Y'] 
&\geq (1+\delta) \BB P\left[ Y' \geq 1+\delta  \right]  + (1-\delta^2) \left( \P[Y' \geq 1-\delta^2] - \P[Y' \geq 1+ \delta] \right)  \notag \\
&\geq (1+\delta) \BB P\left[ Y' \geq 1+\delta  \right]  + (1-\delta^2) \left(1 -  \delta^2 - \BB P\left[  Y' \geq 1+\delta \right] \right)  \quad \text{(by \eqref{eqn: Y whp large})}  .
\end{align}
Re-arranging this gives $\BB P[Y' \geq 1+\delta] \leq 3\delta^2 / (\delta + \delta^2)$, which tends to zero as $\delta \rta 0$.  
Recalling the definitions of $Y$ and $Y'$, we get that for sufficiently large $r$ (depending on $\beta$), it holds with probability $1-o_\beta(1)$ over the realization of $\phi$ that 
\begin{equation}\label{eqn: integrated upper bound}
\P_\phi\left[E_{r,K,q_1,q_2} \: \Big| \: E'_{r,\beta}\right] \leq (1+o_\beta(1))e^{\gamma(-r+K)}d_{\mathrm{disk}}^\beta(q_1,q_2).
\end{equation}

Combining this bound with \eqref{eqn: most phi are probable} and the uniform continuity of $d^\beta_\mathrm{disk}(\cdot, \cdot)$ in $[1/2,1]^2$ gives that for all sufficiently large $r$, with probability $1-o_\beta(1)$ over the realization of $\phi$, 
\eqbn
\frac{d^{\beta,r}_{\GFF} (y_1, y_2 \: \big| \:\phi)}{\P_\phi\left[E_{r,K,q_1,q_2} \Big| E'_{r,\beta} \right]} \geq (1-o_\beta(1)) e^{-\gamma (-r+K)} \quad \text{ for all }(y_1,y_2) \in [q_1,q_1+e^{\gamma(-r+K)/2}]\times [q_2,q_2+e^{\gamma(-r+K)/2}].
\eqen
Observe that RHS is close to the uniform density on $[q_1,q_1+e^{\gamma(-r+K)/2}]\times [q_2,q_2+e^{\gamma(-r+K)/2}]$. 
Thus, with probability $1-o_\beta(1)$ over the realization of $\phi$, if we condition $h$ on $E_{r,K,q_1,q_2} \cap E'_{r,\beta}\cap\{h|_R = \phi - r\}$, the side lengths $\nu_h(\R_+)$ and $\nu_h(\R_+ + i\pi)$ are indeed close, in the total variation sense, to being independently and uniformly drawn from $\mathrm{Unif}([q_j,q_j + e^{\gamma (-r+K)/2}])$ for $j=1,2$.  
 Finally, using \eqref{eqn: h|_R close to phi}, this proves Proposition~\ref{prop: head of GFF} when we condition on $E_{r,K,q_1,q_2} \cap E'_{r,\beta}$ and send first $r \to \infty$ then $\beta \to \infty$ and finally $\eps \to 0$ in that order (sending $\eps \to 0$ guarantees that $\lim_{r\to\infty} q_j(r) = \frac12$ for $j=1,2$). By \eqref{eqn: E' given E} the same holds when we only condition on $E_{r,K, q_1,q_2}$. 
\end{proof}

\subsection{$h$ resembles a quantum disk given $E_{r,K,q_1,q_2}$ and ${(h|_R, \nu_h(\R_+), \nu_h(\R_+ + i\pi))}$} \label{subsection: pinched disk}
In this section, we prove that when we condition on $E_{r,K,q_1,q_2}$ and on a typical realization of $(h|_R, \nu_h(\R_+), \nu_h(\R_++i\pi))$, the field $h$ looks like a $(\frac12,\frac12)$-quantum disk field $\psi$. 

\begin{proposition}\label{prop: conditioned on past, future is disk}
Assume the notation and setup of Proposition \ref{prop: head of GFF}.
Suppose we condition on $E_{r,K,q_1(r),q_2(r)}$. Then the conditional law of $h$ becomes close to the law of the field of a $(\frac12,\frac12)$-length quantum disk as $r \to \infty$, and moreover with high probability (w.r.t.\ the realization of $(h|_R, \nu_h(\R_+), \nu_h(\R_+ + i\pi))$) the same holds if we further condition on $(h|_R, \nu_h(\R_+), \nu_h(\R_+ + i\pi))$. 

We state this more precisely. There exist functions $q_1(r), q_2(r)$ satisfying $\lim_{r\to\infty}q_1(r) = \lim_{r \to \infty}q_2(r) = \frac12$, so that for any fixed $\delta > 0$, $N > 0$, and $R$, for sufficiently large $r$ the following two laws have total variation distance at most $\delta$:
\begin{itemize}
\item Let $\sigma^h \in \R_+$ be chosen so that $\nu_{ h}([\sigma^h, +\infty) \times \{0,\pi\}) = \frac12$ and consider the law of the field $h (\cdot + \sigma^h)|_{\cS_+ - N}$ conditioned on $E_{r,K,q_1(r),q_2(r)}$;

\item 
Let $\psi$ be the field of a $(\frac12, \frac12)$-length quantum disk and let $\sigma^\psi \in \R$ be chosen so that $\nu_\psi([\sigma^\psi, +\infty) \times \{0,\pi\}) = \frac12$, and consider the law of the field $\psi(\cdot + \sigma^\psi)|_{\cS_+ - N}$.
\end{itemize}

Moreover, with conditional probability at least $1-\delta$ given $E_{r,K,q_1(r),q_2(r)}$, the triple $(h|_R, \nu_h(\R_+), \nu_h(\R_++i\pi))$ is such that, if we further condition on $(h|_R, \nu_h(\R_+), \nu_h(\R_++i\pi))$ in the first law, then the above two laws are within $\delta$ in total variation distance.  
\end{proposition}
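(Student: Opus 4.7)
The plan is to reduce to the quantum disk setting via the coupling of Lemma~\ref{lem: coupling of disk and GFF}, then invoke the continuity Corollary~\ref{cor: Psi continuous}; the moreover part additionally requires the Markov property of the GFF together with Proposition~\ref{prop: adaptation of ig4 proposition 2.10}.

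First I would use \eqref{eqn: E' given E} and \eqref{eqn: beta far to the left} of Lemma~\ref{lem: choose q1, q2} to further condition on the event $E'_{r,\beta} \cap \{\tau^h_{-\beta} < \sigma^h - N\}$, at TV cost $e_2(\beta) \to 0$ as $\beta \to \infty$. By Lemma~\ref{lem: coupling of disk and GFF}, on $E'_{r,\beta}$ I can exactly couple $h$ with the field $\psi$ of a quantum disk conditioned on $E'_\beta$ via $h(\cdot) = \psi(\cdot + \tau^\psi_{-r})|_{\cS_+}$. Under this coupling $\sigma^h = \sigma^\psi - \tau^\psi_{-r}$, so $h(\cdot + \sigma^h)|_{\cS_+ - N} = \psi(\cdot + \sigma^\psi)|_{\cS_+ - N}$, and by Corollary~\ref{cor-medium-boundary} the quantum lengths of $\psi$ to the left of $\tau^\psi_{-r}$ on the top and bottom boundaries are each bounded by $e^{-\gamma r/4} = o(e^{\gamma(-r+K)/2})$ with probability $1 - o_r(1)$. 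Hence the event $E_{r,K,q_1,q_2}$ translates (up to a vanishing TV error) to the event that $(\nu_\psi(\R), \nu_\psi(\R+i\pi))$ lies in small intervals $\widetilde I_1 \times \widetilde I_2$ of length comparable to $e^{\gamma(-r+K)/2}$ centered near $(q_1(r), q_2(r))$.

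For the main statement I would disintegrate the quantum disk law: sampling $\psi$ conditional on $(\nu_\psi(\R), \nu_\psi(\R+i\pi)) \in \widetilde I_1 \times \widetilde I_2$ amounts to first drawing $(y_1, y_2)$ from the appropriate weighted distribution on $\widetilde I_1 \times \widetilde I_2$ and then drawing the field from $\P^{y_1, y_2}_{\mathrm{disk}}$, which by Proposition~\ref{prop: E' uniformly likely for all y1,y2} has $E'_\beta$-probability $1 - o_\beta(1)$ uniformly in $(y_1, y_2) \in [\tfrac12, 1]^2$. Corollary~\ref{cor: Psi continuous} provides TV continuity in $(y_1, y_2)$ of the law of $\psi(\cdot + \sigma^\psi)|_{\cS_+ - N}$ sampled from $\P^{y_1, y_2}_{\mathrm{disk}}$. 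Since $q_j(r) \to \tfrac12$ as $r \to \infty$ and the intervals $\widetilde I_j$ shrink to $\{\tfrac12\}$, this law converges in TV to that of the $(\tfrac12, \tfrac12)$-length quantum disk field near $\sigma^\psi$. Sending $r \to \infty$ first and then $\beta \to \infty$ completes the main statement.

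The hard part will be the moreover statement, since conditioning on $h|_R$ entangles the field near the tip of the wedge with the field near $\sigma^h$. Equivalently, I need to show that the joint law of the triple and $h(\cdot + \sigma^h)|_{\cS_+ - N}$ conditional on $E_{r,K,q_1,q_2}$ is close in TV to the product of their marginals. I would combine three inputs: (i) Proposition~\ref{prop: head of GFF}, which implies that the triple is close in TV to an independent $(\phi, V_1, V_2)$, so that $h|_R$ is approximately independent of the side lengths; (ii) on the event $\{\tau^h_{-\beta} < \sigma^h - N\}$, the rectangle $R = [0, S] \times [0, \pi]$ lies at horizontal distance $\sigma^h - N - S \to \infty$ in probability to the left of the region $\cS_+ + (\sigma^h - N)$; (iii) the Markov decomposition Lemma~\ref{lem-markov}(b) applied to the sub-domain $\cS_+ \setminus R$, together with Proposition~\ref{prop: adaptation of ig4 proposition 2.10}, which show that the Dirichlet boundary data on $[S, S + i\pi]$ specified by $h|_R$ has asymptotically negligible TV influence on the field restricted to $\cS_+ + (\sigma^h - N)$. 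A version of the argument respecting the side-length conditioning is obtained by passing through the quantum disk coupling: on $\{\tau^\psi_{-\beta} < \sigma^\psi - N\}$ the field $\psi(\cdot + \sigma^\psi)|_{\cS_+ - N}$ is a function of the $(X, f)$ components of the decomposition from Section~\ref{subsection: continuity of field}, and the Markov property shows that these are asymptotically independent (given $(y_1, y_2)$) of the values of $\psi$ in the distant region $R + \tau^\psi_{-r}$. Combining these inputs with the main statement yields the moreover claim.
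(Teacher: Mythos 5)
The overall shape of your argument (transfer to the quantum disk via the coupling of Lemma~\ref{lem: coupling of disk and GFF}, then use continuity of the disk law in its side lengths) is the right intuition, but there is a quantitative error at the crux of the reduction that invalidates it. You claim that under the coupling $h = \psi(\cdot+\tau^\psi_{-r})|_{\cS_+}$ the leftover boundary lengths $\nu_\psi((-\infty,\tau^\psi_{-r}]\times\{0,\pi\})$ are at most $e^{-\gamma r/4} = o(e^{\gamma(-r+K)/2})$, so that $E_{r,K,q_1,q_2}$ translates to $\{(\nu_\psi(\R),\nu_\psi(\R+i\pi))\in \widetilde I_1\times\widetilde I_2\}$ up to vanishing TV error. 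The inequality goes the wrong way: $e^{-\gamma r/4} / e^{\gamma(-r+K)/2} = e^{\gamma r/4 - \gamma K/2} \to \infty$, and indeed by Lemma~\ref{wedges-lem} the leftover lengths are typically of order $e^{-\gamma r/2}$, i.e.\ \emph{comparable to} the width of the conditioning intervals. Consequently the event $E_{r,K,q_1,q_2}$ corresponds, on the $\psi$ side, to the side lengths lying in a \emph{randomly shifted} window whose shift is correlated with the field and is of the same order as the window width; the symmetric difference with any deterministic window has probability of the same order as the event itself, so the TV error in your reduction is $O(1)$, not $o(1)$. This is exactly the ``exponentially short intervals'' obstruction described at the start of Section~\ref{section: pinching off a disk}, and overcoming it is the main content of the paper's proof: one works with \emph{densities} rather than events, proving matching multiplicative lower and upper bounds $d^{\beta,r}_{\GFF} \geq (1-o(1))\,d^\beta_{\mathrm{disk}}$ and $d^{\beta,r}_{\GFF}(V_1,V_2\mid\phi)\leq(1+o(1))\,d^\beta_{\mathrm{disk}}(V_1,V_2)$ (Lemmas~\ref{lem: conditioned on start, field looks like disk}, \ref{lem: choose q1, q2}, \ref{lem: lower bound on future field}, \ref{lem: stronger upper bound}); the density approach is robust to the microscopic shift because the densities are uniformly continuous on the macroscopic scale.

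The ``moreover'' part has the same underlying gap in sharper form. Conditioning on the exact triple $(h|_R,\nu_h(\R_+),\nu_h(\R_++i\pi))$ is a measure-zero conditioning, and given the triple the far field is genuinely coupled to $h|_R$ through the length constraint; a qualitative appeal to Lemma~\ref{lem-markov}(b) and Proposition~\ref{prop: adaptation of ig4 proposition 2.10} does not control the regular conditional law. What is needed is the Bayes factorization of the Radon--Nikodym derivative $d\cL^{\beta,r,\varphi,y_1,y_2}_{\GFF}/d\cL^{\beta,y_1,y_2}_{\mathrm{disk}}$ into ratios of conditional densities as in \eqref{eqn: ASDFGHJ1}, with each factor bounded via the approximate coupling given $\varphi$ (Lemma~\ref{lem: approx coupling of disk and GFF given varphi}) and the compactness/continuity argument of Section~\ref{subsection: continuity of field}. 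Your proposal names the relevant independence heuristic but does not supply the quantitative density estimates that make it rigorous.
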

We briefly summarize this proposition. To compare $h$ conditioned on $E_{r,K,q_1,q_2}$ and the $(\frac12,\frac12)$-quantum disk field $\psi$, we first fix their horizontal translations in a way intrinsic to the quantum surfaces (i.e. the quantum boundary length of $\R_+ \cup (\R_+ + i\pi)$ is $\frac12$), then restrict the fields to $\cS_+ - N$. Proposition \ref{prop: conditioned on past, future is disk} then roughly states that for large $r$, when the field $h$ is conditioned on $E_{r,K,q_1,q_2}$ and on typical realizations of $(h|_R, \nu_h(\R_+), \nu_h(\R_+ + i\pi))$, it is close (in the above sense) in total variation to $\psi$.

As before, for notational convenience we will usually write $q_1, q_2$ as shorthand for $q_1(r), q_2(r)$. 

Here is a quick explanation for why this proposition should hold. Conditioning $h$ only on $E'_{r,\beta}$ (defined in \eqref{eqn: E'}), the fields $h|_R$ and $h|_{\cS_+ + \tau_{-\beta}}$ are almost independent, simply because their respective domains are far apart in Euclidean distance. Since $h|_R$ has a very small effect on the lengths $\nu_h(\R_+)$ and $\nu_h(\R_++ i\pi)$, we expect that even if we condition on $\{\nu_h(\R_+), \nu_h(\R_+ + i\pi) \approx \frac12\}$, the field $h|_{\cS_+ + \tau_{-\beta}}$ is almost independent of $h|_R$. 

Next, by Lemma~\ref{lem: coupling of disk and GFF} the field $h (\cdot + \tau_{-\beta})|_{\cS_+}$ conditioned on $E'_{r,\beta}$ has the same law as $\psi(\cdot + \tau_{-\beta}^\psi)|_{\cS_+}$, where $\psi$ is the field of a quantum disk conditioned on $E'_\beta$ (defined in \eqref{eqn: large quantum disk}). For $\beta$ large, the lengths $\nu_h([0,\tau_{-\beta}] \times \{0,\pi\})$ and $\psi((-\infty, \tau_{-\beta}^\psi] \times \{0,\pi\})$ are small with high probability, so the law of $h (\cdot + \tau_{-\beta})|_{\cS_+}$ conditioned on $\{\nu_h(\R_+), \nu_h(\R_+ + i\pi) \approx \frac12\}$ is close to the law of $\psi|_{\cS_+}$ conditioned on $\{\nu_\psi(\R) = \nu_\psi(\R + i\pi) = \frac12\}$. This concludes our brief explanation of the above proposition. Again, this explanation is only a sketch, but is morally correct, and will be fully fleshed out in this section. Arguments in a similar flavor as in the previous section give upper and lower bounds on various conditional densities (Lemmas \ref{lem: lower bound on future field} and \ref{lem: stronger upper bound}), which combined yield Proposition \ref{prop: conditioned on past, future is disk}.

Once again, we expect that this proposition should hold if we make for all $r$ the choice $q_1 = q_2 = \frac12$, but it seems simpler to avoid proving it for this specific choice.

Now, we prove a couple of lemmas in order to prove Proposition \ref{prop: conditioned on past, future is disk}. 
The proof of the first lemma uses the notation and intermediate steps of Lemma \ref{lem: conditioned on start, field looks like disk}. Recall that $\cL^{\beta,a,b}_\mathrm{disk}$ is the regular conditional law of $(X^\psi,f^\psi)$ conditioned on $E'_\beta\cap\{\nu_\psi(\R) = a, \nu_\psi(\R + i\pi) = b\}$ in the decomposition \eqref{eqn: field decomposition}. Observe that the field $\Phi := \psi( \cdot + \tau^\psi_{-\beta})|_{\cS_+}$ is a function of $(X^\psi, f^\psi)$, so with an abuse of notation, we also write $\cL^{\beta,a,b}_\mathrm{disk}$ to mean the law of $\Phi$ conditioned on $E'_\beta\cap\{(\nu_\psi(\R), \nu_\psi(\R + i\pi) )= (a,b)\}$. Analogously, if $\varphi$ is a field on $R$ with mean zero on $[0,i\pi]$, then we let $\cL^{\beta,r,\varphi,a,b}_{\GFF}$ denote the law of $h(\cdot + \tau^\psi_{-\beta})|_{\cS_+}$ conditioned on $E'_{r,\beta} \cap \{(h|_R, \nu_h(\R_+), \nu_h(\R_+ + i\pi)) = (\varphi -r, a,b)\}$.

In order to prove Proposition~\ref{prop: conditioned on past, future is disk}, we need to show that for ``most'' realizations of the independent triple $(\phi, V_1, V_2)$ (with $\phi$ as in \eqref{eqn: initial field}, and $V_j$ uniform in $[q_j, q_j + \exp(\gamma(-r+K)/2)]$), the laws $ \cL^{\beta, r,\phi, V_1, V_2}_{\GFF}$ and $\cL^{\beta,V_1,V_2}_\mathrm{disk}$ are close in total variation. To that end, using Lemmas~\ref{lem: lower bound on future field} and \ref{lem: stronger upper bound} we will lower bound $\frac{d \cL^{ \beta,r, \phi,V_1, V_2}_{\GFF}}{d\cL^{\beta,V_1,V_2}_\mathrm{disk}}(\Phi)$ for most $(\phi, V_1,V_2), \Phi$. 

\begin{lemma}[Lower bound on $\frac{d \cL^{\beta,r, \varphi, y_1, y_2}_{\GFF}}{d\cL^{\beta,y_1,y_2}_\mathrm{disk}}$ for most $\Phi$]\label{lem: lower bound on future field}
Let $\varphi$ be the restriction of a Neumann GFF on $\cS$ to $R$ normalized to have zero mean on $[0,i\pi]$. Then $\varphi$-a.s. the following is true when we fix $\varphi$. For all $y_1, y_2 \in \left[ \frac12, 1\right]$, with probability $1-e_1$ over the realization of $\Phi \sim \cL^{\beta,y_1,y_2}_\mathrm{disk}$, we have the inequality
\begin{equation}\label{eqn: ASDFGHJ}
\frac{d \cL^{\beta,r, \varphi, y_1, y_2}_{\GFF}}{d\cL^{\beta,y_1,y_2}_\mathrm{disk}}(\Phi) \geq (1-e_1) \frac{d^{\beta}_\mathrm{disk}(y_1,y_2)}{d^{\beta,r}_{\GFF}(y_1,y_2 \: \big| \: \varphi)}.
\end{equation}
Here, the error $e_1 = e_1(\varphi, r,\beta)$ satisfies for each fixed $\varphi, \beta$ the limit $ \lim_{r \to \infty} e_1 = 0$.
\end{lemma}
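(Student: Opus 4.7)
The plan is to combine Bayes' theorem with the approximate coupling of Lemma~\ref{lem: approx coupling of disk and GFF given varphi}. Write $\mu^{\GFF}_{\beta,r,\varphi}$ (resp.\ $\mu^{\mathrm{disk}}_\beta$) for the marginal law of $\Phi$ under the conditioning $E'_{r,\beta} \cap \{h|_R = \varphi - r\}$ (resp.\ $E'_\beta$), and write $g^{\GFF}(\Phi;y_1,y_2)$ (resp.\ $g^{\mathrm{disk}}(\Phi;y_1,y_2)$) for the conditional density of $(\nu_h(\R_+),\nu_h(\R_+ + i\pi))$ (resp.\ $(\nu_\psi(\R),\nu_\psi(\R+i\pi))$) at $(y_1,y_2)$ given $\Phi$. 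Bayes' theorem will then yield
\begin{equation*}
\frac{d\cL^{\beta,r,\varphi,y_1,y_2}_{\GFF}}{d\cL^{\beta,y_1,y_2}_{\mathrm{disk}}}(\Phi) = \frac{d^\beta_{\mathrm{disk}}(y_1,y_2)}{d^{\beta,r}_{\GFF}(y_1,y_2 \mid \varphi)} \cdot \frac{g^{\GFF}(\Phi;y_1,y_2)}{g^{\mathrm{disk}}(\Phi;y_1,y_2)} \cdot \frac{d\mu^{\GFF}_{\beta,r,\varphi}}{d\mu^{\mathrm{disk}}_\beta}(\Phi),
\end{equation*}
so it will suffice to lower bound the second and third factors on the right by $1-e_1$ on a set of $\cL^{\beta,y_1,y_2}_{\mathrm{disk}}$-probability at least $1-e_1$.

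For the third factor I would invoke the coupling of Lemma~\ref{lem: approx coupling of disk and GFF given varphi}: with probability $1 - e_1$ over the coupling, $h(\cdot + \tau^h_{-r/2})|_{\cS_+} = \psi(\cdot + \tau^\psi_{-r/2})|_{\cS_+}$. For $r > 2\beta$ this equality propagates forward to $h(\cdot + \tau^h_{-\beta})|_{\cS_+} = \psi(\cdot + \tau^\psi_{-\beta})|_{\cS_+}$, since the future field from $\tau_{-r/2}$ onward determines both the stopping time $\tau_{-\beta}$ (as a shift relative to $\tau_{-r/2}$) and the field beyond it. Hence $\Phi$ agrees on both sides, so that $d_{TV}(\mu^{\GFF}_{\beta,r,\varphi}, \mu^{\mathrm{disk}}_\beta) \leq e_1$, and a standard measure-theoretic fact (small total variation implies Radon--Nikodym derivative close to $1$ except on a small set) converts this into the pointwise bound $\frac{d\mu^{\GFF}_{\beta,r,\varphi}}{d\mu^{\mathrm{disk}}_\beta}(\Phi) \geq 1 - \sqrt{e_1}$ on a set of $\mu^{\mathrm{disk}}_\beta$-measure at least $1 - \sqrt{e_1}$.

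For the second factor, I would decompose $\nu_h(\R_+) = A_\Phi + L^h_1$ and $\nu_\psi(\R) = A_\Phi + L^\psi_1$ (and analogously for the other boundary) where $A_\Phi = \nu_\Phi(\R_+)$ and $B_\Phi = \nu_\Phi(\R_+ + i\pi)$ are the ``future'' lengths determined by $\Phi$, and $L^h_j$, $L^\psi_j$ are the ``past'' boundary lengths before $\tau^h_{-\beta}$ and $\tau^\psi_{-\beta}$ respectively. Then $g^{\GFF}$ and $g^{\mathrm{disk}}$ are conditional densities of the past lengths at $(y_1 - A_\Phi, y_2 - B_\Phi)$. Applying the coupling and Corollary~\ref{cor-medium-boundary} gives $|L^h_j - L^\psi_j| \leq 2 e^{-\gamma r/8}$ with high probability (since both past lengths before $\tau_{-r/2}$ and the coupled contributions from $[\tau_{-r/2}, \tau_{-\beta}]$ match). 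Combining this with uniform continuity estimates on the conditional density of the past lengths given $\Phi$, established along the lines of Proposition~\ref{prop: continuity of field} using the orthogonal decomposition of the Neumann GFF and standard GMC moment bounds, will give the required ratio bound.

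The main technical obstacle I anticipate is this last step: rigorously upgrading the almost-sure closeness $|L^h_j - L^\psi_j| = o_r(1)$ under the coupling to a \emph{pointwise} comparison of the densities $g^{\GFF}(\Phi;\cdot,\cdot)$ and $g^{\mathrm{disk}}(\Phi;\cdot,\cdot)$ near $(y_1, y_2)$. This requires controlling a conditional density of a GMC-type random variable under changes of the conditioning field, uniformly on a high-probability set of $\Phi$, and the quantitative modulus of continuity must absorb the coupling error of order $e^{-\gamma r/8}$.
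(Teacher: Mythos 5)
Your Bayes factorization is exactly the one the paper uses, and your treatment of the third factor (the coupling of Lemma~\ref{lem: approx coupling of disk and GFF given varphi} propagating forward from $\tau_{-r/2}$ to $\tau_{-\beta}$, hence small total variation between the $\Phi$-marginals) is the right input. However, two genuine gaps remain. First, every high-probability statement you derive is with respect to the unconditioned law $\cL^\beta_{\mathrm{disk}}$ of $\Phi$, whereas the lemma requires probability $1-e_1$ under $\cL^{\beta,y_1,y_2}_{\mathrm{disk}}$, uniformly over $(y_1,y_2)\in[\frac12,1]^2$. Conditioning on the probability-zero event $\{(\nu_\psi(\R),\nu_\psi(\R+i\pi))=(y_1,y_2)\}$ is not quantitatively absolutely continuous with respect to $\cL^\beta_{\mathrm{disk}}$, so a set of small $\cL^\beta_{\mathrm{disk}}$-measure need not have small $\cL^{\beta,y_1,y_2}_{\mathrm{disk}}$-measure. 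The paper closes this by covering $[\frac12,1]^2$ with finitely many balls $B^j$, transferring to the positive-probability events $\{(\nu_\psi(\R),\nu_\psi(\R+i\pi))\in B^j\}$, and using the total-variation continuity of $(y_1,y_2)\mapsto\cL^{\beta,y_1,y_2}_{\mathrm{disk}}$ (Proposition~\ref{prop: continuity of field}); your write-up never performs this transfer.

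Second, the middle factor. You correctly identify the density comparison as the main obstacle but do not resolve it, and the difficulty is real: the conditional density of the past lengths given $\Phi$ is a different object for $h$ and for $\psi$ (a mixture over two different conditional laws of the pre-$\tau_{-\beta}$ field), so closeness of the coupled lengths alone does not compare the densities. The paper's resolution is to avoid conditioning on $\Phi$ directly and instead condition on the pair $(X,f)$ from the decomposition~\eqref{eqn: field decomposition}, so that the residual randomness is just the two Gaussian coefficients $\alpha_1,\alpha_2$ of bump functions supported in $[\tau_{-\beta}-3,\tau_{-\beta}]\times[0,\pi]$ (hence invisible to $\Phi$): on the coupling event the two conditional densities are then literally equal up to a shift of the argument of size $o_r(1)$ (Corollary~\ref{cor-medium-boundary}), uniform continuity of $d^\beta_{\mathrm{disk}}(\cdot,\cdot\mid X^\psi,f^\psi)$ absorbs the shift, and one recovers the statement given $\Phi$ by taking conditional expectations (Markov's inequality applied to $\P[A_{r,\delta'}\mid\Phi]$). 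If you want to complete your argument you should adopt this finer conditioning rather than trying to establish a modulus of continuity for the density given $\Phi$ alone.
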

\begin{proof}
First, recall the setup and steps of Lemma \ref{lem: conditioned on start, field looks like disk}. Let $\cL^\beta_\mathrm{disk}$ denote the law of $\psi(\cdot + \tau^\psi_{-\beta})|_{\cS_+}$ conditioned on $E'_\beta$, and $ \cL^{ \beta,r, \varphi}_{\GFF}$ the law of $h(\cdot + \tau^h_{-\beta})|_{\cS_+}$ conditioned on $E'_{r,\beta}\cap\{h|_R = \varphi -r\}$. Using Bayes' rule, we have
\begin{equation}\label{eqn: ASDFGHJ1}
\frac{d \cL^{ \beta,r,\varphi,y_1,y_2}_{\GFF}}{d \cL^{\beta,y_1,y_2}_\mathrm{disk}}(\Phi) = \frac{d^{\beta, r}_{\GFF} \left(y_1,y_2 \: \big| \: \varphi, \Phi\right)}{d^{\beta}_{\mathrm{disk}} \left(y_1,y_2 \: \big| \: \Phi\right)} \cdot\frac{d  \cL^{\beta,r,\varphi}_{\GFF}}{d \cL^\beta_{\mathrm{disk}}}(\Phi) \cdot \frac{d^{\beta}_\mathrm{disk}(y_1,y_2)}{d^{\beta,r}_{\GFF}(y_1,y_2 \: \big| \: \varphi)},
\end{equation}
where here $d^{\beta, r}_{\GFF} \left(\cdot, \cdot\: \big| \: \varphi, \Phi\right)$ is the density of $(\nu_h(\R_+), \nu_h(\R_+ + i\pi))$ conditioned on $\{h|_R = \varphi - r\}\cap \left\{h(\cdot + \tau_{-\beta}^h)|_{\cS_+} = \Phi \right\}$, and $d^\beta_\mathrm{disk}(\cdot, \cdot \: \big| \: \Phi)$ is the density of $(\nu_\psi(\R), \nu_\psi(\R + i\pi))$ conditioned on $\{\psi(\cdot + \tau_{-\beta}^\psi) = \Phi\}$. Recall that these regular conditional probability densities can be defined by the Markov property of the GFF.

We will lower bound two of the terms on the right of~\eqref{eqn: ASDFGHJ1} to get \eqref{eqn: ASDFGHJ}. Recall that, in the proof of Lemma \ref{lem: conditioned on start, field looks like disk}, we coupled a GFF $h$ conditioned on $E'_{r,\beta}$ with a quantum disk $(\cS, \psi, +\infty, -\infty)$ conditioned on $E'_\beta$, so that the event $A_{r, \delta'}$ defined as in~\eqref{eqn: coupling with head} and~\eqref{eqn: coupling lower bound} occurs with probability $1-\delta'$ for $r$ sufficiently large. Since $\psi(\cdot + \tau^\psi_{-\beta})|_{\cS_+}$ and $h(\cdot + \tau^h_{-\beta})|_{\cS_+}$ only depend on $(X^\psi( \cdot + \tau_{-r/2}^\psi)|_{\R_+}, f^\psi(\cdot + \tau_{-r/2}^\psi)|_{\cS_+})$ and $(X^h(\cdot + \tau_{-r/2}^h)|_{\R_+}, f^h(\cdot + \tau_{-r/2}^h)|_{\cS_+})$ respectively, this means that when $A_{r,\delta'}$ holds we have $\Phi = \psi(\cdot + \tau^\psi_{-\beta})|_{\cS_+} = h(\cdot + \tau^h_{-\beta})|_{\cS_+}$. 
\medskip

\noindent
\textbf{First term:} We will show that for all $y_1, y_2 \in \left[ \frac12, 1\right]$, with probability $1-o_r(1)$ over the realization of $\Phi \sim \cL^{\beta,y_1,y_2}_\mathrm{disk}$, we have the inequality
\begin{equation}\label{eqn: big part of GFF is close to big part of disk}
d^{\beta,r}_{\mathrm{GFF}} (y_1, y_2 \:\big|\: \varphi, \Phi) \geq (1-o_r(1)) d^\beta_\mathrm{disk} (y_1, y_2 \:\big|\: \Phi).
\end{equation}
Here the $o_r(1)$ terms are uniform in $y_1,y_2$.
Pick any $\delta>0$, and recall that $B^1, \dots, B^N$ is a finite collection of balls covering $[1/2,1]^2$ such that for any ball $B^j$ and any pair of points $(y_1,y_2), (y_1',y_2') \in B^j$, the total variation distance between $\cL^{\beta,y_1,y_2}_\mathrm{disk}$ and $\cL^{\beta,y_1',y_2'}_\mathrm{disk}$ is at most $\delta$. For sufficiently small $\delta' > 0$ and sufficiently large $r$ we have \eqref{eqn: balls absolutely continuous} for all balls $B^j$. For each $j$, an application of Markov's inequality to \eqref{eqn: balls absolutely continuous} shows that, with probability $1-\sqrt\delta$ over the realization of $\Phi$ conditioned on $\{(\nu_\psi(\R), \nu_\psi(\R + i\pi)) \in B^j\}$, we have 
\begin{equation}\label{eqn: analog of 5.12}
\P\left[ A_{r, \delta'}\mid (\nu_\psi(\R), \nu_\psi(\R + i\pi)) \in B^j, \psi(\cdot + \tau^\psi_{-\beta})|_{\cS_+} = \Phi\right] \geq 1 - \sqrt\delta. 
\end{equation}
Recall that on $A_{r,\delta'}$, we have $ h(\cdot + \tau^h_{-\beta})|_{\cS_+} = \Phi$. For any $(y_1, y_2) \in B^j$, since the total variation distance between $\Phi \sim \cL^{\beta,y_1,y_2}_\mathrm{disk}$ and $\Phi$ conditioned on $\{(\nu_\psi(\R), \nu_\psi(\R + i\pi)) \in B^j\}$ is at most $\delta$, by integrating \eqref{eqn: coupling lower bound} over $A_{r,\delta'}$ we see that with probability $1 -\delta-\sqrt\delta$ over the realization of $\Phi \sim \cL^{\beta, y_1,y_2}_\mathrm{disk}$, we have 
\[d^{\beta,r}_{\GFF} \left(y_1, y_2 \: \big| \: \varphi, \Phi\right) \geq (1-\delta -\sqrt\delta) d^\beta_\mathrm{disk} \left(y_1,y_2 \: \big| \: \Phi\right) - \delta.\]
(This above step is essentially the same as the rest of the proof of Lemma~\ref{lem: conditioned on start, field looks like disk} after \eqref{eqn: balls absolutely continuous}, with \eqref{eqn: analog of 5.12} in this argument playing the role of \eqref{eqn: balls absolutely continuous}.)
Since the law of the random variable $ d^\beta_\mathrm{disk} \left(y_1,y_2 \: \big| \: \Phi\right) $ (with $\Phi \sim \cL^{\beta, y_1,y_2}_\mathrm{disk}$) does not depend on $\delta$, with high probability we can absorb the additive term $-\delta$ in the RHS to obtain \eqref{eqn: big part of GFF is close to big part of disk}. 
\medskip

\noindent
\textbf{Second term:} We claim that for all $y_1, y_2 \in \left[ \frac12, 1\right]$, we have
\begin{equation} \label{eqn: KALSJDLKAJDLKSJA}
\frac{d  \cL^{\beta,r,\varphi}_{\GFF}}{d \cL^\beta_{\mathrm{disk}}}(\Phi) \geq 1 - o_r(1) \quad \text{ with probability } 1- o_r(1) \text{ over } \Phi \sim \cL^{\beta,y_1,y_2}_\mathrm{disk}. 
\end{equation}
Here the $o_r(1)$ error is uniform for $y_1, y_2 \in [\frac12,1]$. Consider first $\Phi \sim \cL^\beta_\mathrm{disk}$. In the above coupling of $\psi$ conditioned on $E'_\beta$ and $h$ conditioned on $E'_{r,\beta} \cap \{h|_R = \varphi - r\}$, with probability $1-o_r(1)$ the coupling holds, so $\Phi = \psi(\cdot + \tau_{-\beta}^\psi)|_{\cS_+} = h(\cdot + \tau_{-\beta}^h)|_{\cS_+}$ with probability $1-o_r(1)$. If we define for fixed $\delta'$ the event on the coupled probability space
\[\widetilde A_{r,\delta'} = \{\text{coupling holds} \} \cap \left\{\frac{d  \cL^{\beta,r,\varphi}_{\GFF}}{d \cL^\beta_{\mathrm{disk}}} (\Phi) \geq 1 - \delta' \right\},  \]
then for sufficiently large $r$ we have
\begin{equation} \label{eqn: KALSJDLKAJDLKSJA2}
\P[\widetilde A_{r,\delta'}] \geq 1 - \delta'.
\end{equation}
Note that in the above, since we condition $\psi$ on $E'_\beta$, the field $\Phi$ has the law $\cL^\beta_\mathrm{disk}$. The argument of Step 2 of Lemma~\ref{lem: conditioned on start, field looks like disk} (replacing $A_{r,\delta'}$ with $\wt A_{r,\delta'}$) lets us extend this to get \eqref{eqn: KALSJDLKAJDLKSJA}. 

Using the lower bounds of the first and second term in \eqref{eqn: ASDFGHJ1}, we get the desired bound.
\end{proof}

\begin{lemma}[Upper bound on $d^{\beta,r}_{\GFF}(V_1, V_2 \mid \phi)$ for most $(\phi, V_1, V_2)$]\label{lem: stronger upper bound}
Fix $\eps > 0$. Choose $q_1, q_2$ as in Lemma \ref{lem: choose q1, q2}. Let $\phi, V_1, V_2$ be independent random variables with $\phi$ defined in \eqref{eqn: initial field}, and $V_j$ uniformly chosen from $\left[ q_j, q_j + \exp(\gamma (-r+K)/2) \right]$ for $j=1,2$. Then for sufficiently large $r$, with probability $1-o_\beta(1)$ we have
\begin{equation}\label{eqn: upper bound GFF density}
d^{\beta,r}_{\GFF}(V_1, V_2 \:\big|\: \phi) \leq (1+o_\beta(1))d^\beta_{\mathrm{disk}}(V_1,V_2).
\end{equation}
\end{lemma}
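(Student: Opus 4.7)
The proof plan is to combine the \emph{integrated} upper bound already established in Step 2 of the proof of Proposition~\ref{prop: head of GFF} (equation \eqref{eqn: integrated upper bound}) with the \emph{pointwise} lower bound on $d^{\beta,r}_{\GFF}(\cdot,\cdot\mid\varphi)$ from Lemma~\ref{lem: conditioned on start, field looks like disk}, and then to extract a pointwise upper bound via a Markov-type argument essentially identical to the one used to derive \eqref{eqn: integrated upper bound} itself.

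First I would recall from \eqref{eqn: integrated upper bound} that with probability $1-o_\beta(1)$ over $\phi$,
\eqbn
\P_\phi\left[E_{r,K,q_1,q_2} \,\big|\, E'_{r,\beta}\right] \;=\; \int_S d^{\beta,r}_{\GFF}(y_1,y_2 \mid \phi)\,dy_1\,dy_2 \;\leq\; (1+o_\beta(1))\,e^{\gamma(-r+K)}\,d^\beta_{\mathrm{disk}}(q_1,q_2),
\eqen
where $S = [q_1, q_1 + e^{\gamma(-r+K)/2}]\times[q_2, q_2+e^{\gamma(-r+K)/2}]$ has area $e^{\gamma(-r+K)}$. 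Since $d^\beta_{\mathrm{disk}}$ is continuous on $[\tfrac12,1]^2$ and $S$ has vanishing diameter as $r\to\infty$, one has $d^\beta_{\mathrm{disk}}(y_1,y_2)=(1+o_r(1))\,d^\beta_{\mathrm{disk}}(q_1,q_2)$ uniformly on $S$. Dividing by the area of $S$ and averaging against the uniform law of $(V_1,V_2)$ on $S$ therefore yields, with probability $1-o_\beta(1)$ over $\phi$,
\eqb\label{eqn:lem58-mean}
\E_{V_1,V_2}\!\left[\frac{d^{\beta,r}_{\GFF}(V_1,V_2 \mid \phi)}{d^\beta_{\mathrm{disk}}(V_1,V_2)}\right] \leq 1 + o_\beta(1).
\eqe
Simultaneously, Lemma~\ref{lem: conditioned on start, field looks like disk} gives that, with probability $1-o_\beta(1)$ over $\phi$,
\eqb\label{eqn:lem58-lower}
\frac{d^{\beta,r}_{\GFF}(y_1,y_2 \mid \phi)}{d^\beta_{\mathrm{disk}}(y_1,y_2)} \geq 1 - o_\beta(1) \quad \text{for all } y_1,y_2 \in [\tfrac12,1].
\eqe

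Fix a $\phi$ for which both \eqref{eqn:lem58-mean} and \eqref{eqn:lem58-lower} hold, and set $Z := d^{\beta,r}_{\GFF}(V_1,V_2 \mid \phi)/d^\beta_{\mathrm{disk}}(V_1,V_2)$. Writing $\eta:=o_\beta(1)$ for the error terms in \eqref{eqn:lem58-mean}--\eqref{eqn:lem58-lower}, we have $Z\geq 1-\eta$ pointwise and $\E[Z]\leq 1+\eta$, so for any $\delta>0$,
\eqbn
1+\eta \;\geq\; \E[Z] \;\geq\; (1-\eta)\,\P[Z\leq 1+\delta] + (1+\delta)\,\P[Z>1+\delta] \;=\; (1-\eta) + (\delta+\eta)\,\P[Z>1+\delta],
\eqen
whence $\P[Z > 1+\delta] \leq 2\eta/(\delta+\eta)$. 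Taking $\delta = \sqrt{\eta}$ gives $\P[Z>1+\sqrt\eta] = o_\beta(1)$. Integrating this over $\phi$, we conclude that with probability $1-o_\beta(1)$ over the independent triple $(\phi,V_1,V_2)$, the inequality $d^{\beta,r}_{\GFF}(V_1,V_2\mid\phi) \leq (1+o_\beta(1))\,d^\beta_{\mathrm{disk}}(V_1,V_2)$ of \eqref{eqn: upper bound GFF density} holds.

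The only nontrivial point is to make sure the Markov-type argument in the last paragraph is applied to a $\phi$ for which \emph{both} \eqref{eqn:lem58-mean} and \eqref{eqn:lem58-lower} hold; this is a routine union bound since each fails with probability only $o_\beta(1)$. The rest is bookkeeping with continuous moduli on the compact square $[\tfrac12,1]^2$.
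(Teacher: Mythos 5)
Your proposal is correct and follows essentially the same route as the paper: combine the integrated upper bound \eqref{eqn: integrated upper bound} with the pointwise lower bound from \eqref{eqn: most phi are probable} (equivalently Lemma~\ref{lem: conditioned on start, field looks like disk}) and apply a Markov-type inequality to a nonnegative shifted quantity. The only cosmetic difference is that you work with the ratio $d^{\beta,r}_{\GFF}(V_1,V_2\mid\phi)/d^\beta_{\mathrm{disk}}(V_1,V_2)$ while the paper works with the difference $d^{\beta,r}_{\GFF}(V_1,V_2\mid\phi)-(1-\delta^2)d^\beta_{\mathrm{disk}}(q_1,q_2)$, invoking uniform continuity of $d^\beta_{\mathrm{disk}}$ at a slightly different point of the argument.
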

\begin{proof}
Pick any $\delta > 0$. Then for any sufficiently large $\beta$ and any $r > r_0(\beta)$, we have, with probability $1-\delta$ over the realization of $\phi$, that the following both hold:
\begin{align*}
\E\left[ d^{\beta, r}_{\GFF}(V_1,V_2 \mid \phi) \: \big| \: \phi \right] &\leq (1+\delta^2)d_{\mathrm{disk}}^\beta(q_1,q_2),\\
d^{\beta, r}_{\GFF}(y_1, y_2 \mid \phi) &\geq (1-\delta^2) d^{\beta}_\mathrm{disk} (q_1,q_2) \text{ for all } y_j \in [q_j, q_j + \exp(\gamma(-r+K)/2)], \: \forall j \in \{1,2\}.
\end{align*}
The first follows by rephrasing \eqref{eqn: integrated upper bound}, and the second from \eqref{eqn: most phi are probable} together with the uniform continuity of $d^\beta_\mathrm{disk}(\cdot, \cdot)$ on $[1/2,1]^2$. 
Fix any realization of $\phi$ for which these inequalities both hold, and consider the random variable $d^{\beta, r}_{\GFF} (V_1,V_2 \mid \phi) - (1-\delta^2)d^\beta_\mathrm{disk} (q_1,q_2)$ (where the randomness is due to $V_j \sim [q_j, q_j + \exp (\gamma (-r+K)/2)]$). By the second inequality this is a.s. nonnegative, and by the first inequality its expectation is at most $2\delta^2 d^\beta_\mathrm{disk}(q_1,q_2)$. By Markov's inequality, for any fixed realization of $\phi$ for which the above inequalities both hold, we have with probability $1-\delta$ over the realization of $V_1, V_2$ that 
\[ d^{\beta, r}_\mathrm{GFF} (V_1,V_2 \mid \phi) - (1-\delta^2)d^\beta_\mathrm{disk}(q_1,q_2) \leq 2\delta d^\beta_\mathrm{disk} (q_1,q_2).\]
Since $\delta$ is arbitrary, this yields~\eqref{eqn: upper bound GFF density}. 
\end{proof}

Finally, we turn to the proof of Proposition \ref{prop: conditioned on past, future is disk}. 
\begin{proof}[Proof of Proposition \ref{prop: conditioned on past, future is disk}]
We first fix $\eps > 0$ and pick $q_1, q_2 \in \left[\frac12, \frac12 + \eps\right]$ via Lemma \ref{lem: choose q1, q2}. Let $(\psi, \cS, -\infty, +\infty)$ be a quantum disk. Let $(\phi, V_1, V_2)$ be a mutually independent triple with $\phi$ as in \eqref{eqn: initial field}, and $V_j \sim \mathrm{Unif}[q_j, q_j + \exp(\gamma (-r+K)/2)]$ for $j=1,2$. First, we prove that for $r$ sufficiently large, with probability $1-o_\beta(1)$ over the realization of $(\phi, V_1,V_2)$ we have
\begin{equation}\label{eqn: condition on side lengths}
\begin{aligned}
d_{TV}\Big( &h ( \cdot + \tau^h_{-\beta})|_{\cS_+} \text{ conditioned on } E_{r,K,q_1,q_2} \cap E'_{r,\beta}\cap \left\{(h|_R, \nu_h(\R_+), \nu_h(\R_+ + i\pi)) = (\phi -r, V_1,V_2) \right\}, \\
 &\psi(\cdot + \tau_{-\beta}^\psi)|_{\cS_+} \text{ conditioned on }E'_\beta \cap \left\{(\nu_\psi(\R), \nu_\psi(\R + i\pi)) = (V_1,V_2) \right\} \Big)  =o_\beta(1).
\end{aligned}
\end{equation}
To see why this holds, we lower bound the Radon-Nikodym derivative $d  \cL^{\beta, r,\phi,V_1,V_2}_{\GFF}/{d \cL^{\beta,V_1,V_2}_\mathrm{disk}} (\Phi)$. For sufficiently large $r$ (how large depends on $\beta$), the error $e_1(\phi,r, \beta)$ of Lemma \ref{lem: lower bound on future field} will be small with high probability over the realization $\phi$. Thus, plugging $(\varphi, y_1, y_2) = (\phi, V_1, V_2)$ into \eqref{eqn: ASDFGHJ} of Lemma~\ref{lem: lower bound on future field}, and using Lemma~\ref{lem: stronger upper bound} to lower bound the RHS of~\eqref{eqn: ASDFGHJ} with high probability, we get that for $r$ sufficiently large, with probability $1-o_\beta(1)$ over the realization of $(\phi, V_1,V_2)$ we have
\[\frac{d  \cL^{\beta, r,\phi,V_1,V_2}_{\GFF}}{d \cL^{\beta,V_1,V_2}_\mathrm{disk}}(\Phi) \geq 1 - o_\beta(1) \quad \text{ with probability } 1-o_\beta(1) \text{ over } \Phi \sim d \cL^{\beta,V_1,V_2}_\mathrm{disk}.\]
This implies \eqref{eqn: condition on side lengths}.
 
Next, we convert \eqref{eqn: condition on side lengths} to a corresponding statement \eqref{eqn: condition on side lengths different field} about the fields $h(\cdot + \sigma^h)|_{\cS_+ - N}$ and $\psi(\cdot + \sigma^\psi)|_{\cS_+ - N}$. For $r$ sufficiently large in terms of $\beta$, from~\eqref{eqn: beta far to the left} we have  
\begin{equation*}
\P[E'_{r,\beta} \cap \{\tau^h_{-\beta} < \sigma - N\} \mid  E_{r,K,q_1,q_2}] = 1 - o_\beta(1). 
\end{equation*}
Write $\P^{r, \varphi,y_1,y_2}_\mathrm{GFF}$ to denote the regular conditional law of $h$ conditioned on $\{(h|_R , \nu_h(\R_+), \nu_h(\R_+ + i\pi)) = (\varphi -r, y_1,y_2)\}$. Using this equation with Markov's inequality, and the fact that conditioned on $E_{r,K,q_1,q_2}$ the triple $(h|_R , \nu_h(\R_+), \nu_h(\R_+ + i\pi))$ is close in total variation to $(\phi -r, V_1,V_2)$ (Proposition~\ref{prop: head of GFF}), we know that for $r$ sufficiently large, with probability $1-o_\beta(1)$ over the realization of $\phi,V_1,V_2$, we have 
\begin{equation}\label{eqn: comparing different field descriptions for h}
\P^{r,\phi,  V_1,V_2}_\mathrm{GFF}[E'_{r,\beta}\cap \{\tau^h_{-\beta} < \sigma^h - N\}] = 1-o_\beta(1).
\end{equation}
We can show a similar statement for the quantum disk by using Proposition~\ref{prop: E' uniformly likely for all y1,y2} in place of \eqref{eqn: E' given E}. Namely, writing $\P^{y_1, y_2}_\mathrm{disk}$ for the law of a quantum disk field $\psi$ conditioned on $\{(\nu_\psi(\R), \nu_\psi(\R + i\pi)) = (y_1, y_2)\}$, using Corollary~\ref{cor: Psi continuous} we see that, for $r$ sufficiently large in terms of $\beta$, a.s. over the realization of $V_1,V_2$ we have
\begin{equation}\label{eqn: comparing different field descriptions for psi}
\P^{V_1,V_2}_\mathrm{disk}[E'_{\beta}\cap \{\tau^\psi_{-\beta} < \sigma^\psi - N\}] = 1-o_\beta(1).
\end{equation}
On the event $E'_{r,\beta} \cap \{ \tau^h_{-\beta} < \sigma^h - N\}$, the field $h(\cdot + \sigma^h)|_{\cS_+ - N}$ is a function of $h(\cdot + \tau^h_{-\beta})|_{\cS_+}$; the corresponding statement holds for $\psi$ also. Using \eqref{eqn: comparing different field descriptions for h} and \eqref{eqn: comparing different field descriptions for psi} with \eqref{eqn: condition on side lengths}, we conclude that for $r$ sufficiently large, with probability $1-o_\beta(1)$ over the realization of $(\phi, V_1,V_2)$ we have
\begin{equation}\label{eqn: condition on side lengths different field}
\begin{aligned}
d_{TV}\Big( &h ( \cdot + \sigma^h)|_{\cS_+ - N} \text{ conditioned on } \left\{(h|_R, \nu_h(\R_+), \nu_h(\R_+ + i\pi)) = (\phi -r, V_1,V_2) \right\}, \\
 &\psi(\cdot + \sigma^\psi)|_{\cS_+ - N} \text{ conditioned on }\left\{(\nu_\psi(\R), \nu_\psi(\R + i\pi)) = (V_1,V_2) \right\} \Big)  =o_\beta(1).
\end{aligned}
\end{equation}

Finally, we prove Proposition~\ref{prop: conditioned on past, future is disk} by sending $r \to \infty, \beta \to \infty, \eps \to 0$ in that order. By Corollary~\ref{cor: Psi continuous}, if we let $\psi$ be the field of a quantum disk, then for $\eps>0$ sufficiently small we have for all $y_1,y_2 \in \left[\frac12, \frac12 + 2\eps\right]$ that
\begin{align*}
d_{TV} \Big( &\psi(\cdot + \sigma^\psi)|_{\cS_+ - N} \text{ conditioned on }\left\{(\nu_\psi(\R), \nu_\psi(\R + i\pi)) = (y_1,y_2) \right\}, \\
&\psi(\cdot + \sigma^\psi)|_{\cS_+ - N} \text{ conditioned on }\{(\nu_\psi(\R), \nu_\psi(\R + i\pi)) = (\frac12,\frac12) \} \Big) < \frac\delta3.
\end{align*}
Next, pick $\beta$ large in terms of $\eps$, and $r$ large in terms of $\beta$ and $\eps$ so that the error of \eqref{eqn: condition on side lengths different field} is less than $\frac\delta3$, and the total variation distance between $(h|_R, \nu_h(\R_+), \nu_h(\R_+ + i\pi))$ and $(\phi - r, V_1, V_2)$ is less than $\frac\delta3$ (Proposition~\ref{prop: head of GFF}). Since $q_1, q_2 \in [\frac12, \frac12+\eps]$, we have proven Proposition~\ref{prop: conditioned on past, future is disk}. 
\end{proof}

\section{Unit boundary length quantum disk as a mating of trees}
\label{section: disk peanosphere}

In this section, we build on the results of the previous sections to prove Theorem \ref{thm: peanosphere disk}. 
Throughout this section $h$ will denote a distribution defined on the whole strip $\mcl S$, rather than just on $\mcl S_+$ as in the previous section. All our arguments in Section~\ref{section: disk peanosphere} do not depend on the choice of equivalence class representative $h$ (i.e. choice of horizontal translation of $h$), but in Section~\ref{subsection: extra conditioning} we specify such a choice for notational convenience.

Our approach is as follows. We sample a counterclockwise space-filling $\SLE_{\kappa'}$ curve $\eta'$ from $-\infty$ to $-\infty$ on an independent $\gamma$-quantum wedge $(\cS, h, +\infty, -\infty)$. Theorem \ref{thm: peanosphere gamma wedge} describes the boundary length process for $\eta'$ on the $\gamma$-quantum wedge, and using this description we restrict our attention to a curve-decorated surface $\cD^* = (\eta'([0,T]), h, \eta')$, where $T$ is a random time such that $\eta'$ explores about 1 unit of quantum boundary length in the time interval $[0,T]$. Doing some careful conditioning on $\cD^*$, we show firstly that $\cD^*$ converges in some sense to a unit boundary length quantum disk decorated by an independent counterclockwise $\SLE_{\kappa'}$ by Proposition \ref{prop: conditioned on past, future is disk}, and secondly that the boundary length process of $\cD^*$ converges to the appropriate excursion in $\R_+^2$ by Proposition~\ref{prop: approx BM}. This yields Theorem \ref{thm: peanosphere disk}.

In the first two sections we focus on the case $\gamma \in (0,\sqrt2]$, because of the simpler topology. In Section~\ref{subsection: decomposing wedge}, we decompose a curve-decorated $\gamma$-quantum wedge into three curve-decorated quantum surfaces $\cD^*, \cW_1^*, \cW_2^*$, and describe their boundary length processes. We also define an event $F_{r,C}$ which roughly corresponds to the boundary length process of $\cD^*$ being close to a Brownian excursion of displacement 1. Roughly speaking, the event $E_{r,K,q_1,q_2}$ of Proposition~\ref{prop: head of GFF} corresponds to finding a bottleneck in the field description of the quantum wedge, and $F_{r,C}$ a bottleneck in the boundary-length description of the curve-decorated quantum wedge. 

In Section~\ref{subsection: equivalence E F}, we show that $\P[F_{r,C} \mid E_{r,K,q_1,q_2}] > 0$ uniformly in $r$, and $\P[E_{r,K,q_1,q_2} \mid F_{r,C}] \approx 1$ for large $C,K$ (with $E_{r,K,q_1,q_2}$ defined in Proposition~\ref{prop: head of GFF}), so if we want to condition on $F_{r,C}$, we can instead condition on $F_{r,C} \cap E_{r,K,q_1,q_2}$. 

In Section~\ref{subsection: gamma large}, we explain the modifications that we need to make to obtain the results of Sections~\ref{subsection: decomposing wedge} and \ref{subsection: equivalence E F} in the regime $\gamma \in (\sqrt2,2)$. Essentially the same arguments apply, but one needs to be careful about the topology of the surfaces.
Finally, in Section~\ref{subsection: extra conditioning}, we complete the proof of Theorem~\ref{thm: peanosphere disk} by first conditioning on $E_{r,K,q_1,q_2}$, then applying Propositions~\ref{prop: head of GFF} and \ref{prop: conditioned on past, future is disk} to show that even after further conditioning on $F_{r,C}$, the quantum surface $\cD^*$ still resembles a quantum disk.

\subsection{Decomposing a $\gamma$-quantum wedge for $\gamma \in (0,\sqrt2]$}
\label{subsection: decomposing wedge}
In this section we sample a $\gamma$-quantum wedge decorated by an independent counterclockwise space-filling $\SLE_{\kappa'}$, and use the boundary-length process of the curve to decompose the wedge into three curve-decorated quantum surfaces $\cD^*, \cW_1^*, \cW_2^*$. We also show that the boundary length process of the curve in $\cD^*$ is close to a Brownian excursion in the cone $\R_+\times \R_+$. We will state our results for \emph{all} $\gamma \in (0,2)$, bur only prove them for $\gamma \in (0,\sqrt2]$ (the proofs for $\gamma \in (\sqrt2, 2)$ are deferred to Section~\ref{subsection: gamma large}). The regime $\gamma \in (0,\sqrt2]$ is topologically simpler because the region explored by a space-filling $\SLE_{\kappa'}$ in an interval of time is almost surely simply connected.

Sample a $\gamma$-quantum wedge $(\cS,h, +\infty, -\infty)$ so that neighborhoods of $+\infty$ (resp. $-\infty$) are finite (resp. infinite), i.e., as $x \to +\infty$ (resp. $-\infty$) the vertical field averages of $h$ on $[x,x+i\pi]$ tend to $-\infty$ (resp. $\infty$). Throughout this section, we define for $t \in \R$ the stopping time
\begin{equation}
\tau_t = \inf_{x \in \R} \{ \text{average of }h \text{ on } [x, x+i\pi] \text{ equals } t\}.
\end{equation}
We emphasize that this definition of $\tau_t$ is different from that of $\tau_t$ in Section~\ref{section: pinching off a disk}. Indeed, for $r \in \R$ the field studied in Section~\ref{section: pinching off a disk} was given by $h(\cdot + \tau_{-r})|_{\cS_+}$, so the stopping times in Section~\ref{section: pinching off a disk} are all to the right of $\tau_{-r}$. In this section, however, when $a > -r$ the stopping time $\tau_{a}$ is to the \emph{left} of $\tau_{-r}$.  

Let $q_1 = q_1(r)$ and $q_2 = q_2(r)$ be the functions in Proposition \ref{prop: head of GFF}, so $\lim_{r \to \infty} q_1(r) = \lim_{r \to \infty} q_2 (r) = \frac12$. For notational simplicity we will usually write $q_1, q_2$, leaving their dependence on $r$ implicit. Let $x_1 \in \R $ and $x_2 \in \R + i\pi$ satisfy
\begin{equation*}
\nu_h([x_1,+\infty)) = q_1, \quad \nu_h([x_2, +\infty)) = q_2.
\end{equation*}

Independently sample a counterclockwise space-filling $\SLE_{\kappa'}$ $\eta'$ from $-\infty$ to $-\infty$, parametrized by quantum area as in Theorem \ref{thm: peanosphere gamma wedge}, and with time recentered so that $\eta'$ hits $x_1$ at time $0$. As counterclockwise space-filling $\SLE_{\kappa'}$ fills the boundary in counterclockwise order, $\eta'$ hits $x_2$ after $x_1$, say at time $T>0$. Define
\begin{equation*}
U = \eta'([0,T]), \quad U_1 = \eta'((-\infty, 0]), \quad U_2 = \eta'([T, +\infty)),
\end{equation*}
and name the intersection point $p = U \cap U_1 \cap U_2$. Note that in the regime $\gamma \in (0,\sqrt2]$, almost surely these three domains are simply connected. Define the curve-decorated quantum surfaces
\begin{equation}
\cD^* := (U,h,\eta',x_1,p,x_2), \quad \cW_1^* := (U_1,h,\eta',x_1,-\infty), \quad \cW_2^* := (U_2,h,\eta', x_2, -\infty).
\end{equation}
Note that $\cW_1^*$ and $\cW_2^*$ each comes with two marked points: one marked point with neighborhoods of finite quantum area, and one with neighborhoods of infinite quantum area. The curve-decorated quantum surface $\cD^*$ comes with three marked points.
See Figure~\ref{fig: surface decomposition} for an illustration of this setup. 

\begin{figure}[ht!]
\begin{center}
\includegraphics[scale=0.75]{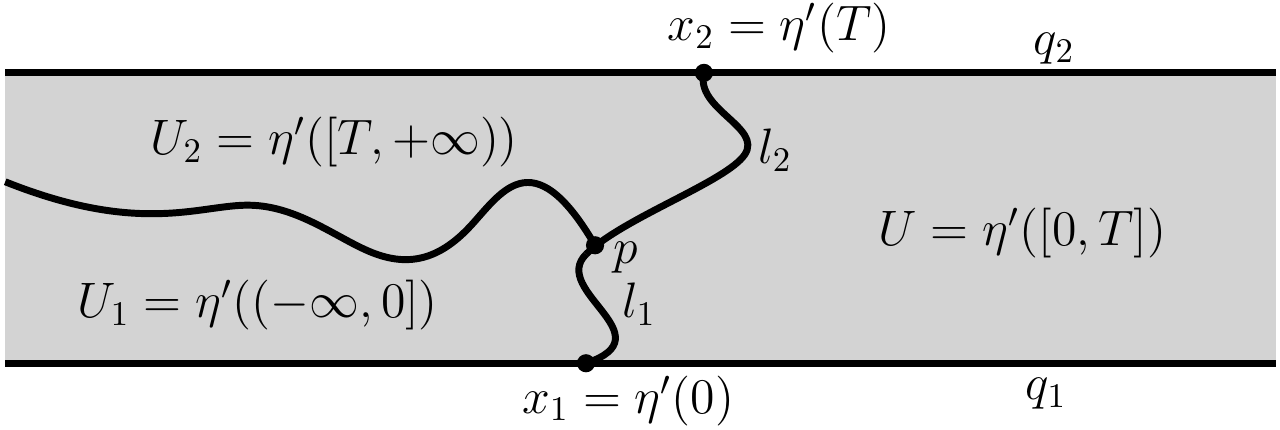}
\end{center}
\caption{\label{fig: surface decomposition} Let ${\gamma \in (0,\sqrt2]}$. By drawing an independent counterclockwise $\SLE_{\kappa'}$ on top of a $\gamma$-wedge $ (\cS, h, +\infty, -\infty)$, we can decompose the curve-decorated $\gamma$-wedge into three curve-decorated surfaces $\cD^*, \cW_1^*, \cW_2^*$ parametrized by the regions $U, U_1, U_2$ respectively.}
\end{figure}

\cite[Proposition 1.7]{shef-zipper} says that if, on a $\gamma$-quantum wedge, one reroots the marked point with finite neighborhoods by moving it some fixed quantum length away on the boundary, the resulting doubly-marked quantum surface is also a $\gamma$-quantum wedge. Thus $(\cS, h, x_j, -\infty)$ is a $\gamma$-quantum wedge for $j=1,2$.
Let $(L_t, R_t)_{t \in \R}$ be the boundary length process of $\eta'$ on $(\cS, h, x_1, -\infty)$, with additive constants fixed so that $L_0 = R_0 = 0$. Using Theorem~\ref{thm: peanosphere gamma wedge}, we can derive descriptions of the boundary length processes of $\eta'$ in each of $\cD^*, \cW_1^*, \cW_2^*$. In each of these descriptions, $(L,R)_I$ is a two-dimensional Brownian motion defined on some time interval $I$ with covariances given by \eqref{eqn: covariance}, and possibly with some conditioning.

\begin{itemize}
\item In $\cD^*$, the boundary length process $(L_t, R_t)_{0 \leq t \leq T}$ of $\eta'$ is Brownian motion started at $(L_0, R_0) = (0,0)$ and stopped the first time $T > 0$ that $R_T = -q_1 - q_2$.

\item In $\cW_1^*$, the boundary length process $(L_t, R_t)_{t \leq 0}$ is the restriction of the boundary length process of Theorem~\ref{thm: peanosphere gamma wedge} to $(-\infty,0]$. 

\item In $\cW_2^*$, the boundary length process $(L_t, R_t)_{t \geq T}$ starts at some random point $(L_T, -q_1-q_2)$, and evolves as a Brownian motion.
\end{itemize}

The only relevant information in these boundary length processes is the change in boundary lengths over time; in particular, we only care about boundary length processes modulo additive constant and translation of time interval. See Section~\ref{sec-MOT} for details. 

By Theorem~\ref{thm: wedge laws} and Remark~\ref{rem-recover}, we see that $\cW_1^*, \cW_2^*$ and $\cD^*$ are a.s. determined by their boundary length processes.
These three boundary length processes (modulo additive constant and translation of time interval) are mutually independent by the strong Markov property of Brownian motion, so we conclude that the curve-decorated surfaces $\cD^*, \cW_1^*, \cW_2^*$ are mutually independent.

By Theorem~\ref{thm: wedge laws}, each of the quantum surfaces $(U_j , h , x_j ,-\infty)$ is a quantum wedge (with some value of $\alpha$).
By the scale-invariance of quantum wedges, for any $c>0$, the curve-decorated quantum surfaces $\cW^*_j + c := (U_j , h + c , \eta', x_j ,-\infty)$ (with $\eta'$ re-parametrized by $\mu_{h+c}$-mass) and $\mcl W_j^*$ agree in law. This scaling property is also easy to see  via the boundary length processes --- adding $c$ to the field increases the quantum area measure by a factor of $e^{\gamma c}$ and the boundary length measure by $e^{\gamma c / 2}$, inducing a Brownian rescaling of the space and time parameterizations of $(L_t, R_t)$.

Let $l_1 = \nu_h(U \cap U_1)$ and $l_2 = \nu_h(U \cap U_2)$. Writing $(L_t, R_t)_{0 \leq t \leq T}$ for the boundary length processes of $\eta'$ in $\cD^*$, we have (with $L_0 = 0$, but this expression for $l_1$ is most natural)
\begin{equation}\label{eqn: interpretation of l1 l2}
l_1 = L_0 - \inf_{t \in [0,T]} L_t, \quad l_2 = L_T - \inf_{t \in [0,T]} L_t.
\end{equation}
Thus, $l_1, l_2$ are measurable with respect to the $\sigma$-algebra $\sigma(\cD^*)$.
Define for $C>0$ the event 
\begin{align} \label{eqn: F def}
F_{r,C} &= \left\{ l_1 < e^{\gamma (-r-C)/2}, \quad l_2 - l_1 \in [1,2] \cdot e^{\gamma(-r-C)/2}\right\}, \notag \\
\cL &= \text{ conditional law of the pair }(h,\eta') \text{ given } F_{r,C}.
\end{align}

Since $F_{r,C}$ is measurable with respect to $\sigma(\cD^*)$, and $\cD^*$ is independent of $(\cW_1^*, \cW_2^*)$, we conclude that the $\cL$-law of $(\cW^*_1, \cW^*_2)$ is the same as in the unconditioned setting, and that under $\cL$ the decorated quantum surfaces $\cD^*, \cW^*_1, \cW^*_2$ are still mutually independent. With slight abuse of notation, we will also say that $(\cD^*, \cW^*_1, \cW^*_2)$ are drawn from $\cL$. 

The strategy of Section~\ref{section: disk peanosphere} is to show that, roughly speaking, if we sample $(h, \eta') \sim \cL$ and send $r \to \infty$, then the curve-decorated quantum surface $\cD^*$ converges in a suitable sense to a $(\frac12,\frac12)$-quantum disk decorated by an independent counterclockwise $\SLE_{\kappa'}$. Since we understand well the boundary length process of $\cD^*$, this will allow us to derive the boundary length process for the $(\frac12,\frac12)$-quantum disk decorated by independent counterclockwise $\SLE_{\kappa'}$. Finally, since the $(\frac12,\frac12)$-quantum disk is just a unit boundary length quantum disk with a pair of antipodal points chosen from boundary measure, we deduce the law of the boundary length process for the unit boundary length quantum disk decorated by an independent counterclockwise $\SLE_{\kappa'}$ from a uniformly chosen boundary point. The reason for working with a $(\frac12,\frac12)$-quantum disk is that a quantum disk with two marked points is easier to relate to a $\gamma$-quantum wedge, since we can compare the two boundary arcs of a doubly-marked quantum disk to the left and right boundary rays of a quantum wedge.

We first show that, in a suitable sense, the boundary length process in $\cD^*$ when we condition on $F_{r,C}$ converges as $r \to \infty$ to the cone excursion of Theorem \ref{thm: peanosphere disk}.
\begin{lemma} \label{lem: boundary length process in D}
For $\gamma \in (0,2)$, consider the law of the boundary length process $(L_t, R_t)_{[0,T]}$ in $\cD^*$, conditioned on $F_{r, C}$. Since the boundary length process is only defined modulo additive constant, we may change our parametrization so that $(L_0, R_0) = (0, q_1+q_2)$, and the process stops at the first time $T$ that $R_T = 0$. As $r \to \infty$, the law of $(L_t, R_t)_{[0,T]}$ converges to that of a sheared normalized boundary-to-boundary Brownian excursion with covariance \eqref{eqn: covariance} in the cone $\R_+ \times \R_+$ from $(L_0,R_0) = (0,1)$ to the origin $(0,0)$. This excursion process is defined in Definition \ref{def: cone BM}, and the convergence is with respect to the Prohorov metric corresponding to \eqref{eqn: metric on space of curves}.
\end{lemma}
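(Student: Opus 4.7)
The plan is to recognize this lemma as a direct translation---through bookkeeping of additive constants and stopping rules---of the already-proved Brownian approximation result Proposition \ref{prop: approx BM}. The main ingredients are the description of the boundary length process of $\eta'$ on the re-rooted $\gamma$-quantum wedge (supplied by Theorem \ref{thm: peanosphere gamma wedge}) and the explicit form of the conditioning event $F_{r,C}$.

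First I would identify the unconditioned boundary length process on $\cD^*$. Applying Theorem \ref{thm: peanosphere gamma wedge} to the $\gamma$-quantum wedge $(\cS,h,x_1,-\infty)$, with $\eta'$ re-parametrized so that it hits $x_1$ at time $0$, the full process $(L_t, R_t)_{t \in \R}$ is a two-sided Brownian motion with covariances \eqref{eqn: covariance} and $(L_0, R_0) = (0,0)$. Because $\nu_h([x_1,+\infty)) = q_1$ and $\nu_h([x_2,+\infty)) = q_2$ by construction, the time $T > 0$ at which $\eta'$ hits $x_2$ coincides with the first hitting time of the level $R_T = -(q_1 + q_2)$; this matches the bulleted description of $\cD^*$ recorded just before the lemma. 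Shifting additive constants so that $(L_0, R_0) = (0, q_1+q_2)$ and $R_T = 0$ turns $(L,R)|_{[0,T]}$, without any conditioning, into a Brownian motion started at $(0,q_1+q_2)$ and stopped at the first hitting time of the line $\{R = 0\}$.

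Next I would rewrite $F_{r,C}$ as an explicit event for this Brownian motion. Setting $\delta := e^{\gamma(-r-C)/2}$, equation \eqref{eqn: interpretation of l1 l2} gives $l_1 = -\inf_{0\le t\le T} L_t$ and $l_2 - l_1 = L_T$, so $F_{r,C}$ is precisely
\[ \bigl\{\,\inf_{0\le t \le T} L_t > -\delta\,\bigr\}\; \cap\; \bigl\{\, L_T \in [\delta, 2\delta]\,\bigr\}. \]
Combined with the stopping rule $R_T = 0$, this is exactly the conditioning event in Proposition \ref{prop: approx BM}: the Brownian motion first exits the cone $(\R_+ - \delta)\times \R_+$ at a point with $R_T = 0$ and $L_T \in [\delta, 2\delta]$. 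Since $F_{r,C}$ is $\sigma(\cD^*)$-measurable and $\cD^*$ is determined by its boundary length process, conditioning $(h,\eta')$ on $F_{r,C}$ is equivalent to conditioning this Brownian motion on the above event.

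Because $\delta \to 0$ and $c := q_1(r) + q_2(r) \to 1$ as $r \to \infty$ (the latter by the choice of $q_j$ in Proposition \ref{prop: head of GFF}), Proposition \ref{prop: approx BM} then applies verbatim and delivers Prohorov convergence of the conditional law of $(L_t, R_t)_{[0,T]}$ to the sheared normalized boundary-to-boundary Brownian excursion in $\R_+^2$ from $(0,1)$ to $(0,0)$ with covariance \eqref{eqn: covariance}. I do not anticipate a serious obstacle: the argument reduces to checking that the additive-constant conventions, the stopping time $T$, and the sheared cone $\cC_\theta$ of Definition \ref{def: cone BM} all line up consistently---a careful but routine bookkeeping exercise.
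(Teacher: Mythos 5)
Your proposal is correct and follows essentially the same route as the paper: the paper's proof likewise uses \eqref{eqn: interpretation of l1 l2} to identify the conditioned boundary length process as a Brownian motion started at $(0,q_1+q_2)$ conditioned to exit the cone $(\R_+-\delta)\times\R_+$ through $[\delta,2\delta]\times\{0\}$ with $\delta = e^{\gamma(-r-C)/2}$, and then invokes Proposition \ref{prop: approx BM} as $\delta\to 0$, $q_1+q_2\to 1$. Your additional bookkeeping (matching the hitting-time definition of $T$ with the cone-exit time of Proposition \ref{prop: approx BM} on the conditioning event) is exactly the check the paper leaves implicit.
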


\begin{proof}[Proof for ${\gamma \in (0,\sqrt{2}]}$]
Write $\delta = e^{\gamma ( -r-C)/2}$. Recalling the definition of $F_{r,C}$ and using \eqref{eqn: interpretation of l1 l2}, we can exactly describe the law of the process $(L_t, R_t)$. It is given by Brownian motion with covariances \eqref{eqn: covariance} started at the point $(0, q_1 + q_2)$ and conditioned to exit the cone $(\R_+ - \delta) \times \R_+$ in the boundary interval $[\delta, 2\delta] \times \{0\}$. Observe that as we send $r \to \infty$, we have $\delta \to 0$ and $q_1 + q_2 \to 1$. Proposition~\ref{prop: approx BM} tells us that in this limit the Brownian motion converges to the desired cone excursion.
\end{proof}

\subsection{Equivalence of $F_{r,C}$ and $F_{r,C} \cap E_{r,K}$ for $\gamma \in (0,\sqrt2]$} 
\label{subsection: equivalence E F}

In this section, we again focus on the case $\gamma \in (0,\sqrt2]$. We will state our results for \emph{all} $\gamma \in (0,2)$, bur only prove them for $\gamma \in (0,\sqrt2]$. See Section~\ref{subsection: gamma large} for $\gamma \in (\sqrt2,2)$.

In Proposition \ref{prop: head of GFF}, we defined the event $E_{r,K,q_1,q_2}$. In this section, we reuse the notation (suppressing the subscripts $q_1(r), q_2(r)$) for a $\gamma$-quantum wedge $(\cS, h, +\infty, -\infty)$:
\begin{equation} \label{eqn: ErK def}
E_{r,K} = \left\{ \nu_ h (\R_+ + \tau_{-r}) \in \left[ q_1, q_1 + e^{\gamma (-r+K)/2}\right], \nu_ h (\R_+ + \tau_{-r} + i\pi) \in \left[q_2,q_2 + e^{\gamma (-r+K)/2}\right] \right\}. 
\end{equation}
The reason why this is not such a strange choice of notation is that the field $h(\cdot + \tau_{-r})|_{\cS_+}$ has precisely the same law as the field described in Proposition \ref{prop: head of GFF} (see Lemma~\ref{lem-wedge-tip}). 

In this subsection, we show that for any $C, K$ we have $\P[F_{r,C} \mid E_{r,K}] > 0$ uniformly in $r$ (Proposition~\ref{prop:F given E}), and furthermore, as we take $K\to \infty$ then $C\to \infty$, we have $\P[E_{r,K} \mid F_{r,C}] \to 1$ uniformly in $r$ (Proposition~\ref{prop: E given F}). As such, if we want to understand the conditional law of $h$ given $F_{r,C}$, we can instead condition on $E_{r,K} \cap F_{r,C}$.

\begin{proposition} \label{prop:F given E}
Let $\gamma \in (0,2)$, and consider the setup of Section~\ref{subsection: decomposing wedge}, with $E_{r,K}$ defined as in~\eqref{eqn: ErK def}. For each fixed choice of $C,K> 1$, there exists some $p = p(C,K) >0$ such that for all sufficiently large $r$, we have 
\begin{equation}
P[F_{r,C} \mid E_{r,K}] \geq p .
\end{equation}
\end{proposition}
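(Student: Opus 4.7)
The plan is to leverage the independence of the space-filling SLE $\eta'$ from the field $h$. Since $E_{r,K}$ depends only on $h$, one can write
\[
\P[F_{r,C} \mid E_{r,K}] = \E\bigl[\P[F_{r,C} \mid h] \bigm| h \in E_{r,K}\bigr],
\]
where the inner probability is taken with respect to the independent SLE $\eta'$. By Propositions~\ref{prop: head of GFF} and~\ref{prop: conditioned on past, future is disk}, for most realizations of $h$ on $E_{r,K}$ the field $h$ is close in total variation (as $r \to \infty$) to a $(1/2, 1/2)$-length quantum disk field $\psi$ in a neighborhood of the bottleneck region around $\tau_{-r}$. Combining this with the independence of $\eta'$ from $h$ produces a coupling of the curve-decorated quantum surface $\cD^* = (U, h, \eta', x_1, p, x_2)$ with a $(q_1, q_2)$-length quantum disk decorated by an independent chordal space-filling $\SLE_{\kappa'}$ between its two marked boundary points, so that the event $F_{r,C}$ transfers (up to the TV-error) to an analogous event $\wt F_{r,C}$ on the disk side concerning the two interior interface lengths of the SLE.

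To evaluate $\P[\wt F_{r,C}]$, I would use Theorem~\ref{thm: peanosphere gamma wedge} as applied in Section~\ref{subsection: decomposing wedge}: the boundary length process of the chordal space-filling $\SLE_{\kappa'}$ in $\cD^*$ is (in its unconditioned law) a 2D Brownian motion with covariances~\eqref{eqn: covariance}, stopped at the hitting time $T$ of $R = -(q_1+q_2)$. The event $\wt F_{r,C}$ is then the explicit event $\{-\inf_{[0,T]} L < \delta\} \cap \{L_T \in [\delta, 2\delta]\}$ with $\delta = e^{-\gamma(r+C)/2}$. A direct Brownian calculation (joint density of $(\inf L, L_T, T)$) gives a lower bound of order $\delta^2$, while the condition $E_{r,K}$ contributes a restriction of order $\delta_V^2$ with $\delta_V = e^{\gamma(-r+K)/2}$, by the uniformity statement of Proposition~\ref{prop: head of GFF}. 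Crucially, the common factor $\asymp e^{-(Q-\gamma)r}$ associated with the field average reaching $-r$ appears in both $\P[F_{r,C} \cap E_{r,K}]$ and $\P[E_{r,K}]$, and so it cancels in the ratio, leaving $\P[F_{r,C} \mid E_{r,K}]$ of order $\delta^2/\delta_V^2 = e^{-\gamma(C+K)}$, which is a positive constant depending only on $C$ and $K$.

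The main obstacle is ensuring that the two layers of approximation---the TV closeness of $h$ to a disk field provided by Propositions~\ref{prop: head of GFF} and~\ref{prop: conditioned on past, future is disk}, and the transfer of $F_{r,C}$ to $\wt F_{r,C}$ through the coupling---are both controlled at a rate $o_r(1)$ small enough to preserve the leading constant $e^{-\gamma(C+K)}$. In practice this means verifying that the event $F_{r,C}$ is a function of the curve-decorated surface $\cD^*$ (which it is, via Remark~\ref{rem-recover}) and that the couplings from the previous section preserve $\cD^*$ rather than just the field $h$; since $\eta'$ is independent of $h$ and the couplings can be taken identity on $\eta'$, this follows once one checks that the interface lengths $l_1, l_2$ are continuous functionals of the field near the pinch, which in turn is a short calculation using Lemma~\ref{lem: distortion estimate}.
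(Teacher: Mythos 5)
Your underlying intuition---that after conditioning on $E_{r,K}$ the field near the pinch looks like an $r$-independent field shifted by $-r$, so that $F_{r,C}$ becomes a fixed positive-probability event after rescaling lengths by $e^{\gamma r/2}$---is the right one, and it is essentially what the paper exploits. But the execution you propose has a gap that would sink it. You want to transfer $F_{r,C}$ through total-variation couplings whose error is only $o_r(1)$, and then evaluate the resulting event by a Brownian computation. The event $F_{r,C}$ has \emph{unconditional} probability tending to $0$ as $r\to\infty$ (it pins $l_1$ and $l_2-l_1$ to windows of width $\delta = e^{\gamma(-r-C)/2}$), so an $o_r(1)$ coupling error is not small relative to the quantity you are trying to estimate; "up to the TV-error" destroys the event. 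The only way the TV input is usable is to stay conditional on $E_{r,K}$ throughout, where $F_{r,C}$ has conditional probability bounded below --- but that lower bound is precisely the proposition, so you cannot assume it to justify the transfer. Relatedly, your Brownian estimate is wrong: the event $\{-\inf_{[0,T]}L<\delta\}\cap\{L_T-\inf L\in[\delta,2\delta]\}$ forces the correlated Brownian motion to exit the cone within distance $O(\delta)$ of its vertex, which has probability $\asymp \delta^{4/\gamma^2}$ (the cone exponent $\pi/\theta$ with $\theta=\pi\gamma^2/4$), not $\delta^2$; and the claimed cancellation of the factor $e^{-(Q-\gamma)r}$ between $\P[F_{r,C}\cap E_{r,K}]$ and $\P[E_{r,K}]$ presumes a factorization of the joint event that is never justified, since $F_{r,C}$ and $E_{r,K}$ are both functions of the field near $\tau_{-r}$. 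Finally, evaluating the transferred event on the disk side via the disk's boundary length process would be circular, as that process is what Theorem~\ref{thm: peanosphere disk} establishes.

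The paper's proof avoids all of this and needs no quantitative estimate, because the proposition only asks for a positive lower bound uniform in $r$. One conditions on $E_{r,K}$ from the start and applies only Proposition~\ref{prop: head of GFF}: the triple $(h|_R+r,\nu_h(\R_++\tau_{-r}),\nu_h(\R_++i\pi+\tau_{-r}))$ converges in total variation to an explicit $r$-independent law $(\phi, q_1+e^{-\gamma r/2}d_1, q_2+e^{-\gamma r/2}d_2)$ with $d_1,d_2$ uniform on $[0,e^{\gamma K/2}]$. On the high-probability event that the interfaces $U\cap U_1$, $U\cap U_2$ lie in the rectangle $R$ (your event ``$F_{r,C}$ is determined by $\cD^*$ near the pinch,'' which is where the locality argument belongs), $F_{r,C}$ coincides with a fixed event $F\cap A$ about $(\phi,d_1,d_2,\eta')$ of some positive probability $\tilde p$ --- the thresholds $e^{\gamma(-r-C)/2}$ in $F_{r,C}$ become the $r$-independent thresholds $e^{-\gamma C/2}$ for $\nu_\phi$-lengths. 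Choosing $r$ large enough that the TV error is below $\tilde p/2$ gives $\P[F_{r,C}\mid E_{r,K}]\ge\tilde p/2$. If you want to keep your framework, this conditional, soft version is the repair: drop Proposition~\ref{prop: conditioned on past, future is disk}, drop the disk detour and the Brownian calculation, and never leave the conditional measure given $E_{r,K}$.
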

\begin{proof}[Proof for ${\gamma \in (0,\sqrt{2}]}$]
Pick any rectangle $R = [0,S] \times [0,\pi] \subset \cS_+$, and let $\phi$ be the distribution on $R$ defined in~\eqref{eqn: initial field}. Let $d_1, d_2$ be independent samples from $\mathrm{Unif}([0,e^{\gamma K/2}])$, and independently sample a counterclockwise space-filling SLE $\eta'$ in $\cS$ from $-\infty$ to $-\infty$ with arbitrary time-parametrization. Then the following event $A$ occurs with positive probability:
\medskip

\hfill\begin{minipage}{\dimexpr\textwidth-2cm}
There exist $y_1 \in [0,S]$ and $y_2 \in [i\pi, i\pi + S]$ such that $\nu_\phi([0,y_1]) = d_1$ and $\nu_\phi([i\pi, y_2]) = d_2$. Let $V_1$ be the region filled by $\eta'$ before hitting $y_1$, $V_2$ the region filled by $\eta'$ after hitting $y_2$, and $V$ the region filled by $\eta'$ between hitting $y_1$ and $y_2$. The interfaces $V \cap V_1$ and $V \cap V_2$ lie in $R$.
\end{minipage}
\medskip 

\noindent On $A$, analogously to $F_{r,C}$, define the event
\[F =  \left\{ \nu_\phi(V \cap V_1) < e^{-\gamma C/2}, \quad \nu_\phi(V \cap V_2) - \nu_\phi(V \cap V_1) \in [1,2] \cdot e^{-\gamma C/2}  \right\}.\]
Let $\wt p = \P[F \cap A]$. Clearly $\wt p > 0$. We claim that choosing $p= \frac12 \widetilde{p}$ works. 

We recenter the field $h$ so that $\tau_{-r} = 0$. For $r$ sufficiently large, we have by Proposition \ref{prop: head of GFF} that the triple $(h|_R, \nu_h(\R_+), \nu_h(\R_+ + i\pi))$ conditioned on $E_{r,K}$ is within $\frac12 \widetilde{p}$ in total variation of the triple $( \phi|_R - r, q_1 + e^{-\gamma r/2} d_1, q_2 + e^{-\gamma r/2} d_2)$, so we may couple them, and decorate them by the same space-filling curve $\eta'$. On the event $A \cap \{\text{coupling holds}\}$, we have $x_j = y_j$ for $j = 1,2$, and $(U,U_1, U_2) = (V,V_1,V_2)$. Thus, in the coupled probability space we have $F_{r, C} \cap A \cap \{\text{coupling holds}\} = F\cap A \cap \{\text{coupling holds}\}$, so
\[\P[F_{r, C} \mid E_{r,K} ] \geq \P[F \cap A \cap \{ \text{coupling holds} \}] \geq \frac12\widetilde p. \]
\end{proof}

\begin{proposition}\label{prop: E given F}
Let $\gamma \in (0,2)$, and consider the setup of Section~\ref{subsection: decomposing wedge}, with $E_{r,K}$ defined as in~\eqref{eqn: ErK def}. Then for each $\delta > 0$, $C > C_0(\delta)$ and $K > K_0(\delta, C)$, for all $r >0$ we have
\begin{equation}\label{eq-E-given-F}
\P[E_{r,K} \mid F_{r,C}] \geq 1-\delta,
\end{equation}
and moreover, conditioned on $F_{r,C}$, the conditional  probability that $U$ lies to the right of $\tau_{-r}$ goes to 1 as $C \to \infty$ (at a rate uniform for all large $r$).
\end{proposition}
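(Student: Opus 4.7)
The proof proceeds in two steps corresponding to the two assertions: first, show that on $F_{r,C}$ the region $U$ lies in $\cS_+ + \tau_{-r}$ with conditional probability tending to $1$ as $C \to \infty$ (uniformly in $r$), which is the second assertion; second, show that on this event the ``extra'' boundary lengths $\nu_h([\tau_{-r}, x_1])$ and $\nu_h([\tau_{-r}+i\pi, x_2])$ are each at most $e^{\gamma(-r+K)/2}$ with conditional probability $1-\delta$ for $K$ chosen large in terms of $C$ and $\delta$.

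For the first step, the idea is that the interface $\bdy U \cap \bdy(U_1 \cup U_2)$ has tiny quantum length $l_1 + l_2 \leq 3e^{\gamma(-r-C)/2}$, which is exponentially smaller in $C$ than the natural scale $e^{-\gamma r/2}$ of boundary lengths near $\tau_{-r}$. A short quantum interface can only occur where the field is low, so the field-average process $X_t$ (the average of $h$ on $[t, t+i\pi]$) must have dropped well below $-r$ by the horizontal coordinate $\hat\tau := \Re(p)$ of the pinch point; concretely, I would aim to show $X_{\hat\tau} \leq -r - C/2$ with conditional probability $1-o_C(1)$. To carry this out, one can use the imaginary-geometry coupling of $\eta'$ with an auxiliary GFF $h^{\IG}$ independent of $h$: the interface is a union of two $h^{\IG}$-flow lines meeting at $p$, and conditional on $h^{\IG}$ it has macroscopic Euclidean extent near $p$. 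A GMC lower bound analogous to Lemma~\ref{wedges-lem} (but in the reverse direction) then forces $X_{\hat\tau} \leq -r - C/2$. Since $X_t > -r$ for $t < \tau_{-r}$, this places $\hat\tau$ to the right of $\tau_{-r}$; combined with the fact that the interface has small Euclidean diameter (so that $x_1, x_2$ are close to $p$ in Euclidean distance and hence also lie to the right of $\tau_{-r}$), we conclude $U \subset \cS_+ + \tau_{-r}$.

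For the second step, on the event from step one we have $\nu_h(\R_+ + \tau_{-r}) - q_1 = \nu_h([\tau_{-r}, x_1])$ and analogously for the top. By Lemma~\ref{lem: rest of thick wedge}(b), conditional on $h|_{\cS_+ + \tau_{-r}}$ and $\eta'$, the restriction $h|_{\cS_- + \tau_{-r}}$ is an independent Neumann-Dirichlet GFF plus drift; hence the $F_{r,C}$-conditional law of $h|_{\cS_+ + \tau_{-r}}$ is absolutely continuous with respect to the unconditional description in Lemma~\ref{lem-wedge-tip}, namely a Neumann GFF plus drift $(\gamma - Q)\Re(\cdot) - r$. The argument of step one also provides $x_1 - \tau_{-r} = O(C)$ with high probability (since $X_t$ must drop from $-r$ to $-r - C/2$, which takes Brownian-time $O(C)$ using the drift $(\gamma-Q)<0$), and similarly for $x_2 - (\tau_{-r} + i\pi)$. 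A standard GMC moment bound of the type in Lemma~\ref{wedges-lem} then yields $\nu_h([\tau_{-r}, x_1]) \leq e^{\gamma(-r+K)/2}$ with conditional probability $\geq 1 - \delta/2$ for $K$ sufficiently large in terms of $C$ and $\delta$. Union-bounding with the analogous bound for the top gives~\eqref{eq-E-given-F}.

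The hard part will be making step one rigorous: a short quantum interface does not automatically imply a low field value on a vertical line crossed by the interface, since the interface could be Euclideanly very thin in a region of moderate field. The imaginary-geometry coupling gives the extra leverage, as it controls the Euclidean shape of the interface independently of $h$ and so forces the field to be low precisely where the interface is short.
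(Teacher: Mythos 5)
There is a genuine gap, and it sits exactly where you flagged it: your Step 1 needs the implication ``short quantum interface $\Rightarrow$ low field average near the pinch point,'' and the fix you propose (imaginary-geometry control of the Euclidean shape of the interface plus a GMC lower bound) does not survive the conditioning. The event $F_{r,C}$ is a rare event (its probability tends to $0$ as $r,C\to\infty$) that depends jointly on $h$ and $\eta'$, so after conditioning on $F_{r,C}$ the fields $h$ and $h^{\IG}$ are no longer independent and the law of $h$ near the interface is distorted in a way you do not control. Any ``reverse Lemma~\ref{wedges-lem}'' estimate would have to hold under this conditional law, and you have no uniform-in-$r$ handle on it. The same problem infects your Step 2: the absolute continuity of the $F_{r,C}$-conditional law of $h|_{\cS_++\tau_{-r}}$ with respect to the unconditioned law is true but quantitatively vacuous, since the Radon--Nikodym derivative is of order $\P[F_{r,C}]^{-1}$, which blows up; a ``standard GMC moment bound'' applied through it gives nothing uniform in $r$ and $C$. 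Likewise your claim that $x_1-\tau_{-r}=O(C)$ ``since $X_t$ must drop from $-r$ to $-r-C/2$'' implicitly uses the unconditioned drift of the field-average process in a region whose conditional law you have not identified.

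The missing idea in the paper's argument is to never estimate anything under the rare conditioning directly. Since $F_{r,C}\in\sigma(\cD^*)$ and $\cD^*$ is independent of $(\cW^*_1,\cW^*_2)$, the conditional law of the two complementary wedges given $F_{r,C}$ is their unconditional law; by scale invariance one can couple them as $\cW^*_j=\widetilde\cW^*_j+c$ with $|c-(-r-C)|\leq N$, where $(\widetilde\cW^*_1,\widetilde\cW^*_2)$ come from an auxiliary wedge conditioned only on a \emph{macroscopic} (positive-probability, $r$-independent) event $\widetilde F$. All field-average and boundary-length estimates (your ``$U$ lies to the right of $\tau_{-r}$'' and the $e^{\gamma(-r+K)/2}$ bound) are then proved for $\widetilde h$, where the surface behaves regularly uniformly in $r$, and transferred to $h$ through the conformal welding map using the distortion estimate of Lemma~\ref{lem: distortion estimate}; the right-to-left Markov property (Lemma~\ref{lem: rest of thick wedge}(b)) handles the part of $h$ to the left of $U$, which is genuinely unaffected by $F_{r,C}$. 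Without this (or an equivalent device for doing estimates on an unconditioned surface), both steps of your outline remain unproved.
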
 
We sketch why~\eqref{eq-E-given-F} is true. Condition on $F_{r,C}$. Firstly, even though $\cD^*$ is conditioned on a very rare event, the surfaces $\cW_1$ and $\cW_2$ are still (wedges) independent of $\cD^*$, so they should behave in a fairly regular way; as a result, since the boundary lengths $l_1,l_2$ along $\cW_1,\cW_2$ are small ($\frac{2}{\gamma} \log l_j \approx -r - C \ll -r$), we expect the field averages inside $\cW_1, \cW_2$ to also be small. Thus we expect $[\tau_{-r}, \tau_{-r} + i\pi]$ to lie to the left of $U$. 
Secondly, since the field average near $\tau_{-r}$ is close to $-r$, and the surface $\cW_1$ behaves in a fairly regular way, we expect that with high probability the remaining boundary length of $\cW_1$ from $\tau_{-r}$ to its marked point $x_1$ should be within a constant factor of $e^{-\gamma r/2}$. Similarly we expect $\nu_h([\tau_{-r} + i\pi, x_2])$ to be within a constant factor of $e^{-\gamma r/2}$. 
By the definitions of $x_1,x_2$, we conclude that with high probability $\nu_h(\R_+ + \tau_{-r}) \in [q_1, q_1 + \exp(\gamma(-r+K)/2)]$ and $\nu_h(\R_+ + \tau_{-r} + i\pi) \in [q_2, q_2 + \exp(\gamma(-r+K)/2)]$, completing our proof sketch of Proposition \ref{prop: E given F}. We devote the rest of this section to actually proving Proposition \ref{prop: E given F} in the case $\gamma \in (0,\sqrt2]$. 

We will need the following auxiliary random curve-decorated surface. Sample a $\gamma$-quantum wedge $(\cS, \widetilde h,+\infty, -\infty)$ together with an independent counterclockwise space-filling SLE $\widetilde \eta'$. Similar to before, let $\widetilde x_1 \in \R$ and $\widetilde x_2 \in \R + i\pi$ be the unique points satisfying $\nu_{\widetilde h} (\R_+ + x_1) = \nu_{\widetilde h}( \R_+ + x_2) = 1$ (the exact values of these lengths are unimportant). Define the left-to-right stopping times $\widetilde \tau_s$ for $s \in \R$, the regions $\widetilde U, \widetilde U_1, \widetilde U_2$, and the curve-decorated quantum surfaces $\widetilde \cD^*, \widetilde \cW_1^*, \widetilde \cW_2^*$ as above with $(\wt h , \wt\eta')$ in place of $(h,\eta')$. Once again, these three quantum surfaces are mutually independent. Define as before $\widetilde l_j = \nu_{\widetilde h} (\widetilde U \cap \widetilde  U_j)$ for $j= 1,2$, and
\begin{align} \label{eqn: tilde F def}
\widetilde F &= \left\{ \frac{\widetilde l_1}{\widetilde l_2} \in \left[\frac12, \frac34\right], \quad \widetilde l_2 \in \left[\frac12,1\right] \right\}, \notag \\
\widetilde \cL &= \text{ conditional law of } (\widetilde h,\widetilde \eta') \text{ given } \widetilde F . 
\end{align}
Note that $\widetilde F \in \sigma(\widetilde \cD^*)$, so as before, under the law $\widetilde \cL$ the quantum surfaces $\widetilde \cD^*, \widetilde \cW^*_1, \widetilde \cW^*_2$ are still mutually independent, and the marginal laws of $\widetilde \cW^*_1, \widetilde \cW^*_2$ are the same as their unconditional marginal laws, i.e. the respective wedges arising in Theorem~\ref{thm: wedge laws}. 

We will need the following technical lemma in our proof of Proposition~\ref{prop: E given F}.
\begin{lemma}\label{lem: uniform bounds on wedge fields}
Fix $a,b > 0$ and let $\Phi_{a,b}^1$ be the set of smooth functions supported in the rectangle $[0,a] \times [0,\pi]$ with $ \phi \geq 0, \int  \phi(x) \ dx = 1$, and $\| \phi'\|_\infty \leq b$. For $s,x \geq 0$, let
\begin{align*}
M_s(\widetilde h) = \sup_{ \phi \in \Phi_{a,b}^1} (\widetilde h,  \phi ( \cdot + \widetilde\tau_{s})), \qquad
m_x(\widetilde h) = \inf_{ \phi \in \Phi_{a,b}^1} (\widetilde h,  \phi ( \cdot + \widetilde\tau_{x})).
\end{align*}
Then for an unconditioned $\gamma$-quantum wedge $(\cS, \widetilde h,+\infty, -\infty)$, we have
\begin{align*}
\P[M_s(\widetilde h) - s \leq k] \to 1 \text{ as } &k \to +\infty \text{ uniformly in }s > 0,\\
\P[m_x(\widetilde h)  - x \geq -k] \to 1 \text{ as } &k \to +\infty\text{ uniformly in }x > 0.
\end{align*}
Moreover, the same holds for $(\widetilde h, \widetilde \eta')$ sampled from from $\widetilde \cL$.
\end{lemma}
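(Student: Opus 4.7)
The proof should proceed in two stages: first for the unconditioned $\gamma$-quantum wedge via a stationarity, and then for $\widetilde\cL$ via a Radon--Nikodym bound against the unconditioned law.

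\textbf{Unconditioned case.} I will invoke the scale invariance of the quantum wedge recorded just after Definition~\ref{def: thick wedge}: for any deterministic $C \in \R$ and any $\gamma$-quantum wedge field $\widetilde h$ normalized so that $\widetilde\tau_0 = 0$, the shifted field $\widetilde h + C$ again has the law of a $\gamma$-wedge after rechoosing horizontal centering, and the required recentering amount is exactly $\widetilde\tau_{-C}$. Rewriting this with $s = -C$ yields the stationarity
\[
\widetilde h(\,\cdot\, + \widetilde\tau_s) - s \;\overset{d}{=}\; \widetilde h \quad \text{for every } s \in \R.
\]
Pairing with any $\phi \in \Phi^1_{a,b}$ and taking the supremum (resp.\ infimum) over the class gives $M_s(\widetilde h) - s \overset{d}{=} M_0(\widetilde h)$ and $m_x(\widetilde h) - x \overset{d}{=} m_0(\widetilde h)$ for every $s, x > 0$. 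It will then suffice to check that $M_0(\widetilde h)$ and $m_0(\widetilde h)$ are a.s.\ finite. This I would verify by writing $\widetilde h = X_{\Re(\cdot)} + \widetilde h_{\mathrm{lat}}$ (field-average and lateral parts, as in Remark~\ref{remark: GFF on strip}): the first contribution is bounded by $\sup_{[0,a]} |X|$, while for the second I would use that $\Phi^1_{a,b}$ is uniformly bounded in $H^1([0,a]\times[0,\pi])$, together with a standard Borell--TIS argument applied to the circle-average regularization of $\widetilde h_{\mathrm{lat}}$.

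\textbf{Conditioned case.} For $(\widetilde h, \widetilde\eta') \sim \widetilde\cL$, the key observation is that $\widetilde F$ (defined in~\eqref{eqn: tilde F def}) has strictly positive unconditioned probability: by Theorem~\ref{thm: peanosphere gamma wedge}, $(\widetilde l_1, \widetilde l_2)$ are explicit functionals of a correlated two-dimensional Brownian motion and hence have a jointly absolutely continuous law, while $\widetilde F$ is carved out by a set of positive Lebesgue measure. Consequently, for every event $A$ one has the Radon--Nikodym bound
\[
\widetilde\cL(A) \;=\; \P[A \mid \widetilde F] \;\leq\; \frac{\P[A]}{\P[\widetilde F]} .
\]
Applying this with $A = \{M_s(\widetilde h) - s > k\}$ and using the unconditioned case gives
\[
\widetilde\cL\bigl[ M_s(\widetilde h) - s > k \bigr] \;\leq\; \frac{\P[M_0(\widetilde h) > k]}{\P[\widetilde F]} \;\xrightarrow[k\to\infty]{}\; 0,
\]
and the right-hand side is manifestly independent of $s$, yielding the desired uniformity in $s > 0$ for free. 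The statement for $m_x$ follows by the identical argument with $A = \{m_x(\widetilde h) - x < -k\}$.

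\textbf{Main obstacle.} The only point that will require genuine care is the a.s.\ finiteness of $M_0(\widetilde h)$ and $m_0(\widetilde h)$, since $\Phi^1_{a,b}$ is infinite-dimensional and one must rule out the supremum being achieved by a sequence $\phi_n \in \Phi^1_{a,b}$ whose mass concentrates onto a single point where $\widetilde h$ is distributionally unbounded. The plan is to exploit the fact that $\Phi^1_{a,b}$ is uniformly bounded in $H^1$ and hence precompact in $H^\eps$ for any $\eps < 1$ (by Rellich--Kondrachov), while the wedge field is a.s.\ in $H^{-\eps}_{\mathrm{loc}}$ for every $\eps > 0$, making $\phi \mapsto (\widetilde h, \phi)$ a.s.\ continuous in a topology in which $\Phi^1_{a,b}$ is compact. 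Once this regularity is in hand, the rest of the argument is essentially soft, being driven by the wedge stationarity and the positivity $\P[\widetilde F] > 0$.
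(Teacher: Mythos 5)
Your proposal is correct and follows essentially the same route as the paper: the paper's proof is exactly (i) the laws of $M_s(\widetilde h)-s$ and $m_x(\widetilde h)-x$ do not depend on $s,x$ and are a.s.\ finite (with the details deferred to the discussion after Proposition~9.19 of the mating-of-trees paper, which is the stationarity $\widetilde h(\cdot+\widetilde\tau_s)-s \overset{d}{=} \widetilde h$ you derive from scale invariance), and (ii) $\P[\widetilde F]>0$ gives the conditioned case by the trivial Radon--Nikodym bound. Your additional sketch of the a.s.\ finiteness of $M_0,m_0$ via the $H^1$-boundedness/compactness of $\Phi^1_{a,b}$ is a reasonable way to supply the detail the paper outsources.
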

\begin{proof}
The lemma holds for the unconditioned $\gamma$-quantum wedge because $M_s(\widetilde h) - s$ and $m_x(\widetilde h) - x$ have distributions independent of $s,x$, and are almost surely finite. For details see the discussion in the paragraph just after \cite[Proposition 9.19]{wedges}.
Since $\P[\widetilde F] > 0$, the same is true for $(\widetilde h, \widetilde \eta')$ sampled from $\widetilde\cL$. 
\end{proof}

The proof of Proposition \ref{prop: E given F} is long and technical. As such, we present a proof sketch in Figure~\ref{fig:coupling} to convey the main ideas (on a first read one might elect to read just the sketch). 

\begin{proof}[Proof of Proposition \ref{prop: E given F} for ${\gamma \in (0,\sqrt2]}$] We focus on proving~\eqref{eq-E-given-F}. The last assertion of the proposition is Step 3 below; we make no further mention of it.
\medskip

\noindent
\textbf{Step 1: Coupling $(h, \eta')$ with $(\widetilde h, \widetilde \eta')$.}

First, we couple $(\cD^*, \cW^*_1, \cW^*_2)$ and $(\widetilde \cD^*, \widetilde  \cW^*_1, \widetilde \cW^*_2)$ so that their marginal laws are $\cL$ and $\widetilde \cL$ respectively, as follows.
Independently sample $\cD^*$ given $F_{r,C}$ and $\widetilde \cD^*$ given $\widetilde F$,
and define the random variables
\begin{align*}
l &= \frac{l_2\widetilde l_1 - l_1 \widetilde l_2}{\widetilde l_2- \widetilde l_1}, \qquad c = \frac2\gamma \log \frac{l_2 + l}{\widetilde l_2}.
\end{align*}
In other words, $\widetilde l$ and $c$ are the solutions to the system of equations
\begin{align*}
l_j + l = e^{\gamma c / 2} \widetilde l_j\quad  \text{ for }j = 1,2.
\end{align*}

Since $l_1,l_2,\widetilde l_1, \widetilde l_2 \in \sigma(\cD^*, \widetilde \cD^*)$, it is clear that $l$ and $c$ are measurable with respect to $\sigma(\cD^*, \widetilde\cD^*)$. Since we are sampling from the conditional laws $\cL$ and $\widetilde \cL$ given $F_{r,C}$ and $\wt F$, respectively, the definitions~\eqref{eqn: F def} and~\eqref{eqn: tilde F def} of these events imply that a.s. $\wt l \geq 0$ and for some absolute constant $N$ we have 
\[ |(-r-C) - c | < N \text{ almost surely}.\]

Next, given $\cD^*$ and $\widetilde \cD^*$, we produce a coupling of the four curve-decorated quantum surfaces $\cW^*_j$ and $\widetilde \cW^*_j$ for $j=1,2$. Sample $\widetilde\cW^*_1$ and $\widetilde\cW^*_2$ independently from their respective laws. For $j=1,2$, define the quantum surfaces $\cW^*_j = \widetilde \cW^*_j + c$ for $j = 1,2$. By the scale invariance property of the quantum wedges $\cW^*_j$ and the independence of $c$ from $(\widetilde \cW^*_1, \widetilde \cW^*_2)$, we see that given any realization of $\cD^*, \widetilde\cD^*$, the marginal laws of $(\cW^*_1,\cW^*_2)$ and $(\wt \cW^*_1, \wt \cW^*_2)$ are exactly what we want.

This gives us a coupling of $(\cD^*, \cW^*_1, \cW^*_2)$ and $(\widetilde \cD^*, \widetilde  \cW^*_1, \widetilde \cW^*_2)$ with marginal laws $\cL, \widetilde \cL$. Therefore, we can couple $(h, \eta')$ and $(\widetilde h, \widetilde \eta')$ so that $\cW^*_j = \widetilde  \cW^*_j + c$ for $j = 1,2$.
\medskip
\begin{figure}[ht!]
\begin{center}
\includegraphics[scale=0.75]{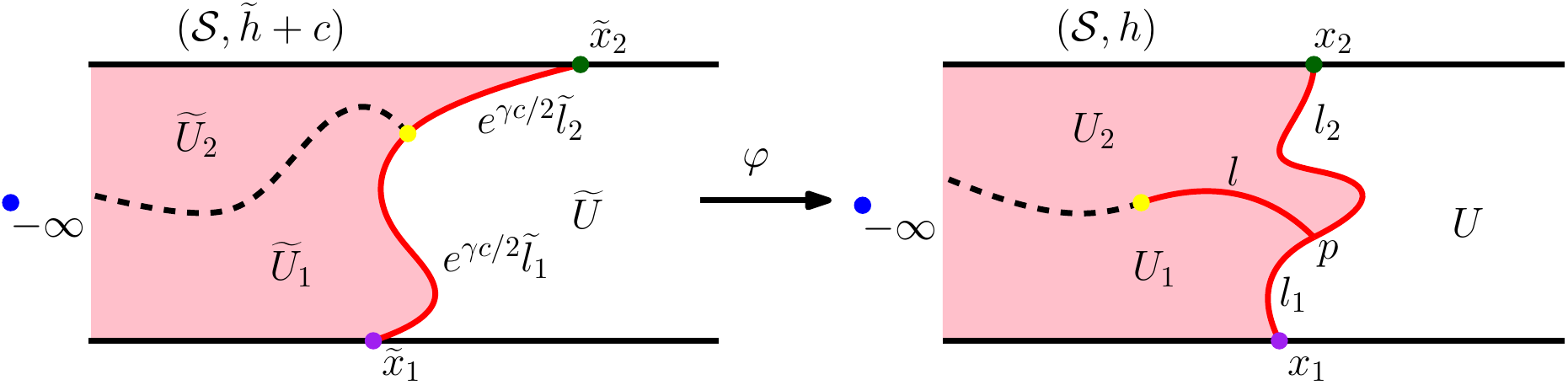}
\end{center}
\caption{\label{fig:coupling} Proof sketch of Proposition \ref{prop: E given F}. In Step 1 we couple $(h, \eta')\sim \cL$ with $(\widetilde h, \widetilde  \eta') \sim \widetilde \cL$ in such a way that for some random $c \approx -r - C$, we have $\widetilde \cW_j^* + c = \cW_j^*$ as curve-decorated quantum surfaces for $j = 1,2$. In Step 2 we make a slit in $U_1 \cup U_2$ producing a slitted pink domain $V$ (right) so that the quantum surfaces parametrized by the pink regions are equivalent. The map $\varphi: \widetilde U_1 \cup \widetilde U_2 \to V$ sends each colored point to the corresponding point of the same color. In Step 3, since in $(\cS, h)$ the lengths $l, l_1, l_2$ are roughly $e^{\gamma (-r-C)/2}$, the nearby region should have field average close to $-r-C$; for large $C$ we thus have that $\tau_{-r}$ lies to the left of $U$. The argument is difficult in $(\cS, h)$ because we condition on the rare event $F_{r,C}$, so we instead perform the argument in $(\cS, \wt h + c)$. In Step 4, we go much further to the left in $(\cS, \wt h + c)$ to a region (corresponding to a region in $(\cS, h)$ with field average greater than $-r$) and analyze the boundary lengths to the right of the region; this gives us the desired bound $e^{\gamma(-r+K)/2}$ on the lengths in $(\cS, h)$ from $\tau_{-r}$ to $x_1$ and from $\tau_{-r} +i\pi$ to $x_2$. How far to the left we have to go depends on $C$, which is why we take $K > K_0(C)$.}
\end{figure}

\noindent
\textbf{Step 2: Showing the equivalence of the quantum surfaces $(\widetilde U_1 \cup \widetilde U_2,  \widetilde h + c)$ and $(V, h)$.} Let $V$ be the domain $ U_1 \cup  U_2$ with a slit along the interface $U_1 \cap U_2$ of quantum length $l$ (see Figure~\ref{fig:coupling}, bottom).

By definition, $\widetilde \cW_j + c = (\widetilde U_j, \widetilde h + c)$ is equivalent as a quantum surface to $\cW_j = (U_j, h)$ for $j = 1,2$. If we write $v_1$ for the point on the right edge of $\widetilde \cW_1 + c$ which is $l_1+l$ units of quantum length from the origin, and $v_2$ for the point on the left edge of $\widetilde \cW_2 + c$ which is $l_2+l$ units from the origin, then $(\widetilde U_1 \cup \widetilde U_2,  \widetilde h + c)$ is a conformal welding of $\widetilde \cW_1+ c$ and $\widetilde \cW_2 + c$ which glues the right boundary ray of $\widetilde\cW_1 + c$ from $v_1$ to $\infty$ to the left boundary ray of $\widetilde\cW_2 + c$ from $v_2$ to $\infty$ according to quantum length. Likewise, we observe that $(V, h)$ is a conformal welding of $\cW_1$ and $\cW_2$ along the analogously defined boundary rays.
By the conformal removability of $\SLE_\kappa$-type curves (with $\kappa \in (0,4)$), such weldings are unique; see \cite[Section 3.5]{wedges} for details. We conclude that the quantum surfaces $(\widetilde U_1 \cup \widetilde U_2,  \widetilde h + c)$ and $(V, h)$ are equivalent. 
\medskip

\noindent
\textbf{Step 3: Showing that $U$ lies to the right of $[\tau_{-r}, \tau_{-r} + i\pi]$ with high probability.}
We want to show that we can choose $C$ large enough so that with probability arbitrarily close to 1 for large $r$, the region $U$ lies to the right of the vertical line $[\tau_{-r}, \tau_{-r} + i\pi]$. In other words, we want to show that when $C$ is large, then $\tau_{-r} \leq u$ with uniformly high probability for large $r$, where
\begin{align} \label{eqn: leftmost pt}
u = \inf \{ \Re z : z \in U\}.
\end{align}

Let $\varphi: \widetilde U_1 \cup \widetilde U_2 \to V$ be the conformal map establishing the equivalence of the quantum surfaces $(\widetilde U_1 \cup \widetilde U_2, \widetilde h + c)$, $(V, h)$, so that on $\widetilde U_1 \cup \widetilde U_2$ we have $h\circ \varphi  + Q\log |\varphi'| = \wt h + c$. See Figure \ref{fig:coupling} for a description of $\varphi$. Our strategy for this step is to construct a function $\phi_{R_2}$ with support to the left of $U$, and use the above change-of-domain formula with the distortion estimate Lemma~\ref{lem: distortion estimate} to show that $(h, \phi_{R_2})$ is very negative with high probability. Thus some vertical field average to the left of $U$ is less than $-r$, so $U$ lies to the right of $[\tau_{-r}, \tau_{-r} + i\pi]$ with high probability.

Let $C_1, C_2$ be the constants in Lemma \ref{lem: distortion estimate}. 
Let $a = 3C_2$ and let $b > 1$ be some large constant to be chosen later that does not depend on $h, \wt h$. Now, we will pick $s$ and $C> 1$, as follows. We can choose $s \gg 0$ such that for $\widetilde h \sim \widetilde \cL$, we have $\P[\widetilde \tau_s + a +C_1 < \inf_{\widetilde U} \Re z]\approx 1$. Let $M_s(\wt h)$ for $s \in \BB R$ be as in Lemma~\ref{lem: uniform bounds on wedge fields}. Since $|(-r-C)-c|< N$,
\begin{equation*}
\P[M_s(\widetilde h) + c < -r - Q \log C_2 ] \geq \P[M_s(\widetilde h) - s < -N + C - s- Q \log C_2 ].
\end{equation*}
By Lemma \ref{lem: uniform bounds on wedge fields}, for $C \geq C_0(s)$ chosen sufficiently large, we have $\P[M_s(\widetilde h) + c < -r - Q \log C_2] \approx 1 $ uniformly for all $r>\beta$.  

For the rest of this step, we truncate on the intersection of the following high probability events:
\begin{gather}
\widetilde \tau_s + a + C_1 < \inf_{\widetilde U} \Re z, \label{eqn: tau_s occurs before interface} \\
M_s(\widetilde h) + c < - r - Q\log C_2 \label{eqn: field small near tau_s} .
\end{gather}
Let $\psi: [0, C_2] \times [0,\pi] \to \R$ be any function such that: 
\begin{itemize}
\item $\psi|_{[0,C_2]}$ is a nonnegative compactly supported bump function;
\item $\psi$ is constant on vertical lines;
\item $\int_{[0,C_2] \times [0,\pi]} \psi(z) \ dz  =1$. 
\end{itemize}
Consider the rectangle $R_1 = [\widetilde \tau_s , \widetilde \tau_s + a] \times [0,\pi]$. By \eqref{eqn: tau_s occurs before interface}, we see that the rectangle $R_1$ is at least a distance of $C_1$ away from $\widetilde U$. Consequently, Lemma \ref{lem: distortion estimate} tells us that the image $\varphi(R_1)\subset U_1 \cup U_2$ contains a rectangle $R_2$ of width $C_2$. Let $\phi_{R_2}$ be given by the composition of $\psi$ with a horizontal translation of the strip $\cS$ such that $\phi_{R_2}$ is supported in $R_2$, so its pullback $\phi =  |\varphi'|^2 \phi_{R_2} \circ \varphi$ is supported in $R_1$. By Lemma \ref{lem: distortion estimate} we can bound $\|\phi'\|_\infty$ above in terms of $C_1, C_2, \| \psi\|_\infty, \|\psi'\|_\infty$, and choosing $b$ large in terms of these constants, we get $\|\phi'\|_\infty < b$. Thus $\phi \in \Phi_{a,b}^1$ (with $\Phi_{a,b}^1$ defined in Lemma \ref{lem: uniform bounds on wedge fields}), and so \eqref{eqn: field small near tau_s} tells us that $(\widetilde h + c, \phi) < - r - Q \log C_2$. Therefore, bounding $|(\varphi^{-1})'|$ via Lemma \ref{lem: distortion estimate}, we get
\[(h,\phi_{R_2})  = (Q \log | (\varphi^{-1})'|, \phi_{R_2}) + (\widetilde h + c, \phi) <- r.\]
Since $\phi_{R_2}$ is constant on vertical lines and has integral against Lebesgue measure equal to 1, we conclude that for some vertical line in $R_2$ the field $h$ has average value less than $-r$. Therefore $\tau_{-r} < u$ with probability approaching 1 as $C \to \infty$.
\medskip

\noindent
\textbf{Step 4: Showing that $\nu_h ([\tau_{-r}, +\infty)) < q_1 + e^{\gamma (-r+K)/2}$ and $ \nu_h([\tau_{-r}, +\infty) + i\pi]) < q_2 + e^{\gamma (-r+K)/2}$ with high probability for $K$ large, uniformly for $r > \beta$.} We return to the setting of Steps 1 and 2, where we have a coupling of $(h, \eta')$ and $(\widetilde h, \widetilde \eta')$, and we are \emph{not} truncating on the events \eqref{eqn: tau_s occurs before interface}, \eqref{eqn: field small near tau_s}.

In the past, we have been exploring the field from left to right. For this step, we instead explore the field $h$ from right to left, stopping when we have discovered the whole domain $U$. More precisely, given the realization of the space-filling curve $\eta'$ (modulo time-parametrization), we discover the field $h|_{\cS_+ +M}$ and decrease the value of $M$ until we discover the points $x_1$ and $x_2$. Given these points and $\eta'$, we know the value of $u$ (defined in~\eqref{eqn: leftmost pt}), and discover $h|_{\cS_+ + u}$. We claim that conditioned on $h|_{\cS_+ + u}$ and $\eta'$, the conditional law of $h|_{\cS_- + u}$ is given by a GFF on $\cS_- + u$ with Neumann boundary conditions on $\R_- +u$ and $\R_- + u + i\pi$, and Dirichlet boundary conditions on $[u,u+i\pi]$ specified by $h|_{\cS_+ + u}$, with an added linear drift in the field average. This is true because it holds for a $\gamma$-quantum wedge field $\wh h$ conditioned on $\wh h|_{\cS_+ + u}, \eta'$ by Lemma~\ref{lem: rest of thick wedge}, and the event $F_{r,C}$ is measurable w.r.t. the $\sigma$-algebra generated by $\wh h |_{\cS_+ + u}$ and $\eta'$. Thus, defining the right-to-left field average process of $h$
\begin{equation*}
Y_t = \text{average of }h \text{ along } [u - t, u - t + i\pi],
\end{equation*}
$(Y_t - Y_0)_{t\geq 0}$ evolves as Brownian motion with variance 2 and upward linear drift of $(Q-\gamma)$ independently of $h|_{\cS_+ + u}$ and $\eta'$.

We henceforth condition on the high probability event $\{\tau_{-r} < u\}$ (see Step 3), so $(Y_t)_{t \geq 0}$ is a Brownian motion with random starting value and upward drift of $(Q - \gamma)$ conditioned to take the value $-r$ at some point. Write \[ \sigma_{-r} = \sup \{t \in (-\infty, u) \: \mid \: \text{the average of }h \text{ along } [t,t+i\pi] \text{ is } -r\}.\] By the strong Markov property of Brownian motion, the law of $|\tau_{-r} - \sigma_{-r}|$ conditioned on $h|_{\cS_+ + u}$ and $\eta'$ is given by the law of the last hitting time of 0 of a Brownian motion started at 0 with variance 2 and upward drift of $(Q-\gamma)$. As such, we can find some absolute constant $d$ such that with high probability we have $|\tau_{-r} - \sigma_{-r}| < d$. 

Next, we choose $x,K > 1$. Since $|(-r-C) - c| \leq N$, we have (with $m_x(\wt h)$ as in Lemma~\ref{lem: uniform bounds on wedge fields})
\begin{equation*}
\P[m_x(\widetilde h) + c > -r + Q \log C_2 ] \geq \P[m_x(\widetilde h) - x > C+N-x+ Q \log C_2],
\end{equation*}
so by Lemma \ref{lem: uniform bounds on wedge fields}, for $x \geq x_0(C)$ we have $\P[m_x(\widetilde h) + c > -r +  Q \log C_2] \approx 1$ uniformly for all $r$. Choose also $x > s$. 

Now, since $|c - (-r-C)|<N$ a.s., we have
\[ \nu_{\widetilde h + c} \left( [\widetilde \tau_x - d - C_2, +\infty) \times \{0,\pi\}\right) < e^{\gamma (-r-C+N)/2} \nu_{\widetilde h} \left( [\widetilde \tau_x - d - C_2, +\infty) \times \{0,\pi\}\right), \]
and so for all $K \geq K_0(x, d)$, 
\begin{equation*}
\P\left[ \nu_{\widetilde h + c} \left( [\widetilde \tau_x - d - C_2, +\infty) \times \{0,\pi\} \right)< e^{\gamma (-r + K)/2}\right] \approx 1 \text{ uniformly for all } r.
\end{equation*}
Thus, uniformly in $r$, the following three events have probability arbitrarily close to 1. We further truncate on them:
\begin{gather}
\sigma_{-r} - \tau_{-r} < d, \label{eqn: first and last hitting times of r are close}\\
m_x(\widetilde h) + c > -r+ Q \log C_2, \label{eqn: field large near tau_x} \\
\nu_{\widetilde h + c} \left( [\widetilde \tau_x - d - C_2, +\infty) \times \{0,\pi\} \right)< e^{\gamma (-r + K)/2}. \label{eqn: boundary length not massive after tau_r}
\end{gather}

Next, we repeat what we did in Step 3. Define $R_1' = [\widetilde \tau_x, \widetilde \tau_x + a] \times [0,\pi]$ and again let $R_2' \subset \varphi(R_1')$ be a rectangle of width $C_2$. Using \eqref{eqn: field large near tau_x}, the same argument from Step 3 shows that $h$ has average greater than $-r$ on some vertical line in $R_2'$. Note that since $x > s$, the rectangle $R_2'$ lies to the left of $R_2$, so by the intermediate value theorem $\sigma_{-r}$ lies to the right of $R_2'$. Therefore, \eqref{eqn: first and last hitting times of r are close} and the distortion estimate Lemma \ref{lem: distortion estimate} tell us that $\varphi^{-1} ([\tau_{-r}, \tau_{-r} + i\pi])$ lies in $\cS_+ + \widetilde \tau_x - d - C_2$. Hence \eqref{eqn: boundary length not massive after tau_r} implies
\begin{align*}
\nu_h([\tau_{-r}, x_1]) &\leq \nu_{\widetilde h + c} ([\varphi^{-1} (\tau_{-r}), +\infty)) < e^{\gamma (-r+K)/2}, \\
\nu_h([\tau_{-r}+i\pi, x_2]) &\leq \nu_{\widetilde h + c} ([\varphi^{-1} (\tau_{-r} + i\pi), +\infty + i\pi)) < e^{\gamma (-r+K)/2}.
\end{align*}
As $\nu_h(\R_+ + x_1) = q_1$ and $\nu_h(\R_+ + x_2) = q_2$, we conclude that with high probability 
\begin{align*}
\nu_h([\tau_{-r}, +\infty)) &< q_1 + e^{\gamma (-r+K)/2}, \\
\nu_h([\tau_{-r}, +\infty) + i\pi) &< q_2 + e^{\gamma (-r+K)/2}.
\end{align*}
This concludes Step 4.

Now we finish the proof. For $(h, \eta') \sim \cL$ (i.e. conditioned on $F_{r,C}$), by Step 3 we have with high probability that $[\tau_{-r}, \tau_{-r} + i\pi]$ lies to the left of $U$, and hence  
\begin{align*}
\nu_h([\tau_{-r}, +\infty)) &> \nu_h([x_i, +\infty)) =  q_1, \\
\nu_h([\tau_{-r}, +\infty) + i\pi) &> \nu_h([x_2, +\infty +i\pi)) =  q_2.
\end{align*}
Comparing this with Step 4, we conclude that $\P[E_{r,K} | F_{r,C}] \approx 1$, as needed. 
See Section~\ref{subsection: gamma large} for the regime $\gamma \in (\sqrt2,2)$.
\end{proof}

\subsection{Adaptations for the regime $\gamma \in (\sqrt2, 2)$}\label{subsection: gamma large}
In this section, we adapt the methods of the previous two sections to the regime $\gamma \in (\sqrt2, 2)$. This regime is more complicated because the space-filling $\SLE_{\kappa'}$ bounces off of itself and boundary segments, so when we perform the surface decomposition in Section~\ref{subsection: decomposing wedge}, we get countably many pieces (rather than three pieces). Fortunately, the difficulty is mostly psychological; most of the arguments carry over directly. We emphasize that for this regime $\gamma \in (\sqrt2,2)$, Theorem~\ref{thm: peanosphere disk} was earlier proved in \cite[Theorem 2.1]{sphere-constructions} by simpler methods. Regardless, we extend our proof to this setting to provide a unified treatment of the mating of trees on a quantum disk.

\medskip

\noindent
\textbf{Section~\ref{subsection: decomposing wedge}: Decomposing a $\gamma$-quantum wedge.} Define exactly as in Section~\ref{subsection: decomposing wedge} the $\gamma$-quantum wedge $(\cS,h,+\infty, -\infty)$, stopping time $\tau_t$ for the left-to-right exploration of the wedge, lengths $q_1 = q_1(r)$ and $q_2 = q_2(r)$, points $x_1, x_2$, counterclockwise space-filling $\SLE_{\kappa'}$ $\eta'$ with time recentered so $\eta'(0) = x_1$, and let $T$ be the time $\eta'$ hits $x_2$. 

Define the regions 
\[U = \eta'([0,T]), \quad U_1 = \eta'((-\infty, 0]), \quad U_2 = \eta'([T,+\infty)).\]
See Figure~\ref{fig: surface decomposition complicated}. When we discussed $\gamma \in (0,\sqrt2]$ earlier, each of these regions had simply connected interior. Now, in the $\gamma \in (\sqrt2,2)$ regime, only $U_1$ has simply connected interior. The interiors of $U, U_2$ each have countably many connected components which are totally ordered; see Remark~\ref{remark: topology of wedges} for details. Regardless, we can define the curve-decorated surfaces 
\begin{equation}
\cD^* := (U,h,\eta',x_1,p,x_2), \quad \cW_1^* := (U_1,h,\eta',x_1,-\infty), \quad \cW_2^* := (U_2,h,\eta', x_2, -\infty).
\end{equation}

\begin{figure}[ht!]
\begin{center}
\includegraphics[scale=0.75]{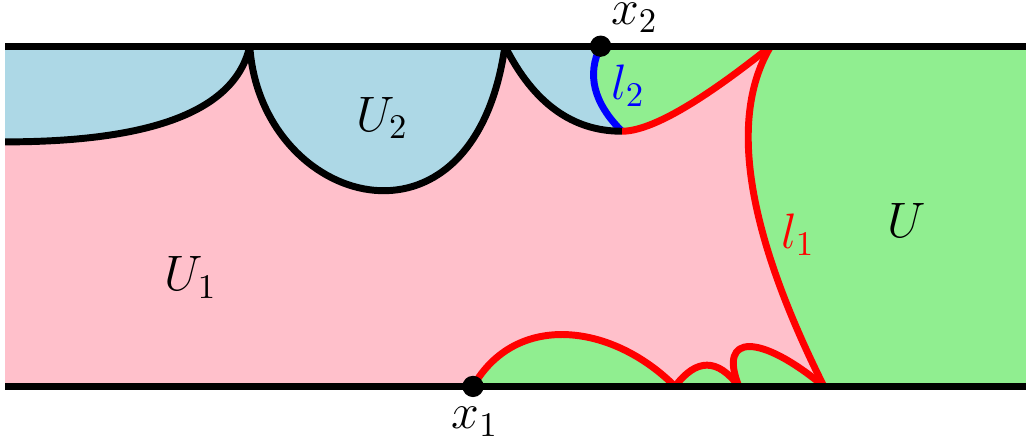}
\end{center}
\caption{\label{fig: surface decomposition complicated} Let $\gamma \in (\sqrt2, 2)$. By drawing an independent counterclockwise $\SLE_{\kappa'}$ on top of a $\gamma$-wedge $ (\cS, h, +\infty, -\infty)$, we can decompose the curve-decorated $\gamma$-wedge into three curve-decorated surfaces $\cD^*, \cW_1^*, \cW_2^*$ parametrized by the color-coded regions $U, U_1, U_2$ respectively. While $U_1$ has simply connected interior, the interiors of $U$ and $U_2$ each have countably many connected components. Define $l_1$ and $l_2$ to be the quantum lengths of the red and blue interfaces respectively.}
\end{figure}

The boundary length processes in each of $\cD^*, \cW_1^*, \cW_2^*$ are precisely those described in Section~\ref{subsection: decomposing wedge}, because those processes were derived purely from working with the boundary length processes in $(\cS, h, +\infty, -\infty, \eta')$, and this derivation involved no topological considerations. Moreover, these surfaces can a.s. be recovered given their boundary length processes, they are mutually independent, and scale invariant. 

As before, we define $l_1 = \nu_h(U \cap U_1)$ and $l_2 = \nu_h(U \cap U_2)$. The curves $U \cap U_j$ \emph{are} allowed to bounce off of the boundary $\partial \cS$, but do not intersect themselves -- see Figure~\ref{fig: surface decomposition complicated}. With these definitions, we define the event $F_{r,C}$ and the conditional law $\cL$ as in~\eqref{eqn: F def}. The proof of Lemma~\ref{lem: boundary length process in D} for $\gamma \in (\sqrt2,2)$ is identical to that of the regime $\gamma \in (0,\sqrt2]$.
\medskip

\noindent
\textbf{Section~\ref{subsection: equivalence E F}: Equivalence of $F_{r,C}$ and $F_{r,C} \cap E_{r,K}$.} All the results of this section still hold in the regime $\gamma \in (\sqrt2,2)$, and most of the arguments carry over directly. 

The only modification needed is in Step 2 of the proof of Proposition~\ref{prop: E given F}. Because of the complicated topology of the quantum surfaces $\cW_1^*, \cW_2^*$, we have to slightly modify the surface-cutting procedure. For $\gamma \in (0,\sqrt2]$, we defined $V$ to be the region $U_1 \cup  U_2$ with a slit of length $\nu_{ h} (\text{slit}) = l$. For $\gamma \in (\sqrt 2, 2)$, the slitted domain comprises countably many components; let $V$ be the component with $-\infty$ on its boundary. Similarly, the interior of the region $\widetilde U_1 \cup \widetilde U_2$ has countably many connected components; let $\widetilde V$ be the component whose boundary contains $-\infty$. Then the surfaces $(\widetilde V, \widetilde h + c)$ and $(V, h)$ are conformal weldings of the surfaces $\widetilde \cW_1^* + c = \cW_1^*$ and $\widetilde \cW_2^* + c = \cW_2^*$ by quantum length along certain boundary rays, after discarding all connected components except for the one containing $-\infty$ on its boundary. By Lemma~\ref{lem: identify-wedges}, we see that the welding interfaces are segments of an $\SLE_{\kappa}(\rho_L; \rho_R)$ process with $\rho_L, \rho_R > -2$. These interfaces are removable \cite[Proposition 3.16]{wedges}, so $(\widetilde V, \widetilde h + c)$ is equivalent to $(V, h)$ as a quantum surface.

\subsection{Extra conditioning does not affect the limit law}\label{subsection: extra conditioning}
Consider the full range $\gamma \in (0,2)$. The results of the previous sections tell us that, if we want to understand the law of $h$ conditioned on $F_{r,C}$, we can first condition on $E_{r,K}$ and then further condition on $F_{r,C}$. 
In this section, we check that this second conditioning does not change the limit law of $h$ --- conditioning $h$ on $E_{r,K} \cap F_{r,C}$ still gives a surface close to a quantum disk. 
Since $\P[E_{r,K} \mid F_{r, C}] \approx 1$ (Proposition~\ref{prop: E given F}) and we understand the law of the boundary length process given $F_{r,C}$ (Lemma~\ref{lem: boundary length process in D}), we conclude that the conditional law of the boundary length process given $E_{r,K} \cap F_{r,C}$ is close to  the Brownian cone excursion of Theorem~\ref{thm: peanosphere disk}. Sending $r, K, C \to \infty$ in that order, we obtain Theorem~\ref{thm: peanosphere disk}.
For notational convenience, in this section we will choose the horizontal translation of $h$ so that $\nu_h(\R_+) + \nu_h(\R_+ + i\pi) = \frac12$.

\begin{definition}
Given a space-filling curve $\eta'$ in $\cS$ and any $N \in \R$, define the \emph{restriction of $\eta'$ to $\cS_+ - N$} (denoted $\eta'|_{\cS_+ - N}$) to be the curve $\eta'|_{[s,t]}$, where $s$ is the first time that $\eta'$ enters $\cS_+ - N$, and $t$ is the last time that $\eta'$ exits $\cS_+ -N$.  
Note that $\eta'|_{\cS_+ - N}$ is typically not contained in $\cS_+-N$.
\end{definition}

\begin{lemma} \label{lemma: quantum disk given F and E'}
Fix $N \gg 0$. Consider the setup in Sections \ref{subsection: decomposing wedge} and \ref{subsection: equivalence E F}, so $(\cS, h, -\infty, +\infty)$ is a $\gamma$-quantum wedge, and parametrize so $\nu_h(\R_+) + \nu_h(\R_+ + i\pi) = \frac12$. Condition $(h, \eta')$ on $F_{r,C}$.

If we first send $r \to \infty$ and then $C \to \infty$, the field-curve pair given by $( h|_{\cS_+ - 2N}, \eta'|_{\cS_+ - N} )$ converges in total variation to a field-curve pair, with field $\psi$ given by a $(\frac12, \frac12)$-length quantum disk parametrized so $\nu_\psi(\R_+) + \nu_\psi(\R_+ + i\pi) = \frac12$ and then restricted to $\cS_+ - 2N$, and curve given by the restriction to $\cS_+ - N$ of an independent counterclockwise $\SLE_{\kappa'}$.
\end{lemma}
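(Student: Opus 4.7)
The plan is to proceed in four steps, combining the ``equivalence of bottlenecks'' results of Section \ref{subsection: equivalence E F} with the ``field resembles a disk'' results of Section \ref{subsection: pinched disk}.

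First, I would replace the conditioning on $F_{r,C}$ by the conditioning on $E_{r,K} \cap F_{r,C}$ via Proposition \ref{prop: E given F}. That proposition tells us that for any $\delta > 0$ and for $C$ and then $K$ chosen sufficiently large, $\P[E_{r,K} \mid F_{r,C}] \geq 1-\delta$ uniformly in $r$, so the conditional law of $(h,\eta')$ given $F_{r,C}$ is within $\delta$ in total variation of the conditional law given $E_{r,K} \cap F_{r,C}$. It therefore suffices to analyze the latter conditioning and then take $r \rta \infty$, then $C, K \rta \infty$.

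Second, under the present normalization the point $\sigma^h$ of Proposition \ref{prop: conditioned on past, future is disk} equals $0$, and that proposition tells us that the conditional law of $h|_{\cS_+ - 2N}$ given $E_{r,K}$ converges in TV (as $r \rta \infty$) to the law of $\psi|_{\cS_+ - 2N}$ for a $(\tfrac12,\tfrac12)$-length quantum disk $\psi$ normalized so $\sigma^\psi = 0$. Since $E_{r,K}$ is $\sigma(h)$-measurable and $\eta'$ was sampled independently of $h$, I can couple the wedge pair $(h,\eta')$ conditioned on $E_{r,K}$ with a pair $(\psi,\wt\eta')$, where $\wt\eta'$ is a counterclockwise $\SLE_{\kappa'}$ independent of $\psi$, so that $\eta' = \wt\eta'$ always and $h|_{\cS_+ - 2N} = \psi|_{\cS_+ - 2N}$ with probability at least $1-\delta$. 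The stronger form of Proposition \ref{prop: conditioned on past, future is disk} (conditioning further on $(h|_R, \nu_h(\R_+), \nu_h(\R_++i\pi))$) is what makes this joint coupling possible.

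Third, I would verify that $F_{r,C}$ is, with high probability, measurable with respect to $(h|_{\cS_+ - 2N}, \eta'|_{\cS_+ - N})$. By definition $F_{r,C}$ depends on $(h,\eta')$ only through the interface lengths $l_1, l_2 = \nu_h(U \cap U_j)$, which are functions of $h|_U$ and $\eta'|_{[0,T]}$. Under $E_{r,K} \cap F_{r,C}$ the field average near the pinch is $\approx -r-C$, so a $\nu_h$-interface of quantum length $e^{\gamma(-r-C)/2}$ has Euclidean length of order $1$; together with the distortion estimate Lemma \ref{lem: distortion estimate} and the moment bounds of Lemma \ref{wedges-lem}, this shows that for $N$ large enough $U \subset \cS_+ - N$ with high probability (this is essentially Step 3 of the proof of Proposition \ref{prop: E given F}). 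Under the coupling of Step 2, $F_{r,C}$ on the wedge side then coincides with the analogously defined event $\wt F_{r,C}$ on the disk side obtained by substituting $(\psi,\wt\eta')$ for $(h,\eta')$.

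The main obstacle is the final step: showing that the conditional law of $(\psi|_{\cS_+ - 2N}, \wt\eta'|_{\cS_+ - N})$ given $\wt F_{r,C}$ converges in TV, as $r \rta \infty$ and then $C \rta \infty$, to its unconditional law. This is the delicate part, since $\wt F_{r,C}$ is restrictive in the limit. The strategy I envision is to note that $\wt F_{r,C}$ only constrains the interface lengths at the pinch — equivalently, the initial segment of the boundary length process of $\wt\eta'$ on $\psi$ — and to use an absolute continuity/density argument (analogous to the Bayes-based arguments in Lemmas \ref{lem: choose q1, q2} and \ref{lem: lower bound on future field}) to control the resulting Radon-Nikodym factor on the restricted pair. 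The main technical work is coordinating this density computation with the joint coupling of Step 2 and ensuring uniformity in $(r,C)$ in the correct order.
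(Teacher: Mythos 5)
There is a genuine gap, and it is the measurability claim in your Step 3, which has the geometry backwards. Under the lemma's normalization ($\nu_h(\R_+)+\nu_h(\R_++i\pi)=\tfrac12$, i.e.\ $\sigma^h=0$), the bottleneck point $\tau_{-r}$ lies at distance of order $r$ to the \emph{left} of the origin, and on $E_{r,K}$ the points $x_1,x_2$ and the two interfaces $U\cap U_1$, $U\cap U_2$ lie within a bounded-width rectangle $R=[\tau_{-r},\tau_{-r}+S]\times[0,\pi]$ near $\tau_{-r}$. In particular $U$ is \emph{not} contained in $\cS_+-N$ for fixed $N$ as $r\to\infty$: $U$ stretches from the pinch near $\tau_{-r}$ all the way to $+\infty$, and its left boundary (whose quantum lengths $l_1,l_2$ define $F_{r,C}$) escapes to the left of any fixed window. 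Hence $F_{r,C}$ is not (even with high probability) a function of $(h|_{\cS_+-2N},\eta'|_{\cS_+-N})$, your transfer of $F_{r,C}$ to an event $\wt F_{r,C}$ on the disk side through a coupling that only matches data on $\cS_+-2N$ does not go through, and Step 4 never gets off the ground. Moreover, the ``delicate part'' you flag in Step 4 (that conditioning the disk pair on $\wt F_{r,C}$ should not change the law of the restriction) is precisely the whole content of the lemma, and your proposal leaves it as an unproved absolute-continuity claim; it also ignores that conditioning on $F_{r,C}$ perturbs the \emph{curve} (through the imaginary geometry field near the pinch), so the marginal law of $\eta'|_{\cS_+-N}$ must be re-examined as well, not just that of the field.

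The correct localization is the opposite of the one you assert, and it is what makes the argument work: with high probability given $E_{r,K}$, the event $F_{r,C}$ is determined by $(h|_R,\,h^{\IG}|_R,\,x_1,\,x_2)$, i.e.\ by data in the rectangle $R$ near the pinch (using that the interfaces are locally determined by $h^{\IG}|_R$), which is far from $\cS_+-2N$. One then uses the \emph{stronger} form of Proposition~\ref{prop: conditioned on past, future is disk} — that the far field is close to a disk even after further conditioning on typical realizations of the pinch data $(h|_R,\nu_h(\R_+),\nu_h(\R_++i\pi))$ — together with the uniform lower bound $\P[F_{r,C}\mid E_{r,K}]\geq p>0$ of Proposition~\ref{prop:F given E}, which guarantees that the bad set of pinch-data realizations (where the total-variation bound fails) still has small conditional probability after conditioning on $F_{r,C}$. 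This handles $h|_{\cS_+-2N}$. For the curve one argues separately that, conditionally on $h^{\IG}|_R$, the law of $h^{\IG}$ far to the right of $R$ is close in total variation to its unconditioned law (as in Proposition~\ref{prop: zoom in Dirichlet boundary}), so $\eta'|_{\cS_+-N}$ is close in total variation to the restriction of a counterclockwise space-filling $\SLE_{\kappa'}$ independent of the field. You cite the right ingredients (Propositions~\ref{prop: E given F}, \ref{prop: conditioned on past, future is disk}, \ref{prop:F given E}), but they must be assembled around the localization of $F_{r,C}$ at the pinch, not at $+\infty$.
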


\begin{proof}
By Proposition \ref{prop: E given F}, it suffices to prove that the statement of the lemma holds for $(h, \eta')$ conditioned on $E_{r,K} \cap F_{r,C}$ when sending $r \to \infty, K \to \infty, C \to \infty$ in that order.

Let $h^{\op{IG}}$ be the imaginary geometry GFF used to construct $\eta'$ as in Section~\ref{subsection: space-filling SLE}, so $h^{\op{IG}}$ is independent of $h$.
Recall Proposition \ref{prop: head of GFF}, which roughly tells us the law of $(h|_R, x_1, x_2)$ conditional on $E_{r,K}$, where $R = [\tau_{-r},\tau_{-r} + S] \times [0,\pi]$. Pick $S$ large so that, when conditioned on $E_{r,K}$, with high probability the interfaces $U_1 \cap U$ and $U_2 \cap U$ (including their endpoints $x_1,x_2$) lie within $R$. When this occurs, given the realizations of $h|_R, h^{\op{IG}}|_R, x_1, x_2$, by \cite[Lemma 2.4]{gms-harmonic} we can check whether $F_{r,C}$ holds. In other words, when we condition on $E_{r,K}$, with high probability $F_{r,C}$ is determined by the restrictions of $h$ and $h^{\op{IG}}$ to $[0,S] \times [0,\pi]$ and the realizations $x_1, x_2 \in R$. Thus, by Propositions \ref{prop: conditioned on past, future is disk} and \ref{prop:F given E}, when we condition on $E_{r,K} \cap F_{r,C} $ and send $r\to \infty, K \to \infty, C \to \infty$ in that order, the field $ h|_{\cS_+ - 2N}$ is close in total variation to $\psi|_{\cS_+ - 2N}$. Furthermore, by Proposition \ref{prop: zoom in Dirichlet boundary}, conditioned on $h^{\op{IG}}|_R$, the field $h^{\op{IG}}$ restricted to $\cS_+ -r/2 $ is close in total variation to its unconditioned law, so by \cite[Lemma 2.4]{gms-harmonic}, the curve $\eta'$ restricted to $\cS_+ - N$ is close in total variation to the restriction of an independent counterclockwise $\SLE$ restricted to $\cS_+ - N$. 
\end{proof}

\begin{proof}[Proof of Theorem~\ref{thm: peanosphere disk}]
Let $N \gg 0$ be large. As in Section~\ref{subsection: decomposing wedge}, sample a $\gamma$-quantum wedge $(\cS, h, -\infty, +\infty)$ parametrized so $\nu_h(\R_+) + \nu_h(\R_+ + i\pi) = \frac12$, decorate it by an independent counterclockwise space-filling SLE $ \eta'$ from $-\infty$ to $-\infty$, and condition on $F_{r,C}$. Let $(L_t, R_t)_{[0,T]}$ be the boundary length process from the time $\eta'$ hits $x_1$ until the time $\eta'$ hits $x_2$, with $(L_0, R_0) = (0,q_1+q_2)$. 

Let $s_N$ and $t_N$ be the first and last times that $\eta'$ lies inside $\cS_+ - N$. As $r \to \infty$ then $C \to \infty$, by Proposition~\ref{prop: E given F} the curve segments $\eta'([0,s_N])$ and $\eta'([t_N, T])$ lie in the rectangle $[\tau_{-r}, -N]\times[0,\pi]$ with probability $1-o_C(1)$. Since $\eta'$ is parametrized by quantum area, we see that with probability $1-o_C(1)$ we have $s_N, T - t_N < \mu_h([\tau_{-r}, -N]\times[0,\pi])$. 
Taking $r \to \infty$, $C \to \infty$, $N \to \infty$ in that order, since $\mu_h([\tau_{-r}, -N]\times[0,\pi])\to 0$ in probability, we have $s_N, T - t_N \to 0$ in probability. Thus the $d_{\mathcal K}$ distance between the curves $(L_t, R_t)_{[0,T]}$ and $(L_t, R_t)_{[s_N, t_N]}$ in $\cS$ converges to zero in probability. (Recall that $d_{\mathcal K}$ is a metric on the space of curves, defined in \eqref{eqn: metric on space of curves}).

Let $(\cS, \psi, +\infty, -\infty)$ be a $(\frac12, \frac12)$-length quantum disk, decorated with an independent counterclockwise space-filling SLE $\widetilde \eta'$ from $-\infty$ to $-\infty$, and with the field $\psi$ horizontally translated so that $\nu_\psi(\R_+) + \nu_\psi(\R_+ + i\pi) = \frac12$. Let $\widetilde T$ be the duration of $\widetilde \eta'$ when parametrized by quantum area (so $\widetilde \eta'(0) = \widetilde \eta' (\widetilde T) = -\infty$). Let the boundary length process be $(\widetilde L_t, \widetilde R_t)_{[0,\widetilde T]}$, and define $\widetilde s_N, \widetilde t_N$ in the same way as above. As before, as $N \to \infty$, the $d_{\mathcal K}$ distance between the curves $(\widetilde L_t, \widetilde R_t)_{[0,\widetilde T]}$ and $(\widetilde L_t, \widetilde R_t)_{[\widetilde s_N,\widetilde t_N]}$ goes to zero in probability.

By Lemma \ref{lemma: quantum disk given F and E'}, we can couple the field/curve pairs $(\psi, \widetilde \eta')$ and $(h,  \eta')$ so that with high probability we have $h|_{\cS_+ - 2N} = \psi|_{\cS_+ - 2N}$, and the restrictions of $\eta', \widetilde \eta'$ to $\cS_+ -N$ agree (recall that we translated the field $h$ so that $\nu_h(\R_+) + \nu_h(\R_+ + i\pi) = \frac12$). Note that as $N \to \infty$, the probability that the curve $\eta'|_{[s_N, t_N]}$ stays inside $\cS_+ - 2N$ tends to 1; this means in particular that with probability approaching 1 the processes $(L_t, R_t)|_{[s_N, t_N]}$ and $(\widetilde L_t, \widetilde  R_t)|_{[\widetilde s_N, \widetilde t_N]}$ agree exactly modulo additive constant.

Now we check that this additive constant is small with high probability. As $N \to \infty$, because $s_N , \widetilde s_N \to 0$ in probability, with probability tending to 1 the ``starts'' of the curves $(L_t, R_t)|_{[0,s_N]}$ and $(\widetilde L_t, \widetilde R_t)|_{[0,\widetilde s_N]}$ stay uniformly close to $(0,1)$. Likewise, the ``ends'' of the curves stay uniformly close to $(0,0)$. Thus in our coupling, with probability approaching 1, the $d_{\mathcal K}$ distance between $(L_t, R_t)|_{[s_N, t_N]}$ and $(\widetilde L_t, \widetilde  R_t)|_{[\widetilde s_N, \widetilde t_N]}$ is arbitrarily small.

Combining all this, we see that for any $\delta > 0$, we can choose $N,C,r \gg 0$, so that we can couple the processes $(L_t, R_t)_{[0,T]}$ and $(\widetilde L_t, \widetilde  R_t)_{[0, \widetilde T]}$ such that with probability $1-\delta$ the $d_{\mathcal K}$ distance between $(L_t, R_t)_{[0,T]}$ and $(\widetilde L_t, \widetilde  R_t)_{[0, \widetilde T]}$ is at most $\delta$. We conclude that the boundary length process $(\widetilde L_t, \widetilde  R_t)_{[0, \widetilde T]}$ is precisely given by the cone excursion process described in Lemma \ref{lem: boundary length process in D}. Forgetting the marked point $+\infty$ on the quantum disk (indeed, it is determined by the quantum surface $(\cS, \psi, -\infty)$ since the arcs separating the two marked points each have $\nu_\psi$-length $1/2$), we see that when we sample an independent counterclockwise space-filling $\SLE_{\kappa'}$ $\eta'$ from $-1$ to $-1$ on a quantum disk $(\D, \psi, -1)$, the boundary length process is as described in Theorem~\ref{thm: peanosphere disk}.

Finally, we check that the curve-decorated $(\frac12, \frac12)$-quantum disk $(\cS, \psi, \eta', +\infty, -\infty)$ is a.s. determined by its boundary length process $(\widetilde L_t, \widetilde R_t)_{t \in [0,\mu_\psi(\D)]}$. Write $\cW^*$ for the curve-decorated $\gamma$-quantum wedge of Theorem~\ref{thm: peanosphere gamma wedge}. In the above proof, we obtained approximations (in total variation) of $(\cS, \psi, \eta', +\infty, -\infty)$ on neighborhoods bounded away from $-\infty$ by looking at neighborhoods of $\cW^*$ conditioned on positive probability events. Thus by Remark~\ref{rem-recover} we see that for any $\ep > 0$, the process $(\widetilde L_t - \widetilde L_\ep ,  \widetilde R_t - \widetilde R_\ep)_{t \in [\ep , \mu_\psi(\cS) - \ep]}$ a.s.\ determines the curve-decorated quantum surface parametrized by the domain $\eta'([\eps, \mu_\psi(\cS) -\eps])$.
Sending $\ep \rta 0$ and forgetting the extra marked point $+\infty$ concludes the proof of Theorem~\ref{thm: peanosphere disk}.
\end{proof}

In the case $\gamma \in (\sqrt2,2)$, Theorem~\ref{thm: peanosphere disk} is equivalent to \cite[Theorem 2.1]{sphere-constructions}, but due to notational differences this may not be immediately apparent. We provide here a restatement of Theorem~\ref{thm: peanosphere disk} of our paper to show the above equivalence. We note that the space-filling SLE$_{\kappa'}$ in the statement of \cite[Theorem 2.1]{sphere-constructions} is the time-reversal of the space-filling $\SLE_{\kappa'}$ considered in this paper.

\begin{corollary}\label{cor: restate disk peanosphere}
Suppose that $\gamma \in (0,2)$, and that $(\D, \psi, -1)$ is a unit boundary length quantum disk. Let $\eta'$ be a counterclockwise space-filling $\SLE_{\kappa'}$ process from $-1$ to $-1$ sampled independently from $h$ and then reparametrized by quantum area. Let $\widehat \eta'$ be the time-reversal of $\eta'$, and let $T$ denote its random duration. Let $\widehat L_t$ and $\widehat R_t$ denote the quantum lengths of the left and right sides of $\widehat\eta'([0,t])$, normalized so that $L_0 = R_0 = 0$; see Figure \ref{fig: peanosphere_disk_boundary_convention} (left). Then $(\widehat L_t, \widehat R_t)_{ 0 \leq t \leq T}$ is a finite-time Brownian motion started from $(0,0)$ and conditioned to stay in the first quadrant $\R^+ \times \R^+$ until it exits at $(1,0)$.
\end{corollary}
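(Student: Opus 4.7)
The plan is to derive this corollary directly from Theorem~\ref{thm: peanosphere disk} using two ingredients: (i) the reversibility of the conditioned correlated Brownian motion appearing in that theorem, and (ii) the symmetry of the covariance matrix in~\eqref{eqn: covariance} under the coordinate swap $(L,R)\mapsto(R,L)$.

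First I would establish the geometric identification
\begin{equation*}
\widehat L_t = R_{T-t},\qquad \widehat R_t = L_{T-t},\qquad 0\leq t\leq T,
\end{equation*}
where $(L_t,R_t)$ is the boundary length process of $\eta'$ as in Theorem~\ref{thm: peanosphere disk}. The point is that $\widehat \eta'([0,t]) = \eta'([T-t,T])$, and reversing the direction of traversal interchanges the two sides of the curve, so the left side of $\widehat\eta'([0,t])$ is, as a quantum interface, the right side of $\eta'([T-t,T])$, and vice versa. The normalizations match up automatically: the prescribed $\widehat L_0 = \widehat R_0 = 0$ agrees with $R_T = L_T = 0$, and the required exit value $(\widehat L_T,\widehat R_T) = (1,0)$ agrees with $(R_0, L_0) = (1,0)$.

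Next I would invoke reversibility. By Theorem~\ref{thm: peanosphere disk}, $(L_t,R_t)_{0\leq t\leq T}$ is a Brownian motion with covariances~\eqref{eqn: covariance} started from $(0,1)$ and conditioned to stay in $\R^+\times\R^+$ until hitting $(0,0)$. Since standard Brownian bridges are reversible and the conditioning event of staying in the first quadrant is invariant under time reversal (compare with the reversibility of boundary-to-boundary Brownian excursions recalled in Section~\ref{subsection: uncorrelated brownian excursions}), the time-reversed process $(L_{T-t}, R_{T-t})_{0\leq t\leq T}$ is a Brownian motion with the same covariances~\eqref{eqn: covariance}, started from $(0,0)$ and conditioned to stay in $\R^+\times\R^+$ until hitting $(0,1)$.

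Finally, since $\Var(L_t)=\Var(R_t)$ and $\Cov(L_t,R_t)=\Cov(R_t,L_t)$, the covariance matrix~\eqref{eqn: covariance} is invariant under the swap $(L,R)\mapsto(R,L)$. Applying this swap to the reversed process and using the identification from the first step shows that $(\widehat L_t,\widehat R_t)_{0\leq t\leq T}$ has the law of a Brownian motion with covariances~\eqref{eqn: covariance} started at $(0,0)$ and conditioned to stay in $\R^+\times\R^+$ until exiting at $(1,0)$, which is the claim. The main (minor) obstacle will be the geometric bookkeeping in the first step: carefully checking the definitions of the left and right sides in Figure~\ref{fig: boundary_lengths_disk} (in particular matching the $+1$ appearing in $R_t = 1 + \nu_\psi(\mathrm{green}) - \nu_\psi(\mathrm{red})$ with the fact that the total quantum boundary length is $1$) so as to verify the identities $\widehat L_t = R_{T-t}$, $\widehat R_t = L_{T-t}$ precisely.
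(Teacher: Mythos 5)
Your proposal is correct and matches the paper's own argument, which is carried out in the caption of Figure~\ref{fig: peanosphere_disk_boundary_convention}: one verifies $\widehat L_t = R_{T-t}$ and $\widehat R_t = L_{T-t}$ by comparing the left/right boundary conventions, then concludes via the reversibility of the Brownian cone excursion together with the invariance of the covariance matrix~\eqref{eqn: covariance} under swapping the two coordinates.
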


\begin{figure}[ht!]
\begin{center}
\includegraphics[scale=0.85]{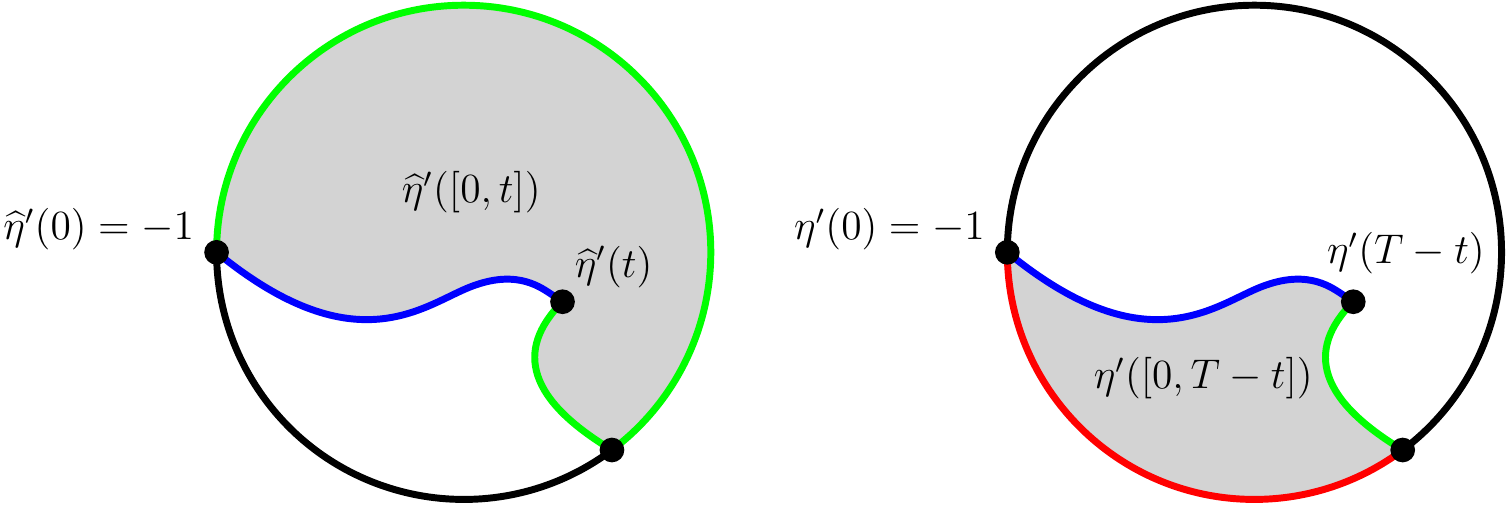}
\end{center}
\caption{\label{fig: peanosphere_disk_boundary_convention} For $\gamma \in (0, \sqrt2]$, consider a unit boundary length quantum disk $(\D,h,-1)$ with an independently drawn counterclockwise space-filling $\SLE_{\kappa'}$ curve $\eta'$ from $-1$ to $-1$ parametrized by quantum area. \textbf{Left:} Let $T$ be the duration of $\eta'$, and let $\widehat \eta'$ be the time-reversal of $\eta'$. We define $\widehat L_t = \nu_h(\text{green})$ and $\widehat R_t =  \nu_h(\text{blue})$. \textbf{Right:} For any time $t \in [0,T]$, by the boundary length definitions of Theorem~\ref{thm: peanosphere disk}, we have $L_{T-t} = \nu_h(\text{blue})$ and $R_{T-t} =1+ \nu_h(\text{green}) - \nu_h(\text{red})$. \textbf{Both:} By comparing diagrams and recalling that the boundary of the disk has quantum length 1, we have $\widehat L_t = R_{T-t}$ and $\widehat R_t = L_{T-t}$. Thus the process $(\widehat L_t, \widehat R_t)$ is the time-reversal of $(R_t, L_t)$. By Theorem~\ref{thm: peanosphere disk} and the reversibility of Brownian excursions, $(\widehat L_t, \widehat R_t)_{0\leq t\leq T}$ is a Brownian cone excursion from $(0,0)$ to $(1,0)$ with covariances given by~\eqref{eqn: covariance}. For $\gamma \in (\sqrt2, 2)$, the topology of the diagram is more complicated, but nevertheless analogous statements hold.}
\end{figure}

\bibliographystyle{hmralphaabbrv}
\bibliography{cibib,bibmore}

\def\cprime{$'$}
\begin{thebibliography}{GKMW18}

\bibitem[AHS17]{ahs-sphere}
J.~Aru, Y.~Huang, and X.~Sun.
\newblock Two perspectives of the 2{D} unit area quantum sphere and their
  equivalence.
\newblock {\em Comm. Math. Phys.}, 356(1):261--283, 2017, \arxiv{1512.06190}.
  \MR{3694028}

\bibitem[{Aru}17]{aru-gmc-survey}
J.~{Aru}.
\newblock {Gaussian multiplicative chaos through the lens of the 2D Gaussian
  free field}.
\newblock {\em ArXiv e-prints}, Sep 2017, \arxiv{1709.04355}.

\bibitem[Ber07]{bernardi-maps}
O.~Bernardi.
\newblock Bijective counting of tree-rooted maps and shuffles of parenthesis
  systems.
\newblock {\em Electron. J. Combin.}, 14(1):Research Paper 9, 36 pp.
  (electronic), 2007, \arxiv{math/0601684}. \MR{2285813 (2007m:05125)}

\bibitem[Ber17]{berestycki-gmt-elementary}
N.~Berestycki.
\newblock An elementary approach to {G}aussian multiplicative chaos.
\newblock {\em Electron. Commun. Probab.}, 22:Paper No. 27, 12, 2017,
  \arxiv{1506.09113}. \MR{3652040}

\bibitem[BG20]{bg-lbm}
N.~{Berestycki} and E.~{Gwynne}.
\newblock {Random walks on mated-CRT planar maps and Liouville Brownian
  motion}.
\newblock {\em ArXiv e-prints}, March 2020, \arxiv{2003.10320}.

\bibitem[BHS18]{bhs-site-perc}
O.~{Bernardi}, N.~{Holden}, and X.~{Sun}.
\newblock {Percolation on triangulations: a bijective path to Liouville quantum
  gravity}.
\newblock {\em ArXiv e-prints}, July 2018, \arxiv{1807.01684}.

\bibitem[BM17]{bet-mier-disk}
J.~Bettinelli and G.~Miermont.
\newblock Compact {B}rownian surfaces {I}: {B}rownian disks.
\newblock {\em Probab. Theory Related Fields}, 167(3-4):555--614, 2017,
  \arxiv{1507.08776}. \MR{3627425}

\bibitem[BMR19]{bmr-uihpq}
E.~Baur, G.~Miermont, and G.~Ray.
\newblock Classification of scaling limits of uniform quadrangulations with a
  boundary.
\newblock {\em Ann. Probab.}, 47(6):3397--3477, 2019. \MR{4038036}

\bibitem[{Cer}19]{cercle-quantum-disk}
B.~{Cercl{\'e}}.
\newblock {Unit boundary length quantum disk: a study of two different
  perspectives and their equivalence}.
\newblock {\em ArXiv e-prints}, Dec 2019, \arxiv{1912.08012}.

\bibitem[DMS14]{wedges}
B.~{Duplantier}, J.~{Miller}, and S.~{Sheffield}.
\newblock {Liouville quantum gravity as a mating of trees}.
\newblock {\em ArXiv e-prints}, September 2014, \arxiv{1409.7055}.

\bibitem[DS11]{shef-kpz}
B.~Duplantier and S.~Sheffield.
\newblock Liouville quantum gravity and {KPZ}.
\newblock {\em Invent. Math.}, 185(2):333--393, 2011, \arxiv{1206.0212}.
  \MR{2819163 (2012f:81251)}

\bibitem[Dub09]{dubedat-duality}
J.~Dub{\'e}dat.
\newblock Duality of {S}chramm-{L}oewner evolutions.
\newblock {\em Ann. Sci. \'Ec. Norm. Sup\'er. (4)}, 42(5):697--724, 2009,
  \arxiv{0711.1884}. \MR{2571956 (2011g:60151)}

\bibitem[FK72]{fk-cluster}
C.~M. Fortuin and P.~W. Kasteleyn.
\newblock On the random-cluster model. {I}. {I}ntroduction and relation to
  other models.
\newblock {\em Physica}, 57:536--564, 1972. \MR{0359655 (50 \#12107)}

\bibitem[GHM20]{ghm-kpz}
E.~Gwynne, N.~Holden, and J.~Miller.
\newblock An almost sure {KPZ} relation for {SLE} and {B}rownian motion.
\newblock {\em Ann. Probab.}, 48(2):527--573, 2020. \MR{4089487}

\bibitem[GHMS17]{kappa8-cov}
E.~Gwynne, N.~Holden, J.~Miller, and X.~Sun.
\newblock Brownian motion correlation in the peanosphere for {$\kappa>8$}.
\newblock {\em Ann. Inst. Henri Poincar\'e Probab. Stat.}, 53(4):1866--1889,
  2017, \arxiv{1510.04687}. \MR{3729638}

\bibitem[GHS19]{ghs-mating-survey}
E.~{Gwynne}, N.~{Holden}, and X.~{Sun}.
\newblock {Mating of trees for random planar maps and Liouville quantum
  gravity: a survey}.
\newblock {\em ArXiv e-prints}, Oct 2019, \arxiv{1910.04713}.

\bibitem[GHS20]{ghs-map-dist}
E.~{Gwynne}, N.~{Holden}, and X.~{Sun}.
\newblock {A mating-of-trees approach for graph distances in random planar
  maps}.
\newblock {\em {Probability Theory and Related Fields}}, to appear, 2020,
  \arxiv{1711.00723}.

\bibitem[GKMW18]{gkmw-burger}
E.~Gwynne, A.~Kassel, J.~Miller, and D.~B. Wilson.
\newblock Active {S}panning {T}rees with {B}ending {E}nergy on {P}lanar {M}aps
  and {SLE}-{D}ecorated {L}iouville {Q}uantum {G}ravity for {$\kappa > 8$}.
\newblock {\em Comm. Math. Phys.}, 358(3):1065--1115, 2018, \arxiv{1603.09722}.
  \MR{3778352}

\bibitem[GM17a]{gm-spec-dim}
E.~{Gwynne} and J.~{Miller}.
\newblock {Random walk on random planar maps: spectral dimension, resistance,
  and displacement}.
\newblock {\em ArXiv e-prints}, November 2017, \arxiv{1711.00836}.

\bibitem[GM17b]{gwynne-miller-uihpq}
E.~Gwynne and J.~Miller.
\newblock Scaling limit of the uniform infinite half-plane quadrangulation in
  the {G}romov-{H}ausdorff-{P}rokhorov-uniform topology.
\newblock {\em Electron. J. Probab.}, 22:1--47, 2017, \arxiv{1608.00954}.

\bibitem[GMS17]{gms-tutte}
E.~{Gwynne}, J.~{Miller}, and S.~{Sheffield}.
\newblock {The Tutte embedding of the mated-CRT map converges to Liouville
  quantum gravity}.
\newblock {\em ArXiv e-prints}, May 2017, \arxiv{1705.11161}.

\bibitem[GMS19]{gms-harmonic}
E.~{Gwynne}, J.~{Miller}, and S.~{Sheffield}.
\newblock {Harmonic functions on mated-{CRT} maps}.
\newblock {\em Electron. J. Probab.}, 24:no. 58, 55, 2019, \arxiv{1807.07511}.

\bibitem[GP18]{gp-sle-bubbles}
E.~{Gwynne} and J.~{Pfeffer}.
\newblock {Connectivity properties of the adjacency graph of SLE$_\kappa$
  bubbles for $\kappa \in (4,8)$}.
\newblock {\em {A}nnals of {P}robability}, to appear, 2018, \arxiv{1803.04923}.

\bibitem[GP19]{gp-dla}
E.~{Gwynne} and J.~{Pfeffer}.
\newblock {External diffusion limited aggregation on a spanning-tree-weighted
  random planar map}.
\newblock {\em ArXiv e-prints}, January 2019, \arxiv{1901.06860}.

\bibitem[HRV18]{hrv-disk}
Y.~Huang, R.~Rhodes, and V.~Vargas.
\newblock Liouville quantum gravity on the unit disk.
\newblock {\em Ann. Inst. Henri Poincar\'{e} Probab. Stat.}, 54(3):1694--1730,
  2018, \arxiv{1502.04343}. \MR{3825895}

\bibitem[HS19]{hs-cardy-embedding}
N.~{Holden} and X.~{Sun}.
\newblock {Convergence of uniform triangulations under the Cardy embedding}.
\newblock {\em ArXiv e-prints}, May 2019, \arxiv{1905.13207}.

\bibitem[Iye85]{iyengar-hitting-lines}
S.~Iyengar.
\newblock Hitting lines with two-dimensional {B}rownian motion.
\newblock {\em SIAM J. Appl. Math.}, 45(6):983--989, 1985. \MR{813460}

\bibitem[Kah85]{kahane}
J.-P. Kahane.
\newblock Sur le chaos multiplicatif.
\newblock {\em Ann. Sci. Math. Qu\'ebec}, 9(2):105--150, 1985. \MR{829798
  (88h:60099a)}

\bibitem[KMSW19]{kmsw-bipolar}
R.~Kenyon, J.~Miller, S.~Sheffield, and D.~B. Wilson.
\newblock Bipolar orientations on planar maps and {${\rm SLE}_{12}$}.
\newblock {\em Ann. Probab.}, 47(3):1240--1269, 2019, \arxiv{1511.04068}.
  \MR{3945746}

\bibitem[KRV20]{krv-dozz}
A.~Kupiainen, R.~Rhodes, and V.~Vargas.
\newblock Integrability of {L}iouville theory: proof of the {DOZZ} formula.
\newblock {\em Ann. of Math. (2)}, 191(1):81--166, 2020. \MR{4060417}

\bibitem[{Le }13]{legall-uniqueness}
J.-F. {Le Gall}.
\newblock Uniqueness and universality of the {B}rownian map.
\newblock {\em Ann. Probab.}, 41(4):2880--2960, 2013, \arxiv{1105.4842}.
  \MR{3112934}

\bibitem[LW04]{lawler-werner-soup}
G.~F. Lawler and W.~Werner.
\newblock The {B}rownian loop soup.
\newblock {\em Probab. Theory Related Fields}, 128(4):565--588, 2004,
  \arxiv{math/0304419}. \MR{2045953 (2005f:60176)}

\bibitem[Mie13]{miermont-brownian-map}
G.~Miermont.
\newblock The {B}rownian map is the scaling limit of uniform random plane
  quadrangulations.
\newblock {\em Acta Math.}, 210(2):319--401, 2013, \arxiv{1104.1606}.
  \MR{3070569}

\bibitem[MS16a]{lqg-tbm2}
J.~{Miller} and S.~{Sheffield}.
\newblock {Liouville quantum gravity and the Brownian map II: geodesics and
  continuity of the embedding}.
\newblock {\em ArXiv e-prints}, May 2016, \arxiv{1605.03563}.

\bibitem[MS16b]{lqg-tbm3}
J.~{Miller} and S.~{Sheffield}.
\newblock {Liouville quantum gravity and the Brownian map III: the conformal
  structure is determined}.
\newblock {\em ArXiv e-prints}, August 2016, \arxiv{1608.05391}.

\bibitem[MS16c]{ig1}
J.~Miller and S.~Sheffield.
\newblock Imaginary geometry {I}: interacting {SLE}s.
\newblock {\em Probab. Theory Related Fields}, 164(3-4):553--705, 2016,
  \arxiv{1201.1496}. \MR{3477777}

\bibitem[MS16d]{ig2}
J.~Miller and S.~Sheffield.
\newblock Imaginary geometry {II}: {R}eversibility of
  {$\operatorname{SLE}_\kappa(\rho_1;\rho_2)$} for {$\kappa\in(0,4)$}.
\newblock {\em Ann. Probab.}, 44(3):1647--1722, 2016, \arxiv{1201.1497}.
  \MR{3502592}

\bibitem[MS16e]{ig3}
J.~Miller and S.~Sheffield.
\newblock Imaginary geometry {III}: reversibility of {$\mathrm{SLE}_\kappa$}
  for {$\kappa\in(4,8)$}.
\newblock {\em Ann. of Math. (2)}, 184(2):455--486, 2016, \arxiv{1201.1498}.
  \MR{3548530}

\bibitem[MS17]{ig4}
J.~Miller and S.~Sheffield.
\newblock Imaginary geometry {IV}: interior rays, whole-plane reversibility,
  and space-filling trees.
\newblock {\em Probab. Theory Related Fields}, 169(3-4):729--869, 2017,
  \arxiv{1302.4738}. \MR{3719057}

\bibitem[MS19]{sphere-constructions}
J.~Miller and S.~Sheffield.
\newblock Liouville quantum gravity spheres as matings of finite-diameter
  trees.
\newblock {\em Ann. Inst. Henri Poincar\'{e} Probab. Stat.}, 55(3):1712--1750,
  2019, \arxiv{1506.03804}. \MR{4010949}

\bibitem[MS20]{lqg-tbm1}
J.~{Miller} and S.~{Sheffield}.
\newblock {Liouville quantum gravity and the Brownian map I: The QLE(8/3,0)
  metric}.
\newblock {\em Invent. Math.}, 219(1):75--152, 2020. \MR{4050102}

\bibitem[Mul67]{mullin-maps}
R.~C. Mullin.
\newblock On the enumeration of tree-rooted maps.
\newblock {\em Canad. J. Math.}, 19:174--183, 1967. \MR{0205882 (34 \#5708)}

\bibitem[Rem20]{remy-fb-formula}
G.~Remy.
\newblock The {F}yodorov-{B}ouchaud formula and {L}iouville conformal field
  theory.
\newblock {\em Duke Math. J.}, 169(1):177--211, 2020. \MR{4047550}

\bibitem[RS05]{schramm-sle}
S.~Rohde and O.~Schramm.
\newblock Basic properties of {SLE}.
\newblock {\em Ann. of Math. (2)}, 161(2):883--924, 2005, \arxiv{math/0106036}.
  \MR{2153402 (2006f:60093)}

\bibitem[RV14]{rhodes-vargas-review}
R.~Rhodes and V.~Vargas.
\newblock Gaussian multiplicative chaos and applications: {A} review.
\newblock {\em Probab. Surv.}, 11:315--392, 2014, \arxiv{1305.6221}.
  \MR{3274356}

\bibitem[RZ20]{rz-gmc-interval}
G.~Remy and T.~Zhu.
\newblock The distribution of {G}aussian multiplicative chaos on the unit
  interval.
\newblock {\em Ann. Probab.}, 48(2):872--915, 2020. \MR{4089497}

\bibitem[Sch00]{schramm0}
O.~Schramm.
\newblock Scaling limits of loop-erased random walks and uniform spanning
  trees.
\newblock {\em Israel J. Math.}, 118:221--288, 2000, \arxiv{math/9904022}.
  \MR{1776084 (2001m:60227)}

\bibitem[She07]{shef-gff}
S.~Sheffield.
\newblock Gaussian free fields for mathematicians.
\newblock {\em Probab. Theory Related Fields}, 139(3-4):521--541, 2007,
  \arxiv{math/0312099}. \MR{2322706 (2008d:60120)}

\bibitem[She16a]{shef-zipper}
S.~Sheffield.
\newblock Conformal weldings of random surfaces: {SLE} and the quantum gravity
  zipper.
\newblock {\em Ann. Probab.}, 44(5):3474--3545, 2016, \arxiv{1012.4797}.
  \MR{3551203}

\bibitem[She16b]{shef-burger}
S.~Sheffield.
\newblock Quantum gravity and inventory accumulation.
\newblock {\em Ann. Probab.}, 44(6):3804--3848, 2016, \arxiv{1108.2241}.
  \MR{3572324}

\bibitem[Shi85]{shimura1985}
M.~Shimura.
\newblock Excursions in a cone for two-dimensional brownian motion.
\newblock {\em J. Math. Kyoto Univ.}, 25(3):433--443, 1985.

\bibitem[SS13]{ss-contour}
O.~Schramm and S.~Sheffield.
\newblock A contour line of the continuum {G}aussian free field.
\newblock {\em Probab. Theory Related Fields}, 157(1-2):47--80, 2013,
  \arxiv{math/0605337}. \MR{3101840}

\end{thebibliography}

\end{document}